\renewcommand*{\baselinestretch}{1.25}
\newtheorem{theorem}{Theorem}[section]
\newtheorem{lemma}{Lemma}[section]
\newtheorem{proposition}{Proposition}[section]
\newtheorem{corollary}{Corollary}[section]
\theoremstyle{definition}
\newtheorem{definition}{Definition}[section]
\newtheorem*{rmk*}{Remark}
\newtheorem{rmk}{Remark}[section]
\newtheorem{example}{Example}[section]
\DeclareMathOperator{\Var}{Var}
\DeclareMathOperator{\Cov}{Cov}
\DeclareMathOperator{\E}{E}
\numberwithin{equation}{section}
    \renewcommand*{\section}{\@startsection{section}{1}{\z@}%
    {6pt}{3pt}{\reset@font\normalsize\bfseries}}
    \renewcommand*{\subsection}{\@startsection{subsection}{2}{\z@}%
    {3pt}{3pt}{\reset@font\normalsize\mdseries\itshape}}
    \renewcommand*{\subsubsection}{\@startsection{subsubsection}{3}{\z@}%
    {3pt}{3pt}{\reset@font\normalsize\mdseries\itshape}}
\def\@seccntformat#1{\csname the#1\endcsname.\quad}
\def\@listi{\leftmargin\leftmargini
  \topsep=.5\baselineskip 
  \partopsep=0pt \parsep=0pt \itemsep=0pt}
\let\@listI\@listi
\def\@listii{\leftmargin\leftmarginii
  \labelwidth\leftmarginii \advance\labelwidth-\labelsep
  \topsep=0pt \partopsep=0pt \parsep=0pt \itemsep=0pt}
\def\@listiii{\leftmargin\leftmarginiii
  \labelwidth\leftmarginiii \advance\labelwidth-\labelsep
  \topsep=0pt \partopsep=0pt \parsep=0pt \itemsep=0pt}
\def\@listiv{\leftmargin\leftmarginiv
  \labelwidth\leftmarginiv \advance\labelwidth-\labelsep
  \topsep=0pt \partopsep=0pt \parsep=0pt \itemsep=0pt}
\newcommand{\opnorm}{\@ifstar\@opnorms\@opnorm}
\newcommand{\@opnorms}[1]{%
  \left|\mkern-1.5mu\left|\mkern-1.5mu\left|
   #1
  \right|\mkern-1.5mu\right|\mkern-1.5mu\right|
}
\newcommand{\@opnorm}[2][]{%
  \mathopen{#1|\mkern-1.5mu#1|\mkern-1.5mu#1|}
  #2
  \mathclose{#1|\mkern-1.5mu#1|\mkern-1.5mu#1|}
}
  \renewenvironment{proof}[1][\proofname]{\par
    \pushQED{\qed}%
    \normalfont \topsep6\p@\@plus6\p@\relax
    \trivlist
    \item\relax
          {\bfseries
      #1\@addpunct{.}}\hspace\labelsep\ignorespaces
  }{%
    \popQED\endtrivlist\@endpefalse
  }
\def\be#1{\begin{equation*}#1\end{equation*}}
\def\ben#1{\begin{equation}#1\end{equation}}
\def\bes#1{\begin{equation*}\begin{split}#1\end{split}\end{equation*}}
\def\besn#1{\begin{equation}\begin{split}#1\end{split}\end{equation}}
\def\bm#1{\begin{multline*}#1\end{multline*}}
\def\ba#1{\begin{align*}#1\end{align*}}
\def\ban#1{\begin{align}#1\end{align}}
\newcommand{\eps}{\varepsilon}
\newcommand{\res}{\beta}
\newcommand{\Res}{B}
\newcommand{\weight}{m}
\newcommand{\bs}[1]{\boldsymbol{#1}}
\newcommand{\ol}[1]{\overline{#1}}
\newcommand{\ul}[1]{\underline{#1}}
\newcommand{\wh}[1]{\widehat{#1}}
\newcommand{\wt}[1]{\widetilde{#1}}
\newcommand{\mcl}[1]{\mathcal{#1}}
\newcommand{\norm}[1]{\left\|#1\right\|}
\newcommand{\bra}[1]{\left(#1\right)}
\newcommand{\cbra}[1]{\left\{#1\right\}}
\newcommand{\sbra}[1]{\left[#1\right]}
\newcommand{\abra}[1]{\left\langle#1\right\rangle}
\newcommand{\abs}[1]{\left|#1\right|}
\newcommand{\pushright}[1]{\ifmeasuring@#1\else\omit\hfill$\displaystyle#1$\fi\ignorespaces}
\newcommand{\pushleft}[1]{\ifmeasuring@#1\else\omit$\displaystyle#1$\hfill\fi\ignorespaces}
\def\be#1{\begin{equation*}#1\end{equation*}}
\def\ben#1{\begin{equation}#1\end{equation}}
\def\bes#1{\begin{equation*}\begin{split}#1\end{split}\end{equation*}}
\def\besn#1{\begin{equation}\begin{split}#1\end{split}\end{equation}}
\def\bm#1{\begin{multline*}#1\end{multline*}}
\def\ba#1{\begin{align*}#1\end{align*}}
\def\ban#1{\begin{align}#1\end{align}}
\title{High-dimensional bootstrap and asymptotic expansion}
\author{Yuta Koike
\thanks{Graduate School of Mathematical Sciences, University of Tokyo}
\thanks{CREST, Japan Science and Technology Agency}
}
\begin{document}

\maketitle

\begin{abstract}

The recent seminal work of Chernozhukov, Chetverikov and Kato has shown that bootstrap approximation for the maximum of a sum of independent random vectors is justified even when the dimension is much larger than the sample size. 
In this context, numerical experiments suggest that third-moment matching bootstrap approximations would outperform normal approximation even without studentization, but the existing theoretical results cannot explain this phenomenon. 
In this paper, we develop an asymptotic expansion formula for the bootstrap coverage probability and show that it can give an explanation for the above phenomenon. 
In particular, we find the following interesting blessing of dimensionality phenomenon: The third-moment matching wild bootstrap is second-order accurate in high dimensions even without studentization if the covariance matrix has identical diagonal entries and bounded eigenvalues. 
We also show that a double wild bootstrap method is second-order accurate regardless of the covariance structure. 
The validity of these results is established under the assumption that the underlying distributions admit Stein kernels. 
\vspace{2mm}

\noindent \textit{Keywords}: Cornish--Fisher expansion; 
coverage probability; 
double bootstrap; Edgeworth expansion; 
second-order accuracy; Stein kernel. 

\end{abstract}

\section{Introduction}

Let $X_1,\dots,X_n$ be independent centered random vectors in $\mathbb R^d$ with finite variance. Set
\[
S_n:=\frac{1}{\sqrt n}\sum_{i=1}^nX_i.
\]
The aim of this paper is to investigate the accuracy of bootstrap approximation for the maximum statistic
\[
T_n:=\max_{1\leq j\leq d}S_{n,j},
\]
when both $n$ and $d$ tend to infinity. 
The seminal work of \citet*{CCK13} has established a Gaussian-type approximation for this statistic under very mild assumptions when the dimension $d$ is possibly much larger than the sample size $n$. 
To be precise, let $Z$ be a centered Gaussian vector in $\mathbb R^d$ with the same covariance matrix as $S_n$, say $\Sigma$. 
The Gaussian analog of $T_n$ is given by
\[
Z^\vee:=\max_{1\leq j\leq d}Z_{j}.
\]
Under mild moment assumptions, \citet*{CCK13} have shown that 
\ben{\label{eq:cck13}
\sup_{t\in\mathbb R}|P(T_n\leq t)-P(Z^\vee\leq t)|=O\bra{\bra{\frac{\log^a(dn)}{n}}^{b}}
}
holds with $a=7$ and $b=1/8$. 
This result implies that given a significance level $\alpha\in(0,1)$, the probability $P(T_n\geq c^G_{1-\alpha})$ is approximately equal to $\alpha$ as long as $\log d=o(n^{1/7})$, where $c^G_{1-\alpha}$ is the $(1-\alpha)$-quantile of $Z^\vee$. 
Therefore, we can use $c^G_{1-\alpha}$ as a critical value to construct asymptotically $(1-\alpha)$-level simultaneous confidence intervals or $\alpha$-level tests for a high-dimensional vector of parameters; see \cite{BCCHK18,CCKK23} for details. 
In practice, $c^G_{1-\alpha}$ is not computable because $\Sigma$ is generally unknown, so we need to replace it by an estimate. 
In \cite{CCK13}, this is implemented by the Gaussian wild (or multiplier) bootstrap: Let $w_1,\dots,w_n$ be i.i.d.~standard normal variables independent of the data $X_1,\dots,X_n$. 
Define the Gaussian wild bootstrap version of $S_n$ as follows: 
\ben{\label{eq:wb}
S_n^*:=\frac{1}{\sqrt n}\sum_{i=1}^nw_i(X_i-\bar X),\quad\text{where }\bar X=\frac{1}{n}\sum_{i=1}^nX_i.
}
We may naturally expect that $c^G_{1-\alpha}$ would be well-approximated by the $(1-\alpha)$-quantile of the conditional law of $T_n^*:=\max_{1\leq j\leq d}S^*_{n,j}$ given the data, say $\hat c_{1-\alpha}$. This is formally justified by \cite{CCK13}: They essentially prove
\ben{\label{cck-boot}
P(T_n\geq \hat c_{1-\alpha})=\alpha+O\bra{\bra{\frac{\log^a(dn)}{n}}^{b}}
}
with $a=7$ and $b=1/8$. 
Subsequently, \citet{CCK17} have improved the convergence rates of \eqref{eq:cck13} and \eqref{cck-boot} to $b=1/6$. They also proved the left hand side of \eqref{eq:cck13} can be replaced by $\sup_{A\in\mcl R}|P(S_n\in A)-P(Z\in A)|$, where $\mcl R:=\{\prod_{j=1}^d[a_j,b_j]:a_j\leq b_j,~j=1,\dots,d\}$ is the class of rectangles in $\mathbb R^d$. 
 
It is easy to see that the conditional law of $S_n^*$ given the data is $N(0,\wh\Sigma_n)$, where $\wh\Sigma_n$ is the sample covariance matrix: $\wh\Sigma_n:=n^{-1}\sum_{i=1}^n(X_i-\bar X)(X_i-\bar X)^\top$. 
Hence, the Gaussian wild bootstrap is essentially a feasible version of the normal approximation for $T_n$. 
Then, it is natural to ask whether the approximation accuracy can be improved by more sophisticated bootstrap methods such as the empirical and non-Gaussian wild bootstraps. 
In the fixed-dimensional setting, it is well-known that standard bootstrap methods improve the approximation accuracy in the coverage probabilities over the normal approximation only when the statistic of interest is asymptotically pivotal (cf.~\cite[Chapter 3]{Ha92}). 
Even worse, they can be harmful for the univariate sample mean without studentization; see \cite[Section 3]{LiSi87}. 
Despite these facts, numerical experiments suggest that third-moment matching bootstrap methods would outperform normal approximation (cf.~\cite{DeZh20,CCKK22}). 
To appreciate this, we depict in \cref{fig:pp} the P--P plot for the rejection rate $P(T_n\geq \hat c_{1-\alpha})$ against the nominal significance level $\alpha$ when $n=200$ and $d=400$, where $\hat c_{1-\alpha}$ is computed by either the Gaussian wild bootstrap or a wild bootstrap with third-moment matching. 
We can clearly see that the latter performance is much better than the former. 

\begin{figure}[ht]
\centering
\includegraphics[scale=0.6]{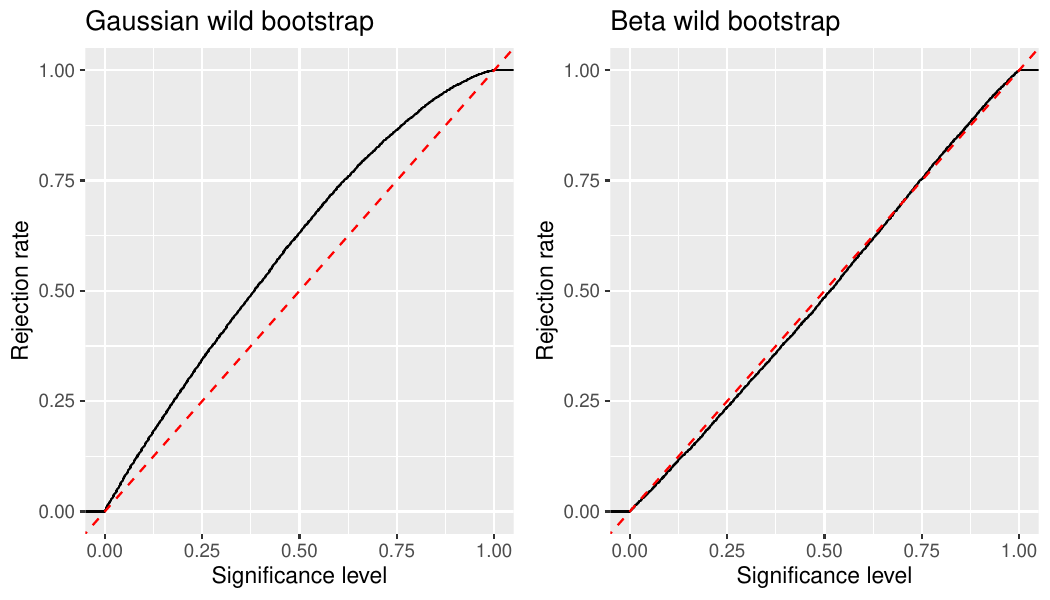}
\caption{\small
P--P plots for the rejection rate $P(T_n\geq \hat c_{1-\alpha})$ against the nominal significance level $\alpha$ when $n=200$ and $d=400$. 
The rejection rate is evaluated based on 20,000 Monte Carlo iterations. 
The critical value $\hat c_{1-\alpha}$ is computed by the Gaussian wild bootstrap for the left panel and the wild bootstrap with $w_1$ generated from the standardized beta distribution with parameters $\alpha,\beta$ given by \eqref{beta-param} with $\nu=0.1$ for the right panel, respectively. 
The number of bootstrap replications is 499. 
$X_1,\dots,X_n$ are generated from a Gaussian copula model with gamma marginals as in the simulation study of \cref{sec:simulate}. The parameter matrix is $R=(0.2^{|j-k|})_{1\leq j,k\leq d}$.}
\label{fig:pp}
\end{figure}

\citet{DeZh20} tried to explain this phenomenon by showing that convergence rates of third-moment matching bootstrap approximations have a better dimension dependence, i.e.~they achieve $a=5$ and $b=1/6$ in \eqref{cck-boot}. 
Later, however, it was shown in \cite{Ko21} that the same convergence rate is achieved by normal approximation, i.e.~\eqref{eq:cck13} holds with $a=5$ and $b=1/6$. 
\citet{CCKK22} have further improved the convergence rate to $a=5$ and $b=1/4$ for both normal and bootstrap approximations. 
Moreover, if we require $\Sigma$ to be invertible, it is possible to achieve the Berry--Esseen rate $n^{-1/2}$ up to a log factor even in the high-dimensional setting. 
Results in this direction first appeared in \citet{FaKo21}, where the following result is obtained when $X_1,\dots,X_n$ are log-concave:
\ben{\label{eq:opt-normal}
\sup_{A\in\mcl R}|P(S_n\in A)-P(Z\in A)|=O\bra{\sqrt{\frac{\log^3d}{n}}\log n}.
}
This rate is known to be optimal up to the $\log n$ factor in terms of both $n$ and $d$; see Proposition 1.1 in \cite{FaKo21} and also \cref{coro:lb-ga}. 
This type of result has been further investigated in \cite{Lo22,KuRi20,CCK23}. 
In particular, \citet{CCK23} have obtained the above nearly optimal rate when $\max_{i,j}|X_{ij}|$ is bounded. 
Here, the boundedness condition can be replaced with the sub-exponential condition by a simple truncation argument; see Appendix \ref{sec:cck23}. 
Further, in some situations, the rate $n^{-1/2}$ is (nearly) attainable even when $\Sigma$ is (asymptotically) degenerate; see \cite{LLM20,FaKo24,FKLZ23}. 
Nevertheless, all of these improvements are valid for normal approximation and thus do not explain the superior performance of third-moment matching bootstrap approximations. 

In this paper, we aim to explain the superior performance of bootstrap approximation in high dimensions using Edgeworth expansion and related techniques. 
Specifically, we develop an asymptotic expansion formula for $P(T_n\geq \hat c_{1-\alpha})$ in \cref{thm:coverage}. 
One main implication of this formula is that when $d\geq n$ and $\Sigma$ has identical diagonal entries and bounded eigenvalues, the wild bootstrap with third-moment matching is second-order accurate in the sense that
\[
P(T_n\geq \hat c_{1-\alpha})=\alpha+O\bra{\frac{\log^a(dn)}{n}}
\]
for some constant $a>0$ (see \cref{coro:coverage}). 
This shows that high dimensionality can be beneficial to the accuracy of the third-moment matching wild bootstrap, revealing the blessing of dimensionality in this context. 
By contrast, the Gaussian wild bootstrap does not benefit from the high dimensionality in this situation, which clearly explains the performance difference in \cref{fig:pp}. 
At the same time, our asymptotic expansion formula also shows that this is not always the case: The structure of $\Sigma$ strongly determines whether the above improvement occurs. In particular, when $\Sigma$ is an equicorrelation matrix, the third-moment matching wild bootstrap could be inferior to the Gaussian wild bootstrap; see \cref{coro:factor} and the simulation results in \cref{sec:simulate}. 
For this reason, we also develop an alternative approximation to the quantiles of $T_n$ that is second-order accurate regardless of the structure of $\Sigma$.  
%
A classical solution to this problem is bootstrapping the studentized version of $S_n$, but this is impossible in high dimensions since the sample covariance matrix $\wh\Sigma_n$ is degenerate whenever $d\geq n$. 
Instead, we achieve this by \citet{Be88}'s double bootstrap method, another classical technique to improve the approximation accuracy for non-pivotal statistics; see \cref{sec:db}. 

Despite the fact that Edgeworth expansion is a standard tool to analyze the performance of bootstrap in the classical setting (cf.~\cite{Ha92}), this approach has not been investigated for the above problem so far. 
One main reason is the lack of valid Edgeworth expansion for $T_n$ in the high-dimensional setting. 
While asymptotic expansions for statistics of high-dimensional data have been actively studied in multivariate statistics (see \cite{FuUl20} for an overview), results developed there seem inapplicable to our problem. 
One main reason is that $T_n$ may not have any limit distribution as $n,d\to\infty$ even after properly scaled. 
In fact, this is one of the motivations for the development of Chernozhukov--Chetverikov--Kato's theory. 
In view of \eqref{eq:opt-normal}, we are concerned with Edgeworth expansion of $P(S_n\in A)$ over $A\in\mcl R$. 
In the fixed-dimensional setting, a valid Edgeworth expansion of $P(S_n\in A)$ is conventionally derived from an asymptotic expansion of the characteristic function of $S_n$ via Fourier analysis (see e.g.~\cite{BhRa10}). 
Such an argument makes the dimension dependence of the error bound extremely complicated, so it is rarely given explicitly. 
One exceptional work is \citet{AHT98}, but their proof technique seems to inherently require the condition $d\ll n$ and is thus inapplicable to our setting.    
In fact, in the high-dimensional setting, the geometry of the set $A$ plays a key role to obtain an improved dimension dependence of error bounds, and it is unclear how to incorporate such information into Fourier analytic arguments. 
We also mention the recent work by \citet{Zh22} who establishes explicit, computable error bounds for $\sup_{A\in\mcl A}|P(S_n\in A)-P(S_n'\in A)|$ where $S_n'$ is another sum of independent random vectors and $\mcl A$ is either the class of balls or half-spaces. However, apart from other technical issues, these error bounds contain $1/\sqrt n$ terms and cannot be used for second-order analysis. 

To circumvent the above issue, we develop valid asymptotic expansions using Stein's method. 
The use of Stein's method for asymptotic expansion was initiated by \citet{Ba86} who derived an asymptotic expansion of $\E[h(S_n)]$ when $d=1$ and $h$ is a smooth function. 
To drop the smoothness of the test function $h$, the so-called Cram\'er's condition is usually assumed in the Fourier analytic approach, but it is unknown how to (directly) incorporate Cram\'er's condition into Stein's method based arguments. 
Instead, we assume that the underlying random vectors have Stein kernels, motivated by the recent development of this approach by \citet{FaLi22} in the univariate case (see Lemma 2.1 ibidem). 
Apart from the technical difficulty, Cram\'er's condition is violated whenever the underlying statistic has a singular covariance matrix. This is unsuitable for application to bootstrap statistics in high-dimensions, so Stein kernels will be a more appropriate tool for our problem (see \cref{rmk:cramer}).  

In addition to the above development, we also establish two new inequalities for high-dimensional normal distributions in order to overcome further difficulties that arise specifically when proving the validity of asymptotic expansions in high dimensions. 
The first inequality is an anti-concentration inequality on rectangles for the higher-order terms of the Edgeworth expansion. 
Such a bound is necessary when controlling the remainder term appearing in an application of the so-called smoothing inequality, which is a standard initial step in justifying multivariate Edgeworth expansions (see e.g.~\cite[page 91]{BhRa10}). 
Existing bounds, such as \cite[Corollary 3.2]{BhRa10}, involve constants that grow polynomially with the dimension and therefore do not work in ultra-high-dimensional regimes where the dimension is far larger than the sample size. 
To address this limitation, in \cref{lem:anti} we derive a new anti-concentration inequality for the higher-order terms of the Edgeworth expansion that depends only poly-logarithmically on the dimension. See also \cref{rmk:coro1}. 
The second inequality is necessary for establishing the validity of the Cornish--Fisher expansion used in justifying the asymptotic expansion formula for $P(T_n\geq\hat c_{1-\alpha})$. 
In the conventional derivation of the Cornish--Fisher expansion, one assumes that the distribution function of the statistic admits an absolutely continuous limit distribution function $G$. 
The Cornish--Fisher expansion is then obtained by transforming the Edgeworth expansion through the inverse of $G$ and applying a Taylor expansion (see \cite[Section 5.2]{FuUl20}). 
In this situation, since $G$ does not depend on $n$, the validity of the Cornish--Fisher expansion follows almost automatically from that of the Edgeworth expansion. 
However, as noted earlier, in high-dimensional settings the maximum statistic $T_n$ generally does not possess a non-degenerate limit distribution. 
Consequently, one must instead transform the Edgeworth expansion using the distribution function $F_Z$ of $Z^\vee$, which depends on $n$, thereby requiring a sufficiently sharp \emph{non-asymptotic} bound for the inverse of $F_Z$. 
To resolve this issue, we derive in \cref{gmax-quantile} a new isoperimetric-type inequality for Gaussian maxima that yields precisely the control needed; see also \cref{rmk:coverage}.

The remainder of the paper is organized as follows. 
\cref{sec:ae} collects the main results of this paper: 
In \cref{sec:2nd-main}, we establish valid Edgeworth expansions for $S_n$ and $S_n^*$ in high dimensions. 
Then, we develop an asymptotic expansion formula for $P(T_n\geq\hat c_{1-\alpha})$ and discuss its implications in \cref{sec:coverage}. 
Finally, we show in \cref{sec:db} that a double wild bootstrap method is second-order accurate. 
\cref{sec:simulate} contains a small simulation study. 
Most proofs are collected in Sections \ref{sec:proof-sec2} and \ref{sec:proof-2nd-order}. 
The appendix contains additional proofs and auxiliary results. 

\paragraph{Notation}

For a vector $x\in\mathbb R^d$, we set $|x|:=\sqrt{\sum_{j=1}^dx_j^2}$ and $x^\vee:=\max_{1\leq j\leq d}x_j$. 
We denote by $\bs1_d=(1,\dots,1)^\top\in\mathbb R^d$ the all-ones vector in $\mathbb R^d$. 
For $r\in\mathbb N$, $(\mathbb R^d)^{\otimes r}$ denotes the set of real-valued $d$-dimensional $r$-arrays $V=(V_{j_1,\dots,j_r})_{1\leq j_1,\dots,j_r\leq d}$. 
In particular, $(\mathbb R^d)^{\otimes1}=\mathbb R^d$ and $(\mathbb R^d)^{\otimes2}$ is the set of $d\times d$ matrices. 
For $U\in(\mathbb R^d)^{\otimes q}$ and $V\in(\mathbb R^d)^{\otimes r}$, we set $U\otimes V:=(U_{i_1,\dots,i_q}V_{j_1,\dots,j_r})_{1\leq i_1,\dots,i_q,j_1,\dots,j_r\leq d}\in(\mathbb R^{d})^{\otimes(q+r)}$. We write $U^{\otimes2}=U\otimes U$ for short. 
When $q=r$, we also set $\langle U,V\rangle:=\sum_{j_1,\dots,j_r=1}^dU_{j_1,\dots,j_r}V_{j_1,\dots,j_r}$. In particular, when $q=r=1$, $\langle U,V\rangle$ is the Euclidean inner product of $U$ and $V$, which we also write $U\cdot V$. 
In addition, we set $\|V\|_1:=\sum_{j_1,\dots,j_r=1}^d|V_{j_1,\dots,j_r}|$ and $\|V\|_\infty:=\max_{1\leq j_1,\dots,j_r\leq d}|V_{j_1,\dots,j_r}|.$ 
Further, for $x\in\mathbb R^d$, we define $x^{\otimes r}:=(x_{j_1}\cdots x_{j_r})_{1\leq j_1,\dots,j_r\leq d}\in(\mathbb R^d)^{\otimes r}$. 
Finally, we set
\[
\ol{X^r}:=\frac{1}{n}\sum_{i=1}^nX_i^{\otimes r}.
\]

Given an $r$-times differentiable function $h:\mathbb R^d\to\mathbb R$, we set $\nabla^rh(x):=(\partial_{j_1,\dots,j_r}h(x))_{1\leq j_1,\dots,j_r\leq d}\in(\mathbb R^d)^{\otimes r}$ for $x\in\mathbb R^d$, where $\partial_{j_1,\dots,j_r}=\frac{\partial^r}{\partial x_{j_1}\cdots\partial x_{j_r}}$. 
For $m\in\mathbb N\cup\{\infty\}$, $C^m_b(\mathbb R^d)$ denotes the set of bounded $C^m$ functions with bounded derivatives. 

For an invertible matrix $V$, $\phi_V$ denotes the density of $N(0,V)$. We write $\phi_d=\phi_{I_d}$ for short, where $I_d$ is the $d\times d$ identity matrix. 
Further, we write $\phi=\phi_1$ for short. $\Phi$ denotes the standard normal distribution function. 
Also, for a distribution function $F:\mathbb R\to[0,1]$, its (generalized) inverse is defined as
$
F^{-1}(p)=\inf\{t\in\mathbb R:F(t)\geq p\},~ p\in(0,1). 
$
We refer to Appendix A.1 in \cite{BoLe19} for useful properties of inverse distribution functions. 

For a random vector $\xi$ and $p\in(1,\infty)$, we set $\|\xi\|_p:=(\E[|\xi|^p])^{1/p}$ (recall that $|\cdot|$ is the Euclidean norm). 
Further, for $\alpha>0$, we set $\|\xi\|_{\psi_\alpha}:=\inf\{t>0:\E[\exp\{(|\xi|/t)^\alpha\}]\leq2\}$. 
For two random vectors $\xi$ and $\eta$, we write $\xi\overset{d}{=}\eta$ if $\xi$ has the same law as $\eta$.

We assume $d\geq3$ whenever we consider an expression containing $\log d$. 
A similar convention is applied to $n$.

\section{Main results}\label{sec:ae}

Throughout the paper, we assume that $S_n$ has an invertible covariance matrix $\Sigma$ and denote by $\sigma_*$ the square root of the minimum eigenvalue of $\Sigma$. 
We also set $\ol\sigma=\max_{j=1,\dots,d}\sqrt{\Sigma_{jj}}$ and $\ul\sigma=\min_{j=1,\dots,d}\sqrt{\Sigma_{jj}}$. 
Further, $w_1,\dots,w_n$ denote i.i.d.~random variables independent of $X_1,\dots,X_n$. They are used to define the wild bootstrap statistic $S_n^*$ in \eqref{eq:wb}. We always assume $\E[w_1]=0$ and $\E[w_1^2]=1$. 
Also, $P^*$ and $\E^*$ denote the conditional probability and expectation given the data $X_1,\dots,X_n$, respectively. 
For $p\in(0,1)$, $\hat c_p$ denotes the conditional $p$-quantile of $T_n^*$ given the data, i.e.~$\hat c_p:=\inf\{t\in\mathbb R:P^*(T_n^*\leq t)\geq p\}$.

\subsection{Valid Edgeworth expansion in high dimensions}\label{sec:2nd-main}

We begin by introducing appropriate (second-order) Edgeworth expansions for $S_n$ and $S_n^*$. 
The former is standard. That is, our Edgeworth expansion for $S_n$ is defined as
\ba{
p_n(z)&=\phi_\Sigma(z)-\frac{1}{6\sqrt n}\langle\E[\ol{X^{3}}],\nabla^3\phi_\Sigma(z)\rangle,\quad z\in\mathbb R^d.
}
The situation is different for the latter. 
In the low-dimensional setting, a natural bootstrap version of $p_n(z)$ would be obtained by replacing $\Sigma$ and $\E[\ol{X^{3}}]$ with their sample counterparts $\wh\Sigma_n$ and $\ol{X^{3}}$, respectively. 
However, when $d\geq n$, $\wh\Sigma_n$ is always degenerate, so $\phi_{\wh\Sigma_n}$ is not well-defined. 
For this reason, we consider an Edgeworth expansion ``around $\phi_\Sigma$''. Formally, our Edgeworth expansion for $S_n^*$ is defined as
\ba{
\hat{p}_{n,\gamma}(z)&=\phi_\Sigma(z)+\frac{1}{2}\langle\ol{X^2}-\Sigma,\nabla^2\phi_\Sigma(z)\rangle-\frac{\gamma}{6\sqrt n}\langle \ol{X^3},\nabla^3\phi_\Sigma(z)\rangle,\quad z\in\mathbb R^d,
}
where $\gamma\in\mathbb R$ is a constant determined by the construction of $S_n^*$. 
Typically, $\gamma=1$ for third-moment matching bootstrap methods. 

Next, we formally define the notion of Stein kernel. 
\begin{definition}[Stein kernel]\label{def:sk}
Let $\xi$ be a random vector in $\mathbb R^d$ with $\E[\|\xi\|_\infty]<\infty$. 
A measurable function $\tau:\mathbb R^d\to\mathbb R^d\otimes\mathbb R^d$ is called a \emph{Stein kernel} for (the law of) $\xi$ if $\E[\|\tau(\xi)\|_\infty]<\infty$ and 
\begin{equation}\label{eq:sk}
\E[(\xi-\E[\xi])\cdot\nabla h(\xi)]=\E[\langle\tau(\xi),\nabla^2h(\xi)\rangle]
\end{equation}
for any $h\in C^2_b(\mathbb R^d)$.
\end{definition}

The concept of Stein kernel was originally introduced in \citet[Lecture VI]{St86} for the univariate case. 
Although its partial multivariate extension dates back to \cite{CaPa92}, general treatments have started in more recent studies of \cite{NPS14,LNP15}, stemming from the discovery of a connection to Malliavin calculus due to \citet{NoPe09} (the so-called \emph{Malliavin--Stein method}). We refer to \cite{MRRS23} for the recent development. 

\begin{rmk}[Alternative definition]
Our definition of Stein kernel is taken from \cite{LNP15}. 
In the literature, the definition of Stein kernel often requires \eqref{eq:sk} to hold with $\nabla h$ on both sides replaced by any bounded $C^1$ function $h:\mathbb R^d\to\mathbb R^d$ with bounded derivatives. 
Except for the case $d=1$, this requirement is slightly stronger than ours. Nevertheless, as far as the author knows, this stronger requirement has so far been met by all known constructions of Stein kernels, including all the examples of this paper. 
\end{rmk}

The validity of Edgeworth expansion for $S_n$ is ensured if the summands have Stein kernels:
\begin{theorem}[Edgeworth expansion for $S_n$]\label{coro1}
Suppose that $X_i$ has a Stein kernel $\tau^X_i$ for every $i=1,\dots,n$. 
Suppose also that there exists a constant $b>0$ such that
\ben{\label{ass:psi1}
\max_{1\leq i\leq n}\max_{1\leq j\leq d}\|X_{ij}\|_{\psi_1}\leq b
}
and
\ben{\label{ass:sk}
\max_{1\leq i\leq n}\max_{1\leq j,k\leq d}\|\tau^X_{i,jk}(X_{i})\|_{\psi_{1/2}}\leq b^2.
}
Further, assume $\log^3d\leq n$. Then,
\ben{\label{eq:ae}
\sup_{A\in\mcl R}\abs{P(S_n\in A)-\int_{A}p_n(z)dz}
\leq C\frac{b^5}{\sigma_*^5}\frac{\log^3 d}{n}\log n.
}
\end{theorem}

\begin{rmk}
Here and below, we do not intend to optimize the dependence of bounds on $b$ and $\sigma_*$. 
\end{rmk}

\begin{rmk}[Proof strategy]\label{rmk:coro1}
One main tool to prove \eqref{eq:ae} and its bootstrap counterpart \eqref{eq:ae-boot} is an \emph{explicit} decomposition formula of $\E[h(S_n)]-\int_{\mathbb R^d}h(z)p_n(z)dz$ for a suitable test function $h:\mathbb R^d\to\mathbb R$ using Stein kernel (see \cref{lem:decomp}). 
In the univariate case, such a decomposition has recently been derived in \cite{FaLi22} for \emph{any} bounded $h$ (see Eqs.(2.10) and (2.11) ibidem). 
Although their argument can be extended more or less naturally to the multivariate case as long as $h$ is sufficiently smooth, it would generally fail if $h$ is just bounded. 
This is because when $d\geq2$, the Stein equation associated with $h$ is a second-order partial differential equation, so its solution is less smooth than in the univariate case (see also \cite[Remark 2.7]{Ga16} for a related discussion). 
For this reason, we prove the decomposition formula with $h$ replaced by a smoothed version $h_t(x)=\E[h(\sqrt{1-t}x+\sqrt tZ)]$ with $t\in(0,1]$. 
For $h=1_A$ with $A\in\mcl R$, this formula along with \cref{coro:aht} gives an error bound for $\E[h_t(W)]-\int_{\mathbb R^d}h_t(z)p_n(z)dz$ depending (poly-)logarithmically on $d$ and $t$ 

Therefore, in contrast to the univariate case, we need an additional argument to make the smoothing error negligible. 
Conventionally, this is accomplished by the so-called \emph{smoothing inequality} and a suitable estimate for $\sup_A\int_{(\partial A)^t}|p_n(z)|dz$ (called an \emph{anti-concentration inequality}), where the supremum is taken over $\mcl R$ in our case. 
See e.g.~\cite[Lemma 11.4]{BhRa10} for the former and \cite[Corollary 3.2]{BhRa10} for the latter, respectively. 
However, to the author's knowledge, all the existing bounds for $\sup_A\int_{(\partial A)^t}|p_n(z)|dz$ depend polynomially on $d$ and are inadequate for our purpose. 
To resolve this issue, we establish a novel bound for $\sup_A|\int_{(\partial A)^t}p_n(z)dz|$ which depends only poly-logarithmically on $d$ (see \cref{lem:anti}). 
This together with a modified smoothing inequality (\cref{l3}) shows that the above smoothing error is negligible.  
\end{rmk}

Below we give a few examples satisfying \eqref{ass:sk}.
\begin{example}[Log-concave distribution]
Log-concave distributions are prominent examples having Stein kernels with the desired properties. 
The following result follows from \cite[Proposition A.5]{AGB15}, \cite[Theorem 2.3 and Proposition 3.2]{Fa19} and \cref{moment-psi}. 
\begin{lemma}[Stein kernel of log-concave distribution]
Suppose that a random vector $X$ in $\mathbb R^d$ has a log-concave density. 
Then $X$ has a Stein kernel $\tau$ and
\[
\max_{1\leq j\leq d}\|X_{j}\|_{\psi_1}\leq C\max_{1\leq j\leq d}\sqrt{\E[X_{j}^2]},\qquad
\max_{1\leq j,k\leq d}\|\tau_{jk}(X)\|_{\psi_{1/2}}\leq C\max_{1\leq j\leq d}\E[X_{j}^2]
\]
for some universal constant $C>0$.
\end{lemma}
\end{example}

\begin{example}[Gaussian copula model]\label{ex:copula}
Let $R$ be a $d\times d$ positive semidefinite symmetric matrix with unit diagonals. 
Also, for every $j=1,\dots,d$, let $\mu_j$ be a non-degenerate probability distribution on $\mathbb R$ (i.e.~$\mu_j$ is not the unit mass at a point), and denote by $F_j$ its distribution function. 
The Gaussian copula model $U=(U_1,\dots,U_d)^\top$ with parameter matrix $R$ and marginal distributions $\mu_1,\dots,\mu_d$ is defined as $U_j=F_j^{-1}(\Phi(Z_j))$ for $j=1,\dots,d$, where $Z\sim N(0,R)$. 
%
\begin{proposition}[Stein kernel of Gaussian copula model]\label{prop:copula}
Suppose that there exists a constant $\kappa>0$ such that for every $j=1,\dots,d$ and any Borel set $B\subset\mathbb R$, 
\ben{\label{eq:cheeger}
\liminf_{h\downarrow0}\frac{\mu_j(B^h)-\mu_j(B)}{h}
\geq \kappa\min\{\mu_j(B),1-\mu_j(B)\},
}
where $B^h:=\{t\in\mathbb R:|t-s|< h\text{ for some }s\in B\}$. 
Then $X:=U-\E[U]$ has a Stein kernel $\tau$ and
\[
\max_{1\leq j\leq d}\|X_{j}\|_{\psi_1}\leq C\kappa^{-1},\qquad
\max_{1\leq j,k\leq d}\|\tau_{jk}(X)\|_{\psi_1}\leq C\kappa^{-2}
\]
for some universal constant $C>0$.
\end{proposition}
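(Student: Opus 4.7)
The plan is to construct an explicit Stein kernel for $X$ by combining a Gaussian covariance (Mehler) identity for the underlying copula variable $Z\sim N(0,R)$ with the marginal quantile transformation $G_j:=F_j^{-1}\circ\Phi$, and then to bound its entries using only the Cheeger inequality \eqref{eq:cheeger}.

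First, I would extract two quantitative inputs from \eqref{eq:cheeger}. Specializing it to $B=(-\infty,t]$ forces $\mu_j$ to admit a density $f_j$ satisfying $f_j(t)\geq\kappa\min\{F_j(t),1-F_j(t)\}$ almost everywhere, and solving this differential inequality for $1-F_j$ on $\{t\geq m_j\}$ (with $m_j$ the median) yields the exponential tail $\mu_j(|U_j-m_j|>t)\leq e^{-\kappa t}$, whence $\max_j\|X_j\|_{\psi_1}\leq C/\kappa$. With $U_j=G_j(Z_j)$ and $X_j=G_j(Z_j)-c_j$ for $c_j=\E[U_j]$, the density lower bound becomes $f_j(G_j(z))\geq\kappa\min\{\Phi(z),1-\Phi(z)\}$, which combined with $\phi(z)/\min\{\Phi(z),1-\Phi(z)\}\leq C(1+|z|)$ delivers the a.e.\ derivative bound $G_j'(z)=\phi(z)/f_j(G_j(z))\leq C(1+|z|)/\kappa$.

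Second, I would build $\tau$ via the Ornstein--Uhlenbeck covariance identity: with $\tilde Z$ an independent copy of $Z$ and $Z_t:=e^{-t}Z+\sqrt{1-e^{-2t}}\tilde Z$,
\[
\Cov(F(Z),H(Z))=\sum_{l,m}R_{lm}\int_0^\infty e^{-t}\E[\partial_l F(Z_t)\,\partial_m H(Z)]\,dt
\]
for sufficiently smooth $F,H$. Apply this with $F(z)=G_j(z_j)$ and $H(z)=\partial_j h(G(z)-c)$ for any $h\in C_b^2(\mathbb R^d)$. Since $\partial_l F=G_j'\delta_{lj}$ and $\partial_m H(z)=\partial_{jm}h(G(z)-c)G_m'(z_m)$, summing over $j$ produces
\[
\E[X\cdot\nabla h(X)]=\E\langle M,\nabla^2 h(X)\rangle,\qquad M_{jm}:=R_{jm}\int_0^\infty e^{-t}G_j'(Z_{t,j})G_m'(Z_m)\,dt.
\]
Defining $\tau_{jm}(x):=\E[M_{jm}\mid X=x]$ and using the tower property makes $\tau$ a Stein kernel of $X$. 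To bound it, $|R_{jm}|\leq 1$ (as $R\succeq 0$ with unit diagonals), $|Z_{t,j}|\leq|Z_j|+|\tilde Z_j|$, and $\int_0^\infty e^{-t}dt=1$ give
\[
|M_{jm}|\leq \frac{C}{\kappa^2}(1+|Z_m|)(1+|Z_j|+|\tilde Z_j|);
\]
each factor is sub-Gaussian with $\psi_2$-norm $O(1)$, so $\|M_{jm}\|_{\psi_1}\leq C/\kappa^2$, and since conditional expectation contracts $\|\cdot\|_{\psi_1}$ (Jensen applied to $u\mapsto e^{|u|/t}$), we conclude $\|\tau_{jm}(X)\|_{\psi_1}\leq C/\kappa^2$, matching the claimed bound.

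The main obstacle is the smoothing needed to invoke the covariance identity, since $G_j$ is in general only locally Lipschitz and need not be $C^1$. I would handle this by mollification: $G_j*\rho_\epsilon$ preserves the linear-growth derivative bound uniformly, and the Gaussian moments of $Z$ license a dominated convergence argument on both sides of the identity as $\epsilon\downarrow 0$. A related subtlety is that \eqref{eq:cheeger} does not a priori preclude a singular component of $\mu_j$; this is removed by first convolving $\mu_j$ with a narrow Gaussian, verifying the bounds in the resulting absolutely continuous setting, and letting the convolution width tend to zero while noting that both the effective Cheeger constant and the target $\psi_1$-bounds are stable in the limit.
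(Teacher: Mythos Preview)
Your approach is essentially the same as the paper's: the explicit Ornstein--Uhlenbeck integral $M_{jm}=R_{jm}\int_0^\infty e^{-t}G_j'(Z_{t,j})G_m'(Z_m)\,dt$ is precisely the Malliavin--Stein kernel $-DL^{-1}X_j\cdot DX_m$ that the paper obtains by applying \cref{lem:joint-sk} to $\psi_j(x)=F_j^{-1}(\Phi(r_j\cdot x))$ with $r_j$ the $j$-th row of $R^{1/2}$, and both routes rest on the same gradient bound $|G_j'(z)|\leq C(1+|z|)/\kappa$ extracted from the Cheeger hypothesis. One simplification worth noting: the paper avoids your density/convolution workaround by observing that \eqref{eq:cheeger} directly makes $F_j^{-1}$ absolutely continuous with $|(F_j^{-1})'(p)|\leq(\kappa\min\{p,1-p\})^{-1}$ (via \cite[Section~5.3]{BoLe19}), so no smoothing of $\mu_j$ is needed.
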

%
The maximal constant $\kappa$ satisfying \eqref{eq:cheeger} is called the \emph{Cheeger (isoperimetric) constant} of $\mu_j$. 
We refer to \cite[Theorem 1.3]{BoHo97} for a useful equivalent formulation in the univariate case. 
When $\mu_j$ is log-concave, then \eqref{eq:cheeger} is satisfied with $\kappa=1/\sqrt{3\Var[X_j]}$ by Proposition 4.1 in \cite{Bo99}. 
Since the gamma distribution with shape parameter $\geq1$ is log-concave, \cref{prop:copula} shows that the simulated model in the introduction satisfies the assumptions of \cref{coro1}. 
We can actually show that any gamma distribution has a positive Cheeger constant; see \cref{cheeger-gamma}. 
\end{example}

\begin{example}[Affine transformation]\label{sk-affine}
Let $\xi$ be a random vector in $\mathbb R^r$ with a Stein kernel $\tau$. Then, a straightforward computation shows that for any $a\in\mathbb R^d$ and $d\times r$ matrix $V$, $V\xi+a$ has a Stein kernel given by $x\mapsto\E[V\tau(\xi)V^\top\mid V\xi+a=x]$. 
\end{example}

\begin{example}[Multiplicative perturbation]\label{ex:regression}
Let $X$ be a random vector in $\mathbb R^d$ and $\epsilon$ a centered random variable independent of $X$ and having a Stein kernel $\tau$. 
Then $\epsilon X$ has a Stein kernel $x\mapsto\E[\tau(\epsilon) X^{\otimes2}\mid\epsilon X=x]$, provided that $\E[\|\epsilon X\|_\infty]+\E[\|\tau(\epsilon) X^{\otimes2}\|_\infty]<\infty$. 
This easily follows by applying \cref{sk-affine} conditional on $X$. 
This type of random vector arises in high-dimensional regression; see \cite[Section 4]{CCK13}. 
Note that univariate Stein kernels can be written down explicitly and their properties are well-investigated in the literature; see \cite{LRS17,Do25} for example. 
\end{example}

\begin{example}[General continuous distribution]
Let $X$ be a centered random vector in $\mathbb R^d$ with density $f$. 
If $\E[|X|^2]<\infty$ and the support of $f$ is a (possibly unbounded) rectangle, we can construct a Stein kernel for $X$ by modifying the construction given in \cite[Remark 4.13]{MRRS23} as follows (see also the proof of \cite[Theorem 4]{ABBN04}). 
For a vector $x\in\mathbb R^d$ and $j\in\{1,\dots,d\}$, we set $x_{1:j}:=(x_1,\dots,x_j)^\top$ and $x_{j:d}:=(x_j,\dots,x_d)^\top$. 
Also, define a function $g_j:\mathbb R^j\to\mathbb R$ as $g_j(u)=\int_{\mathbb R^{d-j}}f(u,v)dv$. 
Thanks to the fact that $f$ is supported by a rectangle, we can define a function $\tau_{1j}:\mathbb R^d\to\mathbb R$ such that
\ba{
\tau_{1j}(x)f(x)&=g_{j-1}(x_{1:(j-1)})\int_{\mathbb R^{j-1}\times[x_j,\infty)} u_1f(u,x_{(j+1):d})du\\
&\quad-\int_{x_j}^\infty g_j(x_{1:(j-1)},u)du\int_{\mathbb R^{j}}u_1f(u,x_{(j+1):d})du
}
a.e.~$x$ with respect to the Lebesgue measure, where we set $g_0\equiv1$ by convention. 
By construction, it is straightforward to verify that $\sum_{j=1}^d\partial_j(\tau_{1j}f)(x)=-x_1f(x)$ a.e.~$x$. 
Moreover, the assumptions $\E[|X|^2]<\infty$ and $\E[X]=0$ respectively imply $\E[|\tau_{1j}(X)|]<\infty$ and $\tau_{1j}(x)f(x)\to0$ as $|x_j|\to\infty$ a.e.~$(x_1,\dots,x_{j-1},x_{j+1},\dots,x_d)$. 
Hence, integration by parts gives $\sum_{j=1}^d\E[\tau_{1j}(X)\partial_jh(X)]=\E[X_1h(X)]$ for any $h\in C^1_b(\mathbb R^d)$. 
Rotating the indices, we can analogously construct functions $\tau_{ij}:\mathbb R^d\to\mathbb R$ $(i,j=1,\dots,d)$ satisfying $\E[|\tau_{ij}(X)|]<\infty$ and $\sum_{j=1}^d\E[\tau_{ij}(X)\partial_jh(X)]=\E[X_ih(X)]$ for any $h\in C^1_b(\mathbb R^d)$. 
Consequently, $(\tau_{ij})_{1\leq i,j\leq d}$ is a Stein kernel for $X$. 

Let us give a sufficient condition to bound $\|\tau_{ij}(X)\|_{\psi_{1/2}}$ (we will actually bound $\|\tau_{ij}(X)\|_{\psi_{1}}$). 
For a non-empty subset $I\subset\{1,\dots,d\}$, we denote by $f_I$ the density of $(X_i)_{i\in I}$. 
Suppose that there exists a constant $K\geq1$ such that for any disjoint partition $\{1,\dots,d\}=I_1\cup I_2\cup I_3$,
\ben{\label{eq:dens-ratio}
\frac{1}{K}\leq\frac{f_{I_1}((X_i)_{i\in I_1})f_{I_2}((X_i)_{i\in I_2})f_{I_3}((X_i)_{i\in I_3})}{f(X)}\leq K\quad\text{a.s.}
}
Also, suppose that the Cheeger constant of the law of $X_j$ is bounded below by $1/K$ for all $j\in\{1,\dots,d\}$ (cf.~\cref{ex:copula}). Then we have $|\tau_{ij}(X)|\leq2(K+|X_i|+\E[|X_i|])K^3$ a.s. 
Hence, there exists a universal constant $C$ such that $\|\tau_{ij}(X)\|_{\psi_1}\leq CK^4$. 

Condition \eqref{eq:dens-ratio} can be interpreted as a weak dependence condition between the components of $X$. Though this condition seems somewhat restrictive, it is satisfied e.g.~when the copula of $X$ belongs to the Farlie--Gumbel--Morgenstern family with appropriate parameter values (see \cite[Example 3.31]{Ne06}). 
\end{example}



\begin{rmk}[Relation to classical conditions]\label{rmk:cramer}
(a) In the univariate case, if a non-degenerate distribution has a Stein kernel, then it has a non-zero absolutely continuous part; see \cite[Theorem 1.1]{Do25}. In particular, it must satisfy Cram\'er's condition. 

\noindent(b) In the multivariate case, a non-degenerate distribution may not satisfy Cram\'er's condition even when it has a Stein kernel: A simple example is a multivariate normal distribution with singular covariance matrix. This example is indeed important in the high-dimensional setting when analyzing the Gaussian wild bootstrap. 

\noindent(c) If a probability distribution has a Stein kernel, its support is convex. This follows from the fact that any one-dimensional marginal has a Stein kernel and thus is supported by an interval according to \cite[Theorem 1.1]{Do25}. 
\end{rmk}

As a first application, we derive the exact convergence rate of the coverage error of (infeasible) Gaussian approximation in the spherical case. This will serve as a benchmark. 
Recall that $c_p^G$ denotes the $p$-quantile of $Z^\vee$. 
\begin{corollary}[Coverage error of Gaussian approximation in the spherical case]\label{coro:lb-ga}
Under the assumptions of \cref{coro1}, suppose additionally that $\Sigma=\sigma I_d$ for some $\sigma>0$. 
Suppose also that $d\to\infty$, $\log^3d=o(n/\log^2n)$ and $b/\sigma=O(1)$ as $n\to\infty$. 
Further, suppose that $(nd)^{-1}\sum_{i=1}^n\sum_{j=1}^d\E[(X_{ij}/\sigma)^3]\to\gamma_X$ as $n\to\infty$. Then, for any $\alpha\in(0,1)$,
\ba{
\sqrt{\frac{n}{\log^3d}}\bra{P(T_n\geq c^G_{1-\alpha})-\alpha}\to-\frac{\sqrt 2}{3}\gamma_X\log(1-\alpha)\quad(n\to\infty).
}
\end{corollary} 

We turn to Edgeworth expansion for $S_n^*$. Its validity is ensured if the weight variables have Stein kernels:
\begin{theorem}[Edgeworth expansion for $S_n^*$]\label{prop:boot}
Assume \eqref{ass:psi1}. 
Also, suppose that $w_1$ satisfies either of the following conditions:
\begin{enumerate}[label=(\roman*)]

\item\label{ass:weight} $w_1$ has a Stein kernel $\tau^*$ and there exists a constant $b_w\geq1$ such that $|w_1|\leq b_w$ and $|\tau^*(w_1)|\leq b_w^2$. 

\item $w_1\sim N(0,1)$. We set $b_w=1$ in this case. 

\end{enumerate}
Further, assume $\log^3d\leq n$. 
Set $\gamma:=\E[w_1^3]$. Then we have
\ben{\label{eq:ae-boot}
\sup_{A\in\mcl R}\abs{P^*(S_n^*\in A)-\int_{A}\hat{p}_{n,\gamma}(z)dz}
\leq C\frac{b_w^5b^5}{\sigma_*^5}\frac{\log ^3(dn)}{n}\log n
}
with probability at least $1-1/n$.
\end{theorem}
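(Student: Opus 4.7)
The plan is to condition on the data $X_1,\dots,X_n$ and apply a variant of \cref{coro1} to the conditionally independent sum $S_n^*=\sum_{i=1}^n Y_i$, where $Y_i:=w_i(X_i-\bar X)/\sqrt n$. Under assumption (i), the multiplicative perturbation construction (\cref{ex:regression}) equips each $Y_i$ with a conditional Stein kernel $\tau_i^Y(y)=\tau^*(w_i)(X_i-\bar X)(X_i-\bar X)^\top/n$; combined with the boundedness of $w_1$ and $\tau^*(w_1)$ and with~\eqref{ass:psi1}, this yields on a high-probability event the sub-exponential control required to invoke the machinery of \cref{coro1} conditionally. Under assumption (ii), the conditional law of $S_n^*$ is exactly $N(0,\hat\Sigma_n^*)$ with $\hat\Sigma_n^*:=\frac1n\sum_i(X_i-\bar X)(X_i-\bar X)^\top$ and $\gamma=0$, so the statement reduces to a Taylor expansion of $\int_A\phi_{\hat\Sigma_n^*}(z)\,dz$ around $\phi_\Sigma$.

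The central technical point is that $\hat\Sigma_n^*$ is singular whenever $d\geq n$, so \cref{coro1} cannot be applied with the natural target $\phi_{\hat\Sigma_n^*}$. My adaptation is to re-run the Stein's-method proof of \cref{coro1} using the \emph{reference} density $\phi_\Sigma$ throughout, absorbing the covariance mismatch into an explicit correction term. The leading-order Stein identity then produces the extra contribution $\langle \Sigma-\hat\Sigma_n^*,\E^*[\nabla^2 f(S_n^*)]\rangle$ (since $\sum_i\E^*[\tau_i^Y(Y_i)]=\hat\Sigma_n^*$ under both hypotheses), which via the Gaussian heat-equation identity $\partial_s\phi_{V+sB}|_{s=0}=\tfrac12\langle B,\nabla^2\phi_V\rangle$ (for symmetric $B$) contributes exactly $\tfrac12\langle\hat\Sigma_n^*-\Sigma,\int_A\nabla^2\phi_\Sigma(z)\,dz\rangle$. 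Since $\hat\Sigma_n^*-\ol{X^2}=-\bar X\bar X^\top$ is negligible in $\|\cdot\|_\infty$ on the event we work on, this matches the covariance-correction piece of $\hat p_{n,\gamma}$. Similarly, the conditional third cumulants $\sum_i\E^*[Y_i^{\otimes 3}]=\gamma\,\ol{(X-\bar X)^3}/\sqrt n$ produce the third-moment term $-\tfrac{\gamma}{6\sqrt n}\langle\ol{X^3},\nabla^3\phi_\Sigma\rangle$ of $\hat p_{n,\gamma}$, again up to lower-order $\bar X$ corrections.

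To make the above rigorous, I would build a high-probability event $\Omega_n$ with $P(\Omega_n)\geq 1-1/n$ on which sub-exponential Bernstein-type (tensor) concentration inequalities hold for $\ol{X^2}-\Sigma$, $\ol{X^3}-\E[\ol{X^3}]$, $\bar X$ and the higher-order quantities that enter the Stein remainder; such bounds are available directly from~\eqref{ass:psi1} using the same tools invoked in \cref{thm:boot}. On $\Omega_n$ the adapted version of \cref{coro1} then yields the conditional bound with the target rate. The hardest part will be executing the covariance-mismatch adaptation of \cref{coro1}: one must show that the remainder terms generated by using $\phi_\Sigma$ (instead of $\phi_{\hat\Sigma_n^*}$) as the reference are controlled solely by $\|\hat\Sigma_n^*-\Sigma\|_\infty$ on $\Omega_n$, without any appeal to invertibility of $\hat\Sigma_n^*$, while keeping the dependence on $\sigma_*^{-1}$ at the fifth power already present in \cref{coro1}; bookkeeping the $\bar X$-type lower-order corrections so that they contribute only at the rate $b_w^5 b^5 \sigma_*^{-5}\log^3(dn)/n\cdot\log n$ is a secondary but delicate issue.
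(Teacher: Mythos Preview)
Your plan is essentially the paper's proof. The paper conditions on the data, applies the abstract Stein-kernel Edgeworth bound (\cref{thm:main}) with $\xi_i=w_i(X_i-\bar X)/\sqrt n$ and conditional Stein kernel $\tau_i(\xi_i)=\tau^*(w_i)(X_i-\bar X)^{\otimes2}/n$ (via \cref{sk-affine}), builds a high-probability event $\mcl E_n$ from sub-exponential concentration of the empirical moments of $X_i$, bounds each term of \eqref{eq:main} on $\mcl E_n$, and finally absorbs the $\bar X$ corrections to pass from the ``tilde'' expansion to $\hat p_{n,\gamma}$; for Case~(ii) it invokes the Gaussian comparison \cref{thm:gcomp} in place of \cref{thm:main}.

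One clarification worth noting: the ``covariance-mismatch adaptation of \cref{coro1}'' you flag as the hardest step is already packaged into the abstract \cref{thm:main}, whose target $p_W$ in \eqref{def:pw} is built around $\phi_\Sigma$ with an explicit $\tfrac12\langle\Sigma_W-\Sigma,\nabla^2\phi_\Sigma\rangle$ correction and whose error bound involves only $\|\Sigma_W-\Sigma\|_\infty$ and moments of $\xi_i,\tau_i(\xi_i)$, never invertibility of $\Sigma_W$. So rather than re-running the proof of \cref{coro1}, you can apply \cref{thm:main} directly (with $\Sigma_W=\wh\Sigma_n$) and focus effort on bounding the right-hand side of \eqref{eq:main} on $\mcl E_n$; the paper's main technical work is in the truncation-based estimate of $\E^*\|\bar T^*\|_\infty^2$ (splitting according to $\{|X_{ij}|\vee|X_{ik}|\gtrless 2b\log n\}$), which your proposal does not yet make explicit.
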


We can construct a random variable $w_1$ satisfying Condition \ref{ass:weight} and $\E[w_1^3]=1$ as follows: Let $\eta$ be a random variable following the beta distribution with parameters $\alpha,\beta>0$. 
Then $w:=(\eta-\E[\eta])/\sqrt{\Var[\eta]}$ satisfies \ref{ass:weight} by \cite[Example 4.9]{LRS17} and \cref{sk-affine}. Also, we have
\[
\E[w_1^3]=\frac{2(\beta-\alpha)\sqrt{\alpha+\beta+1}}{(\alpha+\beta+2)\sqrt{\alpha\beta}}
=\frac{2(1-2\mu)\sqrt{1+\nu}}{(2+\nu)\sqrt{\mu(1-\mu)}},
\]
where $\mu=\alpha/(\alpha+\beta)$ and $\nu=\alpha+\beta$. 
From this expression, given a positive constant $\nu>0$, we have $\E[w_1^3]=1$ if we set
\ben{\label{beta-param}
\alpha=\nu\frac{c-(2+\nu)\sqrt{c}}{2c},\quad
\beta=\nu\frac{c+(2+\nu)\sqrt{c}}{2c}\quad
\text{with }c=\nu^2+20\nu+20.
}

A drawback of \cref{prop:boot} is that two-point distributions do not admit Stein kernels  (cf.~\cite[Theorem 1]{Do25}). 
In particular, it does not cover Mammen's wild bootstrap (cf.~Eq.\eqref{eq:mammen}) examined in the simulation study of \cite{DeZh20}. 
However, the above standardized beta distribution becomes closer to Mammen's two-point distribution as $\nu$ is closer to 0, and their numerical difference virtually vanishes. 
Our simulation study shows that the beta wild bootstrap with $\nu=0.1$ performs very similarly to Mammen's one. 

\subsection{Asymptotic expansion of coverage probability}\label{sec:coverage}

Next, we develop an asymptotic expansion of the bootstrap coverage probability $P(T_n\geq\hat c_{1-\alpha})$ and investigate its implications. 
Before proceeding, we introduce some notation used throughout this section. 
For $t\in\mathbb R$, we set $A(t):=(-\infty,t]^d$. 
We denote by $f_\Sigma$ the density of $Z^\vee$, where $Z\sim N(0,\Sigma)$. 
Note that $f_\Sigma$ is a $C^\infty$ function since $\Sigma$ is invertible. 
Also, we set $\varsigma_d:=\sqrt{\Var[Z^\vee]\log d}$. 
By \cref{gmax-var}, $\varsigma_d$ is bounded from below by a positive constant depending only on $\ol\sigma$ and $\ul\sigma$. 
By \cref{gmax-mom}, $\varsigma_d$ is generally bounded by $\ol\sigma\sqrt{\log d}$, but we often have $\varsigma_d=O(1)$ as $d\to\infty$, known as a \emph{superconcentration} phenomenon (cf.~\cite{Ch14}). 
For example, this is the case when $\Sigma_{jj}=1$ for all $j$ and there exists a constant $C>0$ such that $\Sigma_{jk}\leq C/\log(2+|j-k|)$ for all $j,k$. This follows from \cite[Theorem 9.12]{Ch14}. 


%
\begin{theorem}[Asymptotic expansion of bootstrap coverage probability]\label{thm:coverage}
Under the assumptions of Theorems \ref{coro1} and \ref{prop:boot}, let $\lambda>0$ be a constant such that $b/\sigma_*\leq\lambda$. 
Then, for any $\eps\in(0,1/2)$, there exist positive constants $c$ and $C$ depending only on $\lambda,\eps$ and $b_w$ such that if
\ben{\label{cf-boot-ass}
\frac{\varsigma_d^3}{\sigma_*^3}\frac{\log^3(dn)}{n}\log n\leq c,
}
then
\ba{
\sup_{\eps<\alpha<1-\eps}\abs{P(T_n\geq \hat c_{1-\alpha})-\bra{\alpha-(1-\gamma)Q_n(c_{1-\alpha}^G)-\E[R_n(\alpha)]}}
\leq C\frac{\varsigma_d^3}{\sigma_*^3}\frac{\log ^3(dn)}{n}\log n,
}
where 
\[
Q_n(t):=\int_{A(t)}\{p_n(z)-\phi_\Sigma(z)\}dz=-\frac{1}{6\sqrt n}\langle\E[\ol{X^{3}}],\int_{A(t)}\nabla^3\phi_\Sigma(z)dz\rangle,\qquad t\in\mathbb R,
\]
and
\[
R_n(\alpha):=\frac{1}{\sqrt n}\frac{\abra{\ol{X^3}\otimes\bs1_d,\Psi_{\alpha}^{\otimes2}}}{2f_{\Sigma}(c_{1-\alpha}^G)},\qquad
\Psi_\alpha:=\int_{A(c_{1-\alpha}^G)}\nabla^2\phi_\Sigma(z)dz.
\]
\end{theorem}

\begin{rmk}[Univariate case]\label{rmk:uni-ae}
When $d=1$ and $\Sigma=1$, the above asymptotic expansion formula reduces to 
\[
\begin{cases}
\alpha-\frac{\E[\ol{X^3}]}{6\sqrt n}\{2(c_{1-\alpha}^G)^2+1\}\phi(c_{1-\alpha}^G) &\text{if }\gamma=0,\\
\alpha-\frac{\E[\ol{X^3}]}{2\sqrt n}(c_{1-\alpha}^G)^2\phi(c_{1-\alpha}^G) &\text{if }\gamma=1.
\end{cases}
\] 
These recover the asymptotic expansion formulae for normal and empirical bootstrap coverage probabilities, respectively; see e.g.~\cite[Eqs.(2)--(3)]{LiSi87} (note that $c_{1-\alpha}^G=\Phi^{-1}(1-\alpha)=-\Phi^{-1}(\alpha)$ when $d=1$). 
\end{rmk}

\begin{rmk}[Proof strategy]\label{rmk:coverage}
A basic strategy to prove \cref{thm:coverage} is the same as the classical one (cf.~Section 3.5.2 in \cite{Ha92}): We replace the ``complicated'' random variable $\hat c_{1-\alpha}$ by its asymptotic expansion (known as \emph{Cornish--Fisher expansion}). 
After this replacement, it turns out that the proof amounts to Edgeworth expansion for the maximum component of a sum of independent random vectors with (approximate) Stein kernels, which can be handled by a similar strategy to the proof of \cref{coro1}. 

A major difficulty specific to our setting arises when we derive an error bound for Cornish--Fisher expansion. This is due to the fact that the ``centering'' distribution function $F_Z(t)=P(Z^\vee\leq t)$ for our Edgeworth expansions depends on $d$ and may not converge to a non-degenerate distribution function as $d\to\infty$. 
In particular, bounding derivatives of $F_Z^{-1}$ is non-trivial in our setting. 
To resolve this issue, we develop a novel isoperimetric-type inequality for $Z^\vee$ suitable for our purpose (see \cref{gmax-quantile}). 
We remark that a Gaussian-type isoperimetric inequality for $Z^\vee$ can be derived from Gaussian isoperimetry, but the dimension dependence is less sharp than ours; see \cref{rmk:gmax-quantile} for details. 
\end{rmk}

Now we discuss implications of \cref{thm:coverage}. 
An easy consequence is that any wild bootstrap approximation is second-order accurate when $\E[\ol{X^3}]=0$ as long as $w_1$ satisfies the assumptions in \cref{prop:boot}. 
However, simulation results suggest that the choice of $w_1$ would affect the performance even when $\E[\ol{X^3}]=0$, so there is still room to investigate. 

The following corollary gives a more interesting implication:
\begin{corollary}\label{coro:coverage}
Under the assumptions of \cref{thm:coverage}, suppose additionally that $\E[w_1^3]=1$, $\eps\geq 2e^{-d/2}$, $\ul\sigma=\ol\sigma=:\sigma$ and the maximum eigenvalue of $\Sigma$ is bounded by $K\sigma^2$ with some constant $K>0$. Then there exist a constant $C>0$ depending only on $\lambda,\eps,K$ and $b_w$ such that
\ben{\label{eq:coro:coverage}
\sup_{\eps<\alpha<1-\eps}\abs{P(T_n\geq \hat c_{1-\alpha})-\alpha}
\leq C\bra{\frac{\varsigma_d^3}{\sigma_*^3}\frac{\log ^3(dn)}{n}\log n+\frac{\varsigma_d}{\sigma}\sqrt{\frac{\log^3d}{dn}}}.
}
\end{corollary}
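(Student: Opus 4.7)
The plan is to apply \cref{thm:coverage} and then sharpen the estimate of the remainder $\E[R_n(\alpha)]$ under the additional hypotheses. Since $\gamma:=\E[w_1^3]=1$, the $(1-\gamma)Q_n(c_{1-\alpha}^G)$ term in \cref{thm:coverage} vanishes, and one obtains
\[
\sup_{\eps<\alpha<1-\eps}\abs{P(T_n\geq\hat c_{1-\alpha})-\alpha+\E[R_n(\alpha)]}\leq C\frac{\varsigma_d^3}{\sigma_*^3}\frac{\log^3(dn)}{n}\log n,
\]
which is exactly the first term on the right-hand side of \eqref{eq:coro:coverage}. The corollary thus reduces to proving
\[
\sup_{\eps<\alpha<1-\eps}|\E[R_n(\alpha)]|\leq C\frac{\varsigma_d}{\sigma}\sqrt{\frac{\log^3 d}{dn}}.
\]

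The key algebraic step is to convert the tensor contraction inside $R_n(\alpha)$ into a sum of covariances by pulling out the $\bs 1_d$ factor first. Setting $\psi_\alpha:=\Psi_\alpha\bs 1_d\in\mathbb R^d$, a direct expansion gives
\[
\abra{\ol{X^3}\otimes\bs 1_d,\Psi_\alpha^{\otimes 2}}=\sum_{j,k,l}(\ol{X^3})_{jkl}(\Psi_\alpha)_{jk}(\psi_\alpha)_l=\frac{1}{n}\sum_{i=1}^n(X_i^\top\Psi_\alpha X_i)(X_i\cdot\psi_\alpha).
\]
Taking expectations and using $\E[X_i\cdot\psi_\alpha]=0$ (centering of the $X_i$), each summand equals $\Cov(X_i^\top\Psi_\alpha X_i,X_i\cdot\psi_\alpha)$. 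Applying Cauchy--Schwarz once inside each $i$ and once across $i$, and combining a sub-exponential concentration bound $\Var(X_i^\top\Psi_\alpha X_i)\lesssim b^4\|\Psi_\alpha\|_F^2$ (Frobenius norm) with the eigenvalue bound $\sum_i\Var(X_i\cdot\psi_\alpha)=n\psi_\alpha^\top\Sigma\psi_\alpha\leq nK\sigma^2|\psi_\alpha|^2$, we obtain
\[
\abs{\E\abra{\ol{X^3}\otimes\bs 1_d,\Psi_\alpha^{\otimes 2}}}\lesssim b^2\sigma\sqrt{K}\,\|\Psi_\alpha\|_F\,|\psi_\alpha|.
\]

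Combined with the standard quantile-density lower bound $f_\Sigma(c_{1-\alpha}^G)\gtrsim\sqrt{\log d}/\varsigma_d$, valid for $\alpha\in(\eps,1-\eps)$ when $\eps\geq 2e^{-d/2}$, the proof reduces to establishing
\[
\|\Psi_\alpha\|_F\lesssim\frac{\log d}{\sigma^2\sqrt d}\qquad\text{and}\qquad|\psi_\alpha|\lesssim\frac{\log d}{\sigma^2\sqrt d}
\]
uniformly in $\alpha$, up to constants depending only on $K$ and polylogarithmic factors. This is the main obstacle. The trivial upper bounds on these quantities are of order $\sqrt d/\sigma^2$, and the required $1/d$ improvement stems from a cancellation inside the identity
\[
\Psi_\alpha=\alpha\Sigma^{-1}-\Sigma^{-1}\E\bigl[ZZ^\top\bs 1_{\{Z^\vee>c_{1-\alpha}^G\}}\bigr]\Sigma^{-1},
\]
obtained from $\nabla^2\phi_\Sigma(z)=(\Sigma^{-1}zz^\top\Sigma^{-1}-\Sigma^{-1})\phi_\Sigma(z)$ together with $\E[ZZ^\top]=\Sigma$. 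Under the identical-diagonal and bounded-eigenvalue hypotheses the tail event $\{Z^\vee>c_{1-\alpha}^G\}$ can be unfolded by a union bound over $\{Z_l>c_{1-\alpha}^G\}$; the Gaussian conditioning formula for $Z\mid Z_l$ together with Mills-ratio tail estimates then shows that $\Sigma^{-1}\E[ZZ^\top\bs 1_{\{Z^\vee>c_{1-\alpha}^G\}}]\Sigma^{-1}$ approximates $\alpha\Sigma^{-1}$ tightly enough in Frobenius norm. A parallel analysis for $\psi_\alpha$, anchored by the identity $\bs 1_d^\top\psi_\alpha=f_\Sigma'(c_{1-\alpha}^G)$ in the $\bs 1_d$-direction and controlled orthogonally via eigenvalue boundedness, yields the bound on $|\psi_\alpha|$. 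Substituting everything into the covariance bound and using $b\leq\lambda\sigma$ produces the target rate.
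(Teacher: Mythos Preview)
Your reduction via \cref{thm:coverage} and the Cauchy--Schwarz splitting of $\E\abra{\ol{X^3}\otimes\bs1_d,\Psi_\alpha^{\otimes2}}$ into a quadratic-form factor and a linear factor $X_i\cdot\psi_\alpha$ (with $\psi_\alpha=\Psi_\alpha\bs1_d$) match the paper exactly. The second factor is handled the same way: $n^{-1}\sum_i\Var(X_i\cdot\psi_\alpha)=\psi_\alpha^\top\Sigma\psi_\alpha\le K\sigma^2|\psi_\alpha|^2$.

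There are, however, two genuine gaps.

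\textbf{(i) The Frobenius-norm variance bound is not available.} The claim $\Var(X_i^\top\Psi_\alpha X_i)\lesssim b^4\|\Psi_\alpha\|_F^2$ would require a Hanson--Wright type identity, which needs either coordinate independence or sub-Gaussian tails; here the $X_{ij}$ are only sub-exponential and dependent. Under the paper's assumptions one can only control $\E[(X_i^\top\Psi_\alpha X_i)^2]=\abra{\E[X_i^{\otimes4}],\Psi_\alpha^{\otimes2}}\le b^4\|\Psi_\alpha\|_1^2$, and then invoke \cref{lem:aht} to get $\|\Psi_\alpha\|_1\lesssim\sigma_*^{-2}\log d$. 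This is what the paper does. The upshot is that you do \emph{not} need $\|\Psi_\alpha\|_F=O(d^{-1/2})$: the entire $d^{-1/2}$ gain must (and can) come from $|\psi_\alpha|$ alone, i.e.\ it suffices to prove $\sum_{j}(\sum_l\Psi_{\alpha,jl})^2\lesssim d^{-1}\log^2d$.

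\textbf{(ii) The argument for $|\psi_\alpha|=O(d^{-1/2})$ is not the one you sketch.} Controlling the $\bs1_d$-projection via $\bs1_d^\top\psi_\alpha=f_\Sigma'(c_{1-\alpha}^G)$ is fine, but ``eigenvalue boundedness'' gives no leverage on the component orthogonal to $\bs1_d$; there is no mechanism by which $\|\Sigma\|_{\mathrm{op}}\le K\sigma^2$ alone forces that component to be small. The paper instead bounds $\sum_l|\Psi_{\alpha,jl}|\lesssim d^{-1}\log d$ \emph{entrywise} for every $j$. The crucial input is an extreme-value estimate for the Gaussian quantile $t:=c_{1-\alpha}^G$: one compares $P(Z^\vee\le u)$ with $\Phi(u)^d$ using a Normal Comparison Lemma bound of the form $|P(Z^\vee\le u)-\Phi(u)^d|\lesssim\sum_{j<k}|\Sigma_{jk}|e^{-u^2/(1+|\Sigma_{jk}|)}$, and then exploits $\sum_k\Sigma_{jk}^2\le K\sigma^4$ (the eigenvalue bound) together with the hypothesis $\eps\ge 2e^{-d/2}$ to conclude $e^{-t^2/2}\lesssim d^{-1}\sqrt{\log d}$. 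With this in hand: the diagonal $\Psi_{\alpha,jj}$ is computed explicitly by conditioning on $Z_j=t$ and shown to be $O(d^{-1}\log d)$; each off-diagonal $|\Psi_{\alpha,jl}|$ is bounded by the bivariate density $\phi_{\Sigma[j,l]}(t,t)\lesssim e^{-t^2/(1+\Sigma_{jl})}$, and a threshold argument on $|\Sigma_{jl}|\gtrless t^{-2}$ (again using $\sum_k\Sigma_{jk}^2\le K\sigma^4$ to bound the number of ``large-correlation'' indices) gives $\sum_{l\neq j}|\Psi_{\alpha,jl}|\lesssim d^{-1}\log d$. Your global identity $\Psi_\alpha=\alpha\Sigma^{-1}-\Sigma^{-1}\E[ZZ^\top 1_{\{Z^\vee>t\}}]\Sigma^{-1}$ and a union bound over $\{Z_l>t\}$ do not by themselves produce this cancellation; the entrywise route is what actually closes the argument.
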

%
Observe that the second term on the right hand side of \eqref{eq:coro:coverage} is divided by $\sqrt d$. Hence, \cref{coro:coverage} implies that the third-moment matching wild bootstrap is second-order accurate if $d\geq n$ and $\Sigma$ has identical diagonal entries and bounded eigenvalues with respect to $d$. 
This seems to be a new result on the blessing of dimensionality, although excessively high dimensionality is harmful due to the first term of the bound. 
We also remark that such an improvement generally does not occur without third-moment matching:
\begin{corollary}
\label{coro:lb-gwb}
Under the assumptions of \cref{coro:lb-ga}, if $w_i$ satisfies the conditions in \cref{prop:boot} with $b_w=O(1)$, then for any $\alpha\in(0,1)$,
\ba{
\sqrt{\frac{n}{\log^3d}}\bra{P(T_n\geq \hat c_{1-\alpha})-\alpha}\to-(1-\E[w_1^3])\frac{\sqrt 2}{3}\gamma_X\log(1-\alpha)\quad(n\to\infty).
}
\end{corollary} 

\begin{rmk}
The additional assumptions on $\Sigma$ in \cref{coro:coverage} are necessary to ensure that $T_n$ is truly high-dimensional:
\begin{enumerate}[label=(\alph*)]

\item The assumption $\ul\sigma = \ol\sigma$ ensures that no single component solely affects variations of $T_n$. 
Formally, this assumption is mainly used to prove $P(Z_j>c_{1-\alpha}^G)=O(d^{-1}\sqrt{\log d})$ as $d\to\infty$ for every $j$ (corresponding to the second inequality of \eqref{quantile-bounds}). 

\item The boundedness of the maximum eigenvalue of $\Sigma$ rules out situations where the components of $X_i$ have a common factor. In such a situation, the common factor solely affects variations of $T_n$. In fact, we obtain the following corollary when this is the case:

\end{enumerate} 
\end{rmk}

\begin{corollary}\label{coro:factor}
Under the assumptions of \cref{thm:coverage}, suppose additionally that $X_1,\dots,X_n$ are i.i.d.~and each has the same law as $\sqrt{\rho}U\bs1_d+\sqrt{1-\rho}V$, where $\rho\in(0,1)$ is a constant, $U$ is a random variable with mean 0 and variance 1, and $V$ is a random vector in $\mathbb R^d$ independent of $U$ with $\E[V]=0$ and $\Cov[V]=I_d$. 
Moreover, suppose that $\rho$ and $U$ do not depend on $d,n$.  
Then, for any $\alpha\in(0,1)$,
\besn{\label{eq:factor}
P(T_n\geq \hat c_{1-\alpha})-\alpha
&=\frac{\E[U^3]}{\sqrt n}\bra{\frac{\gamma-1}{6}(z_\alpha^2-1)+\frac{z_\alpha^2}{2}}\phi(z_\alpha)\\
&\quad+(1-\rho)^{\frac{3}{2}}\Upsilon_{n,\gamma}(\alpha)
+o(n^{-1/2})
}
as $d,n\to\infty$, provided that $b=O(1)$ and $\log^9d=o(n/\log^2n)$. 
Here, we set $z_\alpha:=\Phi^{-1}(\alpha)$ and
\[
\Upsilon_{n,\gamma}(\alpha):=
-\frac{1-\gamma}{6\sqrt n}\langle\E[V^{\otimes3}],\int_{A(c^G_{1-\alpha})}\nabla^3\phi_\Sigma(z)dz\rangle
+\frac{1}{2\sqrt n}\frac{\abra{\E[V^{\otimes3}]\otimes\bs1_d,\Psi_{\alpha}^{\otimes2}}}{f_{\Sigma}(c_{1-\alpha}^G)}.
\]
Moreover, if $\gamma=1$, then $\Upsilon_{n,\gamma}(\alpha)=o(n^{-1/2})$ as $d,n\to\infty$. 
\end{corollary}

If $\E[V^{\otimes3}]=0$ in \cref{coro:factor}, the asymptotic expansion formula \eqref{eq:factor} has the same form as in the univariate case (cf.~\cref{rmk:uni-ae}). 
Therefore, in such a situation the third-moment matching wild bootstrap underperforms the Gaussian wild bootstrap if $|z_\alpha|>1$ according to \cite[Section 3]{LiSi87}. 
We expect that a similar phenomenon would occur when $\rho$ is close to 1; see \cref{sec:simulate} for numerical evidence. 
On the other hand, \cref{coro:factor} also shows that the coverage error of the third-moment matching wild bootstrap is always of order $O(n^{-1/2})$ regardless of the value of $\E[V^{\otimes3}]$. 
The simulation results in \cref{sec:simulate} suggest that the Gaussian wild bootstrap would not enjoy this property because its performance in Design (I) significantly worsens when the value of $\rho$ is close to 0. 


\subsection{Double wild bootstrap}\label{sec:db}

As mentioned in the introduction, the lack of second-order accuracy in standard bootstrap methods is due to the fact that $T_n$ is not asymptotically pivotal. 
If we knew the distribution function of $T_n$, say $F_n$, then $F_n(T_n)$ would give an (exactly) pivotal statistic. \citet{Be88} suggested estimating $F_n$ by the bootstrap distribution function $\hat F_n(t)=P^*(T_n^*\leq t)$ and use $\hat F_n(T_n)$ to construct critical values. 
This method is called \emph{bootstrap prepivoting}. Note that $\hat F_n$ can be computed by simulating the conditional law of $T_n^*$ given the data. 
To estimate the law of $\hat F_n(T_n)$, we use the following nested double wild bootstrap procedure following \cite{Be88}: 
Let $v_1,\dots,v_n$ be i.i.d.~variables independent of everything else and such that $\E[v_1]=0$ and $\E[v_1^2]=1$. 
We define the wild bootstrap statistic of $S_n^*$ as
\[
S_n^{**}=\frac{1}{\sqrt n}\sum_{i=1}^nv_i(X_i^*-\bar X^*),\quad\text{where }X^*_i=w_i(X_i-\bar X),~\bar X^*=\frac{1}{n}\sum_{i=1}^nX_i^*.
\]
Then define $\hat F^*_n(t)=P^{**}(T_n^{**}\leq t)$ for $t\in\mathbb R$, where $T_n^{**}:=\max_{1\leq j\leq d}S_{n,j}^{**}$ and $P^{**}$ is the conditional probability given $X_1,\dots,X_n,w_1,\dots,w_n$. 
We regard $\hat F^*_n(T_n^*)$ as a bootstrap version of $\hat F_n(T_n)$ and estimate the law of $\hat F_n(T_n)$ by the conditional law of $\hat F^*_n(T_n^*)$. 
Formally, given a significance level $\alpha\in(0,1)$, let $\hat\beta_\alpha$ be the conditional $(1-\alpha)$-quantile of $\hat F_n^{*}(T_n^*)$ given the data. 
We expect that $P(\hat F_n(T_n)\geq\hat\beta_\alpha)=P(T_n\geq\hat c_{\hat\beta_\alpha})$ would be close to $\alpha$. This is formally justified by the following theorem: 
\begin{theorem}[Second-order accuracy of double bootstrap coverage probability]\label{thm:db}
Suppose that the assumptions of \cref{thm:coverage} are satisfied. 
Suppose also that $v_1$ has a Stein kernel $\tau^{**}$ and there exists a constant $b_v\geq1$ such that $|v_1|\leq b_v$ and $|\tau^{**}(v_1)|\leq b_v^2$. 
Further, assume $\E[w_1^3]=\E[v_1^3]=1$. 
Then, for any $\eps\in(0,1/4)$, there exists a constant $C>0$ depending only on $\lambda,\eps,b_w$ and $b_v$ such that
\ben{\label{db-aim}
\sup_{2\eps<\alpha<1-2\eps}\abs{P\bra{T_n\geq \hat c_{\hat\beta_\alpha}}-\alpha}
\leq C\frac{\varsigma_d^4}{\sigma_*^4}\frac{\log ^3(dn)}{n}\log n.
}
\end{theorem}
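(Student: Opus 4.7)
The strategy follows Beran's prepivoting: the random level $\hat\beta_\alpha$ is designed to cancel the leading $n^{-1/2}$ bias $-\E[R_n(\alpha)]$ exposed by Theorem \ref{thm:coverage}. Since $\gamma=\E[w_1^3]=1$, Theorem \ref{thm:coverage} gives
\begin{equation*}
P(T_n \geq \hat c_{1-\beta}) = \beta - \E[R_n(\beta)] + O(\mathrm{rem}),\qquad \mathrm{rem}:=\frac{\varsigma_d^3}{\sigma_*^3}\frac{\log^3(dn)}{n}\log n,
\end{equation*}
uniformly for $\beta\in(\eps,1-\eps)$. I then apply the same theorem conditionally on $\mathcal D=(X_1,\dots,X_n)$, with the pseudo-sample $X_i^*=w_i(X_i-\bar X)$ playing the role of the data and $v_i$ the role of the weights. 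Since $\E[v_1^3]=1$ and the conditional joint law of $(X_i^*,Y_i^*)$, where $Y_i^*=\vectorize((X_i^*)^{\otimes2}-\E^*[(X_i^*)^{\otimes2}])$, admits on an event $\Omega_n$ with $P(\Omega_n)\geq 1-c/n$ a Stein kernel of the form \eqref{eq:joint-sk} satisfying a conditional analogue of \eqref{ass:joint-sk} with constants depending only on $\lambda,b_w$, the theorem yields
\begin{equation*}
P^*(T_n^* \geq \hat c^{*}_{1-\beta}) = \beta - \E^*[R_n^*(\beta)] + O(\mathrm{rem})
\end{equation*}
uniformly in $\beta\in(\eps,1-\eps)$, where $\hat c^*_{1-\beta}$ is the conditional $(1-\beta)$-quantile of $T_n^{**}$ given $(\mathcal D,w_1,\dots,w_n)$ and $R_n^*$ is the bootstrap analogue of $R_n$ formed from $\ol{(X^*)^3}$, $\wh\Sigma_n$ and the associated Gaussian quantiles. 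The conditional Stein kernel is built from the hypothesized Stein kernel of $(w_1,w_1^2)$ via the multiplicative chain-rule identity behind Example \ref{ex:regression}, with the required Orlicz bounds coming from concentration of $\ol{(X-\bar X)^r}$ for $r\leq 4$ (cf.~\cref{lem:tensor}).

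\emph{Inversion and composition.} Inverting the inner expansion through the Cornish--Fisher mechanism of Theorem \ref{cf-boot} gives $\hat\beta_\alpha = 1-\alpha - \E^*[R_n^*(\alpha)] + O(\mathrm{rem})$ on $\Omega_n$. Combined with the Cornish--Fisher expansion $\hat c_p = c_p^G - \hat Q_{n,1}(c_p^G)/f_\Sigma(c_p^G) + O(\mathrm{rem}/\sqrt{\log d})$ and a Taylor expansion of $c_p^G$ at scale $n^{-1/2}$ (using $\partial_p c_p^G = 1/f_\Sigma(c_p^G)$), this yields
\begin{equation*}
\hat c_{\hat\beta_\alpha} = c_{1-\alpha}^G - \frac{\E^*[R_n^*(\alpha)] + \hat Q_{n,1}(c_{1-\alpha}^G)}{f_\Sigma(c_{1-\alpha}^G)} + O(\mathrm{rem}).
\end{equation*}
Writing $\{T_n \geq \hat c_{\hat\beta_\alpha}\}=\{T_n+\hat\eta\geq c_{1-\alpha}^G\}$ with $\hat\eta := (\E^*[R_n^*(\alpha)]+\hat Q_{n,1}(c_{1-\alpha}^G))/f_\Sigma(c_{1-\alpha}^G)$, and exploiting the identity $\max_j S_{n,j}+\hat\eta = \max_j(S_{n,j}+\hat\eta)$ together with the fact that $\hat\eta$ is itself a sum of independent functions of $X_i$ (up to additive $O_p(1/n)$ terms), one re-applies the Edgeworth expansion framework of Theorem \ref{thm:coverage} to the shifted sum. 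This reduces $P(T_n \geq \hat c_{\hat\beta_\alpha})-\alpha$ to $\E^*[R_n^*(\alpha)]-\E[R_n(\alpha)]$ up to $O(\mathrm{rem})$; a direct bias calculation then closes the argument, using $\E[w_i^3]=1$ (which gives $\E^*[\ol{(X^*)^3}]=\ol{(X-\bar X)^3}$, whose mean matches $\E[\ol{X^3}]$ up to $O(1/n)$ centring terms) together with $\wh\Sigma_n-\Sigma=O_p(\sqrt{\log d/n})$ multiplied by the mean-zero factor $\ol{X^3}-\E[\ol{X^3}]$, which contributes expectation of order $O(1/n)$.

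\textbf{Main obstacle.} The decisive technical hurdle is establishing the inner conditional asymptotic expansion: building, on a high-probability event, a Stein kernel for the conditional joint law of $(X_i^*,Y_i^*)$ that satisfies a conditional version of \eqref{ass:joint-sk} with constants uniform in $i$ and depending only on $\lambda$ and $b_w$. This requires lifting the hypothesized Stein kernel of $(w_1,w_1^2)$ through the multiplicative transformation $w_i\mapsto w_i(X_i-\bar X)$ and controlling the induced $\psi_{1/4}$-type Orlicz norms by concentration of $\ol{(X-\bar X)^r}$. A secondary subtlety is the Taylor expansion of $c_p^G$ at scale $n^{-1/2}$, which demands uniform bounds on the second derivative of the quantile function of $Z^\vee$; these follow from the regularity of $f_\Sigma$ exploited in Theorems \ref{cf-tstat}--\ref{cf-boot}.
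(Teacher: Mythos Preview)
Your inner conditional expansion is essentially the paper's Lemma~\ref{db-coverage}: conditionally on the data, with the $v_i$ playing the role of bootstrap weights, one obtains $P^*(T_n^*\ge\hat c_{1-\beta}^{*})=\beta-R_n(\beta)+O(\mathrm{rem})$ on a high-probability event (the paper shows that the bootstrap analogue of $\E[R_n]$ reduces to $R_n$ itself, so your $\E^*[R_n^*]$ is just $R_n$). Your Stein-kernel construction for $(X_i^*,Y_i^*)$ from $\bar\tau^*$ is also what the paper uses.

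Your composition step, however, is where a genuine gap appears. You propose to Taylor-expand $\hat c_{\hat\beta_\alpha}$ explicitly and then analyze $T_n+\hat\eta$ via the Edgeworth machinery. But unlike the shift $\langle\ol{X^2}-\Sigma,\wt\Psi_\alpha\rangle$ in the proof of Theorem~\ref{thm:coverage}, your $\hat\eta$ contains \emph{cubic} functionals of $X_i$ (through $\ol{X^3}$ in both $R_n$ and the third-moment part of $\hat Q_{n,1}$). Writing $T_n+\hat\eta=\max_j n^{-1/2}\sum_i(X_{ij}+g(X_i))$ with $g$ cubic and invoking Theorem~\ref{thm:main} would require a Stein kernel for $(X_i,X_i^{\otimes2},X_i^{\otimes3})$, which is not among the hypotheses; assumption \eqref{ass:joint-sk} covers only $(X_i,Y_i)$ with $Y_i$ quadratic. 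One could try to repair this by observing that the cubic pieces have fluctuations $O_p(n^{-1})$ and are therefore deterministic at the relevant accuracy, but this extra decomposition and anti-concentration step are not in your outline. A second structural issue: the inner expansion holds for each fixed $\beta$ on an event depending on $\beta$, whereas your inversion implicitly uses it at the random level $1-\hat\beta_\alpha$.

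The paper sidesteps both difficulties by a monotonicity sandwich rather than an explicit expansion of $\hat c_{\hat\beta_\alpha}$. Lemma~\ref{db-coverage} is applied only at two \emph{deterministic} levels $\alpha'_{n,k}=\alpha\mp c\Delta_n+\E[R_n(\alpha\mp c\Delta_n)]$, yielding $\hat\beta_\alpha\in(1-\alpha'_{n,2}-\Delta_n,\,1-\alpha'_{n,1}]$ on a high-probability event; monotonicity of $p\mapsto\hat c_p$ then sandwiches $P(T_n\ge\hat c_{\hat\beta_\alpha})$ between two quantities to which Theorem~\ref{thm:coverage} applies directly at deterministic $\alpha'_{n,k}$. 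The residual mismatch $\E[R_n(\alpha'_{n,k})]-\E[R_n(\alpha)]$ is controlled by a separate Lipschitz estimate for $\alpha\mapsto\E[R_n(\alpha)]$ (Lemma~\ref{lem:rn-lip}), an ingredient absent from your plan. This route requires neither a cubic-shift Edgeworth expansion nor any uniformity in $\beta$ for the inner expansion.
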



\begin{rmk}[Proof strategy]\label{rmk:db-proof}
A key fact to prove \cref{thm:db} is that the coverage probability for the second-level bootstrap enjoys the same asymptotic expansion formula as $P(T_n\geq\hat c_{1-\alpha})$ with high probability; see \cref{db-coverage}. 
Equivalently, the distribution functions of $\hat F_n(T_n)$ and $\hat F_n^*(T_n^*)$ share the same asymptotic expansion formula with high probability. 
This enables us to prove \eqref{db-aim} by a standard argument to prove the accuracy of bootstrap approximation. 
\end{rmk}

\begin{rmk}[$p$-value]
One can easily check that $T_n\geq \hat c_{\hat\beta_\alpha}$ is equivalent to $P^*(\hat p^*_n\leq\hat p_n)\leq\alpha$, where $\hat p_n:=1-\hat F_n(T_n)$ and $\hat p_n^*:=1-\hat F_n^*(T_n^*)$ are the $p$-values of the first- and second-level bootstraps, respectively. 
Hence the $p$-value of the double bootstrap method is $P^*(\hat p^*_n\leq\hat p_n)$. 
\end{rmk}

\begin{rmk}[Computational cost]\label{db-cost}
It is well-known that the double bootstrap often entails a prohibitively high computational cost, as it requires nested resampling of the data (cf.~\cite[Section 3.11.1]{Ha92}). 
Specifically, if $B_1$ and $B_2$ denote the numbers of the first- and second-level bootstrap replications, respectively, a total of $1+B_1+B_1B_2$ test statistics must be computed. 
For this reason, several authors have investigated ways to reduce its computational burden. 
For instance, in the context of constructing confidence intervals, \citet{LeYo99} showed that the coverage error due to the Monte Carlo approximation in the double bootstrap is of order $o(B_1^{-1/2})+O(B_2^{-1})$, suggesting that $B_2$ may be chosen much smaller than $B_1$. 
Moreover, for a fixed confidence level, \citet{Na05} proposed stopping rules for the second-level bootstrap to avoid unnecessary computation that does not affect the resulting confidence interval. 
As an alternative approach, \citet{DaMa07} proposed the \emph{fast double bootstrap (FDB)}, which requires only a single second-level bootstrap sample for each first-level one. While the FDB dramatically reduces the computational burden relative to the standard double bootstrap, it does not necessarily improve the order of accuracy in approximating coverage probabilities from a theoretical point of view (see \citet{ChHa15}). 
\end{rmk}

\section{Simulation study}\label{sec:simulate}

This section conducts a small Monte Carlo study to supplement our theoretical findings. 
We adopt a similar simulation design to \cite{DeZh20}: We set $d=400$ and generate the data from a Gaussian copula model, i.e.~$X_1,\dots,X_n$ are i.i.d.~with the same law as $U-\E[U]$, where $U$ is defined as in \cref{ex:copula}. 
The marginal distributions $\mu_j$ are the gamma distribution with shape parameter 1 and unit scale. 
As the parameter matrix $R$, we consider two designs: (I) $R=\rho\bs1_d^{\otimes2}+(1-\rho)I_d$ and (II) $R=(\rho^{|j-k|})_{1\leq j,k\leq d}$. Here, the parameter $\rho$ is varied as $\rho\in\{0.2,0.8\}$. 
We also vary the sample size as $n\in\{200,400\}$. 
We compute the rejection rate $P(T_{n}\geq \hat c)$ at the 10\% significance level based on 20,000 Monte Carlo iterations, where $\hat c$ is an estimated 90\% quantile of the corresponding statistic using various bootstrap methods. 
In addition, to assess the performance when the skewness of the data is zero, we also consider the case that $X_i\overset{d}{=}U-U'$, where $U'$ is an independent copy of $U$. To keep the marginal kurtosis at the same level, we change the shape parameter of the gamma distribution to $0.5$ in this case. 
We remark that these models satisfy \eqref{ass:psi1} and \eqref{ass:sk} thanks to \cref{prop:copula}. 

For the bootstrap methods, we consider the empirical bootstrap (EB), wild bootstrap and double wild bootstrap (DB) methods.  For the wild bootstrap, we consider the following 4 types of weight variables:
\begin{description}


\item[\bf GB] $w_1$ is a standard normal variable. 

\item[\bf MB] $w_1$ follows Mammen's two point distribution \cite{Ma93}: 
\ben{\label{eq:mammen}
P\bra{w_1=\frac{\sqrt5+1}{2}}=1-P\bra{w_1=-\frac{\sqrt5-1}{2}}=\frac{\sqrt5-1}{2\sqrt 5}.
}

\item[\bf RB] $w_1$ is a Rademacher variable: $P(w_1=\pm1)=1/2$. 

\item[\bf BB] $w_1$ follows the standardized beta distribution with parameters given by \eqref{beta-param} with $\nu=0.1$. 

\end{description}
The double wild bootstrap is implemented with both $w_1$ and $v_1$ generated from the standardized beta distribution with parameters given by \eqref{beta-param} with $\nu=0.1$. 
Note that our theoretical results are applicable only to GB, BB and DB. 
We include EB, MB and RB in our assessment because they are commonly used in the literature. 
The number of bootstrap replications is set to 499 for the first-level bootstrap and 99 for the second-level bootstrap in DB (see \cite[Appendix II.8]{Ha92} and \cite{LeYo99} for discussions about the number of bootstrap replications). 

We summarize the simulation results in Tables \ref{table:asym} and \ref{table:sym}. 
First, \cref{table:asym} reports empirical rejection rates at the 10\% level when the laws of $X_i$ are asymmetric. 
We find that the difference in performance between GB and BB is largely in line with our Corollaries \ref{coro:coverage} and \ref{coro:factor}: 
First, in Design (I), GB outperforms BB when $\rho=0.8$, while BB slightly outperforms GB when $\rho=0.2$. 
This is consistent with \cref{coro:factor}: When $\rho$ is close to 1, the first term on the right hand side of \eqref{eq:factor} dominates the coverage error, and this term has the same form as the univariate case. 
According to computations in \cite[Section 3]{LiSi87}, this implies that the coverage error of GB would be smaller than that of BB. 
On the other hand, when $\rho$ is closer to 0, the second term on the right hand side of \eqref{eq:factor} would be non-negligible for GB, so the above argument would be invalid in this case. 
Next, in Design (II), BB performs much better than GB, which is consistent with \cref{coro:coverage}: Since $\Sigma$ has no large eigenvalues in this design, BB is second-order accurate by \cref{coro:coverage}. 

Turning to the performance of DB, it uniformly outperforms the other methods when $n=400$. 
It tends to over-reject when $n=200$, but it still outperforms the others in Design (I). 
In Design (II), BB outperforms DB when $n=200$. 
This would be because BB is also second-order accurate in this design \emph{thanks to the high dimensionality} by \cref{coro:coverage}. 
Since our theory suggests that an increase in the sample size $n$ would improve the performance of DB more than the others, it is expected that DB performs much better when $n=400$. 

Next, \cref{table:sym} reports empirical rejection rates at the 10\% level when the laws of $X_i$ are symmetric. 
Recall that our \cref{thm:coverage} implies that both GB and BB are second-order accurate in this case. 
Reflecting this fact, GB clearly performs better than in the asymmetric case. 
The performance of BB is improved in Design (I) but not in Design (II). The latter would be due to the same reasoning as above, i.e.~ BB is second-order accurate in Design (II) even when the skewness is not zero by \cref{coro:coverage}. 
By contrast, the performance of DB is not improved. This is not surprising because DB is already second-order accurate in the asymmetric case and the zero skewness condition would not contribute to its performance. 
When comparing GB and BB, BB still outperforms GB. 
This may be due to an effect of kurtosis, but we will need higher-order asymptotic expansions for the formal discussion and leave it to future work. 


Finally, we briefly discuss the performances of EB, MB and RB. 
First, EB tends to under-reject and its performance is not pronounced compared to other methods. 
In fact, we can observe similar phenomena in the simulation results of \cite{DeZh20,CCKK22}. 
Formally, this does not contradict our theory because we have no valid Edgeworth expansion for EB in high-dimensions, although it is unclear whether this is an artifact of our proof strategy. 
Next, although MB is not covered by our theory, its performance is similar to BB. 
This is perhaps explained by the fact that their weights are very close numerically. 
Third, RB performs remarkably well in the symmetric case. 
This is already observed in the simulation study of \cite{CCKK22} who explain this phenomenon by their Theorem 2.3. 
Another possible explanation is the match of higher moments, but we have no formal theoretical result for this so far. 

\begin{table}[ht]
\centering
\caption{Rejection rate at the 10\% level (Asymmetric case)} 
\label{table:asym}
\begin{tabular}{cccccccc}
  \hline
$n$ & $\rho$ & EB & GB & MB & RB & BB & DB \\ 
  \hline &&\multicolumn{6}{l}{(I) $R_{jk}=\rho+(1-\rho)1_{\{j=k\}}$}\\200 & 0.2 & 0.061 & 0.124 & 0.080 & 0.155 & 0.078 & 0.114 \\ 
   & 0.8 & 0.071 & 0.090 & 0.072 & 0.093 & 0.071 & 0.101 \\ 
  400 & 0.2 & 0.072 & 0.122 & 0.083 & 0.140 & 0.085 & 0.107 \\ 
   & 0.8 & 0.080 & 0.095 & 0.081 & 0.096 & 0.080 & 0.102 \\ 
   &&\multicolumn{6}{l}{(II) $R_{jk}=\rho^{|j-k|}$}\\200 & 0.2 & 0.065 & 0.146 & 0.092 & 0.195 & 0.091 & 0.117 \\ 
   & 0.8 & 0.069 & 0.139 & 0.089 & 0.177 & 0.088 & 0.113 \\ 
  400 & 0.2 & 0.074 & 0.135 & 0.090 & 0.160 & 0.088 & 0.101 \\ 
   & 0.8 & 0.079 & 0.135 & 0.092 & 0.155 & 0.091 & 0.104 \\ 
   \hline
\end{tabular}
\end{table}

\begin{table}[ht]
\centering
\caption{Rejection rate at the 10\% level (Symmetric case)} 
\label{table:sym}
\begin{tabular}{cccccccc}
  \hline
$n$ & $\rho$ & EB & GB & MB & RB & BB & DB \\ 
  \hline &&\multicolumn{6}{l}{(I) $R_{jk}=\rho+(1-\rho)1_{\{j=k\}}$}\\200 & 0.2 & 0.065 & 0.076 & 0.083 & 0.100 & 0.082 & 0.114 \\ 
   & 0.8 & 0.089 & 0.092 & 0.091 & 0.096 & 0.091 & 0.105 \\ 
  400 & 0.2 & 0.081 & 0.088 & 0.090 & 0.099 & 0.092 & 0.105 \\ 
   & 0.8 & 0.095 & 0.099 & 0.098 & 0.099 & 0.095 & 0.103 \\ 
   &&\multicolumn{6}{l}{(II) $R_{jk}=\rho^{|j-k|}$}\\200 & 0.2 & 0.062 & 0.071 & 0.085 & 0.101 & 0.084 & 0.114 \\ 
   & 0.8 & 0.067 & 0.076 & 0.088 & 0.100 & 0.086 & 0.109 \\ 
  400 & 0.2 & 0.079 & 0.084 & 0.092 & 0.103 & 0.092 & 0.106 \\ 
   & 0.8 & 0.083 & 0.087 & 0.096 & 0.102 & 0.094 & 0.106 \\ 
   \hline
\end{tabular}
\end{table}


\section{Proofs for Section \ref{sec:2nd-main}}\label{sec:proof-sec2}

We use the following notation in the remainder of the paper: For two random variables $\xi$ and $\eta$, we write $\xi\lesssim\eta$ or $\eta\gtrsim\xi$ if there exists a \emph{universal} constant $C>0$ such that $\xi\leq C\eta$. 
Also, given real numbers $\theta_1,\dots,\theta_m$, we use $C_{\theta_1,\dots,\theta_m}$ to denote positive constants, which depend only on $\theta_1,\dots,\theta_m$ and may be different in different expressions.

\subsection{General error bounds via approximate Stein kernel}\label{sec:ae-general}

We first develop error bounds for high-dimensional Edgeworth expansion in a somewhat general form. 
For later use, we consider a situation where a Stein kernel exists in an approximate sense: 
\begin{definition}[Approximate Stein kernel]
Let $\xi$ be a random vector in $\mathbb R^d$ with $\E[\|\xi\|_\infty]<\infty$. 
Given measurable functions $\tau:\mathbb R^d\to\mathbb R^d\otimes\mathbb R^d$ and $\res:\mathbb R^d\to\mathbb R^d$, we call $(\tau,\res)$ an \emph{approximate Stein kernel} for (the law of) $\xi$ if $\E[\|\tau(\xi)\|_\infty]+\E[\|\res(\xi)\|_\infty]<\infty$ and 
\begin{equation*}
\E[(\xi-\E[\xi])\cdot\nabla h(\xi)]=\E[\langle\tau(\xi),\nabla^2h(\xi)\rangle]+\E[\res(\xi)\cdot\nabla h(\xi)]
\end{equation*}
for any $h\in C^2_b(\mathbb R^d)$.
\end{definition}

With this definition, we have the following results:
\begin{theorem}[Error bound for Edgeworth expansion via approximate Stein kernel]\label{thm:main}
Let $\xi_1,\dots,\xi_n$ be independent random vectors in $\mathbb R^d$ with mean 0 and finite variance. 
Set $W:=\sum_{i=1}^n\xi_i$ and $\Sigma_W:=\Cov[W]$. 
For each $i=1,\dots,n$, suppose that $\xi_i$ has an approximate Stein kernel $(\tau_i,\res_i)$ such that $\E[\|\xi_i\|_\infty^5]+\E[\|\tau_i(\xi_i)\|_\infty^{5/2}]+\E[\|\res_i(\xi_i)\|_\infty^{5/2}]<\infty$ for all $i=1,\dots,n$. 
Set $T=\sum_{i=1}^n\tau_i(\xi_i)$, $\bar T=T-\Sigma$, $\Res=\sum_{i=1}^n\res_i(\xi_i)$ 
and
\ben{\label{def:pw}
p_W(z)=\phi_\Sigma(z)+\frac{1}{2}\langle\Sigma_W-\Sigma,\nabla^2\phi_\Sigma(z)\rangle-\frac{1}{6}\langle\E[W^{\otimes3}],\nabla^3\phi_\Sigma(z)\rangle,\quad z\in\mathbb R^d.
}
Then, there exists a universal constant $C>0$ such that for any $t\in(0,1/2]$,
\ban{
&\sup_{A\in\mcl R}\abs{P(W\in A)-\int_{A}p_W(z)dz}
\notag\\
&\leq C|\log t|\frac{\log^2d}{\sigma_*^4}\bra{
\E\|\bar T\|_\infty^2
+\E\norm{\sum_{i=1}^n\tau_i(\xi_i)^{\otimes2}}_\infty
+\E\norm{\sum_{i=1}^n\xi_i^{\otimes4}}_\infty
}
\notag\\
&\quad+C\frac{\log^{5/2}d}{\sigma_*^{5}}\bra{
\E\norm{\bar T\otimes\sum_{i=1}^n\xi_i^{\otimes3}}_\infty
+\E\norm{\sum_{i=1}^n\xi_i^{\otimes3}\otimes\tau_i(\xi_i)}_\infty
}
\notag\\
&\quad+C\frac{\log^{2}d}{\sigma_*^{4}}\left(
\E\norm{\Res\otimes\sum_{i=1}^n \xi_i^{\otimes3}}_\infty
+\E\norm{\sum_{i=1}^n \xi_i^{\otimes3}\otimes\res_i(\xi_i)}_\infty\right.
\notag\\
&\qquad\qquad\qquad\left.+\E\norm{\bar T\otimes\sum_{i=1}^n \res_i(\xi_i)\otimes\xi_i}_\infty
+\E\norm{\sum_{i=1}^n\res_i(\xi_i)\otimes\xi_i\otimes\tau_i(\xi_i)}_\infty
\right)
\notag\\
&\quad+C\frac{\log d}{\sigma_*^2}\bra{
\E\|\Res\|_\infty^2
+\E\norm{\sum_{i=1}^n\res_i(\xi_i)^{\otimes2}}_\infty
}
\notag\\
&\quad+C\frac{\log^{3/2} d}{\sigma_*^3}\bra{
\E\norm{\Res\otimes\sum_{i=1}^n \res_i(\xi_i)\otimes\xi_i}_\infty
+\E\norm{\sum_{i=1}^n\res_i(\xi_i)^{\otimes2}\otimes\xi_i}_\infty
}
\notag\\
&\quad+C\ol\sigma\sqrt t\bra{\frac{\log d}{\ul\sigma}
+\frac{\log^2d}{\sigma_*^3}\|\Sigma_W-\Sigma\|_\infty
+\frac{\log^{5/2}d}{\sigma_*^4}\norm{\sum_{i=1}^n\E[\xi_i^{\otimes3}]}_\infty
}.
\label{eq:main}
}
\end{theorem}

\begin{theorem}[Refined Gaussian comparison inequality]\label{thm:gcomp}
Let $W$ be a centered Gaussian vector in $\mathbb R^d$ with covariance matrix $\Sigma_W$. 
Then, there exists a universal constant $C>0$ such that for any $t\in(0,1/2]$,
\ba{
&\sup_{A\in\mcl R}\abs{P(W\in A)-\int_{A}p_W(z)dz}\\
&\leq C|\log t|\frac{\log^2d}{\sigma_*^4}\|\Sigma_W-\Sigma\|_\infty^2
+C\ol\sigma\sqrt t\bra{\frac{\log d}{\ul\sigma}
+\frac{\log^2d}{\sigma_*^3}\|\Sigma_W-\Sigma\|_\infty},
}
where $p_W$ is defined by \eqref{def:pw}; note that $\E[W^{\otimes3}]=0$ in the present case.  
\end{theorem}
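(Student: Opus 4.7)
The plan is to prove \cref{thm:gcomp} by a direct Gaussian interpolation in the covariance parameter, specializing the strategy used for \cref{thm:main} to the case when $W$ is itself Gaussian. Since $\E[W^{\otimes 3}]=0$, the expansion reduces to $p_W(z)=\phi_\Sigma(z)+\tfrac12\langle\Sigma_W-\Sigma,\nabla^2\phi_\Sigma(z)\rangle$, and the task is to control the second-order Taylor remainder in going from $\Sigma$ to $\Sigma_W$.

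First, for $A\in\mcl R$ I would replace $1_A$ by a smooth test function $h=h_{A,t}$ built from the Chernozhukov--Chetverikov--Kato logsumexp approximation of the coordinate-wise maximum, with smoothing scale of order $\sqrt{t}\,\ol\sigma$. Two smoothing errors arise. First, Gaussian anti-concentration for the maximum of the coordinates of $W/\ol\sigma$ gives $|P(W\in A)-\E[h(W)]|\lesssim\ol\sigma\sqrt{t}\cdot\log d/\ul\sigma$. Second, combining $\|\int_A\nabla^3\phi_\Sigma\|_1\lesssim\log^{3/2}d/\sigma_*^3$ (a one-order-higher analogue of the bound recorded in \cref{lem:aht}) with anti-concentration controls the replacement error between $\int_A\nabla^2\phi_\Sigma(z)dz$ and $\E[\nabla^2 h(Z)]$ in $\ell^\infty$-norm by $C\ol\sigma\sqrt{t}\log^2d/\sigma_*^3$, which upon contraction against $\Sigma_W-\Sigma$ contributes $C\ol\sigma\sqrt{t}\cdot\log^2d\,\|\Sigma_W-\Sigma\|_\infty/\sigma_*^3$. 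Together these account for the $\sqrt{t}$ terms of the claimed bound.

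Second, setting $\Sigma_\theta:=(1-\theta)\Sigma+\theta\Sigma_W$ and $W_\theta\sim N(0,\Sigma_\theta)$ for $\theta\in[0,1]$, the Gaussian heat identities give
\[
\frac{d}{d\theta}\E[h(W_\theta)]=\tfrac12\langle\Sigma_W-\Sigma,\E[\nabla^2h(W_\theta)]\rangle,\qquad
\frac{d^2}{d\theta^2}\E[h(W_\theta)]=\tfrac14\langle(\Sigma_W-\Sigma)^{\otimes2},\E[\nabla^4h(W_\theta)]\rangle,
\]
so Taylor's theorem with integral remainder yields
\[
\E[h(W)]-\E[h(Z)]-\tfrac12\langle\Sigma_W-\Sigma,\E[\nabla^2h(Z)]\rangle
=\tfrac14\int_0^1(1-\theta)\langle(\Sigma_W-\Sigma)^{\otimes2},\E[\nabla^4h(W_\theta)]\rangle\,d\theta.
\]
By Hölder's inequality in the $(\ell^\infty,\ell^1)$ pairing, the right-hand side is bounded by $\tfrac14\|\Sigma_W-\Sigma\|_\infty^2\sup_\theta\E\|\nabla^4h(W_\theta)\|_1$. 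For the CCK smoothing with softness parameter $\beta\asymp|\log t|/(\sqrt{t}\,\ol\sigma)$, $\|\nabla^4h(x)\|_1$ is essentially controlled by $\beta^3$ times an indicator of an $O(\beta^{-1})$-neighborhood of $\partial A$, so Nazarov's anti-concentration applied to $W_\theta$ produces $\sup_{\theta\in[0,1]}\E\|\nabla^4h(W_\theta)\|_1\lesssim|\log t|\log^2d/\sigma_*^4$, giving the main term.

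The principal obstacle is this uniform fourth-derivative bound over the interpolation path, since Nazarov-type anti-concentration must be applied to $W_\theta$ whose covariance generically differs from both $\Sigma$ and $\Sigma_W$. One may assume $\lambda_{\min}(\Sigma_\theta)\geq\sigma_*^2$ for all $\theta\in[0,1]$ without loss of generality (in the complementary regime where $\|\Sigma_W-\Sigma\|_\infty$ is large relative to $\sigma_*^2$, the first term of the claimed bound is already $\gtrsim 1$), but careful bookkeeping is needed to ensure $|\log t|$ is the only logarithmic penalty and that no further powers of $\log d$ or $1/\sigma_*$ leak in through the composition of derivatives of the logsumexp with those of the smooth indicator.
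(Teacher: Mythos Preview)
Your approach has a genuine gap in the $t$-dependence of the main term. You claim that with CCK smoothing at softness $\beta\asymp|\log t|/(\sqrt t\,\ol\sigma)$, a single Taylor expansion in the covariance parameter $\theta$ yields $\sup_\theta\E\|\nabla^4h(W_\theta)\|_1\lesssim|\log t|\log^2d/\sigma_*^4$. This does not hold: for logsumexp smoothing one has $\|\nabla^4h(x)\|_1\lesssim\beta^3$ pointwise (localized on a strip of width $\asymp\beta^{-1}\log d$), so after Nazarov the best you obtain is of order $\beta^2(\log d)^{3/2}/\ul\sigma\asymp|\log t|^2(\log d)^{3/2}/(t\,\ol\sigma^2\ul\sigma)$, which carries a polynomial factor $t^{-1}$, not $|\log t|$. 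Switching to Ornstein--Uhlenbeck smoothing $h_t$ at a fixed level does not help either: \cref{coro:aht} gives $\sup_x\|\nabla^4h_t(x)\|_1\lesssim t^{-2}\log^2d/\sigma_*^4$, again polynomial in $1/t$. The obstacle you flag (applying anti-concentration uniformly in $\theta$) is real but secondary; the missing idea is structural, not bookkeeping.

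The paper obtains the $|\log t|$ factor by a different mechanism. It smooths via the Ornstein--Uhlenbeck semigroup $h_u(x)=\E[h(\sqrt{1-u}\,x+\sqrt u\,Z)]$ and lets the smoothing parameter $u$ itself run over $[t,1]$, applying the interpolation \emph{twice}. The resulting identity \eqref{decomp-aim-gcomp} from \cref{lem:decomp},
\[
\E[h_t(W)]-\int h_t(z)\,p_W(z)\,dz
=\frac14\int_t^1\!\!\int_s^1\frac{\E\langle(\Sigma_W-\Sigma)^{\otimes2},\nabla^4h_u(W)\rangle}{(1-u)^2}\,du\,ds,
\]
combined with \cref{coro:aht} (which gives $\|\nabla^4h_u\|_1\lesssim(1-u)^2u^{-2}\log^2d/\sigma_*^4$) produces an integrand of order $u^{-2}$, and the double integral $\int_t^1\int_s^1u^{-2}\,du\,ds\le|\log t|$ is precisely where the logarithm arises. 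A single covariance interpolation against a fixed smoothing cannot reproduce this cancellation between the $(1-u)^2$ in the derivative bound and the $(1-u)^{-2}$ in the decomposition; the rest of the proof then follows the smoothing framework of \cref{l3} exactly as in \cref{thm:main}, with the $\sqrt t$ terms coming from \cref{nazarov} and \cref{lem:anti}.
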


The second result will be used for the Gaussian wild bootstrap. 

The remainder of this subsection is devoted to the proofs of Theorems \ref{thm:main} and \ref{thm:gcomp}. 
As usual, the proof starts with a smoothing inequality. We will use the following version.  
\begin{lemma}\label{l3}
Let $\mu$ be a finite measure, $\nu$ a finite signed measure, and $K$ a probability measure on $\mathbb{R}^d$. 
Let $\eps>0$ be a constant such that 
$
\alpha:=K([-\eps,\eps]^d)>1/2.
$ 
Let $h: \mathbb{R}^d\to \mathbb{R}$ be a bounded measurable function. 
Then we have
\be{
\abs{\int h d(\mu-\nu) }\leq (2\alpha-1)^{-1}[\gamma^*(h;\eps)+\tau^*(h; \eps)+\alpha\tilde\tau^*(h; \eps)],
}
where
\ba{
\gamma^*(h;\eps)=\sup_{y\in \mathbb{R}^d} \gamma(h_y;\eps),\qquad
\tau^*(h;\eps)=\sup_{y\in \mathbb{R}^d} \tau(h_y;\eps),\qquad
\tilde\tau^*(h;\eps)=\sup_{y\in \mathbb{R}^d} \tilde\tau(h_y;\eps),
}
with $h_y(x)=h(x+y)$, 
\ba{
\gamma(h;\eps)&=\max\cbra{\int M_h(x; \eps) (\mu-\nu)*K(dx), -\int m_h(x; \eps)  (\mu-\nu)*K(dx)   },\\
\tau(h;\eps)&= \max\cbra{\int [M_h(x; \eps)-h(x)]\nu(dx),\int [h(x)-m_h(x; \eps)]\nu(dx)},\\
\tilde\tau(h;\eps)&= \sup_{y\in [-\eps,\eps]^d}\abs{\int[h(x+y)-h(x)]\nu(dx)},\\
M_h(x;\eps)&=\sup_{y: \norm{y-x}_\infty\leq \eps} h(y),\qquad m_h(x;\eps)=\inf_{y: \norm{y-x}_\infty\leq \eps} h(y),
}
and $*$ denotes the convolution of two finite signed measures.
\end{lemma}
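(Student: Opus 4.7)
The plan is to prove this by a Bhattacharya--Rao style smoothing argument, adapted to a signed measure $\nu$ and a general probability kernel $K$. The starting point is the splitting $K = K_1 + K_2$, where $K_1$ is the restriction of $K$ to $[-\eps,\eps]^d$ (total mass $\alpha$) and $K_2$ is the restriction to its complement (total mass $1-\alpha$). The idea is to smooth $h$ against $K_1$ via the envelopes $M_h(\cdot;\eps)$ and $m_h(\cdot;\eps)$, while the $K_2$-complement generates a residual that is then self-bounded to produce the $(2\alpha-1)^{-1}$ prefactor.

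Concretely, I would begin from the identity
\be{
\alpha \int h \, d(\mu-\nu) = \int\!\!\int h(x)\,1_{[-\eps,\eps]^d}(y)\,K(dy)\,(\mu-\nu)(dx).
}
Since $\|y\|_\infty \le \eps$ forces $h(x) \le M_h(x+y;\eps)$, I would substitute $M_h(x+y;\eps)$ for $h(x)$ in the integrand and decompose the resulting nonnegative gap as
\be{
M_h(x+y;\eps) - h(x) \;=\; \bigl[M_h(x+y;\eps) - h(x+y)\bigr] + \bigl[h(x+y) - h(x)\bigr].
}
Integrated against the positive measure $\mu$ the gap is of the right sign and can be dropped; integrated against $\nu$, the first bracket matches $\tau(h_y;\eps)$ and the second matches $\tilde\tau(h;\eps)$, each bounded by the corresponding starred quantity once the supremum in $y$ is taken. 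This produces the $\tau^*(h;\eps) + \alpha\,\tilde\tau^*(h;\eps)$ contributions of the target bound.

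Next I would convert $\int\!\!\int M_h(x+y;\eps)\,1_{[-\eps,\eps]^d}(y)\,K(dy)\,(\mu-\nu)(dx)$ into an integral against $K$ by writing $1_{[-\eps,\eps]^d}(y) = 1 - 1_{[-\eps,\eps]^{d,c}}(y)$. The first piece equals $\int M_h\,d((\mu-\nu)*K)$ and is absorbed into $\gamma^*(h;\eps)$. The $K_2$-complement piece is the delicate one: writing $M_h(x+y;\eps) = h(x+y) + [M_h(x+y;\eps) - h(x+y)]$, the envelope-gap part is a further $\gamma^*$/$\tau^*$ contribution by the same reasoning, while the $h(x+y)$ part, by Fubini and translation, equals at most $(1-\alpha)\sup_{y}\bigl|\int h_y\,d(\mu-\nu)\bigr|$. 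Running the symmetric argument with $m_h$ in place of $M_h$ for the reverse inequality and combining, I obtain
\be{
\alpha\,\Bigl|\int h\,d(\mu-\nu)\Bigr| \;\le\; \gamma^*(h;\eps)+\tau^*(h;\eps)+\alpha\,\tilde\tau^*(h;\eps) + (1-\alpha)\sup_{y}\Bigl|\int h_y\,d(\mu-\nu)\Bigr|.
}
Since the right-hand side is already stated in terms of suprema over $h_y$ (while the left-hand side is not), I would take $\sup_{y}$ on the left as well --- the whole argument applies verbatim to every translate $h_y$, so the bound is invariant under this replacement --- and then use $\alpha>1/2$ to move the $(1-\alpha)$-term to the left, yielding $(2\alpha-1)|\int h\,d(\mu-\nu)| \le \gamma^*+\tau^*+\alpha\tilde\tau^*$.

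The main obstacle I expect is the bookkeeping in the $K_2$-complement step. One has to verify that the envelope substitution there produces exactly (i) a residual bounded by $(1-\alpha)\sup_{y}|\int h_y\,d(\mu-\nu)|$, suitable for self-bounding, and (ii) only quantities already accounted for by $\gamma^*,\tau^*,\tilde\tau^*$, with no stray terms such as total-variation norms of $\mu$ or $\nu$. The fact that the starred quantities are defined via $\sup_y$ over translates $h_y$ is precisely what makes this absorption close up; without this, the self-bounding would fail and the $(2\alpha-1)^{-1}$ prefactor would not emerge cleanly.
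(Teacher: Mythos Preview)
Your argument is correct and follows essentially the same smoothing scheme as the paper's proof, just organized in reverse: the paper fixes a near-maximizing translate $h_z$ and bounds $\int M_{h_z}\,d((\mu-\nu)*K)$ from below by splitting the $K$-integral over $[-\eps,\eps]^d$ and its complement, whereas you start from $\alpha\int h\,d(\mu-\nu)$, substitute the envelope, and take the supremum over translates at the end to close the self-bounding. Two minor imprecisions to clean up: the $[-\eps,\eps]^d$ part alone contributes $\alpha\tau^*+\alpha\tilde\tau^*$ (not $\tau^*+\alpha\tilde\tau^*$), and the envelope-gap on the complement contributes only $(1-\alpha)\tau^*$ (not $\gamma^*$); these combine to the correct $\tau^*$ in your displayed inequality, so the final bound is right.
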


The proof of this lemma is a straightforward modification of \cite[Lemma 11.4]{BhRa10} and given in Appendix \ref{sec:smoothing}, but its statement contains an important difference from the original one: The bound does not contain the positive part of the signed measure $\nu$. This is important for bounding $\tau^*(h;\eps)$ and $\tilde\tau^*(h;\eps)$ in our setting. To bound these quantities, we will use the following anti-concentration inequality. 
For $A=\prod_{j=1}^d[a_j,b_j]\in\mcl R$ and $u,v\in\mathbb R^d_+:=[0,\infty)^d$, define $A^{u,v}=\prod_{j=1}^d[a_j-u_j,b_j+v_j]$. 
\begin{lemma}\label{lem:anti}
Let $r\in\mathbb N$. Then
\ben{\label{eq:anti}
\sup_{A\in\mcl R}\sup_{\eps>0}\sup_{u,v\in [0,\eps]^d}\frac{1}{\eps}\norm{\int_{A^{u,v}\setminus A}\nabla^r\phi_\Sigma(z) dz}_1\leq C_r\frac{(\log d)^{(r+1)/2}}{\sigma_*^{r+1}},
}
where $C_r>0$ is a constant depending only on $r$.
\end{lemma}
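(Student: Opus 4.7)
My plan is to reduce the anti-concentration bound for the signed measure $\nabla^r\phi_\Sigma(z)\,dz$ to Nazarov's classical Gaussian anti-concentration inequality by combining a slab decomposition of the shell with the Hermite-polynomial representation of the derivatives of $\phi_\Sigma$. The starting identity is
\[
\partial_{k_1,\dots,k_r}\phi_\Sigma(z)=(-1)^r H^\Sigma_{k_1,\dots,k_r}(z)\,\phi_\Sigma(z),
\]
where $H^\Sigma_{k_1,\dots,k_r}$ is the multivariate Hermite polynomial of total degree $r$ associated with $\Sigma$, with coefficients that can be controlled in terms of entries of $\Sigma^{-1}$ and thus of $\sigma_*^{-1}$. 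Under this identity, every entry of $\int_{A^{u,v}\setminus A}\nabla^r\phi_\Sigma$ equals $(-1)^r\E[H^\Sigma_{k_1,\dots,k_r}(Z)\,1_{A^{u,v}\setminus A}(Z)]$ for $Z\sim N(0,\Sigma)$.

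Next I would decompose the shell telescopically as a disjoint union
\[
A^{u,v}\setminus A=\bigsqcup_{j=1}^d T_j,\qquad T_j\subset\{z:a_j-u_j\le z_j<a_j\}\cup\{z:b_j<z_j\le b_j+v_j\},
\]
so that on each $T_j$ the coordinate $z_j$ is confined to an interval of length at most $\eps$ while the remaining coordinates range over a $(d-1)$-dimensional rectangle. Conditioning on $Z_j$ turns each slab integral into a Gaussian integral in the remaining $d-1$ coordinates, with the thinness producing the linear-in-$\eps$ factor and the marginal density of $Z_j$ contributing $O(1/\sigma_*)$.

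The proof then proceeds by induction on $r$. The base case $r=0$ is Nazarov's inequality, which yields $P(Z\in A^{u,v}\setminus A)\lesssim\eps\sqrt{\log d}/\sigma_*$. For the induction step I would integrate by parts in $z_j$ on each slab $T_j$: if the index $j$ appears in $(k_1,\dots,k_r)$, one derivative is absorbed by the boundary evaluation and the problem reduces to a $(d-1)$-dimensional instance of the lemma with $r-1$ derivatives; otherwise, the slab width $\eps$ is used directly and the problem reduces to a $(d-1)$-dimensional instance with $r$ derivatives. Each step of the induction introduces one additional $\sqrt{\log d}/\sigma_*$ factor (from the new Hermite factor or the new Nazarov application), producing the claimed $(\log d)^{(r+1)/2}/\sigma_*^{r+1}$ after $r+1$ applications.

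The main obstacle will be preventing a blow-up when summing slab-wise contributions over $j=1,\dots,d$ (and the $L^1$ norm over multi-indices $(k_1,\dots,k_r)$). The key point to make the estimate tight is that the $T_j$ are disjoint, so their combined Gaussian mass is already bounded, via the $r=0$ case, by $C\eps\sqrt{\log d}/\sigma_*$ rather than $d\cdot\eps/\sigma_*$. I expect the delicate bookkeeping lies in verifying that this global Nazarov bound can be ``threaded'' through each inductive step, giving one $\sqrt{\log d}$ factor per derivative without losing a polynomial factor in $d$ from the slab count or the multi-index sum.
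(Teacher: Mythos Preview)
Your proposal has the right instincts (slab decomposition, Hermite representation) but, as written, has two genuine gaps that the paper's argument handles differently.

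\textbf{The induction is not well-posed.} After integrating by parts in $z_j$ on a slab $T_j$, the boundary term is an integral of $\nabla^{r-1}\phi$ over a \emph{face} of the rectangle (a $(d-1)$-dimensional hyperplane section), not over a $(d-1)$-dimensional \emph{shell} $B^{u',v'}\setminus B$. So it is not an instance of the same lemma in dimension $d-1$, and the inductive hypothesis does not apply. Similarly, in the case $j\notin\{k_1,\dots,k_r\}$, using the slab width directly gives $\eps$ times an integral of $\nabla^r\phi$ over a face, again not a shell. You would need a separate bound on face integrals, which is essentially the content of the Anderson--Hall--Titterington estimate (Lemma~\ref{lem:aht}), not the lemma you are trying to prove.

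\textbf{The slab sum is not controlled.} You correctly identify the danger that summing over $j=1,\dots,d$ (and over the $d^r$ multi-indices) may cost a polynomial factor in $d$. For $r=0$ the disjointness of the $T_j$ saves you via Nazarov, but for $r\geq 1$ the integrands are signed and disjointness of the slabs gives no cancellation after you apply the triangle inequality slab-by-slab. The hope that the Nazarov bound can be ``threaded through'' the induction is not substantiated: at each inductive step you have already split into $d$ pieces, and there is no mechanism in your outline to reassemble them.

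The paper avoids both issues. First it reduces to $\Sigma=I_d$ (which you do not do; for general $\Sigma$ the Hermite coefficients involve all of $\Sigma^{-1}$ and the factorization below is lost). Then it takes a limit $\eps\downarrow 0$ to replace the shell integral by a weighted surface integral $\sum_j (u_j J_{A^j,j}(a_j)+v_j J_{A^j,j}(b_j))$ over the faces of $A$. With $\Sigma=I_d$ each face integral factorizes as a product of one-dimensional integrals $h_{\nu_p}(b_{j_p})-h_{\nu_p}(a_{j_p})$ and $\Phi(b_k)-\Phi(a_k)$. The key trick (Step~4) is to dominate these by the monotone majorants $\tilde h_m$ and $\Phi(c_k)$ with $c_k=|a_k|\wedge|b_k|$, after which the entire double sum over face index $j$ and multi-index $(j_1,\dots,j_q)$ reassembles into $\|\int_{A'}\nabla^{q+1}\phi_d\|_1$ for an orthant $A'$. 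That last quantity is controlled by Lemma~\ref{lem:aht}, and the extra derivative (not an extra application of Nazarov) is what produces the additional $\sqrt{\log d}$.
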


\begin{proof}
See Appendix \ref{sec:anti}. 
\end{proof}

We will apply \cref{l3} with $K=N(0,t\Sigma)$. To bound the quantity $\gamma^*(h;\eps)$, we introduce some notation and lemmas. 
Given a bounded measurable function $h:\mathbb R^d\to\mathbb R$ and $s\in[0,1]$, we define a function $h_s:\mathbb R^d\to\mathbb R$ as
\ben{\label{def:smoothed}
h_s(x)=\E[h(\sqrt{1-s}x+\sqrt sZ)],\qquad x\in\mathbb R^d,
}
where $Z\sim N(0,\Sigma)$. 
When $s>0$, $h_s(x)$ can be rewritten as
\be{
h_s(x)=s^{-d/2}\int_{\mathbb R^d}h(z)\phi_{\Sigma}\bra{\frac{z-\sqrt{1-s}x}{\sqrt s}}dz.
}
By this expression, $h_s$ is infinitely differentiable and
\ban{
\nabla^rh_s(x)
&=\bra{-\sqrt{\frac{1-s}{s}}}^r\int_{\mathbb R^d}h(\sqrt{1-s}x+\sqrt sz)\nabla^r\phi_{\Sigma}(z)dz
\label{eq:ht-deriv}
}
for any $r\in\mathbb N$. 
In particular, $h_s\in C^\infty_b(\mathbb R^d)$.
We will use the following lemmas to bound $\gamma^*(h;\eps)$. 
\begin{lemma}\label{lem:decomp}
Let $h:\mathbb R^d\to\mathbb R$ be a bounded measurable function and $t\in(0,1]$. 
Then, under the assumptions of \cref{thm:main},
\ban{
&\E[h_t(W)]-\int_{\mathbb R^d} h_t(z)p_W(z)dz
\notag\\
&=\frac{1}{4}\int_t^1\bra{\int_s^1\weight_u^4\bra{\E[\langle \bar T^{\otimes2},\nabla^4 h_u(W)\rangle]-\sum_{i=1}^n\E[\langle \tau_i(\xi_i)^{\otimes2},\nabla^4 h_u(W)\rangle]}du}ds
\notag\\
&\quad-\frac{1}{8}\int_t^1\bra{\int_s^1\weight_u^4\sum_{i=1}^n\E[\langle \tau_i(\xi_i)\otimes \xi_i^{\otimes2},\nabla^4 h_u(W)\rangle]du}ds
\notag\\
&\quad+\frac{1}{16}\int_t^1\bra{\int_s^1\weight_u\bra{\int_u^1\weight_v^5\sum_{i=1}^n\E[\langle \xi_i^{\otimes3}\otimes(\xi_i+B-\res_i(\xi_i)),\nabla^4 h_v(W)\rangle]dv}du}ds
\notag\\
&\quad+\frac{1}{16}\int_t^1\bra{\int_s^1\weight_u\bra{\int_u^1\weight_v^5\sum_{i=1}^n\E[\langle \xi_i^{\otimes3}\otimes (\bar T-\tau_i(\xi_i)),\nabla^5 h_v(W)\rangle]dv}du}ds
\notag\\
&\quad+\frac{1}{4}\int_t^1\weight_s\bra{\int_s^1\weight_u^2\bra{\E[\langle  B^{\otimes2},\nabla^2 h_u(W)\rangle]-\sum_{i=1}^n\E[\langle \res_i(\xi_i)^{\otimes2},\nabla^2 h_u(W)\rangle]}du}ds
\notag\\
&\quad+\frac{1}{4}\int_t^1\weight_s\bra{\int_s^1\bra{\weight_u^3+\weight_u^4}\sum_{i=1}^n\E[\langle \res_i(\xi_i)\otimes(\bar T-\tau_i(\xi_i)),\nabla^{3} h_u(W)\rangle]du}ds
\notag\\
&\quad-\frac{1}{8}\int_t^1\weight_s\bra{\int_s^1\weight_u^4\sum_{i=1}^n\E[\langle \res_i(\xi_i)\otimes\xi_i^{\otimes2},\nabla^{3} h_u(W)\rangle]du}ds
\notag\\
&\quad+\frac{1}{8}\int_t^1\weight_s\bra{\int_s^1\weight_u\bra{\int_u^1\weight_v^4\sum_{i=1}^n\E[\langle \res_i(\xi_i)\otimes\xi_i\otimes (\bar T-\tau_i(\xi_i)),\nabla^{4} h_v(W)\rangle]dv}du}ds
\notag\\
&\quad+\frac{1}{8}\int_t^1\weight_s\bra{\int_s^1\weight_u\bra{\int_u^1\weight_v^4\sum_{i=1}^n\E[\langle \res_i(\xi_i)\otimes\xi_i\otimes (\xi_i+\Res-\res(\xi_i)),\nabla^{3} h_v(W)\rangle]dv}du}ds,
\label{decomp-aim}
}
where $m_s=1/\sqrt{1-s}$ for $s\in[0,1)$. 
Also, under the assumptions of \cref{thm:gcomp},
\ben{\label{decomp-aim-gcomp}
\E[h_t(W)]-\int_{\mathbb R^d} h_t(z)p_W(z)dz
=\frac{1}{4}\int_t^1\bra{\int_s^1m_u^4\E[\langle (\Sigma_W-\Sigma)^{\otimes2}, \nabla^4 h_u(W)\rangle]du}ds.
}
\end{lemma}

\begin{proof}
See Appendix \ref{sec:decomp}.
\end{proof}

\begin{lemma}\label{coro:aht}
Let $h=1_A$ with $A\in\mcl R$. Then, for any $s\in(0,1)$ and $r\in\mathbb N$,
\ben{
\sup_{x\in\mathbb R^d}\|\nabla^rh_s(x)\|_1\leq C_r\bra{\frac{1-s}{\sigma_*^2s}\log d}^{r/2},
}
where $C_r>0$ is a constant depending only on $r$. 
\end{lemma}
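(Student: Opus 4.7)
The plan is to combine the explicit integral representation \eqref{eq:ht-deriv} with the rectangular-integral bound \cref{lem:aht} already invoked in the proof of \cref{coro1}. Starting from \eqref{eq:ht-deriv} with $h = 1_A$,
\[
\nabla^r h_s(x) = \bra{-\sqrt{\tfrac{1-s}{s}}}^{\!r} \int_{\mathbb R^d} 1_A\bra{\sqrt{1-s}\,x + \sqrt s\,z}\, \nabla^r \phi_\Sigma(z)\, dz.
\]
The key observation is that, viewed as a function of $z$, the indicator $z \mapsto 1_A(\sqrt{1-s}\,x + \sqrt s\,z)$ is itself the indicator of a rectangle $B_x \in \mcl R$, because $A \in \mcl R$ is a rectangle and the map $z \mapsto \sqrt{1-s}\,x + \sqrt s\,z$ operates coordinatewise with a positive scaling. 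Explicitly, $B_x = (A - \sqrt{1-s}\,x)/\sqrt s$.

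Using this reduction, I would obtain
\[
\|\nabla^r h_s(x)\|_1 = \bra{\frac{1-s}{s}}^{\!r/2}\norm{\int_{B_x} \nabla^r \phi_\Sigma(z)\, dz}_1,
\]
and then invoke \cref{lem:aht} in the form
\[
\sup_{B \in \mcl R}\norm{\int_B \nabla^r \phi_\Sigma(z)\, dz}_1 \leq C_r\,\frac{(\log d)^{r/2}}{\sigma_*^r},
\]
where $C_r > 0$ depends only on $r$ (this is the rectangular analogue of the derivative bounds for $r = 2,3$ already quoted in the proof of \cref{thm:boot}). Combining the two displays immediately yields
\[
\|\nabla^r h_s(x)\|_1 \leq C_r \bra{\frac{1-s}{\sigma_*^2 s}\log d}^{r/2},
\]
which is the desired inequality, uniformly in $x \in \mathbb R^d$.

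I do not expect any real obstacle: the argument amounts to a single coordinatewise change of variables followed by a quotation of \cref{lem:aht}. The one line that deserves explicit verification is the stability of the class $\mcl R$ under coordinatewise positive-affine maps, which is immediate from its definition as a product of intervals. If one preferred a proof not referencing \eqref{eq:ht-deriv} directly, one could instead write $h_s(x) = P(\sqrt s\,Z \in A - \sqrt{1-s}\,x)$ with $Z \sim N(0,\Sigma)$ and differentiate under the integral in $x$, arriving at the same expression and hence the same bound.
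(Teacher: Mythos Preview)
Your proposal is correct and matches the paper's proof essentially line for line: the paper's argument is simply ``Observe that $\{z\in\mathbb R^d:\sqrt{1-s}x+\sqrt sz\in A\}\in\mcl R$ for any $x\in\mathbb R^d$. Thus, the claim follows from \eqref{eq:ht-deriv} and \cref{lem:aht}.'' You have written out the same two steps with a bit more detail.
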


\begin{proof}
Observe that $\{z\in\mathbb R^d:\sqrt{1-s}x+\sqrt sz\in A\}\in\mcl R$ for any $x\in\mathbb R^d$. 
Thus, the claim follows from \eqref{eq:ht-deriv} and \cref{lem:aht}. 
\end{proof}

\begin{proof}[Proof of \cref{thm:main}]
Without loss of generality, we may assume that $W$ and $Z$ are independent. 
We are going to apply \cref{l3} to $\mu,\nu$ and $K$ defined as
\[
\mu(A)=P(\sqrt{1-t}W\in A),\quad
\nu(A)=\int_{\mathbb R^d}1_A(\sqrt{1-t}z)p_W(z)dz,\quad
K(A)=P(\sqrt{t}Z\in A).
\]
They are chosen so that $\sqrt{1-t}W+\sqrt tZ\sim\mu*K$ and 
\[
\nu*K(A)=\int_{\mathbb R^d}\E[1_A(\sqrt{1-t}z+\sqrt t Z)]p_W(z)dz
\] 
for every $A\in\mathcal R$. 
Since 
\[
P(\|Z\|_\infty>\ol\sigma\sqrt{2\log(2d)})\leq\sum_{j=1}^dP(|Z_j|>\sigma_j\sqrt{2\log(2d)})\leq \frac{d}{2}e^{-\log(2d)}=\frac{1}{4},
\]
we have $\alpha:=K([-\eps,\eps]^d)\geq3/4>1/2$ with $\eps=\ol\sigma\sqrt{2t\log(2d)}$. 
Let $h=1_A$ with $A=\prod_{j=1}^d[a_j,b_j]\in\mcl R$. Then we have $M_h(x;\eps)=1_{A^\eps}(x)$ and $m_h(x;\eps)=1_{A^{-\eps}}(x)$, where we set $A^r:=\prod_{j=1}^d[a_j-r,b_j+r]$ for any $r\in\mathbb R$ with interpreting $[a,b]=\emptyset$ if $a>b$. Hence
\ben{\label{gamma-est}
\gamma(h;\eps)\leq\sup_{h=1_A,A\in\mcl R}\abs{\E[h_t(W)]-\int h_t(z)p_W(z)dz}
}
and
\[
\int [M_h(x; \epsilon)-h(x)]\nu(dx)\leq\sup_{A\in\mcl R;u,v\in [0,\eps]^d}\abs{\int 1_{A^{u,v}\setminus A}(\sqrt{1-t}z)p_W(z)dz}.
\]
Further, for each $j=1,\dots,d$, set 
\[
\begin{cases}
I_j=[a_j+\eps,b_j-\eps],~u_j=v_j=\eps & \text{if }a_j+\eps<b_j-\eps,\\
I_j=\{(a_j+b_j)/2\},~u_j=v_j=(b_j-a_j)/2 & \text{if }b_j\leq a_j+2\eps.
\end{cases}
\]
Then we have $\tilde A:=\prod_{j=1}^dI_j\in\mcl R$, $u:=(u_1,\dots,u_d)^\top\in [0,\eps]^d$, $v:=(v_1,\dots,v_d)^\top\in [0,\eps]^d$ and
\[
\int [h(x)-m_h(x; \epsilon)]\nu(dx)=\int 1_{\tilde A^{u,v}\setminus \tilde A}(\sqrt{1-t}z)p_W(z)dz.
\]
Consequently,
\ben{\label{tau-est}
\tau(h;\eps)\leq\sup_{A\in\mcl R;u,v\in [0,\eps]^d}\abs{\int 1_{A^{u,v}\setminus A}(\sqrt{1-t}z)p_W(z)dz}.
}
Besides, for any $x\in\mathbb R^d$ and $y\in [-\eps,\eps]^d$, we have $h(x+y)-h(x)=1_{(A+y)\setminus A}(x)-1_{A\setminus(A+y)}(x)$. For each $j=1,\dots,d$, set 
\[
\begin{cases}
I_j=[a_j+y_j,b_j],u_j=0,v_j=y_j & \text{if }y_j\geq0,a_j+y_j<b_j,\\
I_j=[a_j,b_j+y_j],u_j=-y_j,v_j=0 & \text{if }y_j<0,a_j<b_j+y_j,\\
I_j=\{a_j+y_j\},u_j=0,v_j=b_j-a_j & \text{otherwise}.
\end{cases}
\]
Then we have $\tilde A:=\prod_{j=1}^dI_j\in\mcl R$, $u:=(u_1,\dots,u_d)^\top\in [0,\eps]^d$, $v:=(v_1,\dots,v_d)^\top\in [0,\eps]^d$ and
\[
\int 1_{(A+y)\setminus A}(x)\nu(dx)=\int 1_{\tilde A^{u,v}\setminus \tilde A}(\sqrt{1-t}z)p_W(z)dz.
\]
Also, observe that $A\setminus(A+y)=[(A+y)-y]\setminus(A+y)$ and $A+y\in\mcl R$. Hence we conclude
\ben{\label{tautilde-est}
\tilde\tau(h;\eps)\leq2\sup_{A\in\mcl R;u,v\in [0,\eps]^d}\abs{\int 1_{A^{u,v}\setminus A}(\sqrt{1-t}z)p_W(z)dz}.
}
In addition, observe that $h_y=1_{A-y}$ for any $y\in\mathbb R^d$. 
Hence, Eqs.\eqref{gamma-est}--\eqref{tautilde-est} still hold if the left hand side is replaced by the one marked with an asterisk. 
As a result, \cref{l3} gives
\ban{
&\sup_{A\in\mcl R}\abs{P(\sqrt{1-t}W\in A)-\int_{\mathbb R^d}1_A(\sqrt{1-t}z)p_W(z)dz}\notag\\
&\leq2\sup_{h=1_A,A\in\mcl R}\abs{\E[h_t(W)]-\int h_t(z)p_W(z)dz}
+6\sup_{A\in\mcl R;u,v\in [0,\eps]^d}\abs{\int 1_{A^{u,v}\setminus A}(\sqrt{1-t}z)p_W(z)dz}.
\label{ae:smoothing}
} 
Note that $A/\sqrt{1-t}\in\mcl R$ for any $A\in\mcl R$. Thus, the left hand side of \eqref{ae:smoothing} is equal to
\[
\sup_{A\in\mcl R}\abs{P(W\in A)-\int_{A}p_W(z)dz}.
\]
Further, by Lemmas \ref{nazarov} and \ref{lem:anti}
\ban{
&\sup_{A\in\mcl R;u,v\in [0,\eps]^d}\abs{\int 1_{A^{u,v}\setminus A}(\sqrt{1-t}z)p_W(z)dz}\notag\\
&\lesssim \frac{\eps}{\sqrt{1-t}}\bra{\frac{\sqrt{\log d}}{\ul\sigma}
+\frac{\log^{3/2}d}{\sigma_*^3}\|\Sigma_W-\Sigma\|_\infty
+\frac{\log^{2}d}{\sigma_*^4}\norm{\sum_{i=1}^n\E[\xi_i^{\otimes3}]}_\infty}\notag\\
&\lesssim \ol\sigma\sqrt t\bra{\frac{\log d}{\ul\sigma}
+\frac{\log^{2}d}{\sigma_*^3}\|\Sigma_W-\Sigma\|_\infty
+\frac{\log^{5/2}d}{\sigma_*^4}\norm{\sum_{i=1}^n\E[\xi_i^{\otimes3}]}_\infty}.\label{ae:anti}
}
Meanwhile, set
$
\kappa:=(\log d)/\sigma_*^2
$. 
Then, by \cref{coro:aht}, we have for any $u,v\in(0,1)$ 
\ba{
|\E[\langle \bar T^{\otimes2},\nabla^4 h_u(W)\rangle]|
&\lesssim \E\|\bar T\|_\infty^2\bra{\frac{1-u}{u}\kappa}^2,\\
\abs{\sum_{i=1}^n\E[\langle \tau_i(\xi_i)^{\otimes2},\nabla^4 h_u(W)\rangle]}
&\lesssim \E\norm{\sum_{i=1}^n \tau_i(\xi_i)^{\otimes2}}_\infty\bra{\frac{1-u}{u}\kappa}^2,\\
\abs{\sum_{i=1}^n\E[\langle \tau_i(\xi_i)\otimes \xi_i^{\otimes2},\nabla^4 h_u(W)\rangle]}
&\lesssim \E\norm{\sum_{i=1}^n \tau_i(\xi_i)\otimes\xi_i^{\otimes2}}_\infty\bra{\frac{1-u}{u}\kappa}^2,\\
\abs{\sum_{i=1}^n\E[\langle \xi_i^{\otimes4},\nabla^4 h_v(W)\rangle]}
&\lesssim \E\norm{\sum_{i=1}^n \xi_i^{\otimes4}}_\infty\bra{\frac{1-v}{v}\kappa}^2,\\
\abs{\sum_{i=1}^n\E[\langle \xi_i^{\otimes3}\otimes\Res,\nabla^4 h_v(W)\rangle]}
&\lesssim \E\norm{\Res\otimes\sum_{i=1}^n \xi_i^{\otimes3}}_\infty\bra{\frac{1-v}{v}\kappa}^2,\\
\abs{\sum_{i=1}^n\E[\langle \xi_i^{\otimes3}\otimes\res_i(\xi_i),\nabla^4 h_v(W)\rangle]}
&\lesssim \E\norm{\sum_{i=1}^n \xi_i^{\otimes3}\otimes\res_i(\xi_i)}_\infty\bra{\frac{1-v}{v}\kappa}^2,\\
\abs{\sum_{i=1}^n\E[\langle \xi_i^{\otimes3}\otimes \bar T,\nabla^5 h_v(W)\rangle]}
&\lesssim \E\norm{\bar T\otimes\sum_{i=1}^n\xi_i^{\otimes3}}_\infty\bra{\frac{1-v}{v}\kappa}^{5/2},\\
\abs{\sum_{i=1}^n\E[\langle \xi_i^{\otimes3}\otimes \tau_i(\xi_i),\nabla^5 h_v(W)\rangle]}
&\lesssim \E\norm{\sum_{i=1}^n \xi_i^{\otimes3}\otimes\tau_i(\xi_i)}_\infty\bra{\frac{1-v}{v}\kappa}^{5/2},\\
|\E[\langle \Res^{\otimes2},\nabla^2 h_u(W)\rangle]|
&\lesssim \E\|\Res\|_\infty^2\bra{\frac{1-u}{u}\kappa},\\
\abs{\sum_{i=1}^n\E[\langle \res_i(\xi_i)^{\otimes2},\nabla^2 h_u(W)\rangle]}
&\lesssim \E\norm{\sum_{i=1}^n \res_i(\xi_i)^{\otimes2}}_\infty\bra{\frac{1-u}{u}\kappa},\\
\abs{\sum_{i=1}^n\E[\langle \res_i(\xi_i)\otimes \bar T,\nabla^3 h_u(W)\rangle]}
&\lesssim \E\norm{\bar T\otimes\Res}_\infty\bra{\frac{1-u}{u}\kappa}^{3/2},\\
\abs{\sum_{i=1}^n\E[\langle \res_i(\xi_i)\otimes \tau_i(\xi_i),\nabla^3 h_u(W)\rangle]}
&\lesssim \E\norm{\sum_{i=1}^n \res_i(\xi_i)\otimes\tau_i(\xi_i)}_\infty\bra{\frac{1-u}{u}\kappa}^{3/2},\\
\abs{\sum_{i=1}^n\E[\langle \res_i(\xi_i)\otimes \xi_i^{\otimes2},\nabla^3 h_u(W)\rangle]}
&\lesssim \E\norm{\sum_{i=1}^n \res_i(\xi_i)\otimes\xi_i^{\otimes2}}_\infty\bra{\frac{1-u}{u}\kappa}^{3/2},\\
\abs{\sum_{i=1}^n\E[\langle \res_i(\xi_i)\otimes\xi_i\otimes\bar T,\nabla^4 h_v(W)\rangle]}
&\lesssim \E\norm{\bar T\otimes\sum_{i=1}^n \res_i(\xi_i)\otimes\xi_i}_\infty\bra{\frac{1-u}{u}\kappa}^2,\\
\abs{\sum_{i=1}^n\E[\langle \res_i(\xi_i)\otimes\xi_i\otimes\tau_i(\xi_i),\nabla^4 h_v(W)\rangle]}
&\lesssim \E\norm{\sum_{i=1}^n \res_i(\xi_i)\otimes\xi_i\otimes\tau_i(\xi_i)}_\infty\bra{\frac{1-v}{v}\kappa}^2,\\
\abs{\sum_{i=1}^n\E[\langle \res_i(\xi_i)\otimes\xi_i\otimes\Res,\nabla^3 h_v(W)\rangle]}
&\lesssim \E\norm{\Res\otimes\sum_{i=1}^n \res_i(\xi_i)\otimes\xi_i}_\infty\bra{\frac{1-v}{v}\kappa}^{3/2},\\
\abs{\sum_{i=1}^n\E[\langle \res_i(\xi_i)\otimes\xi_i\otimes\res_i(\xi_i),\nabla^3 h_v(W)\rangle]}
&\lesssim \E\norm{\sum_{i=1}^n \res_i(\xi_i)^{\otimes2}\otimes\xi_i}_\infty\bra{\frac{1-v}{v}\kappa}^{3/2}.
}
Also, by the AM--GM inequality,
\ba{
2\E\norm{\sum_{i=1}^n \tau_i(\xi_i)\otimes\xi_i^{\otimes2}}_\infty\kappa^2
&\leq\E\norm{\sum_{i=1}^n \tau_i(\xi_i)^{\otimes2}}_\infty\kappa^2
+\E\norm{\sum_{i=1}^n \xi_i^{\otimes4}}_\infty\kappa^2,\\
2\E\norm{\bar T\otimes\Res}_\infty\kappa^{3/2}
&\leq\E\norm{\bar T}_\infty^2\kappa^2
+\E\norm{\Res}_\infty^2\kappa,\\
2\E\norm{\sum_{i=1}^n \res_i(\xi_i)\otimes\tau_i(\xi_i)}_\infty\kappa^{3/2}
&\leq\E\norm{\sum_{i=1}^n \res_i(\xi_i)^{\otimes2}}_\infty\kappa
+\E\norm{\sum_{i=1}^n \tau_i(\xi_i)^{\otimes2}}_\infty\kappa^2,\\
2\E\norm{\sum_{i=1}^n \res_i(\xi_i)\otimes\xi_i^{\otimes2}}_\infty\kappa^{3/2}
&\leq\E\norm{\sum_{i=1}^n \res_i(\xi_i)^{\otimes2}}_\infty\kappa
+\E\norm{\sum_{i=1}^n \xi_i^{\otimes4}}_\infty\kappa^2.
}
Further, observe that
\ba{
\frac{1}{w^a\sqrt{1-w}}\leq\frac{\sqrt 2}{w^a}+\frac{2^a}{\sqrt{1-w}}\quad\text{for any }w\in(0,1),a>0
}
and
\ba{
\int_t^1\frac{1}{s}ds=|\log t|,\quad
\int_t^1\frac{1}{\sqrt{s}}ds\leq2,\quad
\int_t^1\frac{1}{\sqrt{1-s}}ds\leq2,\quad
\int_t^1\frac{-\log s}{\sqrt{1-s}}ds\lesssim1.
}
Combining these estimates with \eqref{ae:smoothing}--\eqref{ae:anti} and \eqref{decomp-aim} gives the desired result. 
\end{proof}

\begin{proof}[Proof of \cref{thm:gcomp}]
The claim follows by replacing \eqref{decomp-aim} with \eqref{decomp-aim-gcomp} in the proof of \cref{thm:main}.  
\end{proof}

\subsection{Proof of Theorem \ref{coro1}}

Below we will frequently use the following identity without reference: 
For any $x_1,\dots,x_n\in\mathbb R^d$ and $r\in\mathbb N$, 
\[
\norm{\sum_{i=1}^nx_i^{\otimes (2r)}}_\infty=\max_{1\leq j\leq d}\sum_{i=1}^nx_{ij}^{2r}.
\]
This follows from the AM-GM inequality. 
\begin{proof}[\bf\upshape Proof of \cref{coro1}]
Observe that for every $i=1,\dots,n$, $(\tau_i^X+(\tau_i^X)^\top)/2$ is a Stein kernel for $X_i$ and $\max_{j,k}\|(\tau_{i,jk}^X(X_i)+\tau_{i,kj}^X(X_i))/2\|_{\psi_{1/2}}\lesssim b^2$. 
Therefore, we may assume $\tau^X_{i,jk}=\tau^X_{i,kj}$ for all $j,k\in\{1,\dots,d\}$ without loss of generality. This particularly implies $\E[\tau_i^X(X_i)]=\E[X_i^{\otimes2}]$.

We apply \cref{thm:main} to $\xi_i=X_i/\sqrt n$. Observe that $\Sigma_W=\Sigma$ and that $\xi_i$ has a Stein kernel $\tau_i$ satisfying $\tau_i(\xi_i)=\tau^X_i(X_i)/n$. 

Let us bound the quantities appearing on the right hand side of \eqref{eq:main}. 
First, noting that $\E[\tau_i(\xi_i)]=\E[\xi_i^{\otimes2}]$, we have
\[
\bar T=\sum_{i=1}^n(\tau_i(\xi_i)-\E[\tau_i(\xi_i)]).
\]
Thus, by \cref{max-weibull} with $K=b^2/n$, $\alpha=1/2$ and $r=2$, we obtain
\ben{\label{tbar-bound}
\E[\|\bar T\|_\infty^2]
\lesssim \frac{b^4}{n^2}\bra{\sqrt{n\log d}+(\log d)^{2}}^2
\lesssim \frac{b^4\log d}{n},
}
where the second inequality follows from $\log^3 d\leq n$. 
Next, by \cref{prod-psi} and \cref{max-weibull} with $K=b^2/n$, $\alpha=1/4$ and $r=1$,
\ba{
\E\norm{\sum_{i=1}^n\{\tau_i(\xi_i)^{\otimes2}-\E[\tau_i(\xi_i)^{\otimes2}]\}}_\infty
\lesssim \frac{b^4}{n^2}\bra{\sqrt{n\log d}+(\log d)^{4}}.
}
Therefore, 
\besn{\label{tau2-bound}
\E\norm{\sum_{i=1}^n\tau_i(\xi_i)^{\otimes2}}_\infty
&\leq\norm{\sum_{i=1}^n\E[\tau_i(\xi_i)^{\otimes2}]}_\infty
+\E\norm{\sum_{i=1}^n\{\tau_i(\xi_i)^{\otimes2}-\E[\tau_i(\xi_i)^{\otimes2}]\}}_\infty\\
&\lesssim \frac{b^4}{n}+\frac{b^4\sqrt{\log d}}{n^{3/2}}+\frac{b^4(\log d)^{4}}{n^2}
\lesssim \frac{b^4}{n}+\frac{b^4(\log d)^{4}}{n^2},
}
where the last inequality follows from $\sqrt{\log d}\leq n^{1/6}\leq\sqrt n$. 
Similarly, we can show that
\ban{
\E\norm{\sum_{i=1}^n\xi_i^{\otimes4}}_\infty
&\lesssim \frac{b^4}{n}+\frac{b^4(\log d)^{4}}{n^2},
\label{xi4-bound}\\
\E\norm{\sum_{i=1}^n\xi_i^{\otimes3}\otimes\tau_i(\xi_i)}_\infty
&\lesssim \frac{b^5}{n^{3/2}}+\frac{b^5(\log d)^{5}}{n^{5/2}}.
\label{xi3-tau-bound}
}
In addition, by the Schwarz inequality,
\ba{
\E\norm{\bar T\otimes\sum_{i=1}^n\xi_i^{\otimes3}}_\infty
\leq\sqrt{\E\norm{\bar T}^2_\infty}\sqrt{\E\norm{\sum_{i=1}^n\xi_i^{\otimes3}}^2_\infty}.
}
Similarly to the proof of \eqref{tau2-bound}, we can prove
\ben{\label{xi3-bound}
\E\norm{\sum_{i=1}^n\xi_i^{\otimes3}}^2_\infty
\lesssim \bra{\frac{b^3}{\sqrt n}+\frac{b^3(\log d)^{3}}{n^{3/2}}}^2
\lesssim \frac{b^6}{n},
}
where the second inequality follows by the assumption $\log^3 d\leq n$. 
Combining this with \eqref{tbar-bound} gives
\ben{\label{tbar-xi3-bound}
\E\norm{\bar T\otimes\sum_{i=1}^n\xi_i^{\otimes3}}_\infty
\lesssim \frac{b^5\sqrt{\log d}}{n}.
}

Now, by \eqref{tbar-bound}--\eqref{xi4-bound},
\ba{
&\frac{\log^2d}{\sigma_*^4}\bra{
\E\|\bar T\|_\infty^2
+\E\norm{\sum_{i=1}^n\tau_i(\xi_i)^{\otimes2}}_\infty
+\E\norm{\sum_{i=1}^n\xi_i^{\otimes4}}_\infty
}\\
&\lesssim \frac{\log^2d}{\sigma_*^4}\bra{\frac{b^4\log d}{n}+\frac{b^4(\log d)^{4}}{n^2}}
\lesssim \frac{b^4\log^3 d}{\sigma_*^4n},
}
where the second inequality follows by the assumption $\log^3 d\leq n$. 
Also, by \eqref{xi3-tau-bound} and \eqref{tbar-xi3-bound},
\ba{
&\frac{\log^{5/2}d}{\sigma_*^{5}}\bra{
\E\norm{\bar T\otimes\sum_{i=1}^n\xi_i^{\otimes3}}_\infty
+\E\norm{\sum_{i=1}^n\xi_i^{\otimes3}\otimes\tau_i(\xi_i)}_\infty
}\\
&\lesssim \frac{\log^{5/2}d}{\sigma_*^{5}}\bra{\frac{b^5\sqrt{\log d}}{n}+\frac{b^5(\log d)^{5}}{n^{5/2}}}
\lesssim \frac{b^5\log^3 d}{\sigma_*^{5}n},
}
where we used the assumption $\log^3 d\leq n$ in the second inequality. 
All together, we obtain by \cref{thm:main}
\ba{
&\sup_{A\in\mcl R}\abs{P(S_n\in A)-\int_{A}p_n(z)dz}\\
&\lesssim |\log t|\frac{b^4\log^3 d}{\sigma_*^4n}+\frac{b^5\log^3 d}{\sigma_*^{5}n}
+\ol\sigma\sqrt t\bra{\frac{\log d}{\ul\sigma}
+\frac{\log^{5/2}d}{\sigma_*^4}\norm{\sum_{i=1}^n\E[\xi_i^{\otimes3}]}_\infty
}
}
for any $t\in(0,1/2]$. 
With $t=1/n^2$, we obtain
\ba{
\ol\sigma\sqrt t\bra{\frac{\log d}{\ul\sigma}
+\frac{\log^{5/2}d}{\sigma_*^4}\norm{\sum_{i=1}^n\E[\xi_i^{\otimes3}]}_\infty
}
&\lesssim \frac{b\log d}{\sigma_*n}+\frac{b^4\log^{5/2}d}{\sigma_*^4n^{3/2}}
\lesssim \frac{b^5\log^3 d}{\sigma_*^{5}n}.
}
Consequently, we obtain the desired result. 
\end{proof}

\subsection{Proof of Theorem \ref{prop:boot}}

First consider Case (i). 
Set $\wt X_i:=X_i-\bar X$ for $i=1,\dots,n$. 
We apply \cref{thm:main} with $\xi_i=w_i\wt X_i/\sqrt n$ and $t=1/n^3$ conditional on the data. Note that conditional on the data, $\xi_i$ has a Stein kernel $\tau_i$ satisfying $\tau_i(\xi_i)=\E^*[\tau_i^*(w_i)\wt X_i^{\otimes2}/n\mid\xi_i]$ by \cref{sk-affine}. Therefore, we have
\ban{
&\sup_{A\in\mcl R}\abs{P^*(S_n^*\in A)-\int_{A}\tilde p_{n,\gamma}(z)dz}\notag\\
&\lesssim (\log n)\frac{\log^2d}{\sigma_*^4}\left(
\E^*\sbra{\norm{\bar T^*}_\infty^2}
+\E^*\sbra{\norm{\frac{1}{n^2}\sum_{i=1}^n\tau_i^*(w_i)^2\wt X_i^{\otimes4}}_\infty}
+\E^*\sbra{\norm{\frac{1}{n^2}\sum_{i=1}^nw_i^4\wt X_i^{\otimes4}}_\infty}
\right)\notag\\
&\quad+\frac{\log^{5/2}d}{\sigma_*^{5}}\bra{
\E^*\sbra{\norm{\bar T^*\otimes\frac{1}{n^{3/2}}\sum_{i=1}^nw_i^3\wt X_i^{\otimes3}}_\infty}
+\E^*\sbra{\norm{\frac{1}{n^{5/2}}\sum_{i=1}^nw_i^3\tau^*_i(w_i)\wt X_i^{\otimes5}}_\infty}
}\notag\\
&\quad+\frac{\ol\sigma}{n^{3/2}}\bra{\frac{\log d}{\ul\sigma}
+\frac{\log^2d}{\sigma_*^3}\|\wh\Sigma_n-\Sigma\|_\infty
+\frac{\log^{5/2}d}{\sigma_*^4}\norm{\frac{1}{n^{3/2}}\sum_{i=1}^n\E[w_i^3]\wt X_i^{\otimes3}}_\infty\label{boot1}
},
}
where
\[
\tilde p_{n,\gamma}(z)=\phi_\Sigma(z)+\frac{1}{2}\langle\wh{\Sigma}_n-\Sigma,\nabla^2\phi_\Sigma(z)\rangle-\frac{\gamma}{6n^{3/2}}\sum_{i=1}^n\langle (X_i-\bar X)^{\otimes3},\nabla^3\phi_\Sigma(z)\rangle
\]
and
\[
\bar T^*:=\frac{1}{n}\sum_{i=1}^n\tau_i^*(w_i)\wt X_i^{\otimes2}-\Sigma.
\]
Next, by Lemmas \ref{max-weibull} and \ref{hj-bound}, there exists a universal constant $c$ such that the event 
\besn{\label{def:En}
\mcl E_n&:=\bigcap_{r=1}^2\cbra{\norm{\frac{1}{n}\sum_{i=1}^n(X_{i}^{\otimes r}-\E[X_{i}^{\otimes r}])}_\infty\leq cb^{r}\bra{\sqrt{\frac{\log(dn)}{n}}+\frac{\log^r(dn)}{n}}}\\
&\qquad\cap\bigcap_{r=3}^5\cbra{\max_{1\leq j\leq d}\abs{\frac{1}{n}\sum_{i=1}^n(|X_{ij}|^{r}-\E[|X_{ij}|^{r}])}\leq cb^{r}\bra{\sqrt{\frac{\log(dn)}{n}}+\frac{\log^r(dn)}{n}}}\\
&\qquad\cap\cbra{\max_{1\leq j,k\leq d}\frac{1}{n}\sum_{i=1}^n|X_{ij}||X_{ik}|1_{\{|X_{ij}|\vee|X_{ik}|>2b\log n\}}\leq cb^2\bra{\sqrt{\frac{\log(dn)}{n}}+\frac{\log^2(dn)}{n}}}\\
&\qquad\cap\cbra{\max_{1\leq j,k\leq d}\frac{1}{n}\sum_{i=1}^nX_{ij}^2X_{ik}^21_{\{|X_{ij}|\vee|X_{ik}|\leq 2b\log n\}}\leq cb^4\bra{1+\frac{\log(dn)\log^4n}{n}}}
}
occurs with probability at least $1-1/n$. 
Recall that $\log^3 d\leq n$ by assumption. Hence, on $\mcl E_n$, we have
\ben{\label{mean-bound}
\|\bar X\|_\infty\lesssim b\sqrt{\frac{\log(dn)}{n}}
}
and
\ben{\label{2-mom-bound}
\norm{\frac{1}{n}\sum_{i=1}^nX_{i}^{\otimes 2}-\Sigma}_\infty\lesssim b^2\sqrt{\frac{\log(dn)}{n}}.
}
Thus, on $\mcl E_n$,
\ben{\label{scov-bound}
\norm{\wh\Sigma_n-\Sigma}_\infty
\leq\norm{\frac{1}{n}\sum_{i=1}^nX_{i}^{\otimes 2}-\Sigma}_\infty+\|\bar X\|_\infty^2
\lesssim b^{2}\sqrt{\frac{\log (dn)}{n}}.
} 
Meanwhile, for every $r\in\{2,3,4,5\}$, we have on $\mcl E_n$
\ben{\label{boot-mom0}
\norm{\frac{1}{n}\sum_{i=1}^nX_i^{\otimes r}}
\leq\max_{1\leq j\leq d}\frac{1}{n}\sum_{i=1}^n|X_{ij}|^r
\lesssim b^r\bra{1+\frac{\log^r (dn)}{n}}.
}
Combining this bound with \eqref{mean-bound}, we have on $\mcl E_n$
\ben{\label{boot-mom}
\max_{1\leq j\leq d}\frac{1}{n}\sum_{i=1}^n|\wt X_{ij}|^r\lesssim b^r\bra{1+\frac{\log^r (dn)}{n}}.
}
%
Further, by \eqref{mean-bound}, \eqref{boot-mom0}, the construction of $\mcl E_n$ and $\log^3d\leq n$, we have on $\mcl E_n$
\ben{\label{hj-boot-1}
\max_{1\leq j,k\leq d}\frac{1}{n}\sum_{i=1}^n|\wt X_{ij}\wt X_{ik}|1_{\{|X_{ij}|\vee|X_{ik}|>2b\log n\}}
\lesssim b^2\bra{\sqrt{\frac{\log d}{n}}+\frac{\log^2(dn)}{n}},
}
and
\ben{\label{hj-boot-2}
\max_{1\leq j,k\leq d}\frac{1}{n}\sum_{i=1}^n\wt X_{ij}^2\wt X_{ik}^21_{\{|X_{ij}|\vee|X_{ik}|\leq 2b\log n\}}
\lesssim b^4.
}

Now we bound the right hand side of \eqref{boot1} on the event $\mcl E_n$. 
First, we have
\ben{\label{tbar-decomp}
\E^*[\|\bar T^*\|_\infty^2]\leq 2\E^*\sbra{\max_{1\leq j,k\leq d}R_{jk}^2}+2\|\wh\Sigma_n-\Sigma\|_\infty^2,
}
where
$
R_{jk}:=n^{-1}\sum_{i=1}^n\{\tau_i^*(w_i)-1\}\wt X_{ij}\wt X_{ik}.
$
We decompose $R_{jk}$ as 
\ba{
R_{jk}&=\frac{1}{n}\sum_{i=1}^n\{\tau_i^*(w_i)-1\}\wt X_{ij}\wt X_{ik}\bra{1_{\{|X_{ij}|\vee|X_{ik}|>2b\log n\}}+1_{\{|X_{ij}|\vee|X_{ik}|\leq 2b\log n\}}}\\
&=:R_{1,jk}+R_{2,jk}.
}
Since
\ba{
\max_{1\leq j,k\leq d}|R_{1,jk}|
&\leq(b_w^2+1)\max_{1\leq j,k\leq d}\frac{1}{n}\sum_{i=1}^n|\wt X_{ij}\wt X_{ik}|1_{\{|X_{ij}|\vee|X_{ik}|>2b\log n\}},
}
we have on $\mcl E_n$
\ben{\label{r1-est}
\max_{1\leq j,k\leq d}|R_{1,jk}|
\lesssim b_w^2b^2\bra{\sqrt{\frac{\log d}{n}}+\frac{\log^2(dn)}{n}}
}
by \eqref{hj-boot-1}. 
Meanwhile, by Nemirovski's inequality (e.g.~Lemma 14.24 in \cite{BvdG11}), 
\ba{
\E^*\sbra{\max_{1\leq j,k\leq d}R_{2,jk}^2}
&\lesssim \frac{b_w^4\log d}{n}\max_{1\leq j,k\leq d}\frac{1}{n}\sum_{i=1}^n\wt X_{ij}^2\wt X_{ik}^21_{\{|X_{ij}|\vee|X_{ik}|\leq 2b\log n\}}.
}
Hence we have on $\mcl E_n$
\ben{\label{r2-est}
\E^*\sbra{\max_{1\leq j,k\leq d}R_{2,jk}^2}
\lesssim \frac{b_w^4b^4\log d}{n}
}
by \eqref{hj-boot-2}. 
Combining \eqref{tbar-decomp}--\eqref{r2-est} with \eqref{scov-bound}, we obtain
\ben{\label{tbar-bound*}
\E^*\sbra{\|\bar T^*\|_\infty^2}
\lesssim b_w^4b^4\bra{\frac{\log(dn)}{n}+\frac{\log^4(dn)}{n^2}}.
}
Consequently, we have
\ben{\label{boot-aim1}
\frac{\log^2d}{\sigma_*^4}\E^*\sbra{\|\bar T^*\|_\infty^2}
\lesssim \frac{b_w^4b^4}{\sigma_*^4}\bra{\frac{\log ^3(dn)}{n}+\frac{\log^6(dn)}{n^2}}
\lesssim \frac{b_w^4b^4}{\sigma_*^4}\frac{\log ^3(dn)}{n},
}
where we used the assumption $\log^3d\leq n$ for the last inequality. 
Next, we have
\besn{\label{boot-bound2}
\norm{\frac{1}{n^2}\sum_{i=1}^n\tau_i^*(w_i)^2\wt X_i^{\otimes4}}_\infty
&\leq b_w^4\max_{j,k,l,m}\frac{1}{n^2}\sum_{i=1}^n|\wt X_{ij}\wt X_{ik}\wt X_{il}\wt X_{im}|
\lesssim \frac{b_w^4b^4}{n}\bra{1+\frac{\log^4 (dn)}{n}},
}
where the second inequality follows by the AM-GM inequality and \eqref{boot-mom}. 
Similarly, we can prove
\ban{
\norm{\frac{1}{n^2}\sum_{i=1}^nw_i^4\wt X_i^{\otimes4}}_\infty&\lesssim \frac{b_w^4b^4}{n}\bra{1+\frac{\log^4 (dn)}{n}},
\label{boot-bound4}\\
\norm{\frac{1}{n^{3/2}}\sum_{i=1}^nw_i^3\wt X_i^{\otimes3}}_\infty&\lesssim \frac{b_w^3b^3}{\sqrt n}\bra{1+\frac{\log^3 (dn)}{n}}
\lesssim \frac{b_w^3b^3}{\sqrt n},
\label{boot-bound5}\\
\norm{\frac{1}{n^{5/2}}\sum_{i=1}^nw_i^3\tau^*_i(w_i)\wt X_i^{\otimes5}}_\infty&\lesssim \frac{b_w^5b^5}{n^{3/2}}\bra{1+\frac{\log^5 (dn)}{n}}.
\label{boot-bound6}
}
By \eqref{boot-bound2}--\eqref{boot-bound4}, 
\ban{
&\frac{\log^2d}{\sigma_*^4}\bra{
\E^*\sbra{\norm{\frac{1}{n^2}\sum_{i=1}^n\tau_i^*(w_i)^2\wt X_i^{\otimes4}}_\infty}
+\E^*\sbra{\norm{\frac{1}{n^2}\sum_{i=1}^nw_i^4\wt X_i^{\otimes4}}_\infty}}
\notag\\
&\lesssim \frac{b_w^4b^4}{\sigma_*^4}\bra{\frac{\log^2d}{n}+\frac{\log^6 (dn)}{n^2}}
\lesssim \frac{b_w^4b^4}{\sigma_*^4}\frac{\log^3 (dn)}{n}.
\label{boot-aim2}
}
Also, by \eqref{boot-bound6},
\ban{
\frac{\log^{5/2}d}{\sigma_*^{5}}\E^*\sbra{\norm{\frac{1}{n^{5/2}}\sum_{i=1}^nw_i^3\tau^*_i(w_i)\wt X_i^{\otimes5}}_\infty}
&\lesssim \frac{b_w^5b^5}{\sigma_*^5}\bra{\frac{\log^{5/2}d}{n^{3/2}}+\frac{\log^{15/2} (dn)}{n^{5/2}}}\notag\\
&\lesssim \frac{b_w^5b^5}{\sigma_*^5}\frac{\log^3(dn)}{n}.\label{boot-aim4}
}
Further, by the Schwarz inequality, \eqref{tbar-bound*} and \eqref{boot-bound5},
\besn{\label{boot-aim5}
\frac{\log^{5/2}d}{\sigma_*^{5}}\E^*\sbra{\norm{\bar T^*\otimes\frac{1}{n^{3/2}}\sum_{i=1}^nw_i^3\wt X_i^{\otimes3}}_\infty}
&\lesssim \frac{\log^{5/2}d}{\sigma_*^{5}}\sqrt{b_w^4b^4\bra{\frac{\log (dn)}{n}+\frac{\log^4 (dn)}{n^2}}}\cdot\frac{b_w^3b^3}{\sqrt n}\\
&\leq\frac{b_w^5b^5}{\sigma_*^{5}}\bra{\frac{\log^3 (dn)}{n}+\frac{\log^{9/2}(dn)}{n^{3/2}}}
\lesssim \frac{b_w^5b^5}{\sigma_*^{5}}\frac{\log^3 (dn)}{n}.
}
Combining \eqref{boot1}, \eqref{scov-bound}, \eqref{boot-aim1}, \eqref{boot-bound5}, \eqref{boot-aim2}--\eqref{boot-aim5} and $b_w\geq1,b/\sigma_*\geq1$, we have, on $\mcl E_n$,
\bes{
&\sup_{A\in\mcl R}\abs{P^*(S_n^*\in A)-\int_{A}\tilde p_{n,\gamma}(z)dz}\\
&\lesssim \frac{b_w^5b^5}{\sigma_*^5}\frac{\log ^3(dn)}{n}\log n
+\frac{\ol\sigma}{n^{3/2}}\bra{\frac{\log d}{\ul\sigma}
+b^{2}\frac{\log^{5/2} d}{\sigma_*^3\sqrt n}
+b^3b_w^3\frac{\log^{5/2}d}{\sigma_*^4\sqrt n}
}
\lesssim \frac{b_w^5b^5}{\sigma_*^5}\frac{\log ^3(dn)}{n}\log n.
}
It remains to prove
\ben{\label{boot-aim-final}
\sup_{A\in\mcl R}\abs{\int_{A}\tilde p_{n,\gamma}(z)dz-\int_{A} \hat{p}_{n,\gamma}(z)dz}
\lesssim \frac{b_w^5b^5}{\sigma_*^5}\frac{\log ^3(dn)}{n}\log n
\quad\text{on }\mcl E_n.
}
Observe that
\ben{\label{ae-boot-2nd}
\sup_{A\in\mcl R}|\langle \wh\Sigma_n-\ol{X^2},\int_A\nabla^2\phi_\Sigma(z)dz\rangle|\lesssim\|\bar X\|_\infty^2\frac{\log d}{\sigma_*^2}
}
and
\bes{
&\sup_{A\in\mcl R}\abs{\frac{1}{n^{3/2}}\sum_{i=1}^n\langle (X_i-\bar X)^{\otimes 3}-X_i^{\otimes 3},\int_A\nabla^3\phi_\Sigma(z)dz\rangle}\\
&\lesssim\frac{1}{\sqrt n}\bra{\norm{\frac{1}{n}\sum_{i=1}^nX_i^{\otimes2}}_\infty\|\bar X\|_\infty+\|\bar X\|_\infty^3}\frac{\log^{3/2} d}{\sigma_*^3}.
}
Also, note that $|\gamma|\leq b_w\E[w_1^2]=b_w$. Hence \eqref{boot-aim-final} follows from \eqref{mean-bound} and \eqref{boot-mom0}. 

Next consider Case (ii). In this case, $S_n^*\sim N(0,\wh\Sigma_n)$ conditional on the data. 
Hence, applying \cref{thm:gcomp} and using the bounds \eqref{mean-bound}, \eqref{scov-bound} and \eqref{ae-boot-2nd}, we obtain the desired bound with a simplified argument of the proof for Case (i).  
\qed

\subsection{Proof of Proposition \ref{prop:copula}}\label{pr-copula}

The proof uses the Malliavin--Stein method. We refer to \cite[Chapter 2]{NoPe12} for undefined notation and concepts used below. 

Denote by $r_j$ the $j$-th row vector of $R^{1/2}$. Then $Z$ has the same law as $(r_1\cdot G,\dots,r_d\cdot G)^\top$ with $G\sim N(0,I_d)$. 
Hence we may assume $X$ is of the form $X=\psi(G)$ with $\psi:\mathbb R^d\to\mathbb R^d$ defined as $\psi_j(x)=F_j^{-1}(\Phi(r_j\cdot x))$ for $j=1,\dots,d$ and $x\in\mathbb R^d$. 
Note that $F_j^{-1}$ is absolutely continuous and satisfies $|(F_j^{-1})'(t)|\leq(\kappa\min\{t,1-t\})^{-1}$ a.e.~by \eqref{eq:cheeger}; this follows from arguments in Section 5.3 of \cite{BoLe19} (see also Propositions A.17 and A.19 in \cite{BoLe19}). 
In particular, $F_j^{-1}$ is locally Lipschitz, and hence $\psi_j$ is locally Lipschitz and its gradient is given by $\nabla\psi_j(x)=\phi(r_j\cdot x)(F_j^{-1})'(\Phi(r_j\cdot x))r_j$ a.e. 
Since $|r_j|^2=R_{jj}=1$, we obtain
\ben{\label{est:g-copula}
|\nabla\psi_j(G)|=\phi(Z_j)(F_j^{-1})'(\Phi(Z_j))\leq\frac{\phi(Z_j)}{\kappa\min\{\Phi(Z_j),1-\Phi(Z_j)\}}
\leq\frac{1+|Z_j|}{\kappa},
}
where the last inequality follows by Birnbaum's inequality. 
Now, let $H=\mathbb R^{d}$ be the Hilbert space equipped with the canonical inner product. 
Consider an isonormal Gaussian process over $H$ given by $W(h)=h\cdot G$, $h\in H$. We consider Malliavin calculus with respect to $W$. 
The above argument implies that $X_j=\psi_j(G)\in\mathbb D^{1,2}$ and $DX_j=\nabla \psi_j(G)$ for every $j=1,\dots,d$. 
Therefore, by Proposition 3.7 in \cite{NPS14}, the map $\tau:\mathbb R^d\to(\mathbb R^d)^{\otimes2}$ defined by $\tau_{jk}(x)=\E[-DL^{-1}X_j\cdot DX_k\mid X=x]$ for $x\in\mathbb R^d$ and $j,k=1,\dots,d$ gives a Stein kernel for $X$. 
For any $p\geq1$ and $j,k=1,\dots,d$, we have
\ba{
\E[|\tau_{jk}(X)|^p]\leq\sqrt{\E[|DL^{-1}X_j|^{2p}]\E[|DX_k|^{2p}]}\leq\sqrt{\E[|DX_j|^{2p}]\E[|DX_k|^{2p}]},
}
where the first inequality is by the Jensen and Schwarz inequalities and the second by Lemma 5.3.7 in \cite{NoPe12}. 
If $p$ is an even integer, we also have $\E[X_j^p]\leq(p-1)^{p/2}\E[|DX_j|^{p}]$ by Lemma 5.3.7 in \cite{NoPe12}. 
Since $\|\nabla\psi_j(G)\|_p\lesssim\sqrt p\kappa^{-1}$ by \eqref{est:g-copula}, we obtain the desired result. 
\qed

\subsection{Cheeger constant of the gamma distribution}

\begin{proposition}\label{cheeger-gamma}
Any gamma distribution has a positive Cheeger constant. 
\end{proposition}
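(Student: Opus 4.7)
The plan is to reduce the statement to the characterization of the Cheeger constant of a one-dimensional absolutely continuous distribution given by Bobkov--Houdr\'e \cite[Theorem 1.3]{BoHo97}. That result implies that the largest $\kappa$ satisfying \eqref{eq:cheeger} for a probability measure on $\mathbb R$ with density $f$ and distribution function $F$ is, up to a universal factor, equal to
\[
\essinf_{x\in(a,b)} \frac{f(x)}{\min\{F(x),\,1-F(x)\}},
\]
where $(a,b)$ is the support of the measure. It therefore suffices to show this quantity is strictly positive for every gamma distribution.

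By the scale invariance of \eqref{eq:cheeger}, I may assume the scale parameter equals $1$ and let $\alpha>0$ denote the shape parameter, so that the density is $f_\alpha(x)=x^{\alpha-1}e^{-x}/\Gamma(\alpha)$ on $(0,\infty)$ with distribution function $F_\alpha$. When $\alpha\geq 1$, the density is log-concave and positivity follows at once from \cite[Proposition 4.1]{Bo99}, which is already cited in the body of the paper. Hence the only non-trivial case is $0<\alpha<1$, in which $f_\alpha$ is decreasing on $(0,\infty)$ and blows up at $0$.

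For this range, set $I(x):=f_\alpha(x)/\min\{F_\alpha(x),\,1-F_\alpha(x)\}$ on $(0,\infty)$ and establish $\inf I>0$ by analyzing three regimes. First, as $x\downarrow 0$, $F_\alpha(x)\sim x^\alpha/(\alpha\Gamma(\alpha))$ and $f_\alpha(x)\sim x^{\alpha-1}/\Gamma(\alpha)$, so $I(x)\sim\alpha/x\to\infty$. Second, by the standard asymptotic $\int_x^\infty t^{\alpha-1}e^{-t}\,dt\sim x^{\alpha-1}e^{-x}$ as $x\to\infty$ (integration by parts), $1-F_\alpha(x)\sim f_\alpha(x)$, whence $I(x)\to 1$. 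Third, on any compact subset $K\subset(0,\infty)$, $I$ is continuous and bounded below by $2\min_K f_\alpha>0$, since $\min\{F_\alpha,\,1-F_\alpha\}\leq 1/2$. Combining the three regimes gives $\inf_{x>0}I(x)>0$.

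The principal obstacle is mild and lies entirely in the case $0<\alpha<1$, where log-concavity fails and $f_\alpha$ diverges at the left endpoint. Reassuringly, the same divergence pushes $I(x)\to\infty$ there, so the infimum is in fact attained in the interior, and all remaining estimates reduce to routine asymptotics of the (in)complete gamma function.
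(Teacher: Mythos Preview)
Your proposal is correct and follows essentially the same approach as the paper: both reduce to the Bobkov--Houdr\'e characterization \cite[Theorem 1.3]{BoHo97}, dispose of the log-concave case $\alpha\geq 1$ via \cite[Proposition 4.1]{Bo99}, and for $0<\alpha<1$ verify that $f/\min\{F,1-F\}$ is bounded away from zero by checking the tail behavior. The only cosmetic difference is that the paper computes the right-tail limit via L'H\^{o}pital's rule applied to $f(F^{-1}(p))/(1-p)$, whereas you use the equivalent incomplete-gamma asymptotic $1-F_\alpha(x)\sim f_\alpha(x)$ directly.
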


\begin{proof}
Let $\mu$ be the gamma distribution with shape $\nu$ and rate $\alpha$. If $\nu\geq1$, then $\mu$ is log-concave, so the claim follows by Proposition 4.1 in \cite{Bo99}. When $\nu<1$, the density $f$ of $\mu$ satisfies $\inf_{0<t<M}f(t)>0$ for every $M>0$. Hence, in view of Theorem 1.3 in \cite{BoHo97}, it suffices to prove $\liminf_{p\uparrow1}f(F^{-1}(p))/(1-p)>0$, where $F$ is the distribution function of $\mu$. 
Since
\[
\frac{d}{dp}f(F^{-1}(p))=\frac{\nu-1}{F^{-1}(p)}-\alpha\to-\alpha\quad(p\uparrow1),
\]
we have by L'H\^{o}pital's rule $\liminf_{p\uparrow1}f(F^{-1}(p))/(1-p)=\alpha>0$. This completes the proof.  
\end{proof}

\subsection{Proof of Corollary \ref{coro:lb-ga}}

Replacing $X_i$ by $X_i/\sigma$, we may assume $\sigma=1$ without loss of generality. 

Observe that $P(T_n\geq c^G_{1-\alpha})-\alpha=P(Z< c^G_{1-\alpha})-P(T_n< c^G_{1-\alpha})$. 
Thus, in view of \eqref{eq:ae}, it suffices to prove
\ben{\label{aim:lb-ga}
\frac{1}{(\log d)^{3/2}}\int_{(-\infty,c^G_{1-\alpha}]^d}\langle\E[\ol{X^{3}}],\nabla^3\phi_\Sigma(z)\rangle dz
\to-2\sqrt 2\gamma_X\log(1-\alpha).
}
We decompose the left hand side as
\ba{
&\int_{(-\infty,c^G_{1-\alpha}]^d}\langle\E[\ol{X^{3}}],\nabla^3\phi_\Sigma(z)\rangle dz\\
&=\frac{1}{n}\sum_{i=1}^n\sum_{j=1}^d\E[X_{ij}^3]\phi''(c^G_{1-\alpha})\Phi(c^G_{1-\alpha})^{d-1}
+\frac{3}{n}\sum_{i=1}^n\sum_{j,k:j\neq k}\E[X_{ij}^2X_{ik}]\phi'(c^G_{1-\alpha})\phi(c^G_{1-\alpha})\Phi(c^G_{1-\alpha})^{d-2}\\
&\quad+\frac{1}{n}\sum_{i=1}^n\sum_{\begin{subarray}{c}
j,k,l=1\\
j,k,l\text{ are distinct}
\end{subarray}}^d\E[X_{ij}X_{ik}X_{il}]\phi(c^G_{1-\alpha})^3\Phi(c^G_{1-\alpha})^{d-3}\\
&=:\mathbb{I}+\mathbb{II}+\mathbb{III}.
}
Since $\Sigma=I_d$ by assumption, we have $c^G_{1-\alpha}=\Phi^{-1}((1-\alpha)^{1/d})$. 
Hence we have $c^G_{1-\alpha}=\sqrt{2\log d}+o(1)$ (cf.~the proof of \cite[Proposition 2.1]{Ko21}). We also have $d\phi(c^G_{1-\alpha})/\sqrt{2\log d}\to-\log(1-\alpha)$ by \cite[Lemma 10.3]{BLM13}. 
Consequently,
\[
\frac{\mathbb I}{(2\log d)^{3/2}}\to-\gamma_X\log(1-\alpha).
\]
Meanwhile, noting $n^{-1}\sum_{i=1}^n\E[X_i^{\otimes2}]=\Sigma=I_d$, we obtain
\ba{
\abs{\frac{1}{n}\sum_{i=1}^n\sum_{j,k:j\neq k}\E[X_{ij}^2X_{ik}]}
&\leq\sum_{j=1}^d\sqrt{\frac{1}{n}\sum_{i=1}^n\E[X_{ij}^4]}\sqrt{\frac{1}{n}\sum_{i=1}^n\E\sbra{\bra{\sum_{k:k\neq j}X_{ik}}^2}}
\lesssim b^2d^{3/2}
}
and
\ba{
&\abs{\frac{1}{n}\sum_{i=1}^n\sum_{\begin{subarray}{c}
j,k,l=1\\
j,k,l\text{ are distinct}
\end{subarray}}^d\E[X_{ij}X_{ik}X_{il}]}\\
&\leq\sum_{j,k:j\neq k}\sqrt{\frac{1}{n}\sum_{i=1}^n\E[X_{ij}^2X_{ik}^2]}\sqrt{\frac{1}{n}\sum_{i=1}^n\E\sbra{\bra{\sum_{l:l\neq j,k}X_{il}}^2}}
\lesssim b^2d^{5/2}.
}
Consequently, $\mathbb{II}/(\log d)^{3/2}\to0$ and $\mathbb{III}/(\log d)^{3/2}\to0$. 
Hence we obtain the desired result.
\qed

\section{Proofs for Sections \ref{sec:coverage} and \ref{sec:db}}\label{sec:proof-2nd-order}

Given a random vector $W$, we denote by $F_W$ the distribution function of $W^\vee$. 

\subsection{Cornish--Fisher expansion}\label{sec:cf}

Asymptotic expansion of coverage probability is conventionally derived with the help of Cornish--Fisher expansion (cf.~Section 3.5.2 in \cite{Ha92}), so we first develop such expansions for $T_n$ and $T_n^*$ in our setting. 

%
\begin{theorem}[Cornish--Fisher expansion for $T_n$]\label{cf-tstat}
Under the assumptions of \cref{coro1}, let $\lambda>0$ be a constant such that $b/\sigma_*\leq\lambda$. 
Then, for any $\eps\in(0,1/2)$, there exist positive constants $c$ and $C$ depending only on $\lambda$ and $\eps$ such that if
\ben{\label{cf-ass}
\frac{\varsigma_d^3}{\sigma_*^3}\frac{\log^3d}{n}\log n\leq c,
}
then
\ben{\label{eq:cf-tstat}
\sup_{\eps<p<1-\eps}\abs{c_p-\bra{c_p^G-\frac{Q_n(c_{p}^G)}{f_{\Sigma}(c_{p}^G)}}}
\leq \frac{C}{\sqrt{\log d}}\frac{\varsigma_d^3}{\sigma_*^2}\frac{\log ^3d}{n}\log n,
}
where $c_p$ is the $p$-quantile of $T_n$. 
\end{theorem}

\begin{theorem}[Cornish--Fisher expansion for $T_n^*$]\label{cf-boot}
Under the assumptions of \cref{thm:coverage}, for any $\eps\in(0,1/2)$, there exist positive constants $c$ and $C$ depending only on $\lambda,\eps$ and $b_w$ such that if \eqref{cf-boot-ass} holds, then
\ben{\label{eq:cf-boot}
\sup_{\eps<p<1-\eps}\abs{\hat c_{p}-\bra{c_{p}^G-\frac{\hat Q_{n,\gamma}(c_{p}^G)}{f_{\Sigma}(c_{p}^G)}}}
\leq \frac{C}{\sqrt{\log d}}\frac{\varsigma_d^3}{\sigma_*^2}\frac{\log ^3(dn)}{n}\log n
}
with probability at least $1-1/n$, where
\ba{
\hat Q_{n,\gamma}(t)&:=\int_{A(t)}\{\hat p_{n,\gamma}(z)-\phi_\Sigma(z)\}dz.
}
\end{theorem}



The proofs are based on the following abstract result. 
\begin{proposition}[Abstract Cornish--Fisher type expansion for maximum statistics]\label{thm:cf}
Let $\eps\in(0,1/2)$. 
Also, let $W$ be a random vector in $\mathbb R^d$. 
Suppose that there exist arrays $U\in(\mathbb R^{d})^{\otimes2},V\in(\mathbb R^d)^{\otimes3}$ and a constant $\Delta>0$ such that
\ben{\label{formal-ae-w}
\sup_{t\in\mathbb R}\abs{P(W^\vee\leq t)-\int_{A(t)}p_{U,V}(z)dz}\leq\Delta,
}
where $p_{U,V}(z)=\phi_\Sigma(z)+\langle U,\nabla^2\phi_\Sigma(z)\rangle+\langle V,\nabla^3\phi_\Sigma(z)\rangle.$ 
Set
\[
\delta:=\|U\|_\infty\frac{\log d}{\sigma_*^2}+\|V\|_\infty\frac{\log^{3/2}d}{\sigma_*^3},\qquad
\ol\Delta:=\Delta+\frac{\varsigma_d}{\sigma_*}(\Delta+\delta)\delta.
\]
Then, there exist positive constants $c$ and $C$ depending only on $\eps$ such that if $\delta+\ol\Delta\leq c$, then
\ba{
\sup_{\eps<p<1-\eps}\abs{F_{W}^{-1}(p)-\bra{F_{Z}^{-1}(p)-\frac{Q_{U,V}(F_{Z}^{-1}(p))}{f_{\Sigma}(F_{Z}^{-1}(p))}}}
&\leq \frac{C}{\sqrt{\log d}}\bra{\varsigma_d\ol\Delta
+\frac{\varsigma_d^3}{\sigma_*^2}(\delta+\ol\Delta)^2},
}
where
\[
Q_{U,V}(t)=\int_{A(t)}\{p_{U,V}(z)-\phi_\Sigma(z)\}dz.
\]
\end{proposition}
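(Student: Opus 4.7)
The plan is to decompose
\[
F_W^{-1}(p) - t_p^* \;=\; \bigl(F_W^{-1}(p) - G^{-1}(p)\bigr) \;+\; \bigl(G^{-1}(p) - t_p^*\bigr),
\]
where $t_0 := F_Z^{-1}(p)$, $t_p^* := t_0 - Q_{U,V}(t_0)/f_\Sigma(t_0)$, $G(t) := F_Z(t)+Q_{U,V}(t) = \int_{A(t)}p_{U,V}(z)\,dz$, and $G^{-1}$ is the generalized inverse. The first piece converts the CDF approximation \eqref{formal-ae-w} into a quantile approximation, while the second piece is a deterministic one-dimensional Cornish--Fisher computation.

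For the first piece I would use \eqref{formal-ae-w} to obtain the sandwich $G^{-1}(p-\Delta)\le F_W^{-1}(p)\le G^{-1}(p+\Delta)$, and then invert this via a lower bound on the density of $G$ on a neighbourhood of $t_0$. This requires three one-dimensional ingredients. First, a lower bound $f_\Sigma(t_0)\gtrsim \sqrt{\log d}/\varsigma_d$ on the Gaussian-max density at quantile levels bounded away from $0,1$, and an upper bound $|f_\Sigma'(t)|\lesssim \log d/\varsigma_d^2$ on a comparable interval; both are consistent with the superconcentration scaling $\mathrm{Var}[Z^\vee]\asymp \varsigma_d^2/\log d$ referenced in the Notation section. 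Second, a Lipschitz bound $|Q_{U,V}'(t)|\lesssim \delta\,\sqrt{\log d}/\sigma_*$, obtained by applying the anti-concentration Lemma \ref{lem:anti} to $Q_{U,V}(t+h)-Q_{U,V}(t) = \langle U,\int_{A(t+h)\setminus A(t)}\nabla^2\phi_\Sigma\rangle+\langle V,\int_{A(t+h)\setminus A(t)}\nabla^3\phi_\Sigma\rangle$ and letting $h\downarrow 0$. Under the smallness condition $\delta+\bar\Delta\le c$, $Q_{U,V}'$ is dominated by $f_\Sigma$, the density of $G$ is $\gtrsim \sqrt{\log d}/\varsigma_d$ on the relevant interval, and one reads off $|F_W^{-1}(p)-G^{-1}(p)|\lesssim \bar\Delta\,\varsigma_d/\sqrt{\log d}$.

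For the second piece I would Taylor expand $F_Z$ to second order and $Q_{U,V}$ to first order at $t_0$, evaluate at $t_p^*$, and use the identity $f_\Sigma(t_0)(t_p^*-t_0)+Q_{U,V}(t_0)=0$ to kill the linear terms. What remains is
\[
|G(t_p^*)-p| \;\lesssim\; |f_\Sigma'(\eta_1)|\,(t_p^*-t_0)^2 + |Q_{U,V}'(\eta_2)|\,|t_p^*-t_0|.
\]
Inserting $|t_p^*-t_0|\le |Q_{U,V}(t_0)|/f_\Sigma(t_0)\lesssim \delta\,\varsigma_d/\sqrt{\log d}$ (using Lemma \ref{lem:aht} for $|Q_{U,V}(t_0)|\lesssim \delta$) and translating this CDF-level error into a quantile-level error by the same sandwich/anti-concentration mechanism used for the first piece produces a contribution of the stated order $(\varsigma_d^3/\sigma_*^2)\,\delta^2/\sqrt{\log d}$. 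The replacement of $\delta^2$ by $(\delta+\bar\Delta)^2$ in the final bound arises because the Taylor step must really be performed at $G^{-1}(p)$ rather than exactly at $t_p^*$; the extra discrepancy $|G^{-1}(p)-t_p^*|$ brings in the correction built into the definition of $\bar\Delta$, and the two pieces then assemble into the claimed bound.

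The main obstacle is establishing the two auxiliary estimates on the Gaussian-max density, namely the lower bound $f_\Sigma(t_0)\gtrsim \sqrt{\log d}/\varsigma_d$ at interior quantile levels and the derivative bound $|f_\Sigma'|\lesssim \log d/\varsigma_d^2$. Neither is standard in the high-dimensional CLT literature on which the rest of the paper relies. I would derive both by combining the Mills-ratio-type representation $f_\Sigma(t) = \sum_j \phi_{\sigma_j}(t)\,P\!\left(\max_{k\ne j} Z_k\le t\mid Z_j=t\right)$ with Gaussian concentration bounds for $Z^\vee$ and the scaling $\mathrm{Var}[Z^\vee]\asymp \varsigma_d^2/\log d$; everything else in the argument is bookkeeping around Taylor expansion and the sandwich inversion.
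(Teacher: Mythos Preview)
Your overall plan---sandwich $F_W^{-1}$ via the Edgeworth approximation, then do a one-dimensional Taylor step---is the right shape, and the ingredients you identify (a density lower bound for $Z^\vee$, a Lipschitz bound on $Q_{U,V}$ from Lemma~\ref{lem:anti}, and the sup bound $|Q_{U,V}|\lesssim\delta$ from Lemma~\ref{lem:aht}) are exactly what the paper uses. But two points in your write-up do not go through as stated.

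First, the bound $|f_\Sigma'|\lesssim \log d/\varsigma_d^2$ is not correct in general and is not what you need. The paper proves the \emph{uniform} bound $|f_\Sigma'(t)|\lesssim \log d/\sigma_*^2$ (Lemma~\ref{gmax-2nd-deriv}) via the identity $f_\Sigma'(t)=\langle \bs1_d^{\otimes2},\int_{A(t)}\nabla^2\phi_\Sigma(z)\,dz\rangle$ and Lemma~\ref{lem:aht}; no Mills-ratio representation is needed. If you plug this correct bound into your second piece you still land on the target order, so this is a miscalculation rather than a structural problem. Similarly, the density lower bound $f_\Sigma(F_Z^{-1}(p))\gtrsim \sqrt{\log d}/\varsigma_d$ is available (Lemma~\ref{gmax-quantile}), but the paper obtains it from concavity of $\Phi^{-1}\circ F_Z$ (which follows because $z\mapsto z^\vee$ is convex), not from a Mills-ratio expansion.

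Second, and more seriously, your route through $G^{-1}$ with $G=F_Z+Q_{U,V}$ requires $G'>0$ on the relevant interval, i.e.\ $|Q_{U,V}'|\lesssim \delta\sqrt{\log d}/\sigma_* < f_\Sigma\gtrsim \sqrt{\log d}/\varsigma_d$, which amounts to $\delta\,\varsigma_d/\sigma_*\le c'$. This is \emph{not} implied by the hypothesis $\delta+\ol\Delta\le c$: from $\ol\Delta\ge(\varsigma_d/\sigma_*)\delta^2$ one only gets $\delta\,\varsigma_d/\sigma_*\le\sqrt{c\,\varsigma_d/\sigma_*}$, which blows up with $\varsigma_d/\sigma_*$. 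The paper sidesteps this by never inverting $G$. Instead it reads off from \eqref{formal-ae-w} that $|F_Z(F_W^{-1}(p))-(p-Q_{U,V}(F_W^{-1}(p)))|\le\Delta$, uses only the crude bound $|Q_{U,V}|\le C_1\delta$ to get $|F_W^{-1}(p)-F_Z^{-1}(p)|\lesssim\sqrt{\Var[Z^\vee]}(\Delta+\delta)$ by inverting the \emph{monotone} function $F_Z$, then transfers $Q_{U,V}(F_W^{-1}(p))\to Q_{U,V}(F_Z^{-1}(p))$ via the Lipschitz bound (this is where $\ol\Delta$ enters), and finally Taylor-expands $F_Z^{-1}$ in the probability variable using the bound on $(F_Z^{-1})''$ from Lemma~\ref{gmax-quantile}. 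Reorganizing your argument along these lines---invert $F_Z$, not $G$---closes the gap.
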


The proof relies on the following novel isoperimetric-type inequality for $Z^\vee$ and its consequence to the second derivative of $F_Z^{-1}$:
\begin{lemma}\label{gmax-quantile}
Let $Z$ be a centered Gaussian vector in $\mathbb R^d$. 
If $Z^\vee$ has a continuous density $f$, then
\ben{\label{f-lower}
f(F_Z^{-1}(p))\geq\frac{1}{4\sqrt{\Var[Z^\vee]}}\min\cbra{\frac{p}{\sqrt 2},(1-p)^{3/2}}
}
for all $p\in(0,1)$. Moreover, if $\Cov[Z]=\Sigma$, there exists a universal constant $C>0$ such that 
\ben{\label{gmax-q-2nd}
|(F_Z^{-1})''(p)|\leq C\bra{\frac{\Var[Z^\vee]}{\min\{p^2,(1-p)^3\}}}^{3/2}\frac{\log d}{\sigma_*^2}
}
for all $p\in(0,1)$. 
\end{lemma}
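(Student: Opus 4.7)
The plan rests on log-concavity of $F_Z$, obtained from Prekopa's theorem applied to the joint function $(t,z)\mapsto\phi_\Sigma(z) 1_{\{z\leq t\bs1_d\}}$ on $\mathbb R^{d+1}$: this is log-concave because $\{(t,z):z\leq t\bs1_d\}$ is an intersection of half-spaces, and integrating out $z$ yields $F_Z(t)=\int_{A(t)}\phi_\Sigma(z)\,dz$. As an immediate consequence, the hazard rate $f/F_Z$ is nonincreasing on $\mathbb R$. Cantelli's one-sided Chebyshev inequality also gives the auxiliary bound $|q_{1/2}-\mu|\leq\sigma$ with $\mu=\E[Z^\vee]$, $\sigma=\sqrt{\Var[Z^\vee]}$, and $q_{1/2}=F_Z^{-1}(1/2)$.

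For the density lower bound \eqref{f-lower}, I would first establish $f(q_{1/2})\gtrsim 1/\sigma$ by a spread argument. Monotonicity of $f/F_Z$ gives $f(t)\leq 2f(q_{1/2})$ on $[q_{1/2},\infty)$, so since the mass of $Z^\vee$ on this half-line equals $1/2$, one obtains $P(Z^\vee\geq q_{1/2}+T_0)\geq 1/4$ for $T_0=1/(8f(q_{1/2}))$; combining with Cantelli yields $T_0\lesssim\sigma$, hence $f(q_{1/2})\gtrsim 1/\sigma$. For $p\leq 1/2$, monotonicity of $f/F_Z$ propagates this directly to $f(q_p)\geq 2p f(q_{1/2})\gtrsim p/\sigma$, giving the left-tail part of \eqref{f-lower}. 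For $p\geq 1/2$, I repeat the spread argument with reference point $q_p$ in place of $q_{1/2}$: since $f(t)\leq f(q_p)/p$ on $[q_p,\infty)$, one gets $P(Z^\vee\geq q_p+T_0')\geq (1-p)/2$ for $T_0'=p(1-p)/(2f(q_p))$, and Cantelli then gives $T_0'\lesssim \sigma\sqrt{1/(1-p)}$, which rearranges to $f(q_p)\gtrsim p(1-p)^{3/2}/\sigma\gtrsim(1-p)^{3/2}/\sigma$. The extra $(1-p)^{1/2}$ factor on the right tail relative to the linear-in-$p$ left tail reflects precisely the tightening of the Cantelli bound in the far upper tail.

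For the second derivative bound \eqref{gmax-q-2nd}, I start from the elementary identity $(F_Z^{-1})''(p)=-f'(q_p)/f(q_p)^3$, valid since $f\in C^\infty$ (as $\Sigma$ is invertible). Writing $F_Z(t)=\int_{(-\infty,0]^d}\phi_\Sigma(u+t\bs1_d)\,du$ via the substitution $z=u+t\bs1_d$ and differentiating twice in $t$ yields the clean identity $f'(t)=\langle\bs1_d^{\otimes 2},\int_{A(t)}\nabla^2\phi_\Sigma(z)\,dz\rangle$. Hence $|f'(t)|\leq \|\int_{A(t)}\nabla^2\phi_\Sigma\|_1\leq C\log d/\sigma_*^2$ by \cref{lem:aht}, and combining with the lower bound on $f(q_p)$ from the first part yields \eqref{gmax-q-2nd} after straightforward algebra, using that $\min(p^3,(1-p)^{9/2})=\min(p^2,(1-p)^3)^{3/2}$.

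The main obstacle is obtaining the asymmetric exponents $p/\sqrt 2$ versus $(1-p)^{3/2}$ with the correct constants. One cannot directly appeal to Bobkov-type density-at-median bounds for log-concave densities, because the density of $Z^\vee$ need not itself be log-concave for general $\Sigma$ (for instance when the components $Z_j$ have very different variances, one can check numerically that $\log f$ fails to be concave). The proof therefore has to work only with the always-log-concave CDF $F_Z$ together with Cantelli's inequality, and careful bookkeeping of the optimization over the parameter $\alpha$ in the spread argument is needed to recover the specific constants in \eqref{f-lower}.
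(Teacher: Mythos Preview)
Your argument is correct and the second half (the bound on $(F_Z^{-1})''$ via $f'(t)=\langle\bs1_d^{\otimes2},\int_{A(t)}\nabla^2\phi_\Sigma\rangle$ and \cref{lem:aht}) coincides with the paper's proof. For \eqref{f-lower} the two routes diverge. The paper does not invoke Pr\'ekopa; instead it uses that $z\mapsto z^\vee$ is convex to conclude that $\Phi^{-1}\circ F_Z$ is concave (a known fact for convex functionals of Gaussians), and for $p\geq1/2$ it exploits this concavity directly: the slope inequality for $\Phi^{-1}\circ F_Z$ at $q_p$ and $q_p+c$, together with a Chebyshev bound $1-F_Z(q_p+c)\leq 2\Var[Z^\vee]/c^2$ (second moment around the median, using $\E[(Z^\vee-q_{1/2})^2]\leq2\Var[Z^\vee]$), yields \eqref{f-lower} with the explicit constant $1/4$. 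Log-concavity of $F_Z$ is then deduced as a corollary (via $\log\circ\Phi$ increasing concave) and used only for the $p<1/2$ case, exactly as you do. Your approach replaces the $\Phi^{-1}\circ F_Z$ concavity by the weaker log-concavity throughout and compensates with the spread-plus-Cantelli argument; this is more self-contained (Pr\'ekopa and Cantelli are both elementary) but makes the sharp constant $1/4$ harder to extract, as you note. Since every application of \eqref{f-lower} in the paper only uses it up to a universal constant depending on $\eps$, this is immaterial for the downstream results.
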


\begin{proof}
See Appendix \ref{sec:gmax-quantile}. 
\end{proof}

\begin{rmk}\label{rmk:gmax-quantile}
Under the first assumption of \cref{gmax-quantile}, we can also derive the following Gaussian-type isoperimetric inequality for $Z^\vee$: For all $p\in(0,1)$,
\ben{\label{gmax-giso}
f(F_Z^{-1}(p))\geq\frac{1}{\sigma}\phi(\Phi^{-1}(p)),
}
where $\sigma:=\max_{1\leq j\leq d}\sqrt{\Var[Z_j]}$. 
In fact, by \cite[Proposition 5]{BaMa00}, \eqref{gmax-giso} follows once we prove
\[
\phi(\Phi^{-1}(\E[g(Z^\vee)]))\leq\E\sbra{\sqrt{\phi(\Phi^{-1}(g(Z^\vee)))^2+\sigma^{2}g'(Z^\vee)^2}}
\]
for any locally Lipschitz function $g:\mathbb R\to[0,1]$. 
The latter follows by applying Bobkov's functional Gaussian isoperimetric inequality to the function $x\mapsto g(\max_{1\leq j\leq d}(\Cov[Z]^{1/2}x)_j)$ (cf.~Eq.(2) of \cite{BaMa00}). 
While \eqref{gmax-giso} has a better dependence on $p$ than \eqref{f-lower}, it is often the case that $\Var[Z^\vee]=O(1/\log d)$ as already mentioned at the beginning of \cref{sec:coverage}, so \eqref{f-lower} is preferable to \eqref{gmax-giso} in terms of the dimension dependence. 
\end{rmk}

\begin{proof}[\bf\upshape Proof of \cref{thm:cf}]
Observe that
\ba{
Q_{U,V}(t)=\langle U,\int_{A(t)}\nabla^2\phi_\Sigma(z)dz\rangle+\langle V,\int_{A(t)}\nabla^3\phi_\Sigma(z)dz\rangle.
} 
Hence, by Lemmas \ref{lem:aht} and \ref{lem:anti}, there exists a universal constant $C_1\geq1$ such that
\ben{\label{q-aht}
|Q_{U,V}(t)|\leq C_1\delta
}
and
\ben{\label{q-anti}
|Q_{U,V}(t)-Q_{U,V}(s)|\leq C_1\delta\frac{\sqrt{\log d}}{\sigma_*}|t-s|
}
for all $t,s\in\mathbb R$. 
Also, for any $p\in(\eps,1-\eps)$, we have by \eqref{formal-ae-w}
\ben{\label{qz-upper}
p \leq F_W(F_W^{-1}(p))\leq F_Z(F_W^{-1}(p))+Q_{U,V}(F_W^{-1}(p))+\Delta
}
and
\ben{\label{qz-lower}
p\geq F_W(F_W^{-1}(p)-)\geq F_Z(F_W^{-1}(p))+Q_{U,V}(F_W^{-1}(p))-\Delta.
}
Combining these bounds with \eqref{q-aht} gives
$
p-\Delta-C_1\delta\leq F_Z(F_W^{-1}(p))\leq p+\Delta+C_1\delta.
$
Therefore, provided that $\Delta+C_1\delta<\eps/2$, we have by the mean value theorem and \eqref{f-lower}
\ba{
|F_Z^{-1}(p\pm(\Delta+C_1\delta))-F_Z^{-1}(p)|\leq C_2\sqrt{\Var[Z^\vee]}(\Delta+C_1\delta)
}
for some constant $C_2\geq1$ depending only on $\eps$. Thus we obtain
\ba{
|F_W^{-1}(p)-F_Z^{-1}(p)|\leq C_2\sqrt{\Var[Z^\vee]}(\Delta+C_1\delta).
}
This and \eqref{q-anti} give
\ba{
|Q_{U,V}(F_{W}^{-1}(p))-Q_{U,V}(F_{Z}^{-1}(p))|\leq C_1C_2\delta\frac{\varsigma_d}{\sigma_*}(\Delta+C_1\delta)=:\Delta'.
}
Combining this with \eqref{qz-upper} and \eqref{qz-lower}, we obtain
\ben{\label{fz-qw}
p-Q_{U,V}(F_Z^{-1}(p))-\Delta-\Delta'\leq F_Z(F_W^{-1}(p))\leq p-Q_{U,V}(F_Z^{-1}(p))+\Delta+\Delta'.
}
Thus, provided that $C_1\delta+\Delta+\Delta'<\eps/2$, we have by Taylor's theorem and \eqref{gmax-q-2nd}
\ba{
&\abs{F_Z^{-1}(p-Q_{U,V}(F_Z^{-1}(p))\pm(\Delta+\Delta'))-\bra{F_Z^{-1}(p)-\frac{Q_{U,V}(F_Z^{-1}(p))\mp(\Delta+\Delta')}{f_{\Sigma}(F_Z^{-1}(p))}}}\\
&\leq C_3\frac{\varsigma_d^3}{\sigma_*^2\sqrt{\log d}}|Q_{U,V}(F_Z^{-1}(p))\mp(\Delta+\Delta')|^2
}
for some constant $C_3\geq1$ depending only on $\eps$. 
Combining this with \eqref{f-lower}, \eqref{q-aht} and \eqref{fz-qw} gives
\ba{
&\abs{F_W^{-1}(p)-\bra{F_Z^{-1}(p)-\frac{Q_{U,V}(F_Z^{-1}(p))}{f_{\Sigma}(F_Z^{-1}(p))}}}\\
&\leq\frac{4\sqrt{\Var[Z^\vee]}}{\eps^{3/2}}(\Delta+\Delta')
+C_3\frac{\varsigma_d^3}{\sigma_*^2\sqrt{\log d}}(C_1\delta+\Delta+\Delta')^2.
}
Since $\Delta+\Delta'\leq C_1^2C_2\ol\Delta$, this completes the proof. 
\end{proof}

Now we turn to the proof of Theorems \ref{cf-tstat} and \ref{cf-boot}.
\begin{proof}[\bf\upshape Proof of \cref{cf-tstat}]
First, observe that \cref{gmax-var} yields
\ben{\label{gmax-lb}
\varsigma_d/\sigma_*\gtrsim\ul\sigma/\ol\sigma\geq\lambda^{-1}.
} 
Hence, due to \eqref{cf-ass}, we may assume 
\ben{\label{cf-wlog}
\bra{1+\frac{\varsigma_d^3}{\sigma_*^3}}\frac{\log^3d}{n}\log n\leq 1.
}
Then, we have \eqref{eq:ae} by \cref{coro1}. 
Also, observe that $\E[\ol{X^3}]\lesssim b^3/\sqrt n$. Hence, in this setting, $\delta$ and $\ol\Delta$ in \cref{thm:cf} are bounded as
\be{
\delta\leq C_\lambda\sqrt{\frac{\log ^3d}{n}},\qquad
\ol\Delta\leq C_{\lambda}\bra{1+\frac{\varsigma_d}{\sigma_*}}\frac{\log ^3d}{n}\log n,
}
where we used \eqref{cf-wlog} for the second inequality. Combining these bounds with \eqref{gmax-lb} and \eqref{cf-wlog} gives
\be{
\delta+\ol\Delta\leq C_\lambda\sqrt{\frac{\log ^3d}{n}\log n},\qquad
\ol\Delta\leq C_\lambda\frac{\varsigma_d}{\sigma_*}\frac{\log ^3d}{n}\log n.
}
Consequently, the desired result follows from \cref{thm:cf}. 
\end{proof}

\begin{proof}[\bf\upshape Proof of \cref{cf-boot}]
By the same reasoning as in the proof of \cref{cf-tstat}, we may assume
\ben{\label{cf-boot-wlog}
\bra{1+\frac{\varsigma_d^3}{\sigma_*^3}}\frac{\log^3(dn)}{n}\log n\leq 1.
}
Let $\mcl E_n$ be the event defined by \eqref{def:En}. Recall that $P(\mcl E_n)\geq1-1/n$. 
Also, by the proof of \cref{prop:boot}, we have \eqref{eq:ae-boot} on $\mcl E_n$. 
Further, recall that we have \eqref{2-mom-bound} and \eqref{boot-mom0} on $\mcl E_n$.  Hence, on $\mcl E_n$
\ba{
\frac{1}{2}\|\ol{X^2}-\Sigma\|_\infty\frac{\log d}{\sigma_*^2}+\frac{|\gamma|}{6}\|\ol{X^3}\|_\infty\frac{\log^{3/2}d}{\sigma_*^3}
\lesssim b^2\frac{\log^{3/2}(dn)}{\sigma_*^2\sqrt n}+\frac{|\gamma|b^3\log^{3/2}d}{\sigma_*^3\sqrt n}
\leq C_{\lambda,b_w}\sqrt{\frac{\log^3(dn)}{n}}.
}
Consequently, a similar argument to that in the proof of \cref{cf-tstat} gives the desired result. 
\end{proof}

\subsection{Proof of Theorem \ref{thm:coverage}}

\begin{lemma}\label{phi-deriv-upper}
For any $r\in\mathbb N$ and $t\in\mathbb R$,
\[
\int_{A(t)^c}\nabla^r\phi_\Sigma(z)dz=-\int_{A(t)}\nabla^r\phi_\Sigma(z)dz.
\]
\end{lemma}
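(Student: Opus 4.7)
The plan is to reduce the identity to the statement that $\int_{\mathbb{R}^d}\nabla^r\phi_\Sigma(z)dz=0$ (the zero $r$-array), component by component. Once this is established, the lemma follows from $\mathbb{R}^d=A(t)\cup A(t)^c$ (disjointly up to a null set) and linearity of the integral, since then
\[
\int_{A(t)^c}\nabla^r\phi_\Sigma(z)dz=\int_{\mathbb{R}^d}\nabla^r\phi_\Sigma(z)dz-\int_{A(t)}\nabla^r\phi_\Sigma(z)dz=-\int_{A(t)}\nabla^r\phi_\Sigma(z)dz.
\]

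For an arbitrary multi-index $(j_1,\dots,j_r)\in\{1,\dots,d\}^r$ with $r\geq 1$, I would write $\partial_{j_1\cdots j_r}\phi_\Sigma=\partial_{j_1}(\partial_{j_2\cdots j_r}\phi_\Sigma)$, noting that $\partial_{j_2\cdots j_r}\phi_\Sigma(z)$ has the form $P(z)\phi_\Sigma(z)$ for some polynomial $P$, and hence is absolutely integrable on $\mathbb{R}^d$ together with its derivatives, and vanishes as $|z|\to\infty$. Then I apply Fubini's theorem to integrate first in $z_{j_1}$:
\[
\int_{\mathbb{R}^d}\partial_{j_1}\bigl(\partial_{j_2\cdots j_r}\phi_\Sigma\bigr)(z)\,dz=\int_{\mathbb{R}^{d-1}}\Bigl[\partial_{j_2\cdots j_r}\phi_\Sigma(z)\Bigr]_{z_{j_1}=-\infty}^{z_{j_1}=+\infty}dz_{-j_1}=0,
\]
where $z_{-j_1}$ denotes the remaining $d-1$ coordinates. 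Here I use the fundamental theorem of calculus in the $z_{j_1}$ direction together with the decay of $\partial_{j_2\cdots j_r}\phi_\Sigma$ at $\pm\infty$.

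There is no real obstacle: the only thing to check is the integrability needed to invoke Fubini and the fundamental theorem of calculus, which is immediate since every partial derivative of $\phi_\Sigma$ is a polynomial multiple of $\phi_\Sigma$ and $\Sigma$ is positive definite. Since $(j_1,\dots,j_r)$ was arbitrary, every component of $\int_{\mathbb{R}^d}\nabla^r\phi_\Sigma(z)dz$ vanishes, completing the proof.
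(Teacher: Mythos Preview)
Your proof is correct. The key observation that $\int_{\mathbb R^d}\nabla^r\phi_\Sigma(z)\,dz=0$ for $r\geq1$, together with additivity over the disjoint partition $\mathbb R^d=A(t)\cup A(t)^c$, immediately yields the claim; the justification via Fubini and the fundamental theorem of calculus is sound since every partial derivative of $\phi_\Sigma$ is a polynomial times $\phi_\Sigma$ and hence integrable with the required decay.

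The paper argues slightly differently: it introduces a shift parameter $x$, writes the total-mass identity $\int_{A(t)^c}\phi_\Sigma(z+x)\,dz=1-\int_{A(t)}\phi_\Sigma(z+x)\,dz$, differentiates $r$ times in $x$ under the integral sign, and sets $x=0$; the constant $1$ is annihilated by the differentiation. Both routes are short and essentially equivalent in content. Your argument is a bit more hands-on (one explicit integration by parts in a chosen coordinate), while the paper's parametric differentiation trick packages the same vanishing of $\int_{\mathbb R^d}\nabla^r\phi_\Sigma$ without singling out a coordinate; neither approach offers a real advantage over the other here.
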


\begin{proof}
Let $Z\sim N(0,\Sigma)$. Then, for any $x\in\mathbb R^d$,
\ba{
\int_{A(t)^c}\phi_\Sigma(z+x)dz=P(Z-x\in A(t)^c)=1-P(Z-x\in A(t))=1-\int_{A(t)}\phi_\Sigma(z+x)dz.
}
Differentiating both sides $r$ times with respect to $x$ and setting $x=0$, we obtain the desired result.  
\end{proof}

\begin{lemma}[Anti-concentration inequality for $T_n$]\label{anti-tstat}
Under the assumptions of \cref{coro1}, there exists a universal constant $C>0$ such that
\ba{
P\bra{t\leq T_n\leq t+\eps}\leq C\bra{\frac{b^5}{\sigma_*^5}\frac{\log^3 d}{n}\log n+\eps\bra{\frac{\sqrt{\log d}}{\ul\sigma}+\frac{b^3}{\sigma_*^4}\frac{\log^2d}{\sqrt n}}}
}
for all $t\in\mathbb R$ and $\eps>0$. 
\end{lemma}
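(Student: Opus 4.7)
My plan is to use the Edgeworth expansion of \cref{coro1} to reduce the anti-concentration problem for $T_n$ to bounding the integral of the Edgeworth density $p_n$ over the thin slice $A(t+\eps)\setminus A(t)$, and then control each summand of $p_n$ separately using Nazarov's Gaussian anti-concentration inequality for the leading term and \cref{lem:anti} for the third-moment correction.

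First I would extend \cref{coro1} from bounded rectangles in $\mcl R$ to the half-infinite boxes $A(s)=(-\infty,s]^d$ by truncating to $[-M,s]^d$ and letting $M\to\infty$, which is justified by the exponential decay and integrability of $\phi_\Sigma$. Writing $\Delta_n:=Cb^5\sigma_*^{-5}n^{-1}\log^3 d\cdot\log n$, this yields
\[
\sup_{s\in\mathbb R}\abs{P(T_n\leq s)-\int_{A(s)}p_n(z)dz}\leq \Delta_n.
\]
Since $\int_{A(s)}p_n(z)dz$ is continuous in $s$, the point-mass estimate $\sup_{t\in\mathbb R}P(T_n=t)\leq 2\Delta_n$ follows, and hence
\[
P(t\leq T_n\leq t+\eps)\leq \int_{A(t+\eps)\setminus A(t)}p_n(z)dz + 4\Delta_n.
\]

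Next I would split the Edgeworth density: $p_n=\phi_\Sigma-\frac{1}{6\sqrt n}\langle\E[\ol{X^3}],\nabla^3\phi_\Sigma\rangle$. The Gaussian contribution is controlled by Nazarov's inequality,
\[
\int_{A(t+\eps)\setminus A(t)}\phi_\Sigma(z)dz = P(t<Z^\vee\leq t+\eps)\lesssim \eps\frac{\sqrt{\log d}}{\ul\sigma}.
\]
For the third-moment correction, observe that $A(t+\eps)\setminus A(t)=A(t)^{0,\eps\bs1_d}\setminus A(t)$, so \cref{lem:anti} (extended to half-infinite $A$ by the same truncation argument) yields
\[
\norm{\int_{A(t+\eps)\setminus A(t)}\nabla^3\phi_\Sigma(z)dz}_1 \lesssim \eps\frac{\log^2 d}{\sigma_*^4}.
\]
Combining this with the elementary bound $\|\E[\ol{X^3}]\|_\infty\lesssim b^3$, which follows from Hölder's inequality and the sub-exponential moment assumption \eqref{ass:sk} (applied via \cref{moment-psi}), the correction term contributes at most $C\eps b^3(\log^2 d)/(\sigma_*^4\sqrt n)$. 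Summing the three contributions gives the claimed estimate.

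The main obstacle is the technical step of extending both \cref{coro1} and \cref{lem:anti}, which are stated for bounded rectangles, to sets $A(t)$ and slices $A(t+\eps)\setminus A(t)$ that are unbounded in the negative direction. This should be routine from the exponential decay of $\phi_\Sigma$ and dominated convergence applied uniformly in $t$, but it is the only non-mechanical point; once it is in place the lemma is an immediate consequence of assembling previously established bounds.
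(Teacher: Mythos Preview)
Your proposal is correct and follows essentially the same route as the paper's proof, which simply says the claim ``immediately follows by combining \cref{coro1} with Lemmas \ref{nazarov} and \ref{lem:anti}.'' Your only stated concern—the extension from bounded rectangles to the half-infinite boxes $A(t)$—is not treated as an obstacle in the paper: the sets $A(t)$ are used freely as members of (or limits within) $\mcl R$ throughout, and the limiting argument you sketch is exactly how the paper handles the analogous point in Step~4 of the proof of \cref{lem:anti}.
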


\begin{proof}
The claim immediately follows by combining \cref{coro1} with Lemmas \ref{nazarov} and \ref{lem:anti}. 
\end{proof}

%

\begin{lemma}\label{lem:sk-bias}
Let $\eta$ be a centered random vector in $\mathbb R^r$ having a Stein kernel $\tau$ such that $\tau_{jk}=\tau_{kj}$ for all $j,k$. Assume $\E[|\eta|^2]<\infty$. 
Also, let $L$ and $M$ be a $d\times r$ matrix and an $r\times r$ symmetric matrix, respectively. 
Then, for any $a\in\mathbb R^d$, $\xi:=L\eta+\langle M,\eta^{\otimes2}-\E[\eta^{\otimes2}]\rangle a$ has an approximate Stein kernel $(\bar\tau,\res)$ such that
\ba{
\bar\tau(\xi)&=\E[L\tau(\eta)L^\top+4(L\tau(\eta)M\eta)\otimes a\mid\xi],\\
\res(\xi)&=\E[\langle M,2\tau(\eta)-\eta^{\otimes2}-\E[\eta^{\otimes2}]\rangle a\mid\xi].
}
\end{lemma}

\begin{proof}
Define a function $g:\mathbb R^r\to\mathbb R^d$ as $g(y)=Ly+\langle M,y^{\otimes2}-\E[\eta^{\otimes2}]\rangle a$, $y\in\mathbb R^r$. 
Take $h\in C^2_b(\mathbb R^d)$ arbitrarily and set $H:=h\circ g$. 
A straightforward computation shows that $H\in C^2_b(\mathbb R^d)$ and
\ba{
\nabla H(y)&=L^\top\nabla h(g(y))+2(a\cdot\nabla h(g(y)))My,\\
\nabla^2 H(y)&=L^\top\nabla^2 h(g(y))L+2\{L^\top\nabla^2h(g(y))a(My)^\top+My(L^\top\nabla^2h(g(y))a)^\top\}\\&\quad+2(a\cdot\nabla h(g(y)))M
}
for every $y\in\mathbb R^r$. 
In particular, $\eta\cdot\nabla H(\eta)=(\xi+\langle M,\eta^{\otimes2}+\E[\eta^{\otimes2}]\rangle a)\cdot\nabla h(\xi)$. 
Hence
\ba{
\E[(\xi-\E[\xi])\cdot\nabla h(\xi)]
&=\E[\eta\cdot\nabla H(\eta)]-\E[\langle M,\eta^{\otimes2}+\E[\eta^{\otimes2}]\rangle a\cdot\nabla h(\xi)]\\
&=\E[\langle\tau(\eta),\nabla^2 H(\eta)\rangle]-\E[\langle M,\eta^{\otimes2}+\E[\eta^{\otimes2}]\rangle a\cdot\nabla h(\xi)]\\
&=\E[\langle\bar\tau(\xi),\nabla^2 h(\xi)\rangle]+\E[\res(\xi)\cdot\nabla h(\xi)].
}
This completes the proof. 
\end{proof}

\begin{proof}[\bf\upshape Proof of \cref{thm:coverage}]
By the same reasoning as in the proof of \cref{coro1}, we may assume $\tau^X_{i,jk}=\tau^X_{i,kj}$ for all $i\in\{1,\dots,n\}$ and $j,k\in\{1,\dots,d\}$ without loss of generality. 

By Theorems \ref{cf-tstat} and \ref{cf-boot}, there exist positive constants $c$ and $C$ depending only on $\lambda,\eps$ and $b_w$ such that if \eqref{cf-boot-ass} holds, then we have \eqref{eq:cf-tstat} and \eqref{eq:cf-boot} with probability at least $1-1/n$. In the sequel we assume \eqref{cf-boot-ass} is satisfied with this $c$ and fix $\alpha\in(\eps,1-\eps)$ arbitrarily. By \eqref{f-lower} and Lemmas \ref{lem:aht} and \ref{lem:tensor} 
\ba{
\frac{1}{f_{\Sigma}(c_{1-\alpha}^G)}\abs{\hat Q_{n,\gamma}(c_{1-\alpha}^G)-\gamma Q_n(c_{1-\alpha}^G)-\frac{1}{2}\langle\ol{X^2}-\Sigma,\Psi_\alpha\rangle}
\lesssim\frac{|\gamma|\varsigma_d}{\eps^{3/2}\sqrt{\log d}}\frac{b^3}{\sigma_*^3}\frac{\log^{3/2}d}{n}\sqrt{\log n}
}
with probability at least $1-1/n$. 
Combining this with \eqref{eq:cf-tstat} and \eqref{eq:cf-boot}, we have
\ben{\label{cf-diff}
\abs{\hat c_{1-\alpha}-\tilde c_{1-\alpha}+\frac{\langle\ol{X^2}-\Sigma,\Psi_\alpha\rangle}{2f_{\Sigma}(c_{1-\alpha}^G)}}
\leq\frac{C_{\lambda,\eps,b_w}}{\sqrt{\log d}}\frac{\varsigma_d^3}{\sigma_*^2}\frac{\log ^3(dn)}{n}\log n
}
with probability at least $1-2/n$, where $\tilde c_{1-\alpha}:=c_{1-\alpha}+(1-\gamma)Q_n(c_{1-\alpha}^G)/f_{\Sigma}(c_{1-\alpha}^G)$. 
This and \cref{anti-tstat} give
\besn{\label{anti-tstat-applied}
&\abs{P(T_n\geq \hat c_{1-\alpha})-P\bra{T_n\geq \tilde c_{1-\alpha}-\frac{\langle\ol{X^2}-\Sigma,\Psi_\alpha\rangle}{2f_{\Sigma}(c_{1-\alpha}^G)}}}\\
&\leq C_{\lambda,\eps,b_w}\bra{\frac{\log^3 d}{n}\log n+\frac{\varsigma_d^3}{\sigma_*^3}\frac{\log ^3(dn)}{n}(\log n)\bra{1+\sqrt{\frac{\log^3d}{n}}}}+\frac{1}{n}\\
&\leq C_{\lambda,\eps,b_w}\frac{\varsigma_d^3}{\sigma_*^3}\frac{\log ^3(dn)}{n}\log n,
}
where the second inequality follows by \eqref{cf-boot-ass}. 
Now, observe that
\ben{\label{coverage-key}
\wt T_n:=T_n+\frac{\langle\ol{X^2}-\Sigma,\Psi_\alpha\rangle}{2f_{\Sigma}(c_{1-\alpha}^G)}=\max_{1\leq j\leq d}\frac{1}{\sqrt n}\sum_{i=1}^n\bra{X_{ij}+U_i},
}
where 
\ben{\label{def:Psi-tilde}
U_i:=\frac{\langle X_i^{\otimes2}-\E[X_i^{\otimes2}],\wt\Psi_\alpha\rangle}{\sqrt n},\qquad
\wt\Psi_\alpha:=\frac{\Psi_\alpha}{2f_{\Sigma}(c_{1-\alpha}^G)}.
}
By \cref{lem:sk-bias}, for every $i$, $\xi_i:=(X_i+U_i\bs1_d)/\sqrt n$ has an approximate Stein kernel $(\tau_i,\res_i)$ such that
\ba{
\tau_i(\xi_i)&=n^{-1}\E[\tau_i^X(X_i)+4n^{-1/2}\tau_i^X(X_i)(\wt\Psi_\alpha X_i)\otimes \bs1_d\mid\xi_i],\\
\res_i(\xi_i)&=n^{-1}\E[\langle \wt\Psi_\alpha,2\tau^X_i(X_i)-X_i^{\otimes2}-\E[X_i^{\otimes2}]\rangle \bs1_d\mid\xi_i].
}
Hence we can derive an Edgeworth expansion for $\wt T_n$ by applying \cref{thm:main} to $\xi_i$. 
We are going to bound the quantities appearing on the right hand side of \eqref{eq:main}. 
First, by \eqref{f-lower} and \cref{lem:aht}
\ben{\label{Psi-bound}
\|\wt\Psi_\alpha\|_1\leq C_\eps\frac{\varsigma_d\sqrt{\log d}}{\sigma_*^2}.
}
Hence, by \cref{sum-psi} and \eqref{cf-boot-ass},
\ben{\label{U-bound}
\norm{U_i}_{\psi_{1/2}}
\leq C_\eps\frac{b^2}{\sqrt n}\frac{\varsigma_d\sqrt{\log d}}{\sigma_*^2}
\leq C_{\lambda,\eps,b_w}\frac{b}{\log d},
}
and
\ben{\label{V-bound}
\max_{j,k}\norm{n^{-1/2}\bra{\tau_i^X(X_i)(\wt\Psi_\alpha X_i)\otimes\bs1_d}_{jk}}_{\psi_{1/3}}
\leq C_\eps\frac{b^3\varsigma_d\sqrt{\log d}}{\sigma_*^2\sqrt n}
\leq C_{\lambda,\eps,b_w}\frac{b^2}{\log d}.
}
These estimates allow us to prove \eqref{tau2-bound}--\eqref{xi3-tau-bound} with $b$ replaced by $C_{\lambda,\eps,b_w}b$ in a similar manner to the proof of \cref{coro1}. 
Further, observe that
\ba{
\max_{j}|\E[X_{ij}U_{i}]|\leq C_{\lambda,\eps}\frac{b^2}{\sqrt n}\frac{\varsigma_d\sqrt{\log d}}{\sigma_*},\qquad
|\E[U_{i}^2]|\leq C_{\lambda,\eps,b_w}\frac{b^2}{\sqrt n}\frac{\varsigma_d}{\sigma_*\sqrt{\log d}}.
}
Combining these estimates with \eqref{V-bound}, we can also prove \eqref{tbar-bound} with $b$ replaced by $C_{\lambda,\eps}b\sqrt{\varsigma_d/\sigma_*}$ similarly to the proof of \cref{coro1}. 
Next, by \cref{sum-psi} and \eqref{Psi-bound},
\be{
\max_j\|\res_{ij}(\xi_i)\|_{\psi_{1/2}}
\leq C_\eps\frac{b^2}{n}\frac{\varsigma_d\sqrt{\log d}}{\sigma_*^2}.
}
Therefore, similarly to the proof of \eqref{tau2-bound}, we obtain
\ba{
&\frac{\log^{2}d}{\sigma_*^{4}}\bra{\E\norm{\sum_{i=1}^n \xi_i^{\otimes3}\otimes\res_i(\xi_i)}_\infty
+\E\norm{\sum_{i=1}^n\res_i(\xi_i)\otimes\xi_i\otimes\tau_i(\xi_i)}_\infty}
\leq C_{\lambda,\eps,b_w}\frac{\varsigma_d}{\sigma_*}\frac{\log^3d}{n}
}
and
\ba{
\frac{\log^{3/2}d}{\sigma_*^3}\E\norm{\sum_{i=1}^n\res_i(\xi_i)^{\otimes2}\otimes\xi_i}_\infty
\leq C_{\lambda,\eps,b_w}\frac{\varsigma_d^2}{\sigma_*^2}\frac{\log^3d}{n}
}
and
\ben{\label{bias-xi-bound}
\frac{\log^2d}{\sigma_*^4}\E\norm{\sum_{i=1}^n \res_i(\xi_i)\otimes\xi_i}_\infty^2
\leq C_{\lambda,\eps,b_w}\frac{\varsigma_d^2}{\sigma_*^2}\frac{\log^3d}{n}.
}
In addition, since $\res_i(\xi_i)$ has identical components, 
\ben{\label{bias-bound}
\frac{\log d}{\sigma_*^2}\bra{\E\|\Res\|_\infty^2+\E\norm{\sum_{i=1}^n\res_i(\xi_i)^{\otimes2}}_\infty}
=\frac{2\log d}{\sigma_*^2}\sum_{i=1}^n\E[\res_{i1}(\xi_i)^2]
\leq C_{\lambda,\eps}\frac{\log^3 d}{n}.
}
Combining \eqref{bias-xi-bound} and \eqref{bias-bound} with \eqref{tbar-bound}, \eqref{xi3-bound} and the AM-GM inequality gives
\ba{
\frac{\log^{2}d}{\sigma_*^{4}}\bra{
\E\norm{\Res\otimes\sum_{i=1}^n \xi_i^{\otimes3}}_\infty
+\E\norm{\bar T\otimes\sum_{i=1}^n \res_i(\xi_i)\otimes\xi_i}_\infty
}\leq C_{\lambda,\eps,b_w}\frac{\varsigma_d^2}{\sigma_*^2}\frac{\log^3d}{n}
} 
and
\ba{
\frac{\log^{3/2} d}{\sigma_*^3}\E\norm{\Res\otimes\sum_{i=1}^n \res_i(\xi_i)\otimes\xi_i}_\infty
\leq C_{\lambda,\eps,b_w}\frac{\varsigma_d^2}{\sigma_*^2}\frac{\log^3d}{n}.
}
All together, we can proceed as in the proof of \cref{coro1} and then obtain
\ba{
\sup_{t\in\mathbb R}\abs{P(\wt T_n\leq t)-\int_{A(t)}\bra{p_{n}(z)+q_n(z)}dz}
&\leq C_{\lambda,\eps,b_w}\frac{\varsigma_d^2}{\sigma_*^2}\frac{\log^3 d}{n}\log n,
}
where 
\[
q_n(z)=\frac{1}{2n}\sum_{i=1}^n\langle 2\E[X_iU_i]\bs1_d^\top+\E[U_i^2]\bs1_d^{\otimes2},\nabla^2\phi_\Sigma(z)\rangle
-\frac{1}{6n^{3/2}}\sum_{i=1}^n\langle \E[(X_i+U_i\bs1_d)^{\otimes3}-X_i^{\otimes3}],\nabla^3\phi_\Sigma(z)\rangle.
\]
Therefore, in view of \cref{coro1}, it remains to prove
\ban{
\abs{\int_{A(\tilde c_{1-\alpha})^c}p_n(z) dz-\int_{A(c_{1-\alpha})^c}p_n(z) dz+(1-\gamma)Q_n(c_{1-\alpha}^G)}
&\leq C_{\lambda,\eps,b_w}\frac{\varsigma_d^2}{\sigma_*^2}\frac{\log ^3(dn)}{n},
\label{coverage-last1}\\
\abs{\int_{A(\tilde c_{1-\alpha})^c}q_n(z) dz+\E[R_n(\alpha)]}
&\leq C_{\lambda,\eps,b_w}\frac{\varsigma_d^3}{\sigma_*^3}\frac{\log ^3(dn)}{n}\log n.\label{coverage-last2}
}
Let us prove \eqref{coverage-last1}. By \cref{lem:anti},
\be{
\abs{\int_{A(\tilde c_{1-\alpha})^c}p_n(z) dz-\int_{A(c_{1-\alpha})^c}p_n(z) dz+\{F_Z(\tilde c_{1-\alpha})-F_Z(c_{1-\alpha})\}}\lesssim\frac{|1-\gamma|b^3\log^2d}{\sigma_*^4\sqrt n}\abs{\frac{Q_n(c_{1-\alpha}^G)}{f_{\Sigma}(c_{1-\alpha}^G)}}.
}
Also, by Taylor's theorem and \cref{gmax-2nd-deriv},
\be{
\abs{F_Z(\tilde c_{1-\alpha})-F_Z(c_{1-\alpha})-(1-\gamma)Q_n(c_{1-\alpha}^G)}
\lesssim\frac{|1-\gamma|\log d}{\sigma_*^2}\abs{\frac{Q_n(c_{1-\alpha}^G)}{f_{\Sigma}(c_{1-\alpha}^G)}}^2.
}
Further, \cref{lem:aht} and \eqref{f-lower} yield
\ben{\label{q/f-bound}
\abs{\frac{Q_n(c_{1-\alpha}^G)}{f_{\Sigma}(c_{1-\alpha}^G)}}\lesssim\frac{\varsigma_d}{\sqrt{\eps^3\log d}}\cdot \frac{b^3}{\sqrt n}\cdot\frac{\log^{3/2}d}{\sigma_*^3}
\leq C_{\lambda,\eps}\frac{\varsigma_d\log d}{\sqrt n}.
}
Combining these three estimates gives \eqref{coverage-last1}. 
Next, to prove \eqref{coverage-last2}, consider the following decomposition:
\ba{
&\int_{A(\tilde c_{1-\alpha})^c}q_n(z) dz\\
&=\frac{1}{n}\sum_{i=1}^n\langle \E[X_{i}U_i]\bs1_d^\top,\int_{A(\tilde c_{1-\alpha})^c}\nabla^2\phi_\Sigma(z)dz\rangle
+\frac{1}{2n}\sum_{i=1}^n\langle \E[U_i^2]\bs1_d^{\otimes2},\int_{A(\tilde c_{1-\alpha})^c}\nabla^2\phi_\Sigma(z)dz\rangle\\
&\quad-\frac{1}{6n^{3/2}}\sum_{i=1}^n\langle \E[(X_i+U_i\bs1_d)^{\otimes3}-X_i^{\otimes3}],\int_{A(\tilde c_{1-\alpha})^c}\nabla^3\phi_\Sigma(z)dz\rangle\\
&=:I+II+III.
}
We can rewrite $I$ as
\ba{
I&=\frac{1}{n}\sum_{i=1}^n\sum_{j,k=1}^d\E[X_{ij}U_i]\int_{A(\tilde c_{1-\alpha})^c}\partial_{jk}\phi_\Sigma(z)dz\\
&=\frac{1}{n^{3/2}}\sum_{i=1}^n\sum_{j,k,l,m=1}^d\E[X_{ij}X_{il}X_{im}]\wt\Psi_{\alpha,lm}\int_{A(\tilde c_{1-\alpha})^c}\partial_{jk}\phi_\Sigma(z)dz\\
&=\frac{1}{\sqrt n}\langle\E[\ol{X^3}]\otimes\bs1_d,\wt\Psi_\alpha\otimes\int_{A(\tilde c_{1-\alpha})^c}\nabla^2\phi_\Sigma(z)dz\rangle
=-\frac{1}{\sqrt n}\langle\E[\ol{X^3}]\otimes\bs1_d,\wt\Psi_\alpha\otimes\int_{A(\tilde c_{1-\alpha})}\nabla^2\phi_\Sigma(z)dz\rangle,
}
where we used \cref{phi-deriv-upper} for the last equality. 
We are going to prove $\tilde c_{1-\alpha}$ in the last expression can be replaced by $c_{1-\alpha}^G$. 
By \cref{lem:anti} and \eqref{Psi-bound},
\[
\norm{\wt\Psi_\alpha\otimes\bra{\int_{A(\tilde c_{1-\alpha})}\nabla^2\phi_\Sigma(z)dz-\int_{A(c_{1-\alpha}^G)}\nabla^2\phi_\Sigma(z)dz}}_1\lesssim\frac{\varsigma_d\log^{2}d}{\sigma_*^5}|\tilde c_{1-\alpha}-c_{1-\alpha}^G|.
\]
Also, by \eqref{eq:cf-tstat} and \eqref{q/f-bound},
\[
|\tilde c_{1-\alpha}-c_{1-\alpha}^G|\leq C_{\lambda,\eps,b_w}\bra{\frac{\varsigma_d\log d}{\sqrt n}+\frac{\varsigma_d^3}{\sigma_*^2}\frac{\log ^{5/2}(dn)}{n}\log n}.
\]
Consequently, we deduce
\ba{
\abs{I+\E[R_n(\alpha)]}
&\leq C_{\lambda,\eps,b_w}\frac{b^3\varsigma_d\log^{2}d}{\sigma_*^5\sqrt n}|\tilde c_{1-\alpha}-c_{1-\alpha}^G|
\leq C_{\lambda,\eps,b_w}\frac{\varsigma_d^3}{\sigma_*^3}\frac{\log^3(dn)}{n}\log n,
}
where we also used \eqref{cf-boot-ass} and \eqref{gmax-lb} for the last inequality. 
Meanwhile, by \cref{lem:aht} and \eqref{U-bound},
\ba{
|II|\leq C_\eps\frac{b^4}{n}\cdot\frac{\varsigma_d^2\log d}{\sigma_*^4}\frac{\log d}{\sigma_*^2}
\leq C_{\lambda,\eps}\frac{\varsigma_d^2}{\sigma_*^2}\frac{\log^2d}{n}
}
and
\ba{
|III|&\lesssim\frac{1}{\sqrt n}\max_{1\leq i\leq n}\bra{\|\E[X_i^{\otimes2}U_i]\|_\infty+\|\E[X_iU_i^2]\|_\infty+|\E[U_i^3]|}\frac{\log^{3/2}d}{\sigma_*^3}\\
&\leq C_{\lambda,\eps,b_w}\frac{b^4}{n}\frac{\varsigma_d\sqrt{\log d}}{\sigma_*^2}\frac{\log^{3/2}d}{\sigma_*^3}
\leq C_{\lambda,\eps,b_w}\frac{\varsigma_d}{\sigma_*}\frac{\log^2d}{n}.
}
All together, we complete the proof. 
\end{proof}

\subsection{Proof of Corollary \ref{coro:coverage}}

First, replacing $X_i$ by $X_i/\sigma$, we may assume $\sigma=1$ without loss of generality. Note that we have $\sigma_*^{-1}\lesssim\lambda$ under this assumption. 
Next, set $\rho:=\max_{1\leq j<k\leq d}|\Sigma_{jk}|<1$. Note that we have $1-\rho^2\geq\sigma_*^2$ because $1-\rho^2$ coincides with the minimum principal minor of $\Sigma$ of size 2. 

We begin by proving the following inequalities for every $\alpha\in(\eps,1-\eps)$:
\ben{\label{quantile-bounds}
|c_{1-\alpha}^G|\leq C_\eps\sqrt{\log d},\qquad
\exp\bra{-(c^G_{1-\alpha})^2/2}\leq C_{\lambda,\eps,K}d^{-1}\sqrt{\log d}.
}
The first one is an immediate consequence of Lemma A.6 in \cite{BCCHK18}. 
Meanwhile, by Eq.(4.2.9) in \cite{LLR83} (see also Eq.(4.2.1) in \cite{LLR83}), we have for any $u>0$
\ba{
|P(Z^\vee\leq u)-\Phi(u)^d|
&\leq C_\lambda\sum_{1\leq j<k\leq d}|\Sigma_{jk}|\exp\bra{-\frac{u^2}{1+|\Sigma_{jk}|}}.
}
Set $\mcl I:=\{(j,k)\in\{1,\dots,d\}^2:|\Sigma_{jk}|>u^{-2}\}$. 
Observe that
$
dK^2\geq\sum_{j,k=1}^d\Sigma_{jk}^2
\geq u^{-2}\sum_{(j,k)\in\mcl I}|\Sigma_{jk}|.
$
Hence we obtain
\ba{
|P(Z^\vee\leq u)-\Phi(u)^d|
&\leq C_\lambda\sum_{(j,k)\in\mcl I}|\Sigma_{jk}|e^{-u^2/(1+\rho)}
+C_\lambda\sum_{(j,k)\not\in\mcl I}|\Sigma_{jk}|e^{-u^2/(1+u^{-2})}\\
&\leq C_\lambda dK^2u^2e^{-u^2/(1+\rho)}+C_\lambda d^{3/2}Ke^{-u^2}.
}
Further, observe that $1-(\eps/2)^{1/d}<-d^{-1}\log(\eps/2)\leq1/2$. 
Thus, with $u=\Phi^{-1}((\eps/2)^{1/d})=-\Phi^{-1}(1-(\eps/2)^{1/d})$, we have by Lemma 10.3 in \cite{BLM13}
\ben{\label{blm-bound}
e^{-u^2/2}=\sqrt{2\pi}\phi(u)\leq-\sqrt{2\pi}d^{-1}\log(\eps/2)\sqrt{2\log(-d/\log(\eps/2))}
\leq C_\eps d^{-1}\sqrt{\log d}.
}
In addition, by the well-known inequality $\Phi(-s)=1-\Phi(s)\leq e^{-s^2/2}$ for all $s\geq0$, we deduce $u\leq\sqrt{-2\log (1-(\eps/2)^{1/d})}\leq C_\eps\sqrt{\log d}$. 
Hence we obtain
\ba{
P(Z^\vee\leq u)\leq\eps/2+C_{\lambda,\eps} \bra{d^{-(1-\rho)/(1+\rho)}K^2(\log d)^{1+1/(1+\rho)}+Kd^{-1/2}\log d}.
}
Therefore, if the second term on the right hand side is less than $\eps/2$, then $P(Z^\vee\leq u)\leq\eps<1-\alpha$, so $c_{1-\alpha}^G> u$. 
Hence the second bound in \eqref{quantile-bounds} follows by \eqref{blm-bound}. 
Otherwise, we have $d\leq d_0$ for some constant $d_0$ depending only on $\lambda,\eps$ and $K$, so the second bound in \eqref{quantile-bounds} trivially holds with sufficiently large $C_{\lambda,\eps,K}$. 

Now we turn to the main body of the proof. 
Since the left hand side of \eqref{eq:coro:coverage} is bounded by 1, we may assume \eqref{cf-boot-ass} holds with the constant $c$ in \cref{thm:coverage}. 
Also, we have $f_\Sigma(c_{1-\alpha}^G)\geq C_\eps\varsigma_d/\sqrt{\log d}$ by \eqref{f-lower}. 
Thus, the proof completes once we show that
\ben{\label{coro:coverage-aim}
|\E[\langle\ol{X^3}\otimes\bs1_d,\Psi_{\alpha}^{\otimes2}\rangle]|\leq C_{\lambda,\eps,K}d^{-1/2}\log^2d
}
for any $\alpha\in(\eps,1-\eps)$. 
Observe that
\ba{
\langle\ol{X^3}\otimes\bs1_d,\Psi_{\alpha}^{\otimes2}\rangle
=\frac{1}{n}\sum_{i=1}^n\abra{X_i^{\otimes3}\otimes\bs1_d,\Psi_\alpha^{\otimes2}}
=\frac{1}{n}\sum_{i=1}^n\langle X_i^{\otimes2},\Psi_\alpha\rangle\abra{X_i\otimes\bs1_d,\Psi_\alpha}.
}
Thus, by the Schwarz inequality
\ba{
|\E[\langle\ol{X^3}\otimes\bs1_d,\Psi_{\alpha}^{\otimes2}\rangle]|
\leq\sqrt{\frac{1}{n}\sum_{i=1}^n\E[\langle X_i^{\otimes2},\Psi_\alpha\rangle^2]\frac{1}{n}\sum_{i=1}^n\E[\abra{X_i\otimes\bs1_d,\Psi_\alpha}^2]}.
}
We have
\ba{
\frac{1}{n}\sum_{i=1}^n\E[\langle X_i^{\otimes2},\Psi_\alpha\rangle^2]
=\frac{1}{n}\sum_{i=1}^n\abra{\E[X_i^{\otimes4}],\Psi_\alpha^{\otimes2}}
\leq b^4\|\Psi_\alpha\|_1^2
\lesssim \lambda^4\log^2d,
}
where the last inequality follows from \cref{lem:aht}. 
Also, 
\ba{
\frac{1}{n}\sum_{i=1}^n\E[\abra{X_i\otimes\bs1_d,\Psi_\alpha}^2]
&=\frac{1}{n}\sum_{i=1}^n\sum_{j,k,l,m=1}^d\E[X_{ij}X_{ik}]\Psi_{\alpha,jl}\Psi_{\alpha,km}\\
&=\sum_{j,k,l,m=1}^d\Sigma_{jk}\Psi_{\alpha,jl}\Psi_{\alpha,km}
\leq K\sum_{j=1}^d\bra{\sum_{l=1}^d\Psi_{\alpha,jl}}^2.
}
Therefore, \eqref{coro:coverage-aim} follows once we show
\ben{\label{coro:coverage-aim2}
\sum_{j=1}^d\bra{\sum_{l=1}^d\Psi_{\alpha,jl}}^2\leq C_{\lambda,\eps,K}d^{-1}\log^2d.
}
Below we write $t=c_{1-\alpha}^G$ for short. 
Fix $j\in\{1,\dots,d\}$ arbitrarily. 
A straightforward computation shows
\ba{
\Psi_{\alpha,jj}=-\E[(\Sigma^{-1}Z)_j1_{\{\max_{k:k\neq j}Z_k\leq t\}}\mid Z_j=t]\phi(t),
}
where $Z\sim N(0,\Sigma)$. Hence
\ben{\label{Psi-jj-bound1}
|\Psi_{\alpha,jj}|\leq\E[|(\Sigma^{-1}Z)_j|\mid Z_j=t]\phi(t)
\leq(\Sigma^{-1})_{jj}|t|\phi(t)+\E[|(\Sigma^{-1})_{j,-j}\cdot Z_{-j}|\mid Z_j=t]\phi(t),
}
where $(\Sigma^{-1})_{j,-j}:=((\Sigma^{-1})_{jk})_{1\leq k\leq d:k\neq j}$ and $Z_{-j}:=(Z_k)_{1\leq k\leq d:k\neq j}$. 
It is well-known that the conditional distribution of $Z_{-j}$ given $Z_j=t$ is $N(t\Sigma_{j,-j},\Sigma_{-j,-j}-\Sigma_{j,-j}^{\otimes2})$ with $\Sigma_{j,-j}:=(\Sigma_{jk})_{1\leq k\leq d:k\neq j}$ and $\Sigma_{-j,-j}:=(\Sigma_{kl})_{1\leq k,l\leq d:k,l\neq j}$ (see e.g.~Theorem 1.2.5 in \cite{FUS10}). 
Therefore,
\ba{
\E[(\Sigma^{-1})_{j,-j}\cdot Z_{-j}\mid Z_j=t]
=(\Sigma^{-1})_{j,-j}\cdot t\Sigma_{j,-j}
=t\sum_{k:k\neq j}(\Sigma^{-1})_{jk}\Sigma_{jk}
=t\bra{1-(\Sigma^{-1})_{jj}}
}
and
\ba{
\Var[(\Sigma^{-1})_{j,-j}\cdot Z_{-j}\mid Z_j=t]
&=(\Sigma^{-1})_{j,-j}^\top(\Sigma_{-j,-j}-\Sigma_{j,-j}^{\otimes2})(\Sigma^{-1})_{j,-j}\\
&=\sum_{k,l:k,l\neq j}(\Sigma^{-1})_{jk}(\Sigma^{-1})_{jl}(\Sigma_{kl}-\Sigma_{jk}\Sigma_{jl})\\
&=\sum_{k:k\neq j}(\Sigma^{-1})_{jk}\bra{-(\Sigma^{-1})_{jj}\Sigma_{kj}-\Sigma_{jk}+\Sigma_{jk}(\Sigma^{-1})_{jj}}\\
&=(\Sigma^{-1})_{jj}-1.
}
Consequently, 
\ben{\label{Psi-jj-bound2}
|\E[|(\Sigma^{-1})_{j,-j}\cdot Z_{-j}|\mid Z_j=t]|
\leq \abs{t\bra{1-(\Sigma^{-1})_{jj}}}+\sqrt{(\Sigma^{-1})_{jj}-1}
\leq C_{\lambda}(1+|t|).
}
Combining \eqref{Psi-jj-bound1} and \eqref{Psi-jj-bound2} with \eqref{quantile-bounds} gives
\ben{\label{Psi-jj-bound}
|\Psi_{\alpha,jj}|\leq C_{\lambda}(1+|t|)\phi(t)\leq C_{\lambda,\eps,K}d^{-1}\log d.
}
Next, fix $l\in\{1,\dots,d\}\setminus\{j\}$ arbitrarily. Then we have
\ba{
|\Psi_{\alpha,jl}|\leq P\bra{\max_{k:k\neq j,l}Z_k\leq t\mid Z_j=t,Z_l=t}\phi_{\Sigma[j,l]}(t,t)
\leq \phi_{\Sigma[j,l]}(t,t),
}
where $\Sigma[j,l]=(\Sigma_{pq})_{p,q\in\{j,l\}}$. Hence
\ba{
|\Psi_{\alpha,jl}|\leq\frac{1}{2\pi\sqrt{1-\rho^2}}\exp\bra{-\frac{t^2}{1+\Sigma_{jl}}}.
}
Let $\mcl I_j:=\{k\in\{1,\dots,d\}:|\Sigma_{jk}|>t^{-2}\}$. 
Then we have
\ba{
K^2\geq\sum_{k=1}^d\Sigma_{jk}^2\geq\sum_{k\in \mcl I_j}\Sigma_{jk}^2>t^{-4}\#\mcl I_j,
}
where $\#\mcl I_j$ is the number of elements in $\mcl I_j$. 
Hence we obtain
\ba{
\sum_{l:l\neq j}|\Psi_{\alpha,jl}|
&\leq C_\lambda\sum_{l\in \mcl I_j}e^{-t^2/(1+\rho)}
+C_\lambda\sum_{l\notin \mcl I_j}e^{-t^2/(1+t^{-2})}
\leq C_\lambda(K^2t^4e^{-t^2/(1+\rho)}+de^{-t^2})\\
&\leq C_{\lambda,\eps,K}\bra{d^{-2/(1+\rho)}(\log d)^{2+1/(1+\rho)}+d^{-1}\log d}
\leq C_{\lambda,\eps,K}d^{-1}\log d,
}
where the third inequality follows from \eqref{quantile-bounds}. 
Combining this with \eqref{Psi-jj-bound} gives \eqref{coro:coverage-aim2}.
 \qed

\subsection{Proof of Corollary \ref{coro:lb-gwb}}

In view of \cref{thm:coverage}, the asserted claim immediately follows from \eqref{aim:lb-ga}, \eqref{coro:coverage-aim} and \eqref{f-lower}. 
\qed

\subsection{Proof of Corollary \ref{coro:factor}}

Let us prove \eqref{eq:factor}. 
Since $\sigma_*\geq\sqrt{1-\rho}$ and $\varsigma_d=O(b\sqrt{\log d})$, we have $\frac{\varsigma_d^3}{\sigma_*^3}\frac{\log ^3(dn)}{n}\log n=o(n^{-1/2})$ by assumption. 
Also, observe that
\ba{
&(1-\gamma)Q_n(c_{1-\alpha}^G)+\E[R_n(\alpha)]\\
&=-(1-\gamma)\frac{\rho^{3/2}\E[U^3]}{6\sqrt n}\langle\bs1_d^{\otimes3},\int_{A(c^G_{1-\alpha})}\nabla^3\phi_\Sigma(z)dz\rangle
+\frac{\rho^{3/2}\E[U^3]}{2\sqrt n}\frac{\abra{\bs1_d^{\otimes2},\Psi_{\alpha}}^2}{f_{\Sigma}(c_{1-\alpha}^G)}
+(1-\rho)^{3/2}\Upsilon_{n,\gamma}(\alpha).
}
Therefore, in view of \cref{thm:coverage}, it suffices to prove
\ba{
\langle\bs1_d^{\otimes3},\int_{A(c^G_{1-\alpha})}\nabla^3\phi_\Sigma(z)dz\rangle
&=\rho^{-\frac{3}{2}}(z_\alpha^2-1)\phi(z_\alpha)+o(1),\\
\frac{\abra{\bs1_d^{\otimes2},\Psi_{\alpha}}^2}{f_{\Sigma}(c_{1-\alpha}^G)}
&=\rho^{-\frac{3}{2}}z_\alpha^2\phi(z_\alpha)+o(1).
}

Observe that $Z$ has the same law as $\sqrt\rho\zeta\bs1_d+\sqrt{1-\rho}G$ with $\zeta\sim N(0,1)$ and $G\sim N(0,I_d)$ independent. 
Hence, $F_Z(t)=\E[\Phi((t-\sqrt{1-\rho}G^\vee)/\sqrt\rho)]$ for every $t\in\mathbb R$. 
Set $a_d:=\sqrt{1-\rho}\E[G^\vee]$. 
Since $\Var[G^\vee]=O(1/\log d)=o(1)$, we have $F_Z(t)=\Phi((t-a_d)/\sqrt\rho)+o(1)$ uniformly in $t\in\mathbb R$ by the Lipschitz continuity of $\Phi$. 
Hence $1-\alpha=F_Z(c^G_{1-\alpha})=\Phi((c^G_{1-\alpha}-a_d)/\sqrt\rho)+o(1)$ and thus $(c^G_{1-\alpha}-a_d)/\sqrt\rho=\Phi^{-1}(1-\alpha)+o(1)=-z_\alpha+o(1)$. 
Then, for any integer $r\geq0$, the Lipschitz continuity of $\phi^{(r)}$ gives
\ban{
f_\Sigma^{(r)}(c^G_{1-\alpha})
&=\rho^{-\frac{r+1}{2}}\E[\phi^{(r)}((c^G_{1-\alpha}-\sqrt{1-\rho}G^\vee)/\sqrt\rho)]\notag\\
&=(-1)^{r}\rho^{-\frac{r+1}{2}}\phi^{(r)}(z_\alpha)+o(1).\label{factor-dens}
}
Combining this with \eqref{gmax-dens-deriv} gives
\ba{
\langle\bs1_d^{\otimes3},\int_{A(c^G_{1-\alpha})}\nabla^3\phi_\Sigma(z)dz\rangle
=\rho^{-\frac{3}{2}}\phi''(z_\alpha)+o(1)
=\rho^{-\frac{3}{2}}(z_\alpha^2-1)\phi(z_\alpha)+o(1)
}
and
\ba{
\frac{\abra{\bs1_d^{\otimes2},\Psi_{\alpha}}^2}{f_{\Sigma}(c_{1-\alpha}^G)}
=\frac{\rho^{-2}\phi'(z_\alpha)^2}{\rho^{-\frac{1}{2}}\phi(z_\alpha)}+o(1)
=\rho^{-\frac{3}{2}}z_\alpha^2\phi(z_\alpha)+o(1).
}
This completes the proof of \eqref{eq:factor}. 

Next we prove $\Upsilon_{n,\gamma}(\alpha)=o(n^{-1/2})$ when $\gamma=1$. 
In view of \eqref{factor-dens} with $r=0$, it suffices to prove
\[
\abra{\E[V^{\otimes3}]\otimes\bs1_d,\Psi_{\alpha}^{\otimes2}}=o(1).
\]
Observe that
\ba{
|\abra{\E[V^{\otimes3}]\otimes\bs1_d,\Psi_{\alpha}^{\otimes2}}|
&=|\E[\abra{V^{\otimes2},\Psi_{\alpha}}\abra{V\otimes\bs1_d,\Psi_{\alpha}}]|\\
&\leq\sqrt{\E[\abra{V^{\otimes2},\Psi_{\alpha}}^2]\E[\abra{V\otimes\bs1_d,\Psi_{\alpha}}^2]}.
}
We have $\E[\abra{V^{\otimes2},\Psi_{\alpha}}^2]=\abra{\E[V^{\otimes4}],\Psi_{\alpha}^{\otimes2}}=O(\log^2d)$ by \cref{lem:aht}. 
Meanwhile, observe that $\phi_\Sigma$ is symmetric as a $d$-variate function due to the structure of $\Sigma$. Hence, $\Psi_{\alpha,jj}=\Psi_{\alpha,11}$ and $\Psi_{\alpha,jk}=\Psi_{\alpha,12}$ if $j\neq k$. 
Therefore,
\ba{
\E[\abra{V\otimes\bs1_d,\Psi_{\alpha}}^2]
&=\E\sbra{\bra{\sum_{j=1}^dV_j}^2}\bra{\Psi_{\alpha,11}+(d-1)\Psi_{\alpha,12}}^2\\
&=d\bra{\Psi_{\alpha,11}+(d-1)\Psi_{\alpha,12}}^2
=\frac{1}{d}\bra{\sum_{j,k=1}^d\Psi_{jk}}^2.
}
Since $\sum_{j,k=1}^d\Psi_{jk}=O(1)$ by \eqref{gmax-dens-deriv} and \eqref{factor-dens}, we conclude $\E[\abra{V\otimes\bs1_d,\Psi_{\alpha}}^2]=O(d^{-1})$. 
Consequently, we obtain $\abra{\E[V^{\otimes3}]\otimes\bs1_d,\Psi_{\alpha}^{\otimes2}}=O(d^{-1/2}\log d)=o(1)$. 
\qed

\subsection{Proof of Theorem \ref{thm:db}}

For $\alpha\in(0,1)$, we denote by $\hat c_{1-\alpha}^*$ the $(1-\alpha)$-quantile of $T_n^{**}$ under $P^{**}$. 
%
\begin{lemma}\label{db-coverage}
Under the assumptions of \cref{thm:db},  there exist positive constants $c$ and $C$ depending only on $\lambda,\eps,b_w$ and $b_v$ such that if \eqref{cf-boot-ass} holds, then
\be{
\sup_{\eps<\alpha<1-\eps}P\bra{\abs{P^*(T_n^*\geq \hat c_{1-\alpha}^{*})-\bra{\alpha-R_n(\alpha)}}
> C\frac{\varsigma_d^3}{\sigma_*^3}\frac{\log ^3(dn)}{n}\log n}\leq\frac{5}{n}.
}
\end{lemma}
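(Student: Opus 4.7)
The plan is to mirror the proof of \cref{thm:coverage} conditionally on the data $X_1,\dots,X_n$, treating $X_i^*:=w_i\widetilde X_i$ as the summands and $v_i$ as the bootstrap weights. Because $\E[v_1^3]=1$, the ``$(1-\gamma)$'' correction from \cref{thm:coverage} vanishes, leaving only the $R_n$-type term, whose conditional analog will reduce to $R_n(\alpha)$ since $\E[w_1^3]=1$ as well. All arguments will be carried out on the high-probability event $\mcl E_n$ from \eqref{def:En}, which ensures $P(\mcl E_n)\geq 1-1/n$ and supplies the truncated-moment bounds needed below.

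The first step is to derive two Cornish--Fisher expansions. \cref{cf-boot} applied directly gives one for $\hat c_{1-\alpha}$. For $\hat c_{1-\alpha}^*$, I apply a conditional version of \cref{cf-boot} to $T_n^{**}$ under $P^{**}$: here the ``data'' are the (deterministic given $(X,w)$) vectors $X_i^*-\bar X^*$ and the weights are $v_i$. The corresponding Edgeworth density is
\[
\hat p_n^{**}(z)=\phi_\Sigma(z)+\tfrac12\langle\widehat\Sigma_n^*-\Sigma,\nabla^2\phi_\Sigma(z)\rangle-\tfrac{1}{6\sqrt n}\langle\overline{(X^*-\bar X^*)^3},\nabla^3\phi_\Sigma(z)\rangle,
\]
with $\widehat\Sigma_n^*=n^{-1}\sum_i(X_i^*-\bar X^*)^{\otimes2}$. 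Taking the difference yields, on $\mcl E_n$,
\[
\hat c_{1-\alpha}^*-\hat c_{1-\alpha}=\frac{\langle\overline{X^2}-\widehat\Sigma_n^*,\Psi_\alpha\rangle}{2f_\Sigma(c_{1-\alpha}^G)}+O\!\left(n^{-1/2}\cdot\text{cubic}\right),
\]
where the cubic remainder collects differences of third-moment corrections which are small because $\E[w_1^3]=1$ and $\max_{i,j}|\bar X_j|$ is controlled on $\mcl E_n$.

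Next I use conditional anti-concentration for $T_n^*$ (by combining \cref{prop:boot} with Lemmas~\ref{nazarov} and \ref{lem:anti}) to replace $\hat c_{1-\alpha}^*$ inside $\{T_n^*\geq\hat c_{1-\alpha}^*\}$ by the approximation above, giving
\[
P^*(T_n^*\geq\hat c_{1-\alpha}^*)=P^*(\widetilde T_n^*\geq\hat c_{1-\alpha})+O(\text{small}),
\]
where, exactly as in the key identity \eqref{coverage-key},
\[
\widetilde T_n^*:=T_n^*+\frac{\langle\widehat\Sigma_n^*-\overline{X^2},\Psi_\alpha\rangle}{2f_\Sigma(c_{1-\alpha}^G)}=\max_{1\leq j\leq d}\frac{1}{\sqrt n}\sum_{i=1}^n\bigl(w_i\widetilde X_{ij}+U_i^{**}\bigr)
\]
for $U_i^{**}$ a linear functional of $(X_i^*)^{\otimes2}-\E^*[(X_i^*)^{\otimes2}]$. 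I then apply \cref{thm:main} conditionally to the sum defining $\widetilde T_n^*$. The required joint Stein kernel of $(X_i^*,Y_i^*)$ with $Y_i^*=\vectorize((X_i^*)^{\otimes2}-\E^*[(X_i^*)^{\otimes2}])=(w_i^2-1)\vectorize(\widetilde X_i^{\otimes2})$ is obtained from the kernel $\bar\tau^*$ of $(w_1,w_1^2)$ via \cref{sk-affine}, and its entries are bounded by polynomials in $b_w$ times tensors in $\widetilde X_i$. The outcome is
\[
P^*(\widetilde T_n^*\geq\hat c_{1-\alpha})=\alpha-\E^*[R_n^*(\alpha)]+O(\text{small}),
\]
where $R_n^*(\alpha)=\frac{\langle\overline{X^{*3}}\otimes\boldsymbol 1_d,\Psi_\alpha^{\otimes2}\rangle}{2\sqrt n f_\Sigma(c_{1-\alpha}^G)}$ with $\overline{X^{*3}}=n^{-1}\sum_i w_i^3\widetilde X_i^{\otimes3}$, the ``$(1-\gamma_v)$'' piece being absent thanks to $\gamma_v=1$. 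Since $\E^*[\overline{X^{*3}}]=n^{-1}\sum_i\widetilde X_i^{\otimes3}$ by $\E[w_1^3]=1$, and this differs from $\overline{X^3}$ only by $\bar X$-corrections negligible on $\mcl E_n$, we obtain $\E^*[R_n^*(\alpha)]=R_n(\alpha)+O(\text{small})$, completing the identification.

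The main obstacle is controlling the error terms with the correct logarithmic dependence. Under $P^*$ the summands $X_i^*$ and their Stein kernels are deterministic multiples of $\widetilde X_i$-tensors, so a naive application of the $\psi_\alpha$-norm machinery of \cref{max-weibull} would force in extra factors of $\max_{i,j}|\widetilde X_{ij}|$ and hence extra $\log n$ factors. To keep the bound at order $\frac{\varsigma_d^3}{\sigma_*^3}\cdot\frac{\log^3(dn)}{n}\log n$, I follow the truncation strategy of the proof of \cref{prop:boot}, Case (i): the truncated parts of moments like $\E^*\|\sum_i(\xi_i^*)^{\otimes r}\|_\infty$ are controlled by a conditional Nemirovski inequality (exploiting the boundedness $|w_i|\leq b_w$, $|v_i|\leq b_v$), while the untruncated parts are bounded deterministically using the corresponding clauses of $\mcl E_n$. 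Tracking these bounds through the decomposition of \cref{thm:main} and summing the $O(1/n)$ failure probabilities of the several high-probability events (Cornish--Fisher for $\hat c_{1-\alpha}$ and $\hat c_{1-\alpha}^*$, the conditional Edgeworth expansion, and $\mcl E_n$ itself) yields the claimed $5/n$ exceptional probability.
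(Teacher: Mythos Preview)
Your proposal is correct and follows essentially the same route as the paper: take the difference of the Cornish--Fisher expansions for $\hat c_{1-\alpha}^*$ and $\hat c_{1-\alpha}$ to isolate the shift $J_n^*(\alpha)=\langle n^{-1}\sum_i(w_i^2-1)X_i^{\otimes2},\wt\Psi_\alpha\rangle$ (your $\overline{X^2}-\widehat\Sigma_n^*$ up to $\bar X$-corrections), use conditional anti-concentration to move it inside, apply \cref{thm:main} conditionally to $T_n^*+J_n^*(\alpha)$ with the Stein kernel for $\xi_i$ obtained from $\bar\tau^*$ via \cref{sk-affine}, and finally identify the resulting $R_n^*$-term with $R_n(\alpha)$ using $\E[w_1^3]=1$. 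The only point worth sharpening is the probability accounting: the paper gets the second-level Cornish--Fisher event $\mcl E_n^*(\alpha)$ with $P(\mcl E_n^*(\alpha))\geq 1-1/n^2$ and then uses Markov's inequality, $P(P^*(\mcl E_n^*(\alpha)^c)\geq 1/n)\leq n\E[P^*(\mcl E_n^*(\alpha)^c)]\leq 1/n$, to handle the random term $P^*(\mcl E_n^*(\alpha)^c)$ that appears after the anti-concentration step.
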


\begin{proof}
The proof is basically a straightforward modification of that of \cref{thm:coverage}. 
We only give a sketch of the proof with emphasis on relatively major changes. 

Fix $\alpha\in(\eps,1-\eps)$ arbitrarily. 
First, it is not difficult to see that an analogous result to \cref{cf-boot} holds for $T_n^{**}$. 
Thus, by a similar argument to the proof of \eqref{cf-diff}, we can find a constant $c$ depending only on $\lambda,\eps,b_w$ and $b_v$ and an event $\mcl E_n^*(\alpha)$ satisfying the following conditions: 
\begin{enumerate}[label=(\roman*)]

\item If \eqref{cf-boot-ass} holds, then we have on $\mcl E_n^*(\alpha)$
\ba{
\abs{\hat c_{1-\alpha}^*-\hat c_{1-\alpha}+\frac{1}{2f_{\Sigma}(c_{1-\alpha}^G)}\langle\frac{1}{n}\sum_{i=1}^n(w_i^2-1)X_i^{\otimes2},\Psi_\alpha\rangle}
\leq\frac{C_{\lambda,\eps,b_w,b_v}}{\sqrt{\log d}}\frac{\varsigma_d^3}{\sigma_*^2}\frac{\log ^3(dn)}{n}\log n.
}

\item \eqref{eq:ae-boot} holds on $\mcl E_n^*(\alpha)$. 

\item\label{event*-3} $P(\mcl E_n^*(\alpha))\geq1-1/n^2$. 

\end{enumerate}
In the sequel we assume \eqref{cf-boot-ass} is satisfied with the above $c$. Then, by a similar argument to the proof of \eqref{anti-tstat-applied}, we obtain
\ba{
\abs{P^*(T_n^*\geq \hat c_{1-\alpha}^*)-P^*\bra{T_n^*\geq \hat c_{1-\alpha}-J_n^*(\alpha)}}
&\leq C_{\lambda,\eps,b_w,b_v}\frac{\varsigma_d^3}{\sigma_*^3}\frac{\log ^3(dn)}{n}\log n+P^*(\mcl E_n^*(\alpha)^c),
}
where $J_n^*(\alpha):=\langle n^{-1}\sum_{i=1}^n(w_i^2-1)X_i^{\otimes2},\wt\Psi_\alpha\rangle$ and $\wt\Psi_\alpha$ is defined as in \eqref{def:Psi-tilde}. Since $P(P^*(\mcl E_n^*(\alpha)^c)\geq1/n)\leq n\E[P^*(\mcl E_n^*(\alpha)^c)]\leq1/n$ by Markov's inequality and \ref{event*-3}, we conclude
\[
\abs{P^*(T_n^*\geq \hat c_{1-\alpha}^*)-P^*\bra{T_n^*\geq \hat c_{1-\alpha}-J_n^*(\alpha)}}
\leq C_{\lambda,\eps,b_w,b_v}\frac{\varsigma_d^3}{\sigma_*^3}\frac{\log ^3(dn)}{n}\log n
\] 
with probability at least $1-1/n$. 
As in the proof of \cref{thm:coverage}, we derive an Edgeworth expansion for $T_n^*+J_n^*(\alpha)$ by applying \cref{thm:main} to $\xi_i:=(w_i\wt X_i+(w_i^2-1)U_i\bs1_d)/\sqrt n$ conditional on the data, where $\wt X_i:=X_i-\bar X$ and $U_i:=\langle X_i^{\otimes2},\wt\Psi_\alpha\rangle/\sqrt n$. 
By \cref{lem:sk-bias}, conditional on the data, $\xi_i$ has an approximate Stein kernel $(\tau_i,\res_i)$ such that 
\ba{
\tau_i(\xi_i)&=n^{-1}\E^*[\tau^*(w_i)\wt X_i^{\otimes2}+V_i\mid\xi_i],\\
\res_i(\xi_i)&=n^{-1}\E^*[\langle X_i^{\otimes2},\wt\Psi_\alpha\rangle(2\tau^*(w_i)-w_i^2-1) \bs1_d\mid\xi_i],
}
where $V_i:=4n^{-1/2}(\tau^*(w_i)\langle X_i^{\otimes2},\wt\Psi_\alpha\rangle w_i)\wt X_i\otimes \bs1_d$. 
As in the proof of \cref{thm:coverage}, it is not difficult to check that we have the estimates corresponding to \eqref{boot-bound2}--\eqref{boot-bound6} in the present setting with probability at least $1-1/(2n)$ by a similar argument to the proof of \cref{prop:boot}. 
Meanwhile, by the Schwarz inequality,
\ba{
\norm{\frac{1}{n}\sum_{i=1}^nV_i}_{\infty}
\leq\frac{4b_w^3}{\sqrt n}\sqrt{\frac{1}{n}\sum_{i=1}^n\langle X_i^{\otimes2},\wt\Psi_\alpha\rangle^2}\sqrt{\max_{1\leq j\leq d}\frac{1}{n}\sum_{i=1}^n\wt X_{ij}^2}.
}
Observe that $\langle X_i^{\otimes2},\wt\Psi_\alpha\rangle^2=\langle X_i^{\otimes4},\wt\Psi_\alpha^{\otimes2}\rangle$. 
Hence, by \eqref{Psi-bound} and \cref{lem:tensor},
\ben{\label{x2-psi-bound}
\frac{1}{n}\sum_{i=1}^n\langle X_i^{\otimes2},\wt\Psi_\alpha\rangle^2
\leq C_\eps\frac{b^4\varsigma_d^2\log d}{\sigma_*^4}
\leq C_{\lambda,\eps}\frac{b^2\varsigma_d^2\log d}{\sigma_*^2}
}
with probability at least $1-1/(2n)$. 
Combining these estimates with \eqref{boot-mom} and the argument to prove \eqref{tbar-bound*}, we obtain the estimate corresponding to \eqref{tbar-bound*} with $b_wb$ replaced by $C_{\lambda,\eps,b_w,b_v}b\sqrt{\varsigma_d/\sigma_*}$ with probability at least $1-1/n$.  
Moreover, by \eqref{mean-bound}, \eqref{boot-mom0}, \eqref{Psi-bound} and \eqref{x2-psi-bound}, we have
\ba{
\frac{\log^{2}d}{\sigma_*^{4}}\bra{\E^*\norm{\sum_{i=1}^n \xi_i^{\otimes3}\otimes\res_i(\xi_i)}_\infty
+\E^*\norm{\sum_{i=1}^n\res_i(\xi_i)\otimes\xi_i\otimes\tau_i(\xi_i)}_\infty}
&\leq C_{\lambda,\eps,b_w,b_v}\frac{\varsigma_d}{\sigma_*}\frac{\log^3(dn)}{n},\\
\frac{\log^{3/2}d}{\sigma_*^3}\E^*\norm{\sum_{i=1}^n\res_i(\xi_i)^{\otimes2}\otimes\xi_i}_\infty
&\leq C_{\lambda,\eps,b_w,b_v}\frac{\varsigma_d^2}{\sigma_*^2}\frac{\log^3(dn)}{n},\\
\frac{\log^2d}{\sigma_*^4}\E^*\norm{\sum_{i=1}^n \res_i(\xi_i)\otimes\xi_i}_\infty^2
&\leq C_{\lambda,\eps,b_w,b_v}\frac{\varsigma_d^2}{\sigma_*^2}\frac{\log^3(dn)}{n},\\
\frac{\log d}{\sigma_*^2}\bra{\E^*\norm{\sum_{i=1}^n\res_i(\xi_i)}_\infty^2+\E^*\norm{\sum_{i=1}^n\res_i(\xi_i)^{\otimes2}}_\infty}
&\leq C_{\lambda,\eps,b_w}\frac{\log^3 d}{n}
}
with probability at least $1-1/n$. 
All together, we can proceed as in the proof of \cref{prop:boot} and then obtain
\ba{
\sup_{t\in\mathbb R}\abs{P^*(T_n^*+J_n^*(\alpha)\leq t)-\int_{A(t)}\bra{\hat p_{n,1}(z)+\hat q_n(z)}dz}
&\leq C_{\lambda,\eps,b_w,b_v}\frac{\varsigma_d^2}{\sigma_*^2}\frac{\log^3 (dn)}{n}\log n
}
with probability at least $1-3/n$, where 
\ba{
\hat q_n(z)=\frac{1}{2n}\sum_{i=1}^n\langle 2\wt X_iU_i\bs1_d^\top+U_i^2\bs1_d^{\otimes2},\nabla^2\phi_\Sigma(z)\rangle
-\frac{1}{6n^{3/2}}\sum_{i=1}^n\langle \E^*[(w_i\wt X_i+U_i\bs1_d)^{\otimes3}]-\wt X_i^{\otimes3},\nabla^3\phi_\Sigma(z)\rangle.
}
The remaining proof is a minor modification of the proof of \eqref{coverage-last2}, so we omit the details. 
\end{proof}

\begin{lemma}\label{lem:rn-lip}
Under the assumptions of \cref{thm:db}, there exists a constant $C>0$ depending only on $\lambda$ and $\eps$ such that
\ben{\label{rn-lip}
|\E[R_n(\alpha+\delta)]-\E[R_n(\alpha)]|
\leq C\delta\frac{\varsigma_d^3}{\sigma_*^3}\sqrt{\frac{\log^{3}d}{n}}
}
for any $\alpha\in(\eps,1-2\eps)$ and $\delta\in(0,\eps]$. 
\end{lemma}

\begin{proof}
By \cref{lem:aht} and \eqref{f-lower},
\ba{
|\E[R_n(\alpha+\delta)]-\E[R_n(\alpha)]|
\leq C_{\lambda,\eps}\frac{b^3}{\sqrt n}\bra{\frac{\log^2 d}{\sigma_*^4}\abs{\frac{1}{f_{\Sigma}(c_{1-\alpha-\delta}^G)}-\frac{1}{f_{\Sigma}(c_{1-\alpha}^G)}}+\frac{\log d}{\sigma_*^2}\frac{\varsigma_d}{\sqrt{\log d}}|\Psi_{\alpha+\delta}-\Psi_\alpha|}.
}
Noting that $(F_Z^{-1})'(p)=1/f_\Sigma(c^G_p)$ for all $p\in(0,1)$, we obtain by the mean value theorem and \eqref{gmax-q-2nd}
\ba{
\abs{\frac{1}{f_{\Sigma}(c_{1-\alpha-\delta}^G)}-\frac{1}{f_{\Sigma}(c_{1-\alpha}^G)}}
\leq C_\eps\delta\frac{\varsigma_d^3}{\sigma_*^2\sqrt{\log d}}.
}
Also, by \cref{lem:anti}, the mean value theorem and \eqref{f-lower},
\ba{
|\Psi_{\alpha+\delta}-\Psi_\alpha|
\lesssim \frac{\log^{3/2}d}{\sigma_*^3}|c^G_{1-\alpha-\delta}-c^G_{1-\alpha}|
\leq C_\eps\delta\frac{\varsigma_d\log d}{\sigma_*^3}.
}
Combining these bounds gives \eqref{rn-lip}.
\end{proof}

\begin{proof}[Proof of \cref{thm:db}]
Denote by $c_1$ and $C_1$ the constants $c$ and $C$ in \cref{thm:coverage}, respectively. 
Also, denote by $c_2$ and $C_2$ the constants $c$ and $C$ in \cref{db-coverage}, respectively. 
Since the left hand side of \eqref{db-aim} is bounded by 1, we may assume \eqref{cf-boot-ass} holds with $c=c_1\wedge c_2$ without loss of generality. 
Then, for each $\alpha\in(\eps,1-\eps)$, the event
\be{
\mcl E_n(\alpha):=\cbra{\abs{P^*(T_n^*\geq \hat c_{1-\alpha}^{*})-\bra{\alpha-R_n(\alpha)}}
\leq C_2\frac{\varsigma_d^3}{\sigma_*^3}\frac{\log ^3(dn)}{n}\log n}
}
occurs with probability at least $1-5/n$. 
Meanwhile, by \eqref{cf-boot-ass}, \eqref{f-lower} and Lemmas \ref{lem:aht} and \ref{max-weibull}, there exists a constant $C_3>0$ depending only on $\lambda,\eps$ such that the event
\be{
\mcl E_n:=\cbra{\sup_{\eps<\alpha<1-\eps}|R_n(\alpha)-\E[R_n(\alpha)]|
\leq C_3\frac{\varsigma_d}{\sigma_*}\frac{\log^3(dn)}{n}}
}
occurs with probability at least $1-1/n$ and
\ben{\label{rn-bound}
\sup_{\eps<\alpha<1-\eps}|\E[R_n(\alpha)]|
\leq C_3\frac{\varsigma_d}{\sigma_*}\sqrt{\frac{\log^{3}d}{n}}.
}
Further, let $C_4$ be the constant $C$ in \cref{lem:rn-lip}. 
Set
\be{
\Delta_n:=C_2\frac{\varsigma_d^3}{\sigma_*^3}\frac{\log ^3(dn)}{n}\log n+C_3\frac{\varsigma_d}{\sigma_*}\frac{\log^3(dn)}{n}+C_3C_4\frac{\varsigma_d^4}{\sigma_*^4}\frac{\log^3d}{n}.
}
Since the left hand side of \eqref{db-aim} is bounded by 1, we may assume without loss of generality
\ben{\label{wlog-db}
C_3\frac{\varsigma_d}{\sigma_*}\sqrt{\frac{\log^{3}d}{n}}+3\Delta_n\leq \eps.
}

Now fix $\alpha\in(2\eps,1-2\eps)$ arbitrarily. 
Set $\alpha_{n,1}:=\alpha-\Delta_n$ and $\alpha_{n,1}':=\alpha_{n,1}+\E[R_n(\alpha_{n,1})]$. 
By \eqref{rn-bound} and \eqref{wlog-db}, $\alpha_{n,1},\alpha_{n,1}'\in(\eps,1-\eps)$. 
Hence, on $\mcl E_n(\alpha_{n,1}')\cap\mcl E_n$,
\ba{
&P^*(\hat F_n^*(T_n^*)>1-\alpha_{n,1}')
\leq P^*(T_n^*\geq\hat c_{1-\alpha_{n,1}'}^*)
\leq\alpha_{n,1}'-R_n(\alpha_{n,1}')+C_2\frac{\varsigma_d^3}{\sigma_*^3}\frac{\log ^3(dn)}{n}\log n\\
&\quad\leq\alpha_{n,1}+\E[R_n(\alpha_{n,1})]-\E[R_n(\alpha_{n,1}')]+C_3\frac{\varsigma_d}{\sigma_*}\frac{\log^{3}(dn)}{n}+C_2\frac{\varsigma_d^3}{\sigma_*^3}\frac{\log ^3(dn)}{n}\log n
\leq\alpha,
}
where the last inequality follows from \eqref{rn-lip} and \eqref{rn-bound}. 
This yields $\hat\beta_\alpha\leq1-\alpha_{n,1}'$ on $\mcl E_n(\alpha_{n,1}')\cap\mcl E_n$. 
Hence
\ban{
P(T_n\geq\hat c_{\hat\beta_\alpha})
&\geq P(T_n\geq\hat c_{1-\alpha_{n,1}'})-\frac{6}{n}
\geq\alpha_{n,1}'-\E[R_n(\alpha_{n,1}')]-C_{\lambda,\eps,b_w}\frac{\varsigma_d^3}{\sigma_*^3}\frac{\log ^3(dn)}{n}\log n
\notag\\
&\geq\alpha+\E[R_n(\alpha_{n,1})]-\E[R_n(\alpha_{n,1}')]-C_{\lambda,\eps,b_w,b_v}\frac{\varsigma_d^4}{\sigma_*^4}\frac{\log ^3(dn)}{n}\log n
\notag\\
&\geq\alpha-C_{\lambda,\eps,b_w,b_v}\frac{\varsigma_d^4}{\sigma_*^4}\frac{\log ^3(dn)}{n}\log n,\label{db-lower}
}
where the second inequality is by \cref{thm:coverage}, the third by $\Delta_n\leq C_{\lambda,\eps,b_w,b_v}\frac{\varsigma_d^4}{\sigma_*^4}\frac{\log^3d}{n}\log n$ and the fourth by \eqref{rn-lip} and \eqref{rn-bound}. 
Similarly, with $\alpha_{n,2}:=\alpha+2\Delta_n$ and $\alpha_{n,2}':=\alpha_{n,2}+\E[R_n(\alpha_{n,2})]$, we have on $\mcl E_n(\alpha_{n,2}')\cap\mcl E_n$
\ba{
&P^*(\hat F_n^*(T_n^*)>1-\alpha'_{n,2}-\Delta_n)
\geq P^*(\hat F_n^*(T_n^*)\geq 1-\alpha'_{n,2})
=P^*(T_n^*\geq\hat c_{1-\alpha'_{n,2}}^*)\\
&\quad\geq\alpha'_{n,2}-R_n(\alpha_{n,2}')-C_2\frac{\varsigma_d^3}{\sigma_*^3}\frac{\log ^3(dn)}{n}\log n
>\alpha.
}
Hence $\hat\beta_\alpha>1-\alpha_{n,2}'-\Delta_n$ on $\mcl E_n(\alpha_{n,2}')\cap\mcl E_n$. 
Therefore, a similar argument to \eqref{db-lower} yields
\[
P(T_n\geq\hat c_{\hat\beta_\alpha})\leq\alpha+C_{\lambda,\eps,b_w,b_v}\frac{\varsigma_d^4}{\sigma_*^4}\frac{\log ^3(dn)}{n}\log n.
\]
Combining this and \eqref{db-lower} gives the desired result. 
\end{proof}

\appendix

\paragraph{Appendix}

\section{Nearly optimal high-dimensional CLT under the sub-exponential condition}\label{sec:cck23}

\begin{theorem}
Set $\sigma_j:=\sqrt{\Var[S_{n,j}]}$ for $j=1,\dots,d$. 
Suppose that there exists a constant $B\geq1$ such that $\max_{i,j}\|X_{ij}/\sigma_j\|_{\psi_1}\leq B$ and $\max_jn^{-1}\sum_{i=1}^n\E[(X_{ij}/\sigma_j)^4]\leq B^2$. 
Then there exists a universal constant $C>0$ such that
\ben{\label{eq:nondeg}
\sup_{A\in\mcl R}|P(S_n\in A)-P(Z\in A)|\leq \frac{C}{\rho_*^2}\sqrt{\frac{B^2\log^3(dn)}{n}}\log n,
}
where $Z\sim N(0,\Cov[S_n])$ and $\rho_*^2$ is the minimum eigenvalue of the correlation matrix of $S_n$. 
\end{theorem}

\begin{proof}
As announced, the proof is a combination of \cite[Theorem 2.1]{CCK23} and a simple truncation argument used in \cite[Section 5.2]{Ko21} and \cite[Section 4.3]{FKLZ23}. 
Denote by $\delta$ the left hand side of \eqref{eq:nondeg}. 
Considering $X_{ij}/\sigma_j$ instead of $X_{ij}$, we may assume $\sigma_j=1$ for all $j$ without loss of generality. 
Further, since $\delta\leq1$, we may also assume
\begin{equation}\label{wlog-nondeg}
\frac{1}{\rho_*^2}\sqrt{\frac{B^2\log^3(dn)}{n}}\log n\leq1.
\end{equation} 
Next, let $\kappa_n:=2B\log n$. For $i=1,\dots,n$ and $j=1,\dots,d$, define $\hat X_{ij}:=X_{ij}1_{\{|X_{ij}|\leq\kappa_n\}}-\E[X_{ij}1_{\{|X_{ij}|\leq\kappa_n\}}]$ and set $\hat X_i=(\hat X_{i1},\dots,\hat X_{id})^\top$ and $\hat S_n:=n^{-1/2}\sum_{i=1}^n\hat X_i$. 
Note that $\max_{1\leq i\leq n}\|\hat X_{i}\|_\infty\leq2\kappa_n$. 
Then, by a similar argument to the proof of Eq.(4.19) in \cite{FKLZ23}, we obtain
\[
\delta\lesssim \frac{1}{n}+\frac{B\log(dn)\sqrt{\log d}}{\sqrt n}\log n+\hat\delta,
\]
where $\hat\delta:=\sup_{A\in\mcl R}|P(\hat S_n\in A)-P(Z\in A)|$. 
Since $\rho_*^2\leq\sigma_1^2=1$, it remains to prove
\ben{\label{nondeg-aim}
\hat\delta\lesssim \frac{1}{\rho_*^2}\sqrt{\frac{B^2\log^3(dn)}{n}}\log n.
}
We prove this bound by applying Theorem 2.1 in \cite{CCK23} with $\psi=2\kappa_n$. This gives
\ben{\label{cck23-apply}
\hat\delta\lesssim (\log n)\left(\Delta_0+\sqrt{\frac{B^2\log^3d}{n\rho_*^4}}+\frac{\kappa_n^2\log^2d}{n\rho_*^2}\right)+\frac{\kappa_n\log^{3/2}d}{\rho_*\sqrt n},
}
where $\Delta_0:=\frac{\log d}{\rho_*^2}\|\Cov(\hat S_n)-\Cov(S_n)\|_\infty$. 
By \eqref{wlog-nondeg} and $\rho_*\leq1$, 
\[
\frac{\kappa_n^2\log^2d}{n\rho_*^2}=\frac{4}{\rho_*^2}\sqrt{\frac{B^2\log^3d}{n}}\sqrt{\frac{B^2(\log d)(\log^2n)}{n}}\log n\leq\frac{4}{\rho_*^2}\sqrt{\frac{B^2\log^3(dn)}{n}}
\]
and
\[
\frac{\kappa_n\log^{3/2}d}{\rho_*\sqrt n}\leq\frac{2}{\rho_*^2}\sqrt{\frac{B^2\log^3(dn)}{n}}\log n.
\]
Further, by Eq.(26) in \cite{Ko21} and \eqref{wlog-nondeg}, 
\begin{equation*}
\Delta_0\lesssim \frac{\log d}{\rho_*^2}e^{-\kappa_n/(2B)}B^2\log n
=\frac{1}{\rho_*^2}\frac{B^2\log d}{n}\log n
\leq \frac{1}{\rho_*^2}\sqrt{\frac{B^2\log^3(dn)}{n}}.
\end{equation*}
Consequently, we obtain \eqref{nondeg-aim} from \eqref{cck23-apply}. 
\end{proof}

\section{Proofs of the auxiliary results in Section \ref{sec:ae-general}}

\subsection{Proof of Lemma \ref{lem:anti}}\label{sec:anti}

We divide the proof into four steps. 
\smallskip

\noindent\textbf{Step 1}. First we reduce the proof to the case $\Sigma=I_d$. 
Let $Z\sim N(0,\Sigma)$ and $Z'\sim N(0,\Sigma-\sigma_*^2I_d)$. 
Then, for any $A\in\mcl R$, $u,v\in\mathbb R^d_+$ and $x\in\mathbb R^d$, we have
\ba{
\int_{A^{u,v}\setminus A}\phi_\Sigma(x+z)dz
&=\E[1_{A^{u,v}\setminus A}(Z-x)]
=\E\sbra{\int_{\mathbb R^d}1_{A^{u,v}\setminus A}(\sigma_*z+Z')\phi_d(z+x/\sigma_*)dz}.
}
Differentiating both sides $r$ times with respect to $x$ and then setting $x=0$, we obtain
\[
\int_{A^{u,v}\setminus A}\nabla^r\phi_\Sigma(z)dz=\frac{1}{\sigma_*^r}\E\sbra{\int_{\mathbb R^d}1_{A^{u,v}\setminus A}(\sigma_*z+Z')\phi_d(z)dz}.
\]
Observe that $1_{A^{u,v}\setminus A}(\sigma_*z+Z')=1_{(\sigma_*^{-1}(A-Z'))^{u/\sigma_*,v/\sigma_*}\setminus (\sigma_*^{-1}(A-Z'))}(z)$ and $\sigma_*^{-1}(A-Z')\in\mcl R$. 
Hence
\ba{
\norm{\int_{A^{u,v}\setminus A}\nabla^r\phi_\Sigma(z)dz}_1
\leq\frac{1}{\sigma_*^r}\sup_{A\in\mcl R}\norm{\int_{A^{u/\sigma_*,v/\sigma_*}\setminus A}\nabla^r\phi_d(z)dz}_1.
}
Therefore, the claim for general $\Sigma$ follows from that for $\Sigma=I_d$. 
\smallskip

\noindent\textbf{Step 2}. 
In this and the next steps, we show that the quantity inside $\sup_{A\in\mcl R}$ on the left hand side of \eqref{eq:anti} can be replaced by a weighted surface integral of $\nabla^r\phi_\Sigma$ over the boundary of $A$. Note that an analogous result for the case $r=0$ is standard in the literature; see e.g.~Proposition 1.1 in \cite{Ra19}. 
For $A\in\mcl R$, $u,v\in\mathbb R^d_+$ and $\eps>0$, set
\[
I_{A}(u,v)=\norm{\int_{A^{u,v}\setminus A}\nabla^r\phi_d(z) dz}_1,\qquad
K(\eps)=\sup_{A\in\mcl R;u,v\in [0,\eps]^d}\frac{I_{A}(u,v)}{\eps}.
\]
In this step, we prove
\[
\sup_{\eps>0}K(\eps)=\limsup_{\eps\downarrow0}K(\eps).
\]
Take $\eps>0$ arbitrarily. For any $A\in\mcl R$ and $u,v\in\mathbb R^d_+$, observe that $A^{u/2,v/2}\in\mcl R$ and $A^{u,v}\setminus A$ is the disjoint union of $(A^{u/2,v/2})^{u/2,v/2}\setminus (A^{u/2,v/2})$ and $A^{u/2,v/2}\setminus A$. This implies that 
\[
\sup_{A\in\mcl R;u,v\in [0,\eps]^d}I_{A}(u,v)\leq2\sup_{A\in\mcl R;u,v\in [0,\eps/2]^d}I_A(u,v),
\] 
and thus $K(\eps)\leq K(\eps/2)$. Repeating this procedure gives $K(\eps)\leq K(\eps/2^n)$ for $n=1,2,\dots$. Hence
\[
K(\eps)\leq\limsup_{n\to\infty}K(\eps/2^n)\leq\limsup_{\eta\downarrow0}K(\eta).
\]
Since $\eps$ is arbitrary, we obtain the desired result. 
\smallskip

\noindent\textbf{Step 3}. 
For any Borel set $A\subset\mathbb R^{d-1}$, $j\in\{1,\dots,d\}$ and $s\in\mathbb R$, define
\[
J_{A,j}(s)=\int_{A}\nabla^r\phi_d(z|_{z_j=s}) dz_1\cdots \wh{dz_{j}}\cdots dz_d,
\]
where $z|_{z_j=s}=(z_1,\dots,z_{j-1},s,z_{j+1},\dots,z_d)^\top$ and $\wh{dz_j}$ means that $dz_j$ is omitted. 
Then, for $u,v\in \mathbb R^d$ and $A=\prod_{j=1}^d[a_j,b_j]\in\mcl R$, set
\[
L_A(u,v)=\sum_{j=1}^d\{u_jJ_{A^j,j}(a_j)+v_jJ_{A^j,j}(b_j)\},
\]
where $A^j=\prod_{k:k\neq j}[a_k,b_k]$. 
In this step, we prove
\[
\limsup_{\eps\downarrow0}K(\eps)=\limsup_{\eps\downarrow0}\sup_{A\in\mcl R;u,v\in [0,\eps]^d}\frac{\|L_A(u,v)\|_1}{\eps}.
\]
Fix $A=\prod_{j=1}^d[a_j,b_j]\in\mcl R$, $\eps>0$ and $u,v\in[0,\eps]^d$. For $j_1,\dots,j_r\in\{1,\dots,d\}$, we set
\[
A^{u,v}_{j_1,\dots,j_r}=\{x\in A^{u,v}:x_j\notin[a_j,b_j]\text{ for }j\in\{j_1,\dots,j_r\}\text{ and }x_j\in[a_j,b_j]\text{ for }j\notin\{j_1,\dots,j_r\}\}.
\]
Then, $A^{u,v}\setminus A=\bigcup_{r=1}^d\bigcup_{1\leq j_1<\cdots<j_r\leq d}A^{u,v}_{j_1,\dots,j_r}$ and this is a disjoint union. Hence we have
\[
I_{A}(u,v)=\norm{\sum_{r=1}^d\sum_{1\leq j_1<\cdots<j_r\leq d}\int_{A^{u,v}_{j_1,\dots,j_r}}\nabla^r\phi_d(z) dz}_1.
\]
One can easily check that
\[
\sup_{A\in\mcl R;u,v\in [0,\eps]^d}\sum_{r=2}^d\sum_{1\leq j_1<\cdots<j_r\leq d}\int_{A^{u,v}_{j_1,\dots,j_r}}\norm{\nabla^r\phi_d(z)}_1 dz=O(\eps^2)\quad\text{as }\eps\downarrow0.
\]
Hence we obtain
\ba{
\limsup_{\eps\downarrow0}K(\eps)
=\limsup_{\eps\downarrow0}\sup_{A\in\mcl R;u,v\in [0,\eps]^d}\norm{\frac{1}{\eps}\sum_{j=1}^d\int_{A^{u,v}_{j}}\nabla^r\phi_d(z) dz}_1.
}
For any $j\in\{1,\dots,d\}$, observe that $A_j^{u,v}$ is the disjoint union of 
\[
\ul{A}_j^{u}:=[a_1,b_1]\times\cdots[a_{j-1},b_{j-1}]\times[a_j-u_j,a_j)\times[a_{j+1},b_{j+1}]\times\cdots\times[a_d,b_d]
\]
and
\[
\ol{A}_j^v:=[a_1,b_1]\times\cdots[a_{j-1},b_{j-1}]\times(b_j,b_j+v_j]\times[a_{j+1},b_{j+1}]\times\cdots\times[a_d,b_d].
\]
Therefore, we have
\ba{
&\norm{\sum_{j=1}^d\int_{A^{u,v}_{j}}\nabla^r\phi_d(z) dz-L_A(u,v)}_1\\
&=\norm{\sum_{j=1}^d\bra{
\int_{\ul{A}^u_{j}}\{\nabla^r\phi_d(z)-\nabla^r\phi_d(z|_{z_j=a_j})\} dz
+\int_{\ol{A}^v_{j}}\{\nabla^r\phi_d(z)-\nabla^r\phi_d(z|_{z_j=b_j})\} dz
}}_1\\
&\leq2\eps\sum_{j=1}^d\int_{\mathbb R^{d-1}}\sup_{s,t\in\mathbb R:|s-t|\leq\eps}\norm{\nabla^r\phi_d(z|_{z_j=s})-\nabla^r\phi_d(z|_{z_j=t})}_1dz_1\cdots\wh{dz_j}\cdots dz_d.
}
Thus,
\[
\limsup_{\eps\downarrow0}\sup_{A\in\mcl R;u,v\in [0,\eps]^d}\frac{1}{\eps}\norm{\sum_{j=1}^d\int_{A^{u,v}_{j}}\nabla^r\phi_d(z) dz-L_A(u,v)}_1=0.
\]
This gives the desired result. 
\smallskip

\noindent\textbf{Step 4}. 
It remains to prove
\ben{\label{step4-aim}
\limsup_{\eps\downarrow0}\sup_{A\in\mcl R;u,v\in R_+(\eps)}\frac{\|L_A(u,v)\|_1}{\eps}\leq C_r(\log d)^{(r+1)/2}.
}
We first note that if $A$ is an orthant, i.e.~$a_j=-\infty$ for all $j$, then \eqref{step4-aim} immediately follows from the fundamental theorem of calculus and \cref{lem:aht}. 
In fact, we have in this case
\ba{
\frac{\|L_A(u,v)\|_1}{\eps}
\leq\norm{\int_A\nabla^{r+1}\phi_d(z)dz}_1
}
for any $\eps>0$ and $u,v\in [0,\eps]^d$. 
In the following we show that the proof is essentially reduced to this case by a similar argument to the proof of \cite[Lemma 2.2]{FaKo21}.   
For every $q\in\{1,\dots,r\}$, set 
\ba{
\mathcal{N}_q(r)=\{(\nu_1,\dots,\nu_q)\in\mathbb{Z}^q:\nu_1,\dots,\nu_q\geq0,\nu_1+\cdots+\nu_q=r\}.
}
Also, for any $m\in\mathbb N$, let
\[
\mathcal{J}_m(d)=\{(j_1,\dots,j_m)\in\{1,\dots,d\}^m:j_1,\dots,j_m\text{ are distinct}\}.
\]
Then, for any $A\in\mcl R$, $j\in\{1,\dots,d\}$ and $s\in\mathbb R$, we have
\ba{
\norm{J_{A^j,j}(s)}_1
&=\sum_{j_1,\dots,j_r=1}^d\abs{\int_{A^j}\partial_{j_1,\dots,j_r}\phi_d(z|_{z_j=s}) dz_1\cdots\wh{dz_j}\cdots dz_d}\\
&\leq C_r\sum_{q=1}^r\sum_{(\nu_1,\dots,\nu_q)\in\mathcal{N}_q(r)}\sum_{(j_1,\dots,j_q)\in\mathcal{J}_q(d)}\abs{\int_{A^j}\partial_{j_1}^{\nu_1}\cdots\partial_{j_q}^{\nu_q}\phi_d(z|_{z_j=s}) dz_1\cdots\wh{dz_j}\cdots dz_d}\\
&=:C_r\sum_{q=1}^r\sum_{\nu=(\nu_1,\dots,\nu_q)\in\mathcal{N}_q(r)}\sum_{\bs j=(j_1,\dots,j_q)\in\mathcal{J}_q(d)}\Lambda_j(\nu,\bs j).
}
For each $r=1,\dots,q$, the cardinality of the set $\mathcal{N}_q(r)$ is bounded by a constant depending only on $r$. Therefore, to prove \eqref{step4-aim}, it suffices to show that
\ben{\label{step4-aim2}
\sum_{j=1}^d\sum_{\bs j=(j_1,\dots,j_r)\in\mathcal{J}_q(d)}\Lambda_j(\nu,\bs j)
\leq C_r(\log d)^{(r+1)/2}
}
for any (fixed) $q\in\{1,\dots,r\}$, $\nu=(\nu_1,\dots,\nu_q)\in\mathcal{N}_q(r)$, $A=\prod_{j=1}^d[a_j,b_j]\in\mcl R$ and $s\in\{a_j,b_j\}$, $j=1,\dots,d$. 

To prove \eqref{step4-aim2}, we introduce additional notation. 
For a non-negative integer $m$, $H_m$ denotes the $m$-th Hermite polynomial, i.e.~$H_m(t)=(-1)^m\phi(t)^{-1}\phi^{(m)}(t)$. 
When $m\geq1$, we set $h_m(t)=H_{m-1}(t)\phi(t)$. 
Also, we denote by $t_m$ the maximum root of $H_m$. For example, $t_1=0,t_2=1,t_3=\sqrt{3}$. 
Finally, set $M_{m}:=\max_{0\leq t\leq t_m}|H_{m-1}(t)|<\infty$ and define
\[
\tilde h_m(t)=M_{m}\phi(t)1_{[0,t_m]}(t)+h_{m}(t)1_{(t_m,\infty)}(t).
\]
The function $\tilde h_m$ satisfies the following properties by Lemma A.1 in \cite{FaKo21}:
\begin{align}
&\tilde h_m \text{ is decreasing on }[0,\infty).\label{h-decrease}\\
&|h_m(t)|\leq\tilde h_m(|t|)\text{ for all }t\in\mathbb R.\label{h-bound}
\end{align}

Now, we fix $\bs j=(j_1,\dots,j_q)\in\mcl J_q(d)$ and $j\in\{1,\dots,d\}$ for a while. Set 
\[
\nu=\begin{cases}
\nu_p & \text{if }j=j_p\text{ for some }p\in\{1,\dots,q\},\\
0 & \text{otherwise}.
\end{cases}
\]
Then we have
\ba{
\Lambda_j(\nu,\bs j)
&=|h_{\nu+1}(s)|\left(\prod_{p:j_p\neq j}\left|h_{\nu_p}(b_{j_p})-h_{\nu_p}(a_{j_p})\right|\right)
\prod_{k:k\neq j_1,\dots,j_q,j}\left\{\Phi(b_k)-\Phi(a_k)\right\}\\
&\leq\tilde h_{\nu+1}(|s|)\left(\prod_{p:j_p\neq j}\left(\tilde h_{\nu_p}(|b_{j_p}|)+\tilde h_{\nu_p}(|a_{j_p}|)\right)\right)
\prod_{k:k\neq j_1,\dots,j_q,j}\left\{\Phi(b_k)+\Phi(-a_k)-1\right\},
}
where the last inequality follows from \eqref{h-bound} and the identity $1-\Phi(t)=\Phi(-t)$. 
Set $c_k=|a_k|\wedge|b_k|$ for $k=1,\dots,d$. Then we have
$
\Phi(b_k)+\Phi(-a_k)-1
\leq\min\{\Phi(b_k),\Phi(-a_k)\}
\leq\Phi(c_k).
$
Combining this with \eqref{h-decrease} gives
\ba{
\Lambda_j(\nu,\bs j)
&\leq2^q\tilde h_{\nu+1}(c_j)\left(\prod_{p:j_p\neq j}\tilde h_{\nu_p}(c_{j_p})\right)
\prod_{k:k\neq j_1,\dots,j_q,j}\Phi(c_k).
}
Now, observe that $\tilde h_m(t)\leq C_m(1+t^{m-1})\phi(t)$ for any $m\in\mathbb N$ and $t\geq0$ by construction. 
Hence, if $\max_{p=1,\dots,q}c_{j_p}\leq\sqrt{4(r+1)\log d}$, 
\ba{
\Lambda_j(\nu,\bs j)
&\leq \begin{cases}
C_r(\log d)^{(r-q+1)/2}\left(\prod_{p=1}^q\phi(c_{j_p})\right)\prod_{k:k\neq j_1,\dots,j_q}\Phi(c_k) & \text{if }j\in\{j_1,\dots,j_q\},\\
C_r(\log d)^{(r-q)/2}\phi(c_j)\left(\prod_{p=1}^q\phi(c_{j_p})\right)\prod_{k:k\neq j_1,\dots,j_q,j}\Phi(c_k) & \text{otherwise},
\end{cases}
}
where we used the identity $\sum_{p=1}^q\nu_p=r$. 
Besides, if $\max_{p=1,\dots,q}c_{j_p}>\sqrt{4(r+1)\log d}$, 
\ba{
\Lambda_j(\nu,\bs j)
\leq C_r\prod_{p=1}^qe^{-c_{j_p}^2/4}\leq C_rd^{-r-1}.
}
Consequently, 
\ba{
&\sum_{j=1}^d\sum_{\bs j=(j_1,\dots,j_q)\in\mcl J_q(d)}\Lambda_j(\nu,\bs j)\\
&\leq C_r+C_r\sum_{(j_1,\dots,j_q)\in\mcl J_q(d)}(\log d)^{(r-q+1)/2}\left(\prod_{p=1}^q\phi(c_{j_p})\right)\prod_{k:k\neq j_1,\dots,j_q}\Phi(c_k)\\
&\quad+C_r\sum_{(j_1,\dots,j_q)\in\mcl J_q(d)}\sum_{j:j\neq j_1,\dots,j_q}(\log d)^{(r-q)/2}\phi(c_j)\left(\prod_{p=1}^q\phi(c_{j_p})\right)\prod_{k:k\neq j_1,\dots,j_q,j}\Phi(c_k).
}
With $A'=\prod_{j=1}^d(-\infty,c_j]$, we can rewrite the right hand side of the above inequality as
\bm{
C_r+C_r(\log d)^{(r-q+1)/2}\sum_{(j_1,\dots,j_q)\in\mcl J_q(d)}\int_{A'}\partial_{j_1,\dots,j_q}\phi_d(z)dz\\
+C_r(\log d)^{(r-q)/2}\sum_{(j_1,\dots,j_{q+1})\in\mcl J_{q+1}(d)}\int_{A'}\partial_{j_1,\dots,j_{q+1}}\phi_d(z)dz.
}
This quantity is bounded by
\ben{\label{step4-reduced}
C_r\bra{1+(\log d)^{(r-q+1)/2}\norm{\int_{A'}\nabla^q\phi_d(z)dz}_1
+(\log d)^{(r-q)/2}\norm{\int_{A'}\nabla^{q+1}\phi_d(z)dz}_1}.
}
For any $m\in\mathbb N$, observe that
\ba{
\norm{\int_{A'}\nabla^m\phi_d(z)dz}_1
=\lim_{a\to-\infty}\norm{\int_{\prod_{j=1}^d[a,c_j]}\nabla^m\phi_d(z)dz}_1
\leq\sup_{A\in\mcl R}\norm{\int_{A}\nabla^m\phi_d(z)dz}_1.
}
Therefore, by \cref{lem:aht}, the quantity in \eqref{step4-reduced} is bounded by $C_r(\log d)^{(r+1)/2}$. This gives \eqref{step4-aim2}. \qed

\subsection{Proof of Lemma \ref{lem:decomp}}\label{sec:decomp}

The proof of \eqref{decomp-aim} is an almost straightforward multi-dimensional extension of that of \cite[Lemma 2.1]{FaLi22}, and the proof of \eqref{decomp-aim-gcomp} is its simplification. 
Let $Z\sim N(0,\Sigma)$ be independent of $(\xi_i)_{i=1}^n$. 
Set $W(s)=\sqrt{1-s}W+\sqrt sZ$ for every $s\in[0,1]$. 
The following two lemmas summarize the role of the approximate Stein kernel in the proof. 
\begin{lemma}\label{lem:sk-arg}
Under the assumptions of \cref{lem:decomp}, suppose additionally that $h\in C^r_b(\mathbb R^d)$. Then, for any $t\in[0,1]$, we have the following:
\begin{enumerate}[label=(\alph*)]

\item If $U\in(\mathbb R^d)^{\otimes r}$ is non-random,
\ba{
&\E[\langle U,\nabla^r h(W(t))\rangle]-\E[\langle U,\nabla^r h(Z)\rangle]\\
&=\frac{1}{2}\int_t^1\bra{\E[\langle U\otimes \bar T,\nabla^{r+2} h(W(s))\rangle]
+\frac{\E[\langle U\otimes B,\nabla^{r+1} h(W(s))\rangle]}{\sqrt{1-s}}}ds.
}

\item For every $i=1,\dots,n$, let $v_i:\mathbb R^d\to(\mathbb R^d)^{\otimes r}$ be a measurable function such that $v_i(\xi_i),v_i(\xi_i)\otimes\xi_i,v_i(\xi_i)\otimes\tau_i(\xi_i)$ and $v_i(\xi_i)\otimes\res_i(\xi_i)$ are integrable. 
Set $V:=\sum_{i=1}^nv_i(\xi_i)$. Then 
\ba{
&\E[\langle V,\nabla^r h(W(t))\rangle]-\E[\langle V,\nabla^r h(Z)\rangle]\\
&=\frac{1}{2}\int_t^1\bra{\E[\langle V\otimes \bar T,\nabla^{r+2} h(W(s))\rangle]
-\sum_{i=1}^n\E[\langle v_i(\xi_i)\otimes\tau_i(\xi_i),\nabla^{r+2} h(W(s))\rangle]}ds\\
&\quad+\frac{1}{2}\int_t^1\frac{1}{\sqrt{1-s}}\bra{\E[\langle V\otimes \Res,\nabla^{r+1} h(W(s))\rangle]
+\sum_{i=1}^n\E[\langle v_i(\xi_i)\otimes(\xi_i-\res_i(\xi_i)),\nabla^{r+1} h(W(s))\rangle]}ds.
}
\end{enumerate}
\end{lemma}

\begin{proof}
Since the proof of (a) is similar to that of (b) and simpler, we prove (b) only. 

By the fundamental theorem of calculus, we obtain
\ban{
&\E[\langle V,\nabla^r h(W(t))\rangle]-\E[\langle V,\nabla^r h(Z)\rangle]\notag\\
&=-\int_t^1\frac{\partial}{\partial s}\E [\langle V,\nabla^r h(W(s))]ds\notag\\
&=\frac{1}{2}\int_t^1\bra{\frac{\E[\langle V\otimes W,\nabla^{r+1} h(W(s))\rangle]}{\sqrt{1-s}}-\frac{\E[\langle V\otimes Z,\nabla^{r+1} h(W(s))\rangle]}{\sqrt s}}ds.\label{foc}
}
Since $Z$ is independent of $(W,V)$, an application of the multivariate Stein identity conditional on $(W,V)$ gives for any $0<s<1$
\ben{\label{g-ibp}
\frac{\E[\langle V\otimes Z,\nabla^{r+1} h(W(s))\rangle]}{\sqrt s}=\E[\langle V\otimes\Sigma, \nabla^{r+2} h(W(s))\rangle].
}
Meanwhile, we rewrite $\E[\langle V\otimes W,\nabla^3 h(W(s))\rangle]$ as
\ban{
&\E[\langle V\otimes W,\nabla^{r+1} h(W(s))\rangle]\notag\\
&=\sum_{i=1}^n\E[\langle V\otimes \xi_i,\nabla^{r+1} h(W(s))\rangle]\notag\\
&=\sum_{i=1}^n\E[\langle V^{(i)}\otimes \xi_i,\nabla^{r+1} h(W(s))\rangle]
+\sum_{i=1}^n\E[\langle v_i(\xi_i)\otimes \xi_i,\nabla^{r+1} h(W(s))\rangle],\label{vw-decomp}
}
where $V^{(i)}=V-v_i(\xi_i)$. 
For $i=1,\dots,n$, since $(\tau_i,\res_i)$ is an approximate Stein kernel for $\xi_i$ and $\xi_i$ is independent of $V^{(i)}$ and $W-\xi_i$, we have
\ba{
&\E[\langle V^{(i)}\otimes \xi_i,\nabla^{r+1} h(W(s))\rangle]\\
&=\sqrt{1-s}\E[\langle V^{(i)}\otimes \tau_i(\xi_i),\nabla^{r+2} h(W(s))\rangle]
+\E[\langle V^{(i)}\otimes \res_i(\xi_i),\nabla^{r+1} h(W(s))\rangle].
}
Hence
\ban{
&\sum_{i=1}^n\E[\langle V^{(i)}\otimes \xi_i,\nabla^{r+1} h(W(s))\rangle]\notag\\
&=\sqrt{1-s}\bra{\sum_{i=1}^n\E[\langle V\otimes \tau_i(\xi_i),\nabla^{r+2} h(W(s))\rangle]
-\sum_{i=1}^n\E[\langle v_i(\xi_i)\otimes\tau_i(\xi_i),\nabla^{r+2} h(W(s))\rangle]}\notag\\
&\quad+\sum_{i=1}^n\E[\langle V\otimes \res_i(\xi_i),\nabla^{r+1} h(W(s))\rangle]
-\sum_{i=1}^n\E[\langle v_i(\xi_i)\otimes\res_i(\xi_i),\nabla^{r+1} h(W(s))\rangle]\notag\\
&=\sqrt{1-s}\bra{\E[\langle V\otimes T,\nabla^{r+2} h(W(s))\rangle]
-\sum_{i=1}^n\E[\langle v_i(\xi_i)\otimes\tau_i(\xi_i),\nabla^{r+2} h(W(s))\rangle]}\notag\\
&\quad+\E[\langle V\otimes \Res,\nabla^{r+1} h(W(s))\rangle]
-\sum_{i=1}^n\E[\langle v_i(\xi_i)\otimes\res_i(\xi_i),\nabla^{r+1} h(W(s))\rangle].\label{vi-decomp}
}
Inserting \eqref{g-ibp}--\eqref{vi-decomp} into \eqref{foc} gives the desired result. 
\end{proof}

\begin{lemma}\label{lem:tau-xi}
Let $\xi$ be a centered random vector in $\mathbb R^d$. Suppose that $\xi$ has a Stein kernel $\tau$ such that $\E[\|\xi\|_\infty^3]+\E[\|\tau(\xi)\otimes\xi^{\otimes2}\|_\infty]<\infty$. Then, $\E[\|\tau(\xi)\otimes\xi\|_\infty]<\infty$ and for any $f\in C^4_b(\mathbb R^d)$,
\ben{\label{eq:tau-xi}
\E[\langle\tau(\xi)\otimes\xi,\nabla^3 f(\xi)\rangle]=\frac{1}{2}\bra{\E[\langle \xi^{\otimes3},\nabla^3f(\xi)\rangle]-\E[\langle\tau(\xi)\otimes\xi^{\otimes2},\nabla^4f(\xi)\rangle]}.
}
\end{lemma}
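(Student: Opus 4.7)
The plan is to derive \eqref{eq:tau-xi} by applying the Stein-kernel identity \eqref{eq:sk} to two carefully chosen test functions (after regularization) and then eliminating the auxiliary second-order expectations that appear along the way. This extends \citet[Lemma~2.1]{FaLi22} from $d=1$ to general $d$ by keeping track of tensor indices.

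The integrability claim is elementary: the AM--GM inequality $st\leq\frac{1}{2}(st^{2}+s)$ for $s,t\geq0$ yields
\[
\|\tau(\xi)\otimes\xi\|_\infty=\|\tau(\xi)\|_\infty\|\xi\|_\infty\leq\tfrac{1}{2}\bra{\|\tau(\xi)\otimes\xi^{\otimes 2}\|_\infty+\|\tau(\xi)\|_\infty},
\]
and since $\E[\|\tau(\xi)\|_\infty]<\infty$ by \cref{def:sk} and $\E[\|\tau(\xi)\otimes\xi^{\otimes 2}\|_\infty]<\infty$ by hypothesis, $\E[\|\tau(\xi)\otimes\xi\|_\infty]<\infty$.

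For the main identity I consider the two auxiliary functions
\[
h_1(x):=\sum_l x_l\partial_l f(x),\qquad h_2(x):=\tfrac{1}{2}\sum_{l,m}x_lx_m\partial_{lm}f(x).
\]
Neither lies in $C^2_b(\mathbb R^d)$, so I introduce a smooth cutoff $\chi_R\in C^\infty_c(\mathbb R^d)$ equal to $1$ on $\{\|x\|_\infty\leq R\}$ and $0$ outside $\{\|x\|_\infty\leq 2R\}$, with $\|\nabla^k\chi_R\|_\infty\lesssim R^{-k}$, and apply \eqref{eq:sk} to $\chi_R h_i\in C^2_b(\mathbb R^d)$. The product rule together with $\partial_j(x_lx_m)=\delta_{jl}x_m+\delta_{jm}x_l$ shows that the ``bulk'' contributions (in which no derivative hits $\chi_R$) are
\begin{align*}
\sum_j\xi_j\partial_j h_1(\xi)&=\xi\cdot\nabla f(\xi)+\langle\xi^{\otimes 2},\nabla^2 f(\xi)\rangle,\\
\sum_{j,k}\tau_{jk}(\xi)\partial_{jk}h_1(\xi)&=2\langle\tau(\xi),\nabla^2 f(\xi)\rangle+\langle\tau(\xi)\otimes\xi,\nabla^3 f(\xi)\rangle,\\
\sum_j\xi_j\partial_j h_2(\xi)&=\langle\xi^{\otimes 2},\nabla^2 f(\xi)\rangle+\tfrac{1}{2}\langle\xi^{\otimes 3},\nabla^3 f(\xi)\rangle,\\
\sum_{j,k}\tau_{jk}(\xi)\partial_{jk}h_2(\xi)&=\langle\tau(\xi),\nabla^2 f(\xi)\rangle+2\langle\tau(\xi)\otimes\xi,\nabla^3 f(\xi)\rangle+\tfrac{1}{2}\langle\tau(\xi)\otimes\xi^{\otimes 2},\nabla^4 f(\xi)\rangle.
\end{align*}
The remaining terms (in which at least one derivative falls on $\chi_R$) are supported in $\{R\leq\|\xi\|_\infty\leq 2R\}$, and there the factor $R^{-k}$ from $\nabla^k\chi_R$ is controlled by $R^{-k}\|\xi\|_\infty^k\leq 2^k$ using the polynomial prefactors; each such term is therefore bounded by a constant multiple (depending on $\max_{r\leq 4}\|\nabla^r f\|_\infty$) of one of $\|\tau(\xi)\|_\infty$, $\|\tau(\xi)\otimes\xi\|_\infty$, $\|\tau(\xi)\otimes\xi^{\otimes 2}\|_\infty$, or $\|\xi\|_\infty^r$ for $r\leq 3$, all times $\mathbf 1_{\{\|\xi\|_\infty\geq R\}}$. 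These dominating variables are integrable by the hypotheses and the first step, and the indicator tends to $0$ pointwise, so dominated convergence allows me to pass to $R\to\infty$ and obtain the corresponding two expected-value identities.

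Finally, applying \eqref{eq:sk} directly to $f\in C^2_b(\mathbb R^d)$ gives $\E[\xi\cdot\nabla f(\xi)]=\E[\langle\tau(\xi),\nabla^2 f(\xi)\rangle]$; inserting this into the $h_1$ identity produces
\[
\E[\langle\xi^{\otimes 2},\nabla^2 f(\xi)\rangle]=\E[\langle\tau(\xi),\nabla^2 f(\xi)\rangle]+\E[\langle\tau(\xi)\otimes\xi,\nabla^3 f(\xi)\rangle].
\]
Substituting this back into the $h_2$ identity causes the $\E[\langle\tau(\xi),\nabla^2 f(\xi)\rangle]$ terms to cancel, and solving for $\E[\langle\tau(\xi)\otimes\xi,\nabla^3 f(\xi)\rangle]$ yields exactly \eqref{eq:tau-xi}. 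The main obstacle is the cutoff step: one must verify that every error term produced by derivatives hitting $\chi_R$ admits an $R$-independent integrable dominator, which is precisely where the hypothesis $\E[\|\tau(\xi)\otimes\xi^{\otimes 2}\|_\infty]<\infty$ (rather than the weaker $\E[\|\tau(\xi)\|_\infty]<\infty$ already coming from \cref{def:sk}) is needed.
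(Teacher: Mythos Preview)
Your argument is correct. The derivative computations for $h_1$ and $h_2$ check out, the cutoff step is handled properly (each error term is indeed dominated by one of the integrable quantities $\|\tau(\xi)\|_\infty$, $\|\tau(\xi)\otimes\xi\|_\infty$, $\|\tau(\xi)\otimes\xi^{\otimes2}\|_\infty$, $\|\xi\|_\infty^r$ for $r\leq3$, times the indicator of the annulus), and the linear algebra at the end yields exactly \eqref{eq:tau-xi}.

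The paper takes a different route. Instead of cutting off the test functions $h_1,h_2$, it first reduces to compactly supported $f$ by a standard approximation. Then it works with the single family $g_j(x)=\langle x^{\otimes2},\nabla^2\partial_j f(x)\rangle$, observes that $\partial_k g_j=\partial_j g_k$, and constructs an explicit primitive $G$ with $\partial_j G=g_j$; the compact support of $f$ is used to show $G\in C^2_b(\mathbb R^d)$, after which a \emph{single} application of the Stein identity \eqref{eq:sk} to $G$ gives $\sum_{j,k}\E[\tau_{jk}(\xi)\partial_k g_j(\xi)]=\E[\langle\xi^{\otimes3},\nabla^3 f(\xi)\rangle]$, from which the result follows by a direct rewriting. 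Your approach instead applies \eqref{eq:sk} three times (to $f$, $\chi_R h_1$, $\chi_R h_2$) and eliminates the intermediate second-order expectations $\E[\langle\xi^{\otimes2},\nabla^2 f(\xi)\rangle]$ and $\E[\langle\tau(\xi),\nabla^2 f(\xi)\rangle]$. The paper's route is a bit more economical (one Stein identity, no system to solve) but requires spotting the conservativity of $(g_j)_j$ and verifying boundedness of the primitive; yours is more mechanical and arguably easier to iterate to higher order, at the cost of a slightly longer cutoff bookkeeping.
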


\begin{proof}
Take $j,k,l\in\{1,\dots,d\}$ arbitrarily. Then we have
\ba{
\E[|\tau_{jk}(\xi)\xi_l|]
&=\E[|\tau_{jk}(\xi)\xi_l|;|\xi_l|\leq1]+\E[|\tau_{jk}(\xi)\xi_l|;|\xi_l|>1]\\
&\leq\E[|\tau_{jk}(\xi)|]+\E[|\tau_{jk}(\xi)|\xi_l^2]
\leq\E[\|\tau(\xi)\|_\infty]+\E[\|\tau(\xi)\otimes\xi^{\otimes2}\|_\infty]<\infty.
}
Hence $\E[\|\tau(\xi)\otimes\xi\|_\infty]<\infty$. 

Next we prove \eqref{eq:tau-xi}. By a standard approximation argument, it suffices to prove \eqref{eq:tau-xi} when $f$ is compactly supported. 
For every $j=1,\dots,d$, define a function $g_j:\mathbb R^d\to\mathbb R$ as
\[
g_j(x)=\langle x^{\otimes2},\nabla^2\partial_jf(x)\rangle=\sum_{u,v=1}^dx_ux_v\partial_{juv}f(x),\qquad x\in\mathbb R^d.
\]
Observe that $g_j$ is compactly supported because $f$ is compactly supported. 
For $j,k\in\{1,\dots,d\}$ and $x\in\mathbb R^d$, we have
\ba{
\partial_k g_j(x)&=2\sum_{v=1}^dx_v\partial_{jkv}f(x)+\sum_{u,v=1}^dx_ux_v\partial_{jkuv}f(x).
}
Hence we obtain
\ba{
\E[\langle\tau(\xi)\otimes\xi,\nabla^3 f(\xi)\rangle]
&=\sum_{j,k,v=1}^d\E[\tau_{jk}(\xi)\xi_v\partial_{jkv} f(\xi)]\\
&=\frac{1}{2}\sum_{j,k=1}^d\E[\tau_{jk}(\xi)\partial_{k} g_j(\xi)]
-\frac{1}{2}\sum_{j,k,u,v=1}^d\E[\tau_{jk}(\xi)\xi_u\xi_v\partial_{jkuv}f(\xi)].
}
The second term on the last line is equal to $\frac{1}{2}\E[\langle\tau(\xi)\otimes\xi^{\otimes2},\nabla^4f(\xi)\rangle]$. 
To evaluate the first term, define a function $G:\mathbb R^d\to\mathbb R$ as $G(x)=\sum_{k=1}^dx_k\int_0^1g_k(\theta x)d\theta$, $x\in\mathbb R^d$. Then, using the relation $\partial_kg_j=\partial_jg_k$, one can easily verify $\partial_jG=g_j$ for all $j=1,\dots,d$. 
Moreover, $G$ is bounded. In fact, since $g_1,\dots,g_d$ are compactly supported, there exists a constant $K>0$ such that $g_j(x)=0$ for all $j$ whenever $\|x\|_\infty>K$. Then, for any $x\in\mathbb R^d$ with $\|x\|_\infty>K$, 
\ba{
|G(x)|\leq\sum_{k=1}^d|x_k|\int_0^{K/\|x\|_\infty}|g_k(\theta x)|d\theta
\leq dK\max_{1\leq k\leq d}\sup_{x\in\mathbb R^d}|g_k(x)|.
}
It is straightforward to check that this estimate is still true when $\|x\|_\infty\leq K$. Hence $G$ is bounded.  
As a result, $G\in C^2_b(\mathbb R^d)$ and thus
\ba{
\sum_{j,k=1}^d\E[\tau_{jk}(\xi)\partial_{k} g_j(\xi)]
&=\sum_{j,k=1}^d\E[\tau_{jk}(\xi)\partial_{kj} G(\xi)]
=\sum_{j=1}^d\E[\xi_j\partial_jG(\xi)]
=\sum_{j=1}^d\E[\xi_jg_j(\xi)]\\
&=\E[\langle \xi^{\otimes3},\nabla^3f(\xi)\rangle],
}
where the second equality follows from the definition of Stein kernel. 
Combining these identities gives the desired result. 
\end{proof}


\begin{proof}[Proof of \cref{lem:decomp}]
First we prove \eqref{decomp-aim}. 
It suffices to prove the claim when $h\in C^\infty_b(\mathbb R^d)$. To see this, let $t_1=t/2$ and $t_2=t/(2-t)$. One can easily check that $t_1,t_2\in(0,1]$ and $h_t=(h_{t_1})_{t_2}$. Since $h_{t_1}\in C^\infty_b(\mathbb R^d)$, the general case follows by applying the claim to $h=h_{t_1}$ and $t=t_2$. 

Observe that $\E[h_t(W)]=\E[h(W(t))]$. 
Therefore, \cref{lem:sk-arg}(a) gives
\ben{\label{decomp1}
\E [h_t(W)]-\E [h(Z)]
=\frac{1}{2}\int_t^1\bra{\E[\langle \bar T, \nabla^2 h(W(s))\rangle]+\frac{\E[\Res\cdot\nabla h(W(s))]}{\sqrt{1-s}}}ds.
}
Next, for $t<s<1$, \cref{lem:sk-arg} also gives
\besn{\label{decomp11}
&\E[\langle \bar T, \nabla^2 h(W(s))\rangle]-\E[\langle \bar T,\nabla^2 h(Z)\rangle]\\
&=\frac{1}{2}\int_s^1\bra{\E[\langle \bar T^{\otimes2},\nabla^{4} h(W(u))\rangle]
-\sum_{i=1}^n\E[\langle \tau_i(\xi_i)^{\otimes2},\nabla^{4} h(W(u))\rangle]}du\\
&\quad+\frac{1}{2}\int_s^1\frac{1}{\sqrt{1-u}}\bra{\E[\langle \bar T\otimes \Res,\nabla^{3} h(W(u))\rangle]
+\sum_{i=1}^n\E[\langle \tau_i(\xi_i)\otimes(\xi_i-\res_i(\xi_i)),\nabla^{3} h(W(u))\rangle]}du
}
and
\besn{\label{decomp12}
&\E[\Res\cdot\nabla h(W(s))]-\E[\Res\cdot\nabla h(Z)]\\
&=\frac{1}{2}\int_s^1\bra{\E[\langle \Res\otimes\bar T,\nabla^{3} h(W(u))\rangle]
-\sum_{i=1}^n\E[\langle \res_i(\xi_i)\otimes\tau_i(\xi_i),\nabla^{3} h(W(u))\rangle]}du\\
&\quad+\frac{1}{2}\int_s^1\frac{1}{\sqrt{1-u}}\bra{\E[\langle \Res^{\otimes2},\nabla^{2} h(W(u))\rangle]
+\sum_{i=1}^n\E[\langle \res_i(\xi_i)\otimes(\xi_i-\res_i(\xi_i)),\nabla^{2} h(W(u))\rangle]}du.
}
Using the definition of the approximate Stein kernel, we can easily verify that $\beta_i(\xi_i)$ are centered. Hence
\ben{
\E[\Res\cdot\nabla h(Z)]=\E[\Res]\cdot\E[\nabla h(Z)]=0.
}
Meanwhile, for $s<u<1$, \cref{lem:tau-xi} gives 
\besn{\label{apply:tau-xi}
&\sum_{i=1}^n\E[\langle \tau_i(\xi_i)\otimes \xi_i,\nabla^3 h(W(u))\rangle]\\
&=\frac{1}{2}\sum_{i=1}^n\bra{\E[\langle \xi_i^{\otimes3}-\res_i(\xi_i)\otimes\xi_i^{\otimes2},\nabla^3 h(W(u))\rangle]
-\sqrt{1-u}\E[\langle \tau_i(\xi_i)\otimes \xi_i^{\otimes2},\nabla^4 h(W(u))\rangle]}.
}
Then, \cref{lem:sk-arg} again gives
\besn{\label{decomp3}
&\E[\langle \xi_i^{\otimes3},\nabla^3 h(W(u))\rangle]-\E[\langle \xi_i^{\otimes3},\nabla^3 h(Z)\rangle]\\
&=\frac{1}{2}\int_u^1\sum_{i=1}^n\E[\langle \xi_i^{\otimes3}\otimes (\bar T-\tau_i(\xi_i)),\nabla^5 h(W(v))\rangle]dv\\
&\quad+\frac{1}{2}\int_u^1\frac{1}{\sqrt{1-v}}\sum_{i=1}^n\E[\langle \xi_i^{\otimes3}\otimes (\xi_i+\Res-\res_i(\xi_i)),\nabla^{4} h(W(v))\rangle]dv
}
and
\besn{\label{decomp4}
&\sum_{i=1}^n\E[\langle \res_i(\xi_i)\otimes\xi_i,\nabla^{2} h(W(u))\rangle]-\sum_{i=1}^n\E[\langle \langle \res_i(\xi_i)\otimes\xi_i,\nabla^2 h(Z)\rangle]\\
&=\frac{1}{2}\int_u^1\sum_{i=1}^n\E[\langle \res_i(\xi_i)\otimes\xi_i\otimes (\bar T-\tau_i(\xi_i)),\nabla^{4} h(W(v))\rangle]dv\\
&\quad+\frac{1}{2}\int_u^1\frac{1}{\sqrt{1-v}}\sum_{i=1}^n\E[\langle \res_i(\xi_i)\otimes\xi_i\otimes (\xi_i+\Res-\res_i(\xi_i)),\nabla^{3} h(W(v))\rangle]dv.
}
Further, note that we have by integration by parts
\ba{
\int_{\mathbb R^d}h_t(z)\nabla^r\phi_\Sigma(z)dz
&=(-\sqrt{1-t})^{r}\int_{\mathbb R^d}\E[\nabla^r h(\sqrt{1-t}z+\sqrt{t}Z)]\phi_\Sigma(z)dz\\
&=(-\sqrt{1-t})^{r}\E[\nabla^rh(Z)]
}
for any $r\in\mathbb N$. 
Also, from the definition of the approximate Stein kernel, we can easily verify $\langle\E[ \xi_i^{\otimes2}],M\rangle=\langle\E[\tau_i(\xi_i)+\beta_i(\xi_i)\otimes\xi_i],M\rangle$ for any $i=1,\dots,n$ and symmetric $d\times d$ matrix $M$. 
Hence
\ban{
&\frac{1-t}{2}\E[\langle \bar T,\nabla^2 h(Z)\rangle]
+\frac{1}{4}\E\sbra{\left\langle \sum_{i=1}^n\beta_i(\xi_i)\otimes\xi_i,\nabla^2 h(Z)\right\rangle}\int_t^1\frac{1}{\sqrt{1-s}}\bra{\int_s^1\frac{1}{\sqrt{1-u}}du}ds\notag\\
&=\frac{1-t}{2}\langle \Sigma_W-\Sigma,\E[\nabla^2 h(Z)]\rangle
=\frac{1}{2}\int_{\mathbb R^d}h_t(z)\langle\Sigma_W-\Sigma,\nabla^2\phi_\Sigma(z)\rangle dz\label{semi-first-term}
}
and
\ben{\label{first-term}
\frac{\E[\langle \xi_i^{\otimes3},\nabla^3 h(Z)\rangle]}{8}\int_t^1\bra{\int_s^1\frac{1}{\sqrt{1-u}}du}ds
=-\frac{1}{6}\int_{\mathbb R^d}h_t(z)\langle\E[\xi_i^{\otimes3}],\nabla^3\phi_\Sigma(z)\rangle dz.
}
Finally, observe that 
\ben{\label{slepian-deriv}
\nabla^rh_s(W)=(1-s)^{r/2}\E[\nabla^rh(W(s))\mid \xi_1,\dots,\xi_n]
} 
for any $r\in\mathbb N$ and $s\in[0,1]$. 
Combining \eqref{decomp1}--\eqref{slepian-deriv} gives \eqref{decomp-aim}.

Next we prove \eqref{decomp-aim-gcomp}. 
As above, we may assume $h\in C^\infty_b(\mathbb R^d)$.
By \cref{lem:sk-arg}(a),
\[
\E [h_t(W)]-\E [h(Z)]
=\frac{1}{2}\int_t^1\E[\langle \Sigma_W-\Sigma, \nabla^2 h(W(s))\rangle]ds
\]
and
\ba{
\E[\langle \Sigma_W-\Sigma, \nabla^2 h(W(s))\rangle]-\E[\langle \Sigma_W-\Sigma, \nabla^2 h(Z)\rangle]
=\frac{1}{2}\int_s^1\E[\langle (\Sigma_W-\Sigma)^{\otimes2}, \nabla^4 h(W(u))\rangle]du.
}
Combining these two identities with \eqref{slepian-deriv} gives \eqref{decomp-aim-gcomp}. 
\end{proof}

\subsection{Proof of Lemma \ref{l3}}\label{sec:smoothing}

As already mentioned, the proof is a straightforward modification of \cite[Lemma 11.4]{BhRa10}. 
Let 
\[
\delta:=\sup\left\{\left|\int h_yd(\mu-\nu)\right|:y\in\mathbb{R}^d\right\}.
\]
Assume first that
\begin{equation}\label{eq:smooth-1}
\delta=\sup\left\{\int h_yd(\mu-\nu):y\in\mathbb{R}^d\right\}.
\end{equation}
Then, given any $\eta>0$, there exists a vector $z\in\mathbb{R}^d$ such that
$
\int h_{z}d(\mu-\nu)\geq\delta-\eta.
$
In this case, we have
\ba{
&\int_{[-\eps,\eps]^d}\left[\int M_{h_{z}}(y+x; \eps)(\mu-\nu)(dy)\right]K(dx)\\
&\geq\int_{[-\eps,\eps]^d}\left[\int h_{z}(y)\mu(dy)-\int M_{h_{z}}(y+x; \eps)\nu(dy)\right]K(dx)\\
&=\int_{[-\eps,\eps]^d}\left[\int h_{z}(y)(\mu-\nu)(dy)-\int \left\{M_{h_{z}}(y+x; \eps)-h_{z}(y)\right\}\nu(dy)\right]K(dx)\\
&\geq\int_{[-\eps,\eps]^d}\left[\delta-\eta-\int \left\{M_{h_{z}}(y+x; \eps)-h_{z}(y+x)\right\}\nu(dy)-\int \left\{h_{z}(y+x)-h_{z}(y)\right\}\nu(dy)\right]K(dx)\\
&\geq\int_{[-\eps,\eps]^d}\left[\delta-\eta-\tau^*(h;\eps)-\tilde\tau^*(h;\eps)\right]K(dx)
=\alpha\left[\delta-\eta-\tau^*(h;\eps)-\tilde\tau^*(h;\eps)\right]
}
and
\ba{
&\int_{\mathbb{R}^d\setminus [-\eps,\eps]^d}\left[\int M_{h_{z}}(y+x; \eps)(\mu-\nu)(dy)\right]K(dx)\\
&\geq\int_{\mathbb{R}^d\setminus [-\eps,\eps]^d}\left[\int h_{z}(y+x)\mu(dy)-\int M_{h_{z}}(y+x; \eps)\nu(dy)\right]K(dx)\\
&=\int_{\mathbb{R}^d\setminus [-\eps,\eps]^d}\left[\int h_{z}(y+x)(\mu-\nu)(dy)-\int \left\{M_{h_{z}}(y+x; \eps)-h_{z}(y+x)\right\}\nu(dy)\right]K(dx)\\
&\geq\int_{\mathbb{R}^d\setminus [-\eps,\eps]^d}\left[-\delta-\tau^*(h;\eps)\right]K(dx)
=(1-\alpha)\left[-\delta-\tau^*(h;\eps)\right].
}
Consequently, we obtain
\ba{
\gamma^*(h;\eps)
\geq\int M_{h_{z}}(x; \eps) (\mu-\nu)*K(dx)
\geq(2\alpha-1)\delta-\tau^*(h;\eps)-\alpha\tilde\tau^*(h;\eps)-\alpha\eta.
}
Letting $\eta\downarrow0$, we obtain the desired result. 
If instead of \eqref{eq:smooth-1} we have
\[
\delta=\sup\left\{-\int h_yd(\mu-\nu):y\in\mathbb{R}^d\right\},
\]
then, given any $\eta>0$, we can find $z\in\mathbb{R}^d$ such that
$
-\int h_{z}d(\mu-\nu)\geq\delta-\eta.
$
Now look at $-h_{z}$ (instead of $h_{z}$) and note that $M_{-h_y}(\cdot;\eps)=-m_{h_y}(\cdot;\eps)$ and
\[
\int\{h_y-m_{h_y}(x;\eps)\}\nu(dx)=\int\{M_{-h_y}(x;\eps)-(-h_y)\}\nu(dx)
\]
for every $y\in\mathbb{R}^d$. Proceeding exactly as above, we obtain 
\[
\gamma^*(h;\eps)
\geq-\int m_{h_{z}}(x; \eps) (\mu-\nu)*K(dx)
\geq(2\alpha-1)\delta-\tau^*(h;\eps)-\alpha\tilde\tau^*(h;\eps)-\alpha\eta.
\]
Thus we complete the proof. \qed

\section{Proof of Lemma \ref{gmax-quantile}}\label{sec:gmax-quantile}

Let us prove \eqref{f-lower}. 
We write $F=F_Z$ and $x=F^{-1}(p)$ for short. 
First we consider the case $p\geq1/2$. 
Since the function $\mathbb R^d\ni z\mapsto z^\vee\in\mathbb R$ is convex, $\Phi^{-1}\circ F$ is concave by Corollary A.2.9 in \cite{VaWe23}. 
Hence, for any $y>x$,
\[
\frac{(\Phi^{-1}\circ F)(y)-(\Phi^{-1}\circ F)(x)}{y-x}\leq(\Phi^{-1}\circ F)'(x)=\frac{f(x)}{\phi(\Phi^{-1}(F(x)))}.
\]
Let $y=F^{-1}(p)+c$ with $c$ a positive constant specified later. Then,
\[
\frac{f(F^{-1}(p))}{\phi(\Phi^{-1}(p))}\geq\frac{\Phi^{-1}(F(F^{-1}(p)+c))-\Phi^{-1}(p)}{c}.
\]
Thus, we need to choose $c$ so that $F(F^{-1}(p)+c)$ has an appropriate lower bound. 
Noting $p\geq1/2$, we have
\ba{
1-F(F^{-1}(p)+c)
&=P\bra{Z^\vee>F^{-1}(p)+c}
\leq P\bra{Z^\vee>F^{-1}(1/2)+c}\\
&\leq\frac{\E[(Z^\vee-F^{-1}(1/2))^2]}{c^2}
\leq\frac{2\Var[Z^\vee]}{c^2},
}
where we used the following inequality for the last bound: For any random variable $Y$ and its median $m$, $\E[(Y-m)^2]=\Var[Y]+(\E[Y]-m)^2\leq2\Var[Y]$. 
Thus, letting $c=\sqrt{4\Var[Z^\vee]/(1-p)},$ 
we obtain $F(F^{-1}(p)+c))\geq1-(1-p)/2=(1+p)/2$. Consequently, 
\ben{\label{f-lower-1}
f(F^{-1}(p))\geq\phi(\Phi^{-1}(p))\frac{\Phi^{-1}((1+p)/2)-\Phi^{-1}(p)}{2\sqrt{\Var[Z^\vee]}}\sqrt{1-p}.
}
Also, by the fundamental theorem of calculus,
\ben{\label{f-lower-2}
\Phi^{-1}((1+p)/2)-\Phi^{-1}(p)=\int_p^{(1+p)/2}\frac{1}{\phi(\Phi^{-1}(u))}du
\geq\frac{1-p}{2\phi(\Phi^{-1}(p))},
}
where the last inequality holds because $\phi\circ\Phi^{-1}$ is decreasing on $[1/2,1)$. 
Combining \eqref{f-lower-1} with \eqref{f-lower-2} gives \eqref{f-lower}.

Next consider the case $p<1/2$. 
Since the function $\log\circ \Phi$ is increasing and concave, $\log\circ F=(\log\circ\Phi)\circ(\Phi^{-1}\circ F)$ is concave. Hence $(\log\circ F)'=f/F$ is non-increasing. Thus we obtain
\ba{
\frac{f(F^{-1}(p))}{p}\geq\frac{f(F^{-1}(1/2))}{F(F^{-1}(1/2))}\geq\frac{1}{2^{5/2}\sqrt{\Var[Z^\vee]}},
}
where the last inequality follows from \eqref{f-lower} for $p=1/2$ which was already proved in the above. 

It remains to prove \eqref{gmax-q-2nd} when $\Cov[Z]=\Sigma$. An elementary computation shows
\[
(F_Z^{-1})'(u)=\frac{1}{f_\Sigma(F_Z^{-1}(u))},\qquad
(F_Z^{-1})''(u)=-\frac{f_\Sigma'(F_Z^{-1}(u))}{f_\Sigma(F_Z^{-1}(u))^3}
\]
for all $u\in(0,1)$. Thus, the desired result follows from \eqref{f-lower} and \cref{gmax-2nd-deriv}. 
\qed

\section{Technical tools}

\subsection{Inequalities related to multivariate normal distributions}

Let $Z$ be a centered Gaussian vector in $\mathbb R^d$. 
Set $\ol\sigma:=\max_{1\leq j\leq d}\sqrt{\Var[Z_j]}$ and $\ul\sigma:=\min_{1\leq j\leq d}\sqrt{\Var[Z_j]}$.

\begin{lemma}\label{gmax-mom}
$\E[Z^\vee]\leq\ol\sigma\sqrt{2\log d}$ and $\Var[Z^\vee]\leq\ol\sigma^2$. 
\end{lemma}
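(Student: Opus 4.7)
The plan is to prove the two bounds by essentially independent, standard arguments, both of which are routine once the right tool is invoked.

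For the expectation bound, I would use the classical Chernoff/MGF approach. Since each $Z_j$ is centered Gaussian with $\Var[Z_j]\leq\ol\sigma^2$, its moment-generating function satisfies $\E[e^{\lambda Z_j}]=e^{\lambda^2\Var[Z_j]/2}\leq e^{\lambda^2\ol\sigma^2/2}$ for every $\lambda>0$. By Jensen's inequality applied to the convex function $e^{\lambda\,\cdot}$ and a union bound,
\[
e^{\lambda\E[Z^\vee]}\leq\E[e^{\lambda Z^\vee}]\leq\sum_{j=1}^d\E[e^{\lambda Z_j}]\leq d\,e^{\lambda^2\ol\sigma^2/2}.
\]
Taking logarithms gives $\E[Z^\vee]\leq\lambda^{-1}\log d+\lambda\ol\sigma^2/2$, and optimizing at $\lambda=\sqrt{2\log d}/\ol\sigma$ yields $\E[Z^\vee]\leq\ol\sigma\sqrt{2\log d}$.

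For the variance bound, I would invoke the Gaussian Poincar\'e inequality. Write $Z=\Sigma^{1/2}G$ with $G\sim N(0,I_d)$ and $\Sigma=\Cov[Z]$, and define $f:\mathbb R^d\to\mathbb R$ by $f(x):=\max_{1\leq j\leq d}(\Sigma^{1/2}x)_j$. The function $y\mapsto\max_j y_j$ on $\mathbb R^d$ is $1$-Lipschitz for $\|\cdot\|_\infty$, so
\[
|f(x)-f(x')|\leq\|\Sigma^{1/2}(x-x')\|_\infty\leq\max_{1\leq j\leq d}\|(\Sigma^{1/2})_{j\cdot}\|_2\cdot\|x-x'\|_2=\ol\sigma\,\|x-x'\|_2,
\]
where the last identity uses $\|(\Sigma^{1/2})_{j\cdot}\|_2^2=(\Sigma^{1/2}(\Sigma^{1/2})^\top)_{jj}=\Sigma_{jj}$. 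Thus $f$ is $\ol\sigma$-Lipschitz (hence a.e.\ differentiable with $|\nabla f|\leq\ol\sigma$), and the Gaussian Poincar\'e inequality gives
\[
\Var[Z^\vee]=\Var[f(G)]\leq\E[\|\nabla f(G)\|_2^2]\leq\ol\sigma^2.
\]

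Neither step poses real difficulty; the only point worth a careful sentence is the Lipschitz-constant computation, which is the unique place where the off-diagonal structure of $\Sigma$ would, in principle, have room to intervene but in fact does not because only the diagonal $\Sigma_{jj}$ appears.
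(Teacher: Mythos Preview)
Your proof is correct and follows essentially the same approach as the paper, which simply cites \cite[Theorem 2.5]{BLM13} for the expectation bound (the MGF/Chernoff argument you wrote out) and \cite[Theorem 5.8]{BLM13} for the variance bound (the Gaussian Poincar\'e inequality applied to the $\ol\sigma$-Lipschitz function $x\mapsto\max_j(\Sigma^{1/2}x)_j$). You have merely unpacked those references in full detail.
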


\begin{proof}
The first bound follows from \cite[Theorem 2.5]{BLM13}. 
The second one follows from \cite[Theorem 5.8]{BLM13}. 
\end{proof}

\begin{lemma}[Nazarov's inequality]\label{nazarov}
If $\ul\sigma>0$, then for any $x\in\mathbb R^d$ and $\eps>0$,
\[
P\bra{0<\max_{1\leq j\leq d}(Z_j-x_j)\leq \eps}\leq\frac{\eps}{\ul\sigma}(\sqrt{2\log d}+2).
\]
\end{lemma}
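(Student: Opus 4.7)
The plan is to follow the classical argument of Nazarov (2003), adapted as in Chernozhukov--Chetverikov--Kato (2017). First, by replacing $Z$ with $Z - x$ (which is still centered Gaussian with the same covariance, and hence the same $\ul\sigma$), we reduce to the case $x = 0$. The task then becomes controlling $F(\eps) - F(0)$, where $F(t) := P(Z^\vee \leq t)$ is the distribution function of the maximum. Since $F(\eps) - F(0) = \int_0^\eps F'(t)\,dt$, it suffices to prove the uniform density bound $F'(t) \leq \ul\sigma^{-1}(\sqrt{2\log d} + 2)$ on $\mathbb{R}$ (or at least on $[0,\eps]$), after which the conclusion follows by integration.

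The core analytic tool is Ehrhard's inequality applied to the convex set $A_t := \{z \in \mathbb{R}^d : z_j \leq t \text{ for all } j\}$. Because $A_t$ is the translate of $A_0$ by $t\bs1_d$ and is convex, Ehrhard's inequality implies that the function $H(t) := \Phi^{-1}(F(t))$ is concave and non-decreasing in $t$. Writing $F = \Phi \circ H$, we get $F'(t) = \phi(H(t))\,H'(t)$, and concavity gives that $H'$ is non-increasing. This is the substitute for log-concavity that makes the estimate dimension-dependent in the correct way: a plain log-concavity argument would only yield $\sup F' \lesssim 1/\sqrt{\Var(Z^\vee)}$, which can blow up in the super-concentration regime described earlier in the paper.

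The heart of the proof is to combine the concavity of $H$ with the surface-integral representation
\[
F'(t) = \sum_{j=1}^d \phi_{\sigma_j}(t)\, P\!\left(Z_k \leq t \text{ for all } k \neq j \,\middle|\, Z_j = t\right),
\]
which expresses the density as a weighted sum of conditional survival probabilities on hyperplanes. The plan is to: (i) use the $\sigma_j \geq \ul\sigma$ bound to rewrite each Gaussian density in terms of a standard Gaussian with a scaling factor of $1/\ul\sigma$; (ii) bound the conditional probability via a Slepian-type comparison argument that reduces the joint tail probability to a one-dimensional quantity involving $Z^\vee$; and (iii) use Gaussian concentration for $Z^\vee$ (\cref{gmax-mom} yields $\E[Z^\vee] \leq \ol\sigma\sqrt{2\log d}$ and $\Var[Z^\vee] \leq \ol\sigma^2$) together with the concavity of $H$ to reach the final $\sqrt{2\log d} + 2$ factor. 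The additive $+2$ will arise as the slack between the median and the expectation of $Z^\vee$ plus the standard Mills-ratio type constant.

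The main obstacle will be step (iii): obtaining the precise constant $\sqrt{2\log d} + 2$ rather than a bound with an absolute constant times $\sqrt{\log d}$. This requires that the Gaussian concentration be combined with the Ehrhard concavity \emph{at the right point}---namely, near the median of $Z^\vee$, where $H(t)$ crosses $0$ and $\phi(H(t))$ is maximal. Controlling $H'$ near this crossing point, and translating the resulting bound on $H'$ into a uniform bound on $\phi(H(t)) H'(t)$, is the delicate step; otherwise the proof is a reasonably standard application of Gaussian convex geometry.
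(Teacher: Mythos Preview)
The paper does not prove this lemma itself; the proof reads ``See \cite[Theorem~1]{CCK17nazarov}'' and nothing more.

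Your reduction to $x=0$ is incorrect: $Z-x$ has mean $-x$, not zero, so it is not a centered Gaussian vector. This matters because your step (iii) hinges on moment bounds for the \emph{centered} maximum $Z^\vee$ via \cref{gmax-mom}. For general $x$ the random variable you must control is $\max_j(Z_j-x_j)$, whose mean depends on $x$ and can be arbitrarily large or small; the bounds $\E[Z^\vee]\leq\ol\sigma\sqrt{2\log d}$ and $\Var[Z^\vee]\leq\ol\sigma^2$ are simply unavailable. The Ehrhard concavity of $H=\Phi^{-1}\circ G$ does survive for general $x$ (the sets $\{z:z_j\leq t+x_j\ \forall j\}$ are convex), but concavity alone only says $H'$ is non-increasing---it does not bound $H'$, which in fact can be of order $\sqrt d$ for $t$ well below the median. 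You have not explained how ``concentration together with concavity of $H$'' turns into the pointwise estimate $\phi(H(t))H'(t)\leq(\sqrt{2\log d}+2)/\ul\sigma$, and with the ingredients you list I do not see that it does. The argument in \cite{CCK17nazarov} produces the $\sqrt{2\log d}$ factor from the marginal bound $G(t)\leq\Phi(s_j)$ for each $j$ together with a Gaussian tail estimate, uniformly in $x$, rather than from moments of any maximum; your step (iii) is a genuine gap.
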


\begin{proof}
See \cite[Theorem 1]{CCK17nazarov}. 
\end{proof}

\begin{lemma}\label{gmax-var}
There exists a universal constant $c>0$ such that $\sqrt{\Var[Z^\vee]\log d}\geq c\ul\sigma^2/\ol\sigma$.
\end{lemma}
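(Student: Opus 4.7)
The plan is to prove the slightly stronger bound $\Var[Z^\vee] \geq c\,\ul\sigma^2/\log d$ for a universal constant $c > 0$, which implies the stated inequality since $\ul\sigma \geq \ul\sigma^2/\ol\sigma$. One may assume $\ul\sigma > 0$, otherwise the right-hand side of the claimed bound vanishes. Write $M := Z^\vee$ for brevity. The argument has two short steps, each of which is essentially an application of a known tool.

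First, I will apply Nazarov's inequality (\cref{nazarov}) with $x = t\bs1_d$. Since $\max_j(Z_j - t) = M - t$, this immediately gives
\[
F_M(t + \eps) - F_M(t) = P(t < M \leq t + \eps) \leq \frac{\eps}{\ul\sigma}\bra{\sqrt{2\log d} + 2}
\]
for every $t \in \mathbb R$ and $\eps > 0$. Thus the distribution function $F_M$ of $M$ is $B$-Lipschitz (and in particular continuous) with $B := (\sqrt{2\log d} + 2)/\ul\sigma$.

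Second, I will invoke the following elementary variance lower bound: if the distribution function $F_X$ of a random variable $X$ is continuous and $B$-Lipschitz, then $\Var[X] \geq 1/(12 B^2)$. The proof is a one-liner: by continuity, $U := F_X(X)$ is uniform on $[0,1]$, so for independent copies $X, X'$ the Lipschitz bound gives $|U - U'| \leq B |X - X'|$, whence $2\Var[X] = \E[(X-X')^2] \geq \E[(U-U')^2]/B^2 = 1/(6B^2)$. Combining the two steps yields $\Var[M] \geq \ul\sigma^2/[12(\sqrt{2\log d} + 2)^2]$, and for $d \geq 3$ this gives $\sqrt{\Var[M]\log d} \geq c\,\ul\sigma \geq c\,\ul\sigma^2/\ol\sigma$ for a universal constant $c > 0$. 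I do not anticipate any substantial obstacle: the key (and arguably only) observation is that Nazarov's anti-concentration inequality, beyond its usual role, directly bounds the modulus of continuity of $F_M$, and such a bound automatically forces the law of $M$ to spread at the quantitative scale required.
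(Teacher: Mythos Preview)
Your proof is correct and takes a genuinely different route from the paper's. The paper invokes a nontrivial result of Ding--Eldan--Zhai (\cite{DEZ15}, Theorem 1.8), adapting their argument to obtain $\sqrt{\Var[Z^\vee]}(\ul\sigma+\E[Z^\vee])\geq c'\ul\sigma^2$, and then combines this with the standard upper bound $\E[Z^\vee]\leq\ol\sigma\sqrt{2\log d}$ to reach the conclusion. Your argument is considerably more elementary and fully self-contained within the paper's toolkit: you only use Nazarov's inequality (already stated as \cref{nazarov}) to bound the Lipschitz constant of $F_{Z^\vee}$, and then the probability-integral-transform trick to turn this into a variance lower bound. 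As a bonus, your route yields the sharper inequality $\sqrt{\Var[Z^\vee]\log d}\geq c\,\ul\sigma$ (rather than $c\,\ul\sigma^2/\ol\sigma$), which you then weaken using $\ul\sigma\leq\ol\sigma$. The paper's approach, going through $\E[Z^\vee]$, is perhaps more informative about \emph{why} the variance cannot be too small (it ties the spread of $Z^\vee$ to its level), but for the purposes of this lemma your argument is cleaner and avoids the external reference.
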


\begin{proof}
By a straightforward modification of the proof of Theorem 1.8 in \cite{DEZ15}, we can prove
\[
\sqrt{\Var[Z^\vee]}(\ul\sigma+\E[Z^\vee])\geq c'\ul\sigma^2
\]
for some universal constant $c'>0$. In fact, this follows by applying the arguments in the proof of Theorem 1.8 in \cite{DEZ15} to $\mathbf X=Z/\ol\sigma$ with $t=\sqrt{1-\ul\sigma^2/(4m^2)}$ when $m:=\E[Z^\vee]>\ul\sigma/2$ (Lemma 2.2 in \cite{DEZ15} is applied with $\lambda=\ul\sigma^2/(4m\ol\sigma)$). 
Then, since $\E[Z^\vee]\leq\ol{\sigma}\sqrt{2\log d}$ by \cref{gmax-mom}, we obtain the desired result. 
\end{proof}

\begin{lemma}[Anderson--Hall--Titterington's bound]\label{lem:aht}
For any $r\in\mathbb N$, 
\[
\sup_{A\in\mcl R}\norm{\int_A\nabla^r\phi_\Sigma(z)dz}_1\leq C_r\frac{\log^{r/2} d}{\sigma_*^r},
\]
where $C_r>0$ is a constant depending only on $r$. 
\end{lemma}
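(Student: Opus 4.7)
The plan is to first reduce to the standard Gaussian case via a covariance-splitting convolution, then exploit the tensor structure of $\phi_d$ together with Hermite polynomial estimates to reduce the rectangle case to an orthant-case estimate, and finally to establish the orthant estimate by a combinatorial two-regime analysis in the coordinate magnitudes.

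For the reduction to identity covariance, let $G\sim N(0,I_d)$ and $Z'\sim N(0,\Sigma-\sigma_*^2 I_d)$ be independent; such a decomposition exists because $\Sigma-\sigma_*^2 I_d$ is positive semidefinite by definition of $\sigma_*$. For any $A\in\mcl R$ and $x\in\mathbb R^d$,
\[
\int_{A}\phi_\Sigma(z+x)\,dz=\E\sbra{\int_{\mathbb R^d}1_{A}(\sigma_*z+Z')\,\phi_d(z+x/\sigma_*)\,dz};
\]
differentiating $r$ times in $x$ at $x=0$ and noting that $\sigma_*^{-1}(A-Z')\in\mcl R$ almost surely gives
\[
\norm{\int_A\nabla^r\phi_\Sigma(z)\,dz}_1\leq \sigma_*^{-r}\sup_{A'\in\mcl R}\norm{\int_{A'}\nabla^r\phi_d(z)\,dz}_1,
\]
so it suffices to prove the bound $C_r(\log d)^{r/2}$ in the case $\Sigma=I_d$.

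For $\Sigma=I_d$ and $A=\prod_{k=1}^d[a_k,b_k]$, the product form $\phi_d(z)=\prod_k\phi(z_k)$ yields, for any multi-index $(j_1,\dots,j_r)$ with count profile $\bs\nu=(\nu_1,\dots,\nu_d)$,
\[
\int_{A}\partial_{j_1\cdots j_r}\phi_d(z)\,dz=\prod_{k:\nu_k\geq1}(-1)^{\nu_k-1}\sbra{h_{\nu_k}(b_k)-h_{\nu_k}(a_k)}\prod_{k:\nu_k=0}\sbra{\Phi(b_k)-\Phi(a_k)},
\]
with $h_m(t)=H_{m-1}(t)\phi(t)$. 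Setting $c_k:=|a_k|\wedge|b_k|$ and invoking the decreasing majorant $\tilde h_m$ of $|h_m|$ constructed in the proof of Lemma~\ref{lem:anti}, together with the identity $\Phi(b_k)-\Phi(a_k)=\Phi(b_k)+\Phi(-a_k)-1\leq\Phi(c_k)$, I can bound each factor by a function of $c_k$ alone. Summing over $(j_1,\dots,j_r)$ and grouping by the support size $q=|\{k:\nu_k\geq1\}|$ of $\bs\nu$ then gives
\[
\sum_{j_1,\dots,j_r=1}^d\abs{\int_A\partial_{j_1\cdots j_r}\phi_d\,dz}\leq C_r\sum_{q=1}^r\sum_{\bs\nu\in\mcl N_q(r)}\sum_{\bs k\in\mcl J_q(d)}\prod_{p=1}^q\tilde h_{\nu_p}(c_{k_p})\prod_{k\notin\bs k}\Phi(c_k).
\]
Each inner sum I then split into the regime $\max_p c_{k_p}\leq\sqrt{4r\log d}$ and its complement: on the former, the polynomial-times-Gaussian estimate $\tilde h_m(t)\leq C_m(1+t^{m-1})\phi(t)$ yields $\tilde h_{\nu_p}(c_{k_p})\leq C_r(\log d)^{(\nu_p-1)/2}\phi(c_{k_p})$, producing a factor $(\log d)^{(r-q)/2}$ outside and reducing matters to the orthant-type quantity $\Omega_q(c):=\sum_{\bs k\in\mcl J_q(d)}\prod_{p=1}^q\phi(c_{k_p})\prod_{k\notin\bs k}\Phi(c_k)$. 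On the complementary regime, Gaussian decay $\tilde h_m(t)\leq C_m e^{-t^2/4}$ bounds each summand by $C_r d^{-r}$, and since at most $d^q\leq d^r$ terms arise, the total contribution is $O(1)$.

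The main obstacle will be the uniform bound $\Omega_q(c)\leq C_q(\log d)^{q/2}$ for $c\in\mathbb R^d$ and $1\leq q\leq r$. Note that $\Omega_q(c)=\sum_{\bs k\in\mcl J_q(d)}\int_{A'}\partial_{k_1\cdots k_q}\phi_d(z)\,dz$ with $A'=\prod_k(-\infty,c_k]$, so $\Omega_q(c)$ is the orthant analogue of the very quantity we are trying to bound. A naive application of Maclaurin's inequality via the generating identity $\prod_k(\Phi(c_k)+\tau\phi(c_k))=\sum_{q\geq0}\tau^q\Omega_q(c)/q!$ only yields $\Omega_q(c)\leq\prod_k\Phi(c_k)\cdot\bra{\sum_k\phi(c_k)/\Phi(c_k)}^q$, and both factors can be exponential in $d$ in extreme configurations, so the cancellation between them must be tracked. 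The resolution is to exploit the Mills-ratio asymptotics $\phi(c)/\Phi(-c)\sim c$ as $c\to\infty$ to show that the extrema of $\Omega_q(c)$ are attained at balanced configurations with $c_k\sim\sqrt{2\log d}$ for most $k$, at which an explicit computation gives the claimed $(\log d)^{q/2}$ rate. Combining this with the two-regime bound above then produces $C_r\sum_{q=1}^r(\log d)^{(r-q)/2}\cdot(\log d)^{q/2}+O(1)\leq C_r(\log d)^{r/2}$, completing the proof.
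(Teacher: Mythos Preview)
Your reduction to $\Sigma=I_d$ via the convolution with $Z'\sim N(0,\Sigma-\sigma_*^2 I_d)$ is correct and is exactly what the paper does. The paper then stops and cites Lemma~2.2 of \cite{FaKo21} for the identity-covariance case, so its own proof is not self-contained.

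Your attempt at a self-contained proof of the $\Sigma=I_d$ case has a genuine gap at the final step. After the Hermite/two-regime reduction you are left with the bound $\Omega_q(c)\leq C_q(\log d)^{q/2}$ for $c\in[0,\infty)^d$, and this is precisely the orthant case of the lemma itself (with $r=q$ and all multiplicities equal to $1$): indeed $\Omega_q(c)=\sum_{\bs k\in\mcl J_q(d)}\int_{A'}\partial_{k_1\cdots k_q}\phi_d(z)\,dz$ with $A'=\prod_k(-\infty,c_k]$. So you have reduced general rectangles and general multi-indices to orthants with simple indices, but the heart of the estimate remains. Your proposed ``resolution'' --- Mills-ratio asymptotics suggesting that the supremum of $\Omega_q$ is attained near balanced configurations $c_k\sim\sqrt{2\log d}$ --- is only a heuristic: you have not shown that the supremum over $c\in[0,\infty)^d$ is controlled by any such configuration, and the naive bounds you mention (Maclaurin, etc.) are, as you note, too weak. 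For $q=1$ one can recover $\Omega_1(c)\lesssim\sqrt{\log d}$ from Nazarov's inequality, since $\Omega_1(c)$ is the density at $0$ of $\max_k(\zeta_k-c_k)$ with $\zeta_k$ i.i.d.\ standard normal; but for $q\geq2$ no such shortcut is apparent, and a rigorous argument is needed. As a side remark, the Hermite-polynomial and two-regime machinery you set up does appear in the paper, but in Step~4 of the proof of \cref{lem:anti}, where it is used in the opposite direction: to reduce the anti-concentration bound to an invocation of the present lemma on orthants, not to prove the present lemma.
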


\begin{proof}
Let $Z\sim N(0,\Sigma)$ and $Z'\sim N(0,\Sigma-\sigma_*^2I_d)$. 
Then, for any $A\in\mcl R$ and $x\in\mathbb R^d$, we have
\ba{
\int_A\phi_\Sigma(x+z)dz
&=\E[1_A(Z-x)]
=\E\sbra{\int_{\mathbb R^d}1_A(\sigma_*z+Z')\phi_d(z+x/\sigma_*)dz}.
}
Differentiating both sides $r$ times with respect to $x$ and then setting $x=0$, we obtain
\[
\int_A\nabla^r\phi_\Sigma(z)dz=\frac{1}{\sigma_*^r}\E\sbra{\int_{\mathbb R^d}1_A(\sigma_*z+Z')\nabla^r\phi_d(z)dz}.
\]
Hence
\ba{
\norm{\int_A\nabla^r\phi_\Sigma(z)dz}_1
\leq\frac{1}{\sigma_*^r}\E\sbra{\norm{\int_{\mathbb R^d}1_A(\sigma_*z+Z')\nabla^r\phi_d(z)dz}_1}
\leq C_r\frac{\log^{r/2}d}{\sigma_*^r},
}
where the last inequality follows by Lemma 2.2 in \cite{FaKo21} because $\{z\in\mathbb R^d:\sigma_*z+Z'\in A\}\in\mcl R$. 
\end{proof}

\begin{lemma}\label{gmax-2nd-deriv}
For any integer $r\geq0$,
\ben{\label{gmax-dens-deriv}
f_\Sigma^{(r)}(t)=\int_{(-\infty,t]^d}\langle\bs1_d^{\otimes {(r+1)}},\nabla^{r+1}\phi_\Sigma(z)\rangle dz\quad\text{for all }t\in\mathbb R.
}
Moreover, there exists a constant $C_r>0$ depending only on $r$ such that $\sup_{t\in\mathbb R}|f_\Sigma^{(r)}(t)|\leq C_r(\sqrt{\log d}/\sigma_*)^{r+1}$.
\end{lemma}

\begin{proof}
Observe that
\[
F_Z(t)=\int_{(-\infty,t]^d}\phi_\Sigma(z)dz=\int_{(-\infty,0]^d}\phi_\Sigma(z+t\bs1_d)dz
\]
for all $t\in\mathbb R$. Differentiating this equation $r+1$ times with respect to $t$ gives
\[
f_\Sigma^{(r)}(t)=\int_{(-\infty,0]^d}\langle\bs1_d^{\otimes(r+1)},\nabla^{r+1}\phi_\Sigma(z+t\bs1_d)\rangle dz=\int_{(-\infty,t]^d}\langle\bs1_d^{\otimes(r+1)},\nabla^{r+1}\phi_\Sigma(z)\rangle dz.
\]
Hence we obtain \eqref{gmax-dens-deriv}. 
The second claim follows from \cref{lem:aht}. 
\end{proof}

\subsection{Inequalities related to sub-Weibull norms}


\begin{lemma}\label{moment-psi}
Let $\xi$ be a random variable. 
Suppose that there is a constant $A>0$ such that $\|\xi\|_p\leq Ap^{1/\alpha}$ for all $p\geq1$. Then $\|\xi\|_{\psi_\alpha}\leq C_\alpha A$. 
\end{lemma}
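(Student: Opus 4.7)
The plan is to Taylor-expand the exponential and bound each moment directly via the hypothesis. For any $t>0$, monotone convergence gives
\[
\E\sbra{\exp\cbra{(|\xi|/t)^\alpha}}=\sum_{k=0}^\infty\frac{\E[|\xi|^{k\alpha}]}{k!\,t^{k\alpha}},
\]
so it suffices to make this series summable at some $t\lesssim_\alpha A$.

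First I would extend the hypothesis to small exponents via monotonicity of $L^p$-norms on a probability space: $\|\xi\|_p\leq\|\xi\|_{p\vee 1}\leq A(p\vee 1)^{1/\alpha}$ for all $p>0$. For $k\geq1$ one has $k\alpha\vee 1\leq(1+\alpha)k$, hence $\|\xi\|_{k\alpha}\leq A(1+\alpha)^{1/\alpha}k^{1/\alpha}$, which yields
\[
\E[|\xi|^{k\alpha}]\leq A^{k\alpha}(1+\alpha)^k k^k.
\]
Combining this with the Stirling-type estimate $k!\geq(k/e)^k$ gives
\[
\frac{\E[|\xi|^{k\alpha}]}{k!\,t^{k\alpha}}\leq\bra{\frac{A^\alpha e(1+\alpha)}{t^\alpha}}^k\qquad(k\geq1).
\]

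Finally, choose $t=C_\alpha A$ with $C_\alpha:=(2e(1+\alpha))^{1/\alpha}$, so that the parenthesized ratio equals $1/2$. Summing the resulting geometric series and adding the $k=0$ contribution (which equals $1$) yields
\[
\E\sbra{\exp\cbra{(|\xi|/t)^\alpha}}\leq 1+\sum_{k=1}^\infty 2^{-k}=2,
\]
so by the definition of $\|\cdot\|_{\psi_\alpha}$ we conclude $\|\xi\|_{\psi_\alpha}\leq t=C_\alpha A$. There is no substantive obstacle; the only delicate point is the treatment of the low-order terms when $\alpha<1$, which is why the argument first uses monotonicity of $L^p$-norms to push the exponent above $1$ before invoking the hypothesis.
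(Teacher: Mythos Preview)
Your proof is correct and self-contained; the paper does not actually prove this lemma but simply cites Lemma~A.5 of \cite{Ko23}. Your Taylor-expansion argument combined with Stirling's bound $k!\geq(k/e)^k$ is the standard route to this implication, and all the steps check out, including the extension of the moment bound to exponents below~$1$ via $\|\xi\|_{k\alpha}\leq\|\xi\|_{k\alpha\vee1}$ and the elementary inequality $k\alpha\vee1\leq(1+\alpha)k$ for $k\geq1$.
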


\begin{proof}
See Lemma A.5 in \cite{Ko23}. 
\end{proof}

\begin{lemma}\label{sum-psi}
For any $\alpha\in(0,1)$, there exists a constant $C_\alpha>0$ depending only on $\alpha$ such that
\[
\norm{\sum_{i=1}^n\xi_i}_{\psi_\alpha}\leq C_\alpha\sum_{i=1}^n\|\xi_i\|_{\psi_\alpha}
\]
for any random variables $\xi_1,\dots,\xi_n$. 
\end{lemma}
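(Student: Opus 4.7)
The plan is to pass through the moment characterization of the sub-Weibull norm, which is already available via \cref{moment-psi}. The restriction $\alpha\in(0,1)$ matters only in that $\|\cdot\|_{\psi_\alpha}$ is a quasi-norm rather than a norm in this regime, which is exactly why a constant $C_\alpha$ must be tolerated on the right-hand side.

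First I would establish the converse direction to \cref{moment-psi}: for any random variable $\xi$,
\[
\|\xi\|_p \leq C_\alpha\, \|\xi\|_{\psi_\alpha}\, p^{1/\alpha}\qquad(p\geq 1).
\]
Setting $A=\|\xi\|_{\psi_\alpha}$, Markov's inequality applied to $\exp((|\xi|/A)^\alpha)$ gives the tail bound $P(|\xi|>t)\leq 2\exp(-(t/A)^\alpha)$. Then
\[
\E[|\xi|^p] = p\int_0^\infty t^{p-1}P(|\xi|>t)\,dt
\leq 2p\int_0^\infty t^{p-1}e^{-(t/A)^\alpha}\,dt
= \frac{2p}{\alpha}A^p\,\Gamma(p/\alpha),
\]
and Stirling's formula yields $\Gamma(p/\alpha)^{1/p}\leq C_\alpha\,p^{1/\alpha}$, which delivers the claim.

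Second, apply Minkowski's inequality in $L^p$:
\[
\norm{\sum_{i=1}^n\xi_i}_p \leq \sum_{i=1}^n\|\xi_i\|_p \leq C_\alpha\, p^{1/\alpha}\sum_{i=1}^n\|\xi_i\|_{\psi_\alpha}
\qquad(p\geq1).
\]
Finally, invoking \cref{moment-psi} with $A=C_\alpha\sum_{i=1}^n\|\xi_i\|_{\psi_\alpha}$ converts this moment estimate back into the desired sub-Weibull bound, yielding $\|\sum_i\xi_i\|_{\psi_\alpha}\leq C_\alpha'\sum_i\|\xi_i\|_{\psi_\alpha}$.

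There is no real obstacle here; the only slightly delicate point is the Stirling estimate in the first step, which is a standard one-line computation. The argument is essentially the usual tail-to-moment/moment-to-tail round trip that underlies every quasi-norm inequality in the Orlicz/sub-Weibull setting.
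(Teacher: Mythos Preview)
Your proof is correct. It differs from the paper's, which is a one-liner: the paper invokes Lemma~C.2 of \cite{ChKa19} to replace $\psi_\alpha$ by an equivalent \emph{convex} Orlicz function (for $\alpha<1$ the map $t\mapsto e^{t^\alpha}-1$ fails to be convex near the origin, but a suitable modification is), and then simply applies the triangle inequality for the resulting genuine Orlicz norm. Your route through the $L^p$ characterization---tail to moments via the Gamma integral, Minkowski in $L^p$, then back via \cref{moment-psi}---is a bit longer but has the virtue of being entirely self-contained within the paper, requiring no external reference beyond Stirling. Both arguments are standard; the paper's is slicker, yours is more explicit.
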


\begin{proof}
This follows from Lemma C.2 in \cite{ChKa19} and the triangle inequality for the Orlicz norm associated with a convex function.  
\end{proof}

\begin{lemma}\label{prod-psi}
Let $\xi_1,\xi_2$ be two random variables such that $\|\xi_1\|_{\psi_{\alpha_1}}+\|\xi_2\|_{\psi_{\alpha_2}}<\infty$ for some $\alpha_1,\alpha_2>0$. Then we have 
$
\|\xi_1\xi_2\|_{\psi_\alpha}\leq\|\xi_1\|_{\psi_{\alpha_1}}\|\xi_2\|_{\psi_{\alpha_2}},
$ 
where $\alpha>0$ is defined by the equation $1/\alpha=1/\alpha_1+1/\alpha_2$. 
\end{lemma}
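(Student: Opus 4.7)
The plan is to prove the bound by choosing good representatives of the $\psi_{\alpha_i}$-norms, then combining Young's and H\"older's inequalities with the conjugate exponents $p=\alpha_1/\alpha$ and $q=\alpha_2/\alpha$, which satisfy $1/p+1/q=1$ by the definition $1/\alpha=1/\alpha_1+1/\alpha_2$.

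More precisely, assume without loss of generality that $\|\xi_1\|_{\psi_{\alpha_1}}$ and $\|\xi_2\|_{\psi_{\alpha_2}}$ are both strictly positive (otherwise the asserted bound is trivial, since the corresponding $\xi_i$ vanishes a.s.). Fix $t_i>\|\xi_i\|_{\psi_{\alpha_i}}$ and set $X_i:=|\xi_i|/t_i$, so that by the definition of $\|\cdot\|_{\psi_{\alpha_i}}$
\[
\E\bigl[\exp(X_i^{\alpha_i})\bigr]\leq 2,\qquad i=1,2.
\]
Since $1/p+1/q=1$ with $p,q>1$, Young's inequality yields
\[
(X_1X_2)^{\alpha}=X_1^{\alpha}X_2^{\alpha}\leq\frac{X_1^{\alpha p}}{p}+\frac{X_2^{\alpha q}}{q}=\frac{\alpha}{\alpha_1}X_1^{\alpha_1}+\frac{\alpha}{\alpha_2}X_2^{\alpha_2}.
\]
Exponentiating and taking expectations, then applying H\"older's inequality with exponents $p=\alpha_1/\alpha$ and $q=\alpha_2/\alpha$, gives
\[
\E\bigl[\exp((X_1X_2)^{\alpha})\bigr]
\leq \E\bigl[\exp(X_1^{\alpha_1})^{\alpha/\alpha_1}\exp(X_2^{\alpha_2})^{\alpha/\alpha_2}\bigr]
\leq \E\bigl[\exp(X_1^{\alpha_1})\bigr]^{\alpha/\alpha_1}\E\bigl[\exp(X_2^{\alpha_2})\bigr]^{\alpha/\alpha_2}
\leq 2^{\alpha/\alpha_1+\alpha/\alpha_2}=2.
\]
Recalling that $(X_1X_2)^{\alpha}=(|\xi_1\xi_2|/(t_1t_2))^{\alpha}$, this says $\|\xi_1\xi_2\|_{\psi_\alpha}\leq t_1t_2$. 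Taking the infimum over admissible $t_1,t_2$ yields the claim.

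There is no serious obstacle; the only thing to be careful about is choosing the right pair of conjugate exponents so that both Young's inequality (to linearize the product $X_1^\alpha X_2^\alpha$ in the exponent) and H\"older's inequality (to separate the resulting expectation) use the same exponents $\alpha_1/\alpha$ and $\alpha_2/\alpha$, which is exactly guaranteed by the scaling relation $1/\alpha=1/\alpha_1+1/\alpha_2$.
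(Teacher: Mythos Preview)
Your proof is correct. The paper does not give its own argument for this lemma; it simply cites \cite[Proposition D.2]{KuCh22}. Your self-contained proof via Young's inequality (to bound $(X_1X_2)^\alpha$ in the exponent) followed by H\"older's inequality with the same conjugate pair $(\alpha_1/\alpha,\alpha_2/\alpha)$ is exactly the standard argument behind that cited result, so there is nothing to add.
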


\begin{proof}
See \cite[Proposition D.2]{KuCh22}.
\end{proof}

\begin{lemma}\label{lem:weibull}
Let $\xi_1,\dots,\xi_n$ be independent random variables such that $\max_{1\leq i\leq n}\|\xi_i\|_{\psi_\alpha}\leq K$ for some $K>0$ and $\alpha\in(0,1]$. 
Then, there is a constant $C_\alpha>0$ depending only on $\alpha$ such that for any $p\geq1$, 
\[
\left\|\sum_{i=1}^n(\xi_i-\E[\xi_i])\right\|_p\leq C_\alpha K\left(\sqrt{pn}+p^{1/\alpha}\right).
\]
\end{lemma}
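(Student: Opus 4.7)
The plan is to derive the claim from a standard Bernstein-type tail inequality for sums of independent sub-Weibull random variables, and then convert tails to moments by integration.

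First I would reduce to the centered case. Since $|\E\xi_i|\leq\|\xi_i\|_1 \leq C_\alpha K$ by \cref{moment-psi}, viewing $-\E\xi_i$ as a degenerate random variable and applying (a quasi-triangle-inequality version of) \cref{sum-psi} yields $\|\xi_i-\E\xi_i\|_{\psi_\alpha}\leq C_\alpha K$. So after renaming, it suffices to bound $\|S_n\|_p$ with $S_n:=\sum_{i=1}^n\xi_i$, $\E\xi_i=0$, and $\|\xi_i\|_{\psi_\alpha}\leq C_\alpha K$.

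Second, I would establish the two-regime Bernstein-type tail bound
\[
P(|S_n|>t) \;\leq\; 2\exp\!\left(-c_\alpha \min\!\left\{\frac{t^2}{nK^2},\;(t/K)^\alpha\right\}\right),\qquad t\geq 0.
\]
This is standard and proceeds by truncation: for a level $M=M(t)$ to be optimized, decompose $\xi_i = \xi_i 1_{\{|\xi_i|\leq M\}} + \xi_i 1_{\{|\xi_i|> M\}}$, apply the classical bounded-variables Bernstein inequality to the centered truncated part (whose variance is $\leq C_\alpha K^2$ and whose sup norm is $\leq 2M$), and control the event $\bigcup_i\{|\xi_i|>M\}$ by the union bound together with $P(|\xi_i|>M)\leq 2e^{-(M/K)^\alpha}$. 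Balancing $M$ against $t$ produces the two competing exponents.

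Third, I would integrate the tail:
\[
\E[|S_n|^p] \;=\; p\int_0^\infty t^{p-1}P(|S_n|>t)\,dt,
\]
splitting at the crossover point $t_\alpha := K n^{1/(2-\alpha)}$ where the two exponents coincide. On $[0,t_\alpha]$ the sub-Gaussian exponent dominates, giving a contribution of order $(C_\alpha K)^p(pn)^{p/2}$ via $\int_0^\infty t^{p-1}e^{-at^2}dt\lesssim a^{-p/2}\Gamma(p/2)$ and Stirling; on $[t_\alpha,\infty)$ the Weibull exponent dominates, giving $(C_\alpha K)^p p^{p/\alpha}$ via $\int_0^\infty t^{p-1}e^{-bt^\alpha}dt\lesssim b^{-p/\alpha}\Gamma(p/\alpha)$. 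Taking $p$-th roots and using $(x+y)^{1/p}\leq x^{1/p}+y^{1/p}$ closes the proof.

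The main obstacle is the second step: the truncation must be executed so that the mean correction $\sum_i \E[\xi_i 1_{\{|\xi_i|>M\}}]$ absorbed by centering the truncated variables is of lower order than the desired bound, uniformly in $t$ and $n$. For $\alpha<1$ the fact that $\|\cdot\|_{\psi_\alpha}$ is only a quasi-norm also means all triangle-type reductions carry $\alpha$-dependent factors which must be verified not to blow up with $n$ or $p$. This inequality is well documented in the literature (see, e.g., \cite{KuCh22}), so a direct citation is also a reasonable route.
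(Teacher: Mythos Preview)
Your approach is sound; the paper, however, simply cites Lemma 2.1 in \cite{FaKo23} for this result and gives no argument of its own. So your route---truncate, apply a bounded Bernstein inequality to the truncated part, union-bound the large part, and then integrate the resulting two-regime tail---is not a different proof from the paper's so much as a proof where the paper has none. The sketch is essentially the standard argument that underlies the cited result (and the variants in \cite{KuCh22} that you mention), and the obstacles you flag (controlling the truncation-induced mean shift uniformly, and tracking quasi-norm constants for $\alpha<1$) are real but routine. If you want a fully self-contained write-up this is the right skeleton; if you are content to match the paper, a one-line citation to \cite{FaKo23} or \cite{KuCh22} suffices.
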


\begin{proof}
See Lemma 2.1 in \cite{FaKo23}. 
\end{proof}

\begin{lemma}\label{max-weibull}
Let $Y_1,\dots,Y_n$ be independent random vectors in $\mathbb R^d$. Suppose that there exist constants $K>0$ and $\alpha\in(0,1]$ such that $\max_{1\leq i\leq n}\max_{1\leq j\leq d}\|Y_{ij}\|_{\psi_\alpha}\leq K$. Then, there exists a constant $C_\alpha>0$ depending only on $\alpha$ such that
\ben{\label{eq:max-weibull-1}
\E\norm{\sum_{i=1}^n(Y_{i}-\E[Y_{i}])}_\infty^r\leq C_\alpha^r K^r\bra{\sqrt{nr\log d}+(r\log d)^{1/\alpha}}^r
}
for any $r\geq1$ and
\ben{\label{eq:max-weibull-2}
P\bra{\norm{\sum_{i=1}^n(Y_{i}-\E[Y_{i}])}_\infty>C_\alpha K\bra{\sqrt{an\log(dn)}+a^{1/\alpha}\log^{1/\alpha}(dn)}}\leq\frac{1}{n^a}
}
for any $a\geq1$. 
\end{lemma}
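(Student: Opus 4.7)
The plan is to reduce to the univariate Bernstein-type inequality stated in Lemma \ref{lem:weibull} coordinate-wise, then glue the coordinates together using the standard $L^p$ maximal inequality $\|\max_{1\leq j\leq d}|\zeta_j|\|_p\leq d^{1/p}\max_{1\leq j\leq d}\|\zeta_j\|_p$. Set $S_j:=\sum_{i=1}^n(Y_{ij}-\E[Y_{ij}])$. For each $j$, the variables $Y_{1j},\dots,Y_{nj}$ are independent with $\|Y_{ij}\|_{\psi_\alpha}\leq K$, so Lemma \ref{lem:weibull} gives
\[
\|S_j\|_p\leq C_\alpha K\bra{\sqrt{pn}+p^{1/\alpha}}
\]
for every $p\geq1$, with $C_\alpha$ depending only on $\alpha$.

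For the moment bound \eqref{eq:max-weibull-1}, I would pick $p=r\log d\geq1$ (recall the standing convention $d\geq3$). Then
\[
\|\max_{1\leq j\leq d}|S_j|\|_p\leq d^{1/p}\max_{1\leq j\leq d}\|S_j\|_p=e^{1/r}\max_{1\leq j\leq d}\|S_j\|_p\leq eC_\alpha K\bra{\sqrt{rn\log d}+(r\log d)^{1/\alpha}}.
\]
Since $r\leq p$ and the underlying measure is a probability measure, $\|\max_j|S_j|\|_r\leq\|\max_j|S_j|\|_p$, and raising to the $r$-th power gives \eqref{eq:max-weibull-1}.

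For the tail bound \eqref{eq:max-weibull-2}, I would turn the univariate moment bound into a tail bound in the standard way: from $\|S_j\|_p\leq M_p$ and Markov's inequality, $P(|S_j|>eM_p)\leq e^{-p}$. A union bound over $j\in\{1,\dots,d\}$ yields
\[
P\bra{\max_{1\leq j\leq d}|S_j|>eC_\alpha K(\sqrt{pn}+p^{1/\alpha})}\leq de^{-p}.
\]
Choosing $p=a\log(dn)$ with $a\geq1$ makes $de^{-p}=d^{1-a}n^{-a}\leq n^{-a}$, and substituting this $p$ into the deviation term gives $\sqrt{an\log(dn)}+a^{1/\alpha}\log^{1/\alpha}(dn)$ (after absorbing constants into $C_\alpha$), which is \eqref{eq:max-weibull-2}.

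This proof is essentially mechanical once Lemma \ref{lem:weibull} is in hand; there is no real obstacle. The only minor care point is keeping track of the dimension-absorbing exponent $d^{1/p}$ and verifying that the choice $p=r\log d$ (respectively $p=a\log(dn)$) delivers exactly the advertised $\log d$ (respectively $\log(dn)$) factors inside the square root and the sub-Weibull term.
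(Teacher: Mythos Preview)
Your proof is correct and is essentially the same as the paper's: both apply Lemma~\ref{lem:weibull} coordinate-wise, then for \eqref{eq:max-weibull-1} choose $p=r\log d$ and use the $L^p$ maximal inequality $\|\max_j|S_j|\|_p\leq d^{1/p}\max_j\|S_j\|_p$ together with $\|\cdot\|_r\leq\|\cdot\|_p$, and for \eqref{eq:max-weibull-2} choose $p=a\log(dn)$ and combine Markov's inequality with a union bound over the $d$ coordinates. The only cosmetic difference is that the paper writes the $d^{1/p}$ factor as $d^{r/p}=e\leq e^r$ after raising to the $r$-th power, whereas you keep it as $d^{1/p}=e^{1/r}\leq e$ at the level of norms; the resulting constants are the same.
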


\begin{proof}
By \cref{lem:weibull}, we have for any $p\geq2$
\ben{\label{weibull-est}
\max_{1\leq j\leq d}\norm{\sum_{i=1}^n(Y_{ij}-\E[Y_{ij}])}_p\leq C'_\alpha K\bra{\sqrt{pn}+p^{1/\alpha}},
}
where $C_\alpha'>0$ depends only on $\alpha$. 
Therefore, with $p=r\log d$, we have
\ba{
\E\norm{\sum_{i=1}^n(Y_{i}-\E[Y_{i}])}_\infty^r
&\leq\bra{\E\norm{\sum_{i=1}^n(Y_{i}-\E[Y_{i}])}_\infty^p}^{r/p}
\leq d^{r/p}\max_{1\leq j\leq d}\norm{\sum_{i=1}^n(Y_{ij}-\E[Y_{ij}])}_p^r\\
&\leq d^{r/p}(C'_\alpha)^r K^r\bra{\sqrt{nr\log d}+(r\log d)^{1/\alpha}}^r.
}
Since $d^{r/p}=e^{r\log d/p}=e\leq e^r$, we obtain \eqref{eq:max-weibull-1} with $C_\alpha=eC_\alpha'$. 
Also, by the union bound, Markov's inequality and \eqref{weibull-est}, we have for any $t>0$ and $p\geq2$
\ba{
P\bra{\norm{\sum_{i=1}^n(Y_{i}-\E[Y_{i}])}_\infty>t}
&\leq d\bra{t^{-1}C'_\alpha K\bra{\sqrt{pn}+p^{1/\alpha}}}^p.
}
Applying this estimate with $p=a\log(dn)$ and $t=eC'_\alpha K\bra{\sqrt{pn}+p^{1/\alpha}}$, we obtain \eqref{eq:max-weibull-2}
\end{proof}

\begin{lemma}\label{lem:tensor}
Let $r\in\mathbb N$. If \eqref{ass:psi1} is satisfied, there exists a constant $C_{r}>0$ depending only on $r$ such that
\[
P\bra{\abs{\frac{1}{n}\sum_{i=1}^n(\langle X_i^{\otimes r},V\rangle-\E[\langle X_i^{\otimes r},V\rangle])}>a^rC_{r}\|V\|_1b^r\sqrt{\frac{\log n}{n}}}
\leq \frac{1}{n^a}
\]
for any $a\geq1$ and $V\in(\mathbb R^d)^{\otimes r}$. 
\end{lemma}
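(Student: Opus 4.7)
The plan is to interpret $\xi_i:=\langle X_i^{\otimes r},V\rangle-\E[\langle X_i^{\otimes r},V\rangle]$ as independent centered random variables whose $\psi_{1/r}$-norm is controlled by $\|V\|_1 b^r$, and then apply the sub-Weibull Bernstein machinery already developed in the appendix.

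Step 1: bound the $\psi_{1/r}$-norm of a single summand. Expanding
\[
\langle X_i^{\otimes r},V\rangle=\sum_{j_1,\dots,j_r=1}^d V_{j_1,\dots,j_r}\,X_{i,j_1}\cdots X_{i,j_r},
\]
iterated application of \cref{prod-psi} combined with $\|X_{ij}\|_{\psi_1}\leq b$ gives $\|X_{i,j_1}\cdots X_{i,j_r}\|_{\psi_{1/r}}\leq b^r$. For $r\geq 2$, \cref{sum-psi} with $\alpha=1/r\in(0,1)$ yields $\|\langle X_i^{\otimes r},V\rangle\|_{\psi_{1/r}}\leq C_r\|V\|_1 b^r$; for $r=1$ the same follows from the convex triangle inequality. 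Centering costs only a constant factor, so $\|\xi_i\|_{\psi_{1/r}}\leq K$ with $K:=C_r\|V\|_1 b^r$ after adjusting $C_r$.

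Step 2: apply \cref{lem:weibull} with $\alpha=1/r$ to get $\|\sum_{i=1}^n\xi_i\|_p\leq C_r' K(\sqrt{pn}+p^r)$ for all $p\geq 1$. A Markov-type bound then gives, for any $t>0$,
\[
P\Bigl(\Bigl|\sum_{i=1}^n\xi_i\Bigr|>t\Bigr)\leq \bigl(t^{-1}eC_r'K(\sqrt{pn}+p^r)\bigr)^{p}\cdot e^{p}\cdot e^{-p}.
\]
Choosing $p=a\log n$ (valid since $a\geq 1,\ n\geq 3$) and $t=eC_r'K(\sqrt{an\log n}+(a\log n)^r)$ produces the bound $e^{-p}=n^{-a}$, so dividing by $n$ yields
\[
\frac{1}{n}\Bigl|\sum_{i=1}^n\xi_i\Bigr|\leq eC_r'\|V\|_1 b^r\bigl(\sqrt{\tfrac{a\log n}{n}}+\tfrac{(a\log n)^r}{n}\bigr)
\]
outside an event of probability at most $n^{-a}$.

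Step 3 (the main technical point): absorb both terms into $a^r C_r''\|V\|_1 b^r\sqrt{\log n/n}$. The first term satisfies $\sqrt{a\log n/n}\leq a^r\sqrt{\log n/n}$ trivially since $a\geq 1$ and $r\geq 1$. For the second, the inequality $(a\log n)^r/n\leq a^r\sqrt{\log n/n}$ reduces to $(\log n)^{2r-1}\leq n$, which holds for all $n$ larger than some $r$-dependent threshold $n_0(r)$. For $n\in[3,n_0(r)]$ the ratio $(\log n)^r/(n\sqrt{\log n/n})$ is itself bounded by an $r$-dependent constant, which can be folded into the final constant $C_r$. This small-$n$ bookkeeping is the only delicate point; the rest of the argument is a routine application of Lemmas \ref{prod-psi}, \ref{sum-psi}, and \ref{lem:weibull}.
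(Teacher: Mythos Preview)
Your proposal is correct and follows essentially the same route as the paper: bound $\|\langle X_i^{\otimes r},V\rangle\|_{\psi_{1/r}}$ via \cref{prod-psi} and \cref{sum-psi}, apply \cref{lem:weibull} with $\alpha=1/r$, use Markov's inequality at $p=a\log n$, and then absorb the second term via $(\log n)^{r-1/2}/\sqrt{n}\leq C_r$. The paper handles that last step by writing $n^{-1}\log^rn=\sqrt{n^{-1}\log n}\cdot n^{-1/2}\log^{r-1/2}n\leq C_r\sqrt{n^{-1}\log n}$, which is exactly your Step~3; your separate treatment of $r=1$ via the convex triangle inequality is a small extra care that the paper omits.
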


\begin{proof}
By Lemmas \ref{sum-psi} and \ref{prod-psi}, for every $i=1,\dots,n$,
\ba{
\norm{\langle X_i^{\otimes r},V\rangle}_{\psi_{1/r}}
&\leq C_r\sum_{j_1,\dots,j_r=1}^d\norm{X_{ij_1}\cdots X_{ij_r}}_{\psi_{1/r}}|V_{j_1,\dots,j_r}|
\leq 
C_r\|V\|_1\max_j\norm{X_{ij}}_{\psi_1}^r.
}
Therefore, by \cref{lem:weibull}, there exists a constant $C_r'>0$ depending only on $r$ such that
\ba{
\norm{\frac{1}{n}\sum_{i=1}^n(\langle X_i^{\otimes r},V\rangle-\E[\langle X_i^{\otimes r},V\rangle])}_p
\leq C_{r}'\|V\|_1b^r\bra{\sqrt{\frac{p}{n}}+\frac{p^r}{n}}
}
for any $p\geq1$. Hence, with $p=a\log n$, we have by Markov's inequality
\ba{
P\bra{\abs{\frac{1}{n}\sum_{i=1}^n(\langle X_i^{\otimes r},V\rangle-\E[\langle X_i^{\otimes r},V\rangle])}>eC'_{r}\|V\|_1b^r\bra{\sqrt{\frac{p}{n}}+\frac{p^r}{n}}}
\leq e^{-p}=\frac{1}{n^a}.
}
Noting $n^{-1}\log^rn=\sqrt{n^{-1}\log n}\cdot n^{-1/2}\log^{r-1/2}n\leq C_{r}\sqrt{n^{-1}\log n}$, we complete the proof. 
\end{proof}

\begin{lemma}\label{hj-bound}
If \eqref{ass:psi1} is satisfied, there exists a universal constant $C>0$ such that for any $a\geq1$,
\ben{\label{eq:hj-bound-1}
P\bra{\max_{1\leq j,k\leq d}\frac{1}{n}\sum_{i=1}^n|X_{ij}||X_{ik}|1_{\{|X_{ij}|\vee|X_{ik}|>2b\log n\}}>Cb^2\bra{\sqrt{\frac{a\log(dn)}{n}}+\frac{a^2\log^2(dn)}{n}}}\leq\frac{1}{n^a}
}
and
\ben{\label{eq:hj-bound-2}
P\bra{\max_{1\leq j,k\leq d}\frac{1}{n}\sum_{i=1}^nX_{ij}^2X_{ik}^21_{\{|X_{ij}|\vee|X_{ik}|\leq 2b\log n\}}>Cb^4\bra{1+a\frac{\log(dn)\log^4n}{n}}}\leq\frac{1}{n^a}.
}
\end{lemma}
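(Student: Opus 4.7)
My plan is to treat the two tail bounds separately; both will reduce to concentration of a maximum over $d^2$ sums of independent random variables, for which the tools collected earlier are essentially sufficient.

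For \eqref{eq:hj-bound-1}, the first step is the pointwise domination
\[
1_{\{|X_{ij}|\vee|X_{ik}|>2b\log n\}}\leq 1_{\{|X_{ij}|>2b\log n\}}+1_{\{|X_{ik}|>2b\log n\}},
\]
reducing matters to bounding $\max_{j,k}n^{-1}\sum_iY_{i,(j,k)}$ for $Y_{i,(j,k)}:=|X_{ij}X_{ik}|\,1_{\{|X_{ij}|>2b\log n\}}$ (and the symmetric version with the roles of $j$ and $k$ swapped). By \cref{prod-psi}, $\|Y_{i,(j,k)}\|_{\psi_{1/2}}\leq b^2$, since truncation only lowers the Orlicz norm. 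The expectations are small: by Cauchy--Schwarz, combined with the sub-exponential tail bound $P(|X_{ij}|>2b\log n)\leq 2/n^2$ and the moment bound $\E[X_{ij}^2 X_{ik}^2]\lesssim b^4$ (via \cref{moment-psi}), one obtains $\E Y_{i,(j,k)}\lesssim b^2/n$. Then I would apply \cref{max-weibull} with $\alpha=1/2$, $K\lesssim b^2$, and dimension $d^2$, absorbing $\log(d^2 n)$ into $\log(dn)$. Adding back the $O(b^2/n)$ mean is harmless, and a union bound over the two truncation choices yields the claim.

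For \eqref{eq:hj-bound-2}, the truncation makes $\widetilde Y_{i,(j,k)}:=X_{ij}^2 X_{ik}^2\,1_{\{|X_{ij}|\vee|X_{ik}|\leq 2b\log n\}}$ bounded by $16\,b^4\log^4 n$, while the sub-exponential moment bound controls the expectation and second moment uniformly: $\E \widetilde Y_{i,(j,k)}\leq\E[X_{ij}^2 X_{ik}^2]\lesssim b^4$ and $\E[\widetilde Y_{i,(j,k)}^2]\leq\E[X_{ij}^4 X_{ik}^4]\lesssim b^8$. I would apply Bernstein's inequality with variance proxy $\sigma^2\lesssim b^8$ and sup-norm bound $M\lesssim b^4\log^4 n$, then take a union bound over the $d^2$ pairs $(j,k)$. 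The resulting deviation is of order $b^4\bigl(\sqrt{a\log(dn)/n}+a\log(dn)\log^4 n/n\bigr)$, and the square-root term is absorbed via AM--GM into $b^4+b^4\cdot a\log(dn)\log^4 n/n$, matching \eqref{eq:hj-bound-2}.

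I do not anticipate a genuine obstacle: both halves are routine applications of the concentration machinery already in place (\cref{max-weibull} and Bernstein), once the indicator functions are split as above or swallowed by the truncation. The only minor bookkeeping point is that the maximum runs over $d^2$ pairs, which costs only a constant factor since $\log(d^2 n)\leq 2\log(dn)$.
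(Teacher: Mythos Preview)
Your argument for \eqref{eq:hj-bound-1} is essentially the paper's: the paper works directly with $Y_{i,jk}=|X_{ij}X_{ik}|1_{\{|X_{ij}|\vee|X_{ik}|>2b\log n\}}$, observes $\|Y_{i,jk}\|_{\psi_{1/2}}\leq b^2$, applies \cref{max-weibull} with $\alpha=1/2$, and bounds $\E Y_{i,jk}\lesssim b^2/n$ by Cauchy--Schwarz. Your preliminary split of the indicator is unnecessary but harmless.

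For \eqref{eq:hj-bound-2} you take a genuinely different route. The paper does \emph{not} use Bernstein plus a union bound; instead it invokes a Talagrand-type deviation inequality (Lemma~E.5 in \cite{CCK17}) to control the maximum $\zeta$ directly in terms of $\E[\zeta]$ and the uniform bound $16b^4\log^4 n/n$, and then bounds $\E[\zeta]$ via a Nemirovski-type maximal inequality (Lemma~9 in \cite{CCK15}). Your approach is more elementary and fully self-contained within the toolkit already assembled in the appendix: Bernstein for each fixed pair, union over $d^2$ pairs, then AM--GM to absorb the $\sqrt{a\log(dn)/n}$ term into the stated bound. This works, and avoids the two external references. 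The paper's route, by contrast, handles the maximum in one shot and would generalise more cleanly if the summands were not uniformly bounded but only had controlled moments; here, where the truncation guarantees boundedness, your simpler argument is entirely adequate.
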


\begin{proof}
Let us prove \eqref{eq:hj-bound-1}. 
Write $Y_{i,jk}:=|X_{ij}||X_{ik}|1_{\{|X_{ij}|\vee|X_{ik}|>2b\log n\}}$ for short. Since $\|Y_{i,jk}\|_{\psi_{1/2}}\leq b^2$, we have by \cref{max-weibull}
\[
\max_{1\leq j,k\leq d}\frac{1}{n}\abs{\sum_{i=1}^n(Y_{i,jk}-\E[Y_{i,jk}])}\lesssim b^2\bra{\sqrt{\frac{a\log(dn)}{n}}+\frac{a^2\log^2(dn)}{n}}
\]
with probability at least $1-1/n^a$. 
Moreover, observe that
\ba{
|\E[Y_{i,jk}]|
&\leq\max_{1\leq j,k\leq d}\sqrt{\E[X_{ij}^2X_{ik}^2]P(|X_{ij}|>2b\log n)}\lesssim \frac{b^2}{n}.
}
Hence we conclude
\[
\max_{1\leq j,k\leq d}\frac{1}{n}\abs{\sum_{i=1}^nY_{i,jk}}\lesssim b^2\bra{\sqrt{\frac{a\log(dn)}{n}}+\frac{a\log^2(dn)}{n}}
\]
with probability at least $1-1/n^a$. This proves \eqref{eq:hj-bound-1}. 

Next we prove \eqref{eq:hj-bound-2}. 
Set
\[
\zeta:=\max_{1\leq j,k\leq d}\frac{1}{n}\sum_{i=1}^nX_{ij}^2X_{ik}^21_{\{|X_{ij}|\vee|X_{ik}|\leq 2b\log n\}}.
\]
By Lemma E.5 in \cite{CCK17} with $\eta=3$ and $B=16b^4\log^4n/n$, we have for any $t>0$
\ba{
P\bra{\zeta\geq4\E[\zeta]+16\frac{b^4\log^4n}{n}t}\leq e^{-t}.
}
Further, 
$
\E[\zeta]
\lesssim b^4+\frac{b^4\log^4n}{n}\log d
$
by Lemma 9 in \cite{CCK15}. 
Hence there exists a universal constant $C_2>0$ such that
\ba{
P\bra{\zeta\geq C_2b^4\bra{1+\frac{\log^4n}{n}(\log d+t)}}\leq e^{-t}
}
for any $t>0$. Applying this with $t=a\log n$ gives \eqref{eq:hj-bound-2}. 
\end{proof}

\paragraph{Acknowledgments}

The author thanks Xiao Fang and Ryo Imai for valuable discussions about the subject of this paper. 
Thanks are also due to Kouki Akatsuka for his helpful comments about several imperfections contained in the original version of the paper. 
This work was partly supported by JST CREST Grant Number JPMJCR2115 and JSPS KAKENHI Grant Numbers JP22H00834, JP22H00889, JP22H01139, JP24K14848.

{
\renewcommand*{\baselinestretch}{1}\selectfont
\addcontentsline{toc}{section}{References}

}

\end{document}